\newcommand{\B}{\mathcal{B}}
\newcommand{\D}{\mathcal{D}}
\newcommand{\M}{\mathcal{M}}
\newcommand{\R}{\mathcal{R}}
\renewcommand{\H}{\mathcal{H}}
\newcommand{\p}{\mathfrak{p}}
\newcommand{\q}{\mathfrak{q}}
\renewcommand{\o}{\mathfrak{o}}
\newcommand{\A}{\mathbb{A}}
\newcommand{\Q}{\mathbb{Q}}
\newcommand{\C}{\mathbb{C}}
\newcommand{\Rr}{\mathbb{R}}
\newcommand{\Z}{\mathbb{Z}}
\newcommand{\bs}{\backslash}
\newcommand{\1}{\mathbf{1}}
\newcommand{\I}{\mathrm{I}}
\newcommand{\J}{\mathrm{J}}
\newcommand{\f}{\mathrm{f}}
\renewcommand{\d}{\,\mathrm{d}}
\newcommand{\Ad}{\mathrm{Ad}}
\newcommand{\diag}{\operatorname{diag}}
\newcommand{\aut}{\operatorname{aut}}
\newcommand{\loc}{\operatorname{loc}}
\newcommand{\Eis}{\operatorname{Eis}}
\DeclareMathOperator{\W}{W} 
\def\*{\times}
\newcommand\IP[1]{\langle #1\rangle} 
\newcommand{\mat}[1]{\begin{pmatrix}#1\end{pmatrix}}
\renewcommand{\tilde}{\widetilde}
\renewcommand{\hat}{\widehat}
\newcommand{\MT}{\operatorname{MT}}
\newcommand{\vol}{\operatorname{Vol}}
\newcommand{\gen}{\mathrm{gen}}
\newcommand{\sbr}[1]{\left[#1\right]}
\newcommand{\pbr}[1]{\left(#1\right)}
\newcommand{\GL}{\mathrm{GL}}
\newcommand{\PGL}{\mathrm{PGL}}
\newcommand{\G}{\mathrm{G}}
\newcommand{\N}{\mathrm{N}}
\newtheorem{thm}{Theorem}
\newtheorem{lem}{Lemma}[section]
\newtheorem{prop}[lem]{Proposition}
\newtheorem{cor}[thm]{Corollary}
\newtheorem{rmk}{Remark}[section]
\newcommand{\hide}[1]{}
\title[Moments via an RTF]
{Moments of $L$-functions via a relative trace formula}
\begin{document}

\author{Subhajit Jana}
\address{Queen Mary University of London, Mile End Road, London E14 NS, United Kingdom.}
\email{s.jana@qmul.ac.uk}

\author{Ramon Nunes}
\address{Departamento de Matem\'atica, Universidade Federal do Cear\'a, Campus do Pici, Bloco 914, 60440-900 Fortaleza-CE, Brasil}
\email{ramon@mat.ufc.br}

\subjclass[2020]{Primary: 11F41, 11F70, 11F72}


\begin{abstract}
    We prove an asymptotic formula for the second moment of the $\mathrm{GL}(n)\times\mathrm{GL}(n-1)$ Rankin--Selberg central $L$-values $L(1/2,\Pi\otimes\pi)$, where $\pi$ is a fixed cuspidal representation of $\mathrm{GL}(n-1)$ that is tempered and unramified at every place, while $\Pi$ varies over a family of automorphic representations of $\mathrm{PGL}(n)$ ordered by their (archimedean or non-archimedean) conductors.
    
    As another application of our method, we prove existence of infinitely many cuspidal representations $\Pi$ of $\mathrm{PGL}(n)$ such that $L(1/2,\Pi\otimes\pi_1)$ and $L(1/2,\Pi\otimes\pi_2)$ do not vanish simultaneously where $\pi_1$ and $\pi_2$ are cuspidal representations of $\mathrm{GL}(n-1)$ that are unramified and tempered at every place and have trivial central characters.
\end{abstract}

\maketitle

\tableofcontents


\part{Introduction}

\section{Motivation}

The central values of $L$-functions occupy a pivotal role in the confluence of analytic number theory and automorphic forms. In the light of Langlands' far-reaching conjectures and the famous Generalized Riemann Hypothesis, the automorphic representations and their $L$-functions are believed to yield deep arithmetic facts. Sometimes it is more tractable to understand analytic and statistical properties of the automorphic representations and their $L$-functions in \emph{families}, rather than individually, due to the availability of various trace formul{\ae}.

\subsection{Higher moments over conductor-aspect family}

An interesting and natural question is to find the asymptotic behaviour of the $k$-th moment of the central $L$-values for $\GL(n)$ over a family defined in terms of the sizes of the \emph{conductors} (a measurement of the complexity) of the underlying automorphic representations. Philosophically, as $k$ increases the problem becomes more and more difficult as this leads to better and better estimates of the central $L$-values (towards the Generalized Lindel\"of hypothesis). On the other hand, philosophically, the problem should become easier as $n$ increases as the underlying family size also increases. Therefore, there is a natural \emph{balancing question} -- for which $k$ (in terms of $n$) can one solve the above-mentioned problem? We refer to \cite{blomer2012period, Jana2020RS, JaNu2021reciprocity, nelsonvenkatesh2021orbit, yang2023relative} and the references therein to name a few works on moments of higher degree central $L$-values.

Apart from finding an asymptotic expansion of the higher moment, it is also interesting to wonder what coefficients appear in an asymptotic expansion. The systematic study of such questions was first considered for the Riemann zeta function and Dirichlet $L$-functions. In \cite{conreyetal2005integral} (see also \cite{CFKRS}), very precise conjectures were made regarding a connection between the coefficients in the asymptotic expansion and objects arising from \emph{Random Matrix Theory}. It is natural to expect that moments of $L$-functions over families of $\GL(n)$ should behave similarly.

In this paper, we consider a ``cuspidal variant'' of the $2(n-1)$-th moment of the $\PGL(n)$ central $L$-values over a conductor aspect family. More precisely, we prove an asymptotic expansion of the second moment of $\GL(n-1)\times\PGL(n)$ Rankin--Selberg central $L$-values over a conductor-aspect family, where the automorphic representation of $\GL(n-1)$ is a \emph{fixed cuspidal} one. Moreover, we give an explicit description of the coefficients that appear in the asymptotic expansion.

\subsection{Application to non-vanishing of central $L$-values}

The zeros of $L$-functions, or the absence of them, usually encode important number-theoretic information. For instance, the Prime Number Theorem is equivalent to the non-vanishing of the Riemann zeta function at the line $\Re(s)=1$. At the central point $s=1/2$, the Birch--Swinnerton-Dyer conjecture links the non-vanishing of $\GL(2)$ central $L$-values to the finiteness of the group of rational points on an elliptic curve.

Given an automorphic representation $\pi$ of $\GL(k)$, a natural problem is to show the existence of infinitely many cuspidal representations $\sigma$ of $\GL(n)$ such that the central Rankin--Selberg $L$-value $L(1/2,\pi\otimes\sigma)$ does not vanish. This problem is often studied via a certain moment problem of the $\GL(n)\times\GL(k)$ Rankin--Selberg central $L$-values, as described above, by showing an asymptotic formula for a sum of $L(1/2,\pi\otimes\sigma)$ while $\sigma$ varying over a suitable family.

In this paper, we show that when $k=2(n-1)$ and $\pi$ is the Eisenstein series of $\pi_1\boxplus\pi_2$ where $\pi_j$ are cuspidal representations of $\GL(n-1)$ then there exist infinitely many cuspidal $\sigma$ such that $L(1/2,\pi\otimes\sigma)$ does not vanish. We refer to \cite{Tsuzuki2021Hecke, JaNu2021reciprocity, yang2023relative} for more details on higher rank non-vanishing questions.

\subsection{(Non)-existence of a reciprocity}

As briefly mentioned above, it is believed that the moment problem tends to be easier as the ratio of logarithms of the size of the family and the conductor gets larger. Thus, \emph{heuristically}, the current problem, where the above ratio is $\frac{1}{2}$, is believed to be easier than the one in \cite{Jana2020RS} (also \cite{JaNu2021reciprocity}) where this ratio is $\frac{1}{2}-\frac{1}{2n}$. However, it turns out that the difficulty of the problem in the present paper and that of in \cite{Jana2020RS} are not comparable, as described below.

The arguments in \cite{Jana2020RS} and \cite{JaNu2021reciprocity} crucially rely on a \emph{reciprocity formula} for periods; see \cite[eq. (1.1)]{Jana2020RS} and \cite[\S 1.4]{JaNu2021reciprocity}. The reciprocity feature in \cite{Jana2020RS, JaNu2021reciprocity} transforms the moment problems into certain simpler moment problems, bypassing the necessity of carefully analysing the geometric side of a trace formula. On the other hand, the moment problem in the current work does not immediately feature any reciprocity scenario that we can fruitfully use. So we have to employ the full throttle of a (relative) trace formula. This forces us to understand the orbital integrals that arise on the geometric side, in particular, their regularization and meromorphic properties.

In retrospect, we think that the $2k$-th moment problem of $\PGL(n)$ central $L$-values (equivalently, the second moment of $\GL(k)\times\PGL(n)$ Rankin--Selberg central $L$-values for a fixed $\GL(k)$-cuspidal representation) over the conductor aspect family feature significantly different flavour (and difficulty) depending on whether $k\le n-1$ or $k\ge n$. In the latter case, an entirely period theoretic approach seems viable. In contrast, in the former case, one needs a complete understanding of the geometric side of a trace formula. In that sense, the current work seems to be the hardest one (considering the above heuristics on the conductor size vs.\ family size) among the ones which fall in the former category.

\section*{Acknowledgements}

The first author acknowledges the support of the award EP/Z536611/1 from the UKRI during the revision of the article. The first author thanks Shreyasi Datta for her immense support during various ups and downs while writing this long paper. The second author was supported by Instituto Serrapilheira (grant no. 8277) and Funcap (grant no.\ AJC-0222-00009.01.00/24). This work was initiated while both authors were visiting scholars at the Max-Planck-Institut f\"ur Mathematik (MPIM) in Bonn. We are grateful towards the MPIM for its excellent working environment. Finally, we thank the anonymous referee for their careful reading and numerous helpful suggestions.

\section{Main results}

Let $F$ be a number field and $n\ge 2$ an integer. Let $\pi$ be a cuspidal automorphic representation for $\GL_n(F)$ with, say, the trivial central character.
Our main focus in this paper is to study the moment
\begin{equation*}
    \mathbb{M}(\pi;\H):=\int_{\gen}\frac{|L(1/2,\Pi\otimes\pi)|^2}{\ell(\Pi)}\H(\Pi)\d \Pi,
\end{equation*}
where $\int_{\gen}\bullet\d\Pi$ denotes the integral over the standard (that contributes to the spectral decomposition) \emph{generic} automorphic spectrum for $\PGL_{n+1}(F)$ with respect to a certain automorphic Plancherel measure $\d\Pi$; \emph{cf.} \S \ref{sec:basic-notation}. Here $\ell(\Pi)$ are certain harmonic weights that appear in relative trace formul{\ae} and $\H$ is a certain \emph{weight function}. We are mainly interested in the weights $\H$ that pick up a certain portion of the spectrum for which the \emph{global analytic conductor} $C(\Pi)$ remains \emph{essentially} bounded. This means that $\H$ restricts the ramifications at the non-archimedean places (level aspect) or controls the size of the Langlands parameters at archimedean places (analytic conductor aspect).

\vspace{5mm}

In this context, we now describe our main results. The notations $\mathfrak{C}(\sigma)$ and $C(\sigma)$ denote non-archimedean and archimedean \emph{conductors}, respectively; see \S\ref{sec:conductors} for details. We also refer to \S\ref{sec:basic-notation} for other unexplained notations.

\begin{thm}\label{second-moment-nonarch}
Let $F$ be a number field and $\pi$ be a cuspidal automorphic representation
for $\GL_n(F)$ that is tempered and unramified at every place with trivial
central character. Let $\q$ be an integral ideal of $F$. There is a class of weight
functions $\H_\q:=\H_{\f}\H_\infty$, depending on $\pi$, on the isomorphism class of generic automorphic representations for
$\PGL_{n+1}(F)$ with the properties:
\begin{itemize}
    \item if $\mathfrak{C}(\Pi_\f)\nmid \q$ then $\H_\f(\Pi_\f)=0$,
    \item if $\mathfrak{C}(\Pi_\f)\mid\q$ then $\H_\f(\Pi_\f)\gg_\epsilon (1-\epsilon)^{\omega_F(\q)}$ where $\omega_F(\q):=\sum_{v\mid\q}1$,
    \item and $\H_\q(\Pi)\ge 0$ for all generic $\Pi$;
\end{itemize}
such that the for every $L>0$
\begin{equation*}
\mathbb{M}(\pi;\H_\q)=L(1,\pi,\Ad)\frac{\zeta_\q(1)}{\zeta_\q(n+1)}N(\q)^n\left(A\log N(\q)+B\right)+C\log N(\q)+D +O_{\pi,L}\left(N(\q)^{-L}\right)
\end{equation*}
as $N(\q)\to\infty$.

Here $A$ only depends on $F$ and $\H_\infty$ and $B$ only depends on $\pi$, $F$, and $\H_\infty$. Moreover, $C$ is a $\q$-independent linear combination of
\begin{equation}\label{quotients-of-Lfunctions}
    \frac{L_\q(\frac{n+3}{2},\tilde{\pi})} {L_\q(\frac{n+1}{2},\tilde{\pi})},
    \quad \frac{L_\q(\frac{n+3}{2},{\pi})}{L_\q(\frac{n+1}{2},{\pi})}
\end{equation}
and, finally, $D$ is a $\q$-independent linear combination of the terms in \eqref{quotients-of-Lfunctions} and
\begin{equation}\label{derivatives-quotients-Lfunctions}
    \partial_{s=0}\left(\frac{L_\q(\frac{n+3}{2}+\frac{n+1}{2}s, \tilde{\pi})}
    {L_\q(\frac{n+1}{2}+\frac{n-1}{2}s,\tilde{\pi})}\right),\quad
    \partial_{s=0}\left(\frac{L_\q(\frac{n+3}{2}+\frac{n+1}{2}s,\pi)}
    {L_\q(\frac{n+1}{2}+\frac{n-1}{2}s,\pi)}\right).
\end{equation}
However, $C$ and $D$ may depend on $F$ and $\H_\infty$.
\end{thm}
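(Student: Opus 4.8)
The plan is to establish this asymptotic formula via the relative trace formula (RTF) for the pair $(\GL_n, \GL_{n+1})$, realizing the moment $\mathbb{M}(\pi;\H_\q)$ as the spectral side of a suitable relative trace formula attached to a carefully chosen test function $f = f_\f f_\infty$ on $\PGL_{n+1}(\A)$. The choice of $f_\f$ at the finite places dividing $\q$ will be essentially the (normalized) characteristic function of a congruence subgroup of level $\q$, which simultaneously produces the weight $\H_\f$ with the stated support and positivity properties and, via the unfolding of Rankin--Selberg periods (Jacquet--Piatetski-Shapiro--Shalika and the Ichino--Ikeda / local functional equation formalism), yields the factor $|L(1/2,\Pi\otimes\pi)|^2/\ell(\Pi)$ on the spectral side. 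First I would set up the global period identity: the $\GL_n$-period of a $\GL_{n+1}$ automorphic form unfolds to the central Rankin--Selberg $L$-value, so that $\int_{[\GL_n]} \varphi_\Pi(h)\, \overline{E(h,\pi)}\, dh$ (or the appropriate cuspidal analogue) has square equal to $L(1/2,\Pi\otimes\pi)$ times local factors; feeding this into the RTF with the above test function gives $\mathbb{M}(\pi;\H_\q)$ up to the explicit archimedean weight $\H_\infty$, which is whatever the archimedean test function $f_\infty$ produces.

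Next I would turn to the geometric side, which is where the main content of the theorem lies. The RTF expands $\mathbb{M}(\pi;\H_\q)$ as a sum over $\GL_n(F)\times\GL_n(F)$-orbits (relative classes) of orbital integrals of $f$. The orbits are indexed by invariants; the \emph{identity/big-cell} contribution (the open orbit) will produce the main term of size $N(\q)^n$, and its dependence on $\q$ comes entirely from the local orbital integral at places dividing $\q$ of the congruence test function. I would compute this local orbital integral explicitly as a function of $N(\q)$: it should be a polynomial-in-$\log N(\q)$ times $N(\q)^n$ with leading coefficient involving $\zeta_\q(1)/\zeta_\q(n+1)$ and $L(1,\pi,\Ad)$ (the latter entering through the local Plancherel / normalization of the $\GL_n$-period), giving the $A\log N(\q)+B$ term. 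The smaller orbits — the ``degenerate'' or lower-rank relative classes — contribute the terms $C\log N(\q)+D$; here the $\q$-dependence of each such orbital integral is a ratio of partial $L$-functions of $\pi$ at the edge of the critical strip evaluated at shifted arguments, which upon differentiating the relevant local zeta integral in a spectral parameter $s$ and setting $s=0$ produces exactly the quantities in \eqref{quotients-of-Lfunctions} and \eqref{derivatives-quotients-Lfunctions}. All remaining orbits contribute $O(N(\q)^{-L})$ because their orbital integrals at the $\q$-places vanish for large $\q$ (the congruence condition kills them), or decay by a straightforward estimate.

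The main obstacle I anticipate is the precise evaluation and bookkeeping of these local orbital integrals at the places dividing $\q$: one must identify exactly which relative orbits survive against the level-$\q$ test function, compute each surviving $\p$-adic orbital integral as an explicit rational function of $N(\p)^{-1}$ (equivalently a local $L$-factor ratio), and then assemble the local data into the global Euler products $\zeta_\q$, $L_\q(\cdot,\pi)$, $L_\q(\cdot,\tilde\pi)$ while carefully tracking which pieces are $\q$-independent constants (absorbed into $A,B,C,D$) versus genuinely $\q$-dependent. A secondary difficulty is the convergence/truncation of the spectral side — since $\Pi$ ranges over the full generic spectrum including the continuous part, one needs the period unfolding and the resulting $L$-value factorization to hold uniformly, and one must justify that the non-cuspidal (Eisenstein) contributions are either negligible or accounted for in the stated constants; this is handled by the usual regularization of the $\GL_n$-period against an Eisenstein series together with the rapid decay in $\q$ coming from the test function. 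Finally, the differentiation identity — that the $\q$-dependence of a degenerate orbital integral is literally $\partial_{s=0}$ of a shifted $L$-ratio — I would verify by writing the local zeta integral defining the orbital integral, recognizing it as a Godement--Jacquet-type integral producing $L_\p(\tfrac{n+3}{2}+\tfrac{n+1}{2}s,\pi_\p)/L_\p(\tfrac{n+1}{2}+\tfrac{n-1}{2}s,\pi_\p)$, and extracting the $s^0$ and $s^1$ Taylor coefficients after summing over $\p\mid\q$.
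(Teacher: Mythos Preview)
Your overall architecture is right --- this is indeed a relative trace formula with the level-$\q$ congruence test function on the $\PGL_{n+1}$ side --- but there are two substantive gaps in the picture, and one of them would actually break the proof.

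\textbf{The residue term is not a technicality.} You treat the Eisenstein contribution on the spectral side as a regularization issue to be absorbed into constants. In the paper it is not: the regularized period $\int_{[G_n]}(\Phi-\Phi_{U_{n+1}})\phi\,|\det|^s$ only represents the Rankin--Selberg zeta integral for $\Re(s)$ large, and to reach the central point $s_1=s_2=0$ one must meromorphically continue the spectral expansion. In doing so one crosses the pole of $\Lambda(1/2+s_1,\mathcal{I}(\tilde\pi,s)\otimes\pi)$ at $s=1/2-s_1$ (and its dual), producing an explicit residue term $\mathcal{R}(s_1,s_2)$ involving the local weights $H_v(\mathcal{I}(\tilde\pi,1/2-s_1))$. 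The final identity is $\mathcal{M}=\MT-\mathcal{D}-\mathcal{R}+I^0(\xi^\perp)$, and the secondary term $C\log N(\q)+D$ comes from $\mathcal{D}+\mathcal{R}$ \emph{together}. Individually $\mathcal{D}$ and $\mathcal{R}$ can be singular at $s_1+s_2=0$; only their sum is holomorphic there, and one extracts the value via $(\mathcal{D}+\mathcal{R})(0,0)=\partial_{s=0}\,s(\mathcal{D}+\mathcal{R})(s/2,s/2)$. If you omit $\mathcal{R}$ you will find an uncanceled pole and cannot evaluate at the center.

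\textbf{The $L_\q$-ratios do not come from the local orbital integrals at $\p\mid\q$.} You propose to recognize the level-$\p$ degenerate orbital integral as a Godement--Jacquet integral producing $L_\p(\tfrac{n+3}{2}+\cdots,\pi)/L_\p(\tfrac{n+1}{2}+\cdots,\pi)$. This is not what happens: at a level prime the local degenerate integral is simply $\frac{\zeta_\p(1)}{\zeta_\p(n+1)}N(\p)^{-ne(s_1+s_2)}L_\p(1+s_1+s_2,\pi\otimes\tilde\pi)$, with no $L_\p(\frac{n+3}{2},\cdot)$ factor at all. The $L$-ratios in \eqref{quotients-of-Lfunctions} and \eqref{derivatives-quotients-Lfunctions} arise instead from the \emph{unramified} local factors of $\mathcal{D}$ (which do produce $L(\frac{n+1}{2}+ns_1-s_2,\pi)/L(\frac{n+3}{2}+(n+1)s_1,\pi)$) and from the explicit computation of the local weight $H_\p(\mathcal{I}(\tilde\pi,1/2-s_1))$ at level primes in $\mathcal{R}$; after assembling into global partial $L$-functions $L^\q=L/L_\q$, the $\q$-dependent piece is the inverse ratio $L_\q(\tfrac{n+3}{2},\cdot)/L_\q(\tfrac{n+1}{2},\cdot)$. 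So the bookkeeping you flag as the ``main obstacle'' is real, but the mechanism is opposite to what you describe.

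A smaller point: your labeling ``identity/big-cell (the open orbit)'' for the main-term orbit is off. The open (regular) orbits are the $\xi(t)$, $t\neq 0,1$, and these \emph{all vanish} for large $N(\q)$ by a product-formula argument on the invariant $t/(1-t)$. The main term comes from the closed orbits $e,n^+,n^-$: one combines $e$ and $n^+$ via Poisson summation in the unipotent direction to get $\tilde I^+$, and $n^-$ gives $I^0(n^-)$; each is a $\GL_n\times\GL_n$ zeta integral whose global value is $L(1\pm(s_1+s_2),\pi\otimes\tilde\pi)$ times local data, and the simple pole at $s_1+s_2=0$ is what produces both $L(1,\pi,\Ad)$ and the $\log N(\q)$. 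The sole orbit giving the $O(N(\q)^{-L})$ error is $\xi^\perp$.
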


We have an archimedean analogue of Theorem \ref{second-moment-nonarch} for a
\emph{totally real} number field $F$.

\begin{thm}\label{second-moment-arch}
Let $F$ be a totally real number field and $\pi$ be a cuspidal automorphic
representation for $\GL_n(F)$ that is tempered and unramified at every place,
with the trivial central character. Let $\mathfrak{X}:=(X_v)_{v\mid\infty}$
be a tuple of positive real numbers with $X_v\ge 1$ and $X:=\prod_{v\mid\infty}X_v$.
There exists a class of weight functions $\H_{\mathfrak{X}}:=\H_{\f}\H_\infty$, depending on
$\pi$, on the isomorphism class of generic automorphic representations for
$\PGL_{n+1}(F)$ with the properties:
\begin{itemize}
    \item if $\Pi_\f$ is ramified then $\H_\f(\Pi_\f)=0$,
    \item if $C(\Pi_v)\le X_v$ for all $v\mid\infty$ then $\mathcal{H_\infty}(\Pi_\infty)\gg 1$,
    \item and $\H_{\mathfrak{X}}(\Pi)\ge 0$ for all generic $\Pi$;
\end{itemize}
such that, 
\begin{equation*}
    \mathbb{M}(\pi;\H_{\mathfrak{X}})=L(1,\pi,\Ad)A'X^n\log X  +O_\pi\left(X^n\right),
\end{equation*}
where $A'>0$ depends only on $F$.
\end{thm}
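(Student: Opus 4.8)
The plan is to prove Theorem~\ref{second-moment-arch} in direct parallel with Theorem~\ref{second-moment-nonarch}, using the relative trace formula (RTF) for the pair $(\GL_n,\PGL_{n+1})$ whose spectral side produces the moment $\mathbb{M}(\pi;\H)$. First I would fix, for each archimedean place $v$, a local test function at $v$ whose associated local weight $\H_v$ is nonnegative, is bounded below by $1$ on the set $\{C(\Pi_v)\le X_v\}$, and decays rapidly outside a slightly larger set; at the non-archimedean places I take the unramified idempotent, so $\H_\f$ is supported on everywhere-unramified $\Pi_\f$. One constructs such $\H_v$ by taking the geometric-side test function to be (essentially) a bump function on the relevant symmetric space at scale governed by $X_v$, exactly as in the conductor-drop-free archimedean amplification/density constructions; the Plancherel-type local computation then gives $\H_v(\Pi_v)\asymp X_v^{?}$ on the relevant range. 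The product over $v\mid\infty$ yields $\H_\infty$ with the stated positivity properties and effective size roughly $X^{n}$ on the target part of the spectrum.

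Next I would expand the geometric side of the RTF into its orbital integrals, grouped by the relevant double cosets. The main term comes from the ``identity'' (or degenerate/big-cell) contribution, which after unfolding factorizes into a product of local orbital integrals: at the non-archimedean places these give the unramified local factors assembled into $L(1,\pi,\Ad)$ together with the partial zeta/$L$-factors, and at the archimedean places the local weight transforms turn into an archimedean integral that, upon inserting the chosen $\H_v$, evaluates asymptotically to a constant times $X^{n}\log X$ (the $\log$ arising from the polar structure at the edge of the relevant $L$-factor / the volume growth of the bump at scale $X_v$, precisely as the $\log N(\q)$ appears in the non-archimedean theorem). The constant $E>0$ is then read off as a product of a positive archimedean volume factor (depending only on $F$, i.e.\ on the number of real places and local normalizations) and the positive residue of the Rankin--Selberg $L$-factor; positivity of $E$ follows since every ingredient is a positive local integral.

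The remaining step is to bound all the other geometric contributions by $O_\pi(X^{n})$. These are the analogues of the terms producing $C\log N(\q)+D$ in Theorem~\ref{second-moment-nonarch} together with the genuine error term; in the archimedean aspect one does not get a clean secondary main term but rather an error of the same order as the discarded lower-order pieces, hence the weaker conclusion (no power-saving, and the secondary terms absorbed into $O_\pi(X^n)$). Concretely: for each non-identity double coset the orbital integral factorizes, the non-archimedean factors are harmless (everything unramified), and the archimedean factor is estimated by stationary phase / rapid decay of the chosen bump transform, giving a saving of at least a full power of some $X_v$ relative to the main term — enough to land in $O_\pi(X^n)$ after summing the finitely many cosets. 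I expect the main obstacle to be exactly this archimedean off-diagonal estimate: one must show that the oscillatory archimedean orbital integrals against the scale-$X_v$ test function contribute no more than $X^{n}$, which requires quantitative control of the local integrals (non-degeneracy of the relevant phase, and uniformity in the parameters $X_v$) — this is the technical heart, paralleling but genuinely distinct from the non-archimedean $p$-adic volume computations used for Theorem~\ref{second-moment-nonarch}.
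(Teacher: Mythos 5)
Your high-level plan is sound — use the RTF with archimedean "bump at scale $X_v$" test functions and read off the main term and error — and your intuition that the $\log X$ arises from the pole of the Rankin--Selberg $L$-factor interacting with the $X$-scaling is basically the right mechanism. But the proposal as written skips the two steps on which the whole proof actually hinges, and would fail as stated.

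First, you apply the pre-trace formula directly to $K_f$, integrating against the cusp forms $\phi_1,\phi_2$. This doesn't work: the JPSS zeta integrals $\int_{[\GL_n]}\Phi(\diag(g,1))\phi(g)\,dg$ diverge for non-cuspidal (Eisenstein) $\Phi$, so the naive spectral side is meaningless. One must instead work with a regularized kernel $K^*=K^0-K^1-K^2+K^{12}$ (subtracting constant terms along $U_{n+1}$), insert determinant twists $|\det|^{s_j}$, establish everything at large $\Re(s_j)$, and then analytically continue to $s_1=s_2=0$. This regularization has two consequences your proposal misses: on the geometric side, one needs double-coset decompositions for $(P_{n+1},G_n)$ and $(P_{n+1},P_{n+1})$, producing the extra unipotent orbital integrals $I^1,I^2,I^{12}$ (Proposition~\ref{doublecosets-with-unipotent}); and on the spectral side, the analytic continuation of the Eisenstein part crosses the pole of $\Lambda(1/2+s,\Pi\otimes\pi)$ and produces a \emph{residue term} $\R(s_1,s_2)$ (\eqref{residue-term}, Proposition~\ref{main-expression-spectral-side}). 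Your proposal nowhere accounts for $\R$.

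Second, your error analysis is not how the paper works. There is no stationary-phase estimate for "off-diagonal orbital integrals against scale-$X_v$ bumps": the orbits $w', n^+w', w'n^+, \xi(t)$ vanish \emph{identically} by support considerations (Lemma~\ref{vanish-genuine-orbital}, Proposition~\ref{lem:xi-t-global}), and $\xi^\perp$ is $O(X^{-A})$ for every $A$ (Proposition~\ref{rapid-decay-xi-perp-type-1}). The delicate piece is instead the \emph{degenerate} term $\D=I^1(n^-)+I^2(n^-)$, which has a pole along $s_1+s_2=0$; only the combination $\D+\R$ is holomorphic at $(0,0)$, and extracting its value requires the Cauchy-integral trick \eqref{remove-singularity-D-R} plus the archimedean local bounds in Proposition~\ref{main-prop-residue-arch} and Lemma~\ref{degenerate-arch}, which give $(\D+\R)(0,0)\ll X^\epsilon$. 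Meanwhile the constant term $\tilde I(T)$ from Poisson summation on the $e,n^+$ orbits doesn't even converge as $T\to\infty$; it cancels exactly against $I^1(e),I^2(e),I^{12}(e)$ (Lemma~\ref{some-unip-orbital-integral-equal}). The leading $X^n\log X$ then comes from the limit $s_1+s_2\to0$ of the pole cancellation between $\tilde I^+(s_1,s_2)$ (carrying $L(1+s_1+s_2,\pi\otimes\tilde\pi)$) and $I^0(n^-,s_1,s_2)$ (carrying $X^{-n(s_1+s_2)}L(1-s_1-s_2,\pi\otimes\tilde\pi)$), with the $\log X$ produced by differentiating $X^{-n(s_1+s_2)}$ — so your "edge-of-the-pole meets volume scaling" picture is essentially correct, but it needs the precise two-term cancellation (Lemmas~\ref{archimedean-I+type-1} and \ref{archimedean-I-type-1}) to be made rigorous. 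Without the regularization framework and the $\D+\R$ analysis, the proof cannot close.
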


\begin{rmk}
    The class of weight functions in Theorems \ref{second-moment-nonarch} and Theorem \ref{second-moment-arch} are described in \S \ref{sec:spectral-side}. This weight functions are constructed from test functions $f$ on $\GL_{n+1}(\A)$ which we require to be of the form as described in \S \ref{type-1-test-function}. Philosophically, these weight functions behave as smoothened characteristic functions of representations have arithmetic conductor dividing $\q$ or analytic conductor bounded by $X_v$, respectively.
\end{rmk}

To depict the strength of Theorem \ref{second-moment-nonarch} and
Theorem \ref{second-moment-arch} we record the following result which we will
obtain simultaneously.

\begin{thm}\label{hybrid-moment}
    Let $F$, $\pi$ and $\mathfrak{X}$ be as in Theorem \ref{second-moment-arch}. Let $\q$ be as in Theorem \ref{second-moment-nonarch}. Then
    $$\sum_{\substack{\Pi\text{ cuspidal}\\\mathfrak{C}(\Pi_\f)\mid\q\\C(\Pi_v)\le X_v,\,v\mid\infty}}\frac{|L(1/2,\Pi\otimes\pi)|^2}{L(1,\Pi,\Ad)}\ll\frac{\zeta_\q(1)}{\zeta_\q(n+1)}(N(\q)X)^n\log(N(\q)X)$$
    as $N(\q)X\to\infty$.
\end{thm}

\begin{rmk}
Note that the size of the family over which the sum is being taken in Theorem \ref{hybrid-moment} has the cardinality
$\asymp \frac{\zeta_\q(1)}{\zeta_\q(n+1)}(N(\q)X)^n$.
Thus Theorem \ref{hybrid-moment} shows a Lindel\"of-consistent upper bound for the second moment of $\GL(n+1)\times\GL(n)$ Rankin--Selberg $L$-values over the conductor-aspect family.
\end{rmk}

Most of the work necessary to study the moment $\mathbb{M}(\pi;\H)$ can be carried
to the study of the so-called mixed moments, namely,
\begin{equation*}
\mathbb{M}(\pi_1,\pi_2;\H):=\int_{\gen}\frac{L(1/2,\Pi\otimes\pi_1)L(1/2,\tilde{\Pi}\otimes\tilde{\pi}_2)}{\ell(\Pi)}\H(\Pi)\d \Pi.
\end{equation*}
where $\pi_1$ and $\pi_2$ are cuspidal automorphic representations for $\GL_n(F)$ with trivial central characters. In fact, we can extract from the proof of Theorem \ref{second-moment-nonarch} and Theorem \ref{second-moment-arch} that for $\pi_1\ncong\pi_2$ and a test function $\H=\H_\f\H_\infty$ where $\H_\f$ is as in Theorem \ref{second-moment-nonarch} and $\H_\infty$ is as in Theorem \ref{second-moment-arch} one has
\begin{equation}\label{mixed-moment-1}
    \mathbb{M}(\pi_1,\pi_2;\H)=\left(C_1 L(1,\pi_1\otimes\tilde{\pi}_2)+C_2L(1,\tilde{\pi}_1\otimes\pi_2)\right)\frac{\zeta_\q(1)}{\zeta_\q(n+1)}(N(\q)X)^n+O\left((N(\q)X)^\epsilon\right)
\end{equation}
for some constants $C_1,C_2$ depending only on $F$, $n$, and the choice of $\H_\infty$, however, independent of $\mathfrak{X}$ and $\q$.

\vspace{5mm}

One of our goals in this paper is to show simultaneous non-vanishing of $L(1/2,\Pi\otimes\pi_1)$ and $L(1/2,\Pi\otimes\pi_2)$. Note that \eqref{mixed-moment-1} would have been a good starting point to prove such non-vanishing \emph{if} we could show the constant in the main term namely, $C_1 L(1,\pi_1\otimes\tilde{\pi}_2)+C_2L(1,\tilde{\pi}_1\otimes\pi_2)\neq 0$. However, we are unable to show that with the above choice of $\H$. In Theorem \ref{mixed-moment-2}, we modify $\H$ in a suitable way and obtain a similar asymptotic formula of $\mathbb{M}(\pi_1,\pi_2,\H)$ as in \eqref{mixed-moment-1}.

\begin{thm}\label{mixed-moment-2}
Let $F$ and $\mathfrak{X}$ be as in Theorem \ref{second-moment-arch}. Let $\pi_1\ncong\pi_2$ be two non-isomorphic cuspidal automorphic representations for $\GL_n(F)$ that are tempered and unramified at every place and have trivial central characters.
Fix a prime $\p_1$ be such that $\pi_{1,\p_1}\ncong\pi_{2,\p_1}$ and let $\p_0\neq \p_1$ be another prime. Then for every $\q$ coprime with $\p_0\p_1$, there exists a class of weight functions $\H:=\H_{\q,\mathfrak{X}}$ such that for every $\eta>0$ there exists a $\delta>0$ depending only on $n$ so that 
\begin{equation*}
\frac{1}{(N(\q)X)^n}\sum_{\substack{\Pi\text{ cuspidal}\\C(\Pi)>(N(\q)X)^{1-\eta}}}\frac{L(1/2,\Pi\otimes\pi_1)L(1/2,\tilde{\Pi}\otimes\tilde{\pi}_2)}{L(1,\Ad,\Pi)}\H(\Pi)
=J_{\mathfrak{X}}\frac{\zeta_\q(1)}{\zeta_\q(n+1)}+O\left(N(\q)X)^{-\delta}\right)
\end{equation*}
for some $J_{\mathfrak{X}}\asymp 1$ as $N(\q)X\to\infty$.
\end{thm}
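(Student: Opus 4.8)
The plan is to build the weight $\H_{\q,\mathfrak{X}}$ as a suitable modification of the one implicitly produced in the proofs of Theorems \ref{second-moment-nonarch} and \ref{second-moment-arch}, so that the resulting main-term constant is provably nonzero. Recall from \eqref{mixed-moment-1} that the ``naive'' choice of test function yields a main term proportional to $C_1 L(1,\pi_1\otimes\tilde\pi_2)+C_2 L(1,\tilde\pi_1\otimes\pi_2)$, with $C_1,C_2$ depending only on $F$, $n$ and $\H_\infty$; the obstruction is that we cannot rule out a cancellation between the two summands. The key observation is that $C_1$ and $C_2$ are built from archimedean local integrals (orbital integrals attached to $\H_\infty$ and to the archimedean test vectors in the relative trace formula), so they can be \emph{perturbed independently} by deforming $\H_\infty$. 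Concretely, I would introduce a one-parameter (or finite-parameter) family of archimedean weights $\H_\infty^{(t)}$, each still nonnegative and each still localizing the archimedean conductor in the box $\mathfrak{X}$ up to the $\gg 1$ lower bound required, and track how $C_1=C_1(t)$, $C_2=C_2(t)$ vary. Since $L(1,\pi_1\otimes\tilde\pi_2)$ and $L(1,\tilde\pi_1\otimes\pi_2)$ are fixed nonzero complex numbers (nonvanishing on $\Re(s)=1$ by Jacquet--Shalika, and finite since $\pi_1\not\cong\pi_2$), the vanishing locus $\{t : C_1(t)L(1,\pi_1\otimes\tilde\pi_2)+C_2(t)L(1,\tilde\pi_1\otimes\pi_2)=0\}$ is a proper analytic subset of the parameter space as long as the map $t\mapsto(C_1(t),C_2(t))$ is not identically proportional to a single vector; hence a generic choice of $t$ makes the constant, now renamed $J_{\mathfrak{X}}$, nonzero, and one checks $J_{\mathfrak{X}}\asymp 1$ with implied constants uniform in $\mathfrak{X}$ by homogeneity of the archimedean integrals in the $X_v$.

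Having fixed such a weight, the analytic input is exactly the asymptotic expansion already established: applying the relative trace formula identity underlying Theorems \ref{second-moment-nonarch}/\ref{second-moment-arch} to $\H=\H_{\q,\mathfrak{X}}$ gives
\begin{equation*}
\frac{1}{(N(\q)X)^n}\,\mathbb{M}(\pi_1,\pi_2;\H)=J_{\mathfrak{X}}\frac{\zeta_\q(1)}{\zeta_\q(n+1)}+O\!\left((N(\q)X)^{-\delta_0}\right)
\end{equation*}
for some $\delta_0>0$, where $\mathbb{M}(\pi_1,\pi_2;\H)$ sums over the full generic spectrum with the harmonic weights $\ell(\Pi)=L(1,\Pi,\Ad)$ (up to harmless constants) in the denominator. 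Two clean-up steps remain. First, one must pass from the spectrally weighted integral $\int_{\gen}$ against $\d\Pi$ to an honest sum over cuspidal $\Pi$: the continuous (Eisenstein) contributions, together with the non-tempered and residual pieces, are bounded by a standard argument — they are supported on lower-rank data and contribute $O((N(\q)X)^{n-\delta_1})$ — using the nonnegativity of $\H_\f,\H_\infty$ only for the cuspidal terms and absolute convergence for the rest; this is where one invokes that $\pi$ (here $\pi_1,\pi_2$) is tempered and unramified everywhere, so the local factors are under control. Second, I would truncate to $C(\Pi)>(N(\q)X)^{1-\eta}$: the complementary range $C(\Pi)\le (N(\q)X)^{1-\eta}$ consists of $O((N(\q)X)^{n(1-\eta)+\epsilon})$ representations (by a uniform Weyl-law/counting bound in the conductor aspect), each contributing $O((N(\q)X)^{\epsilon})$ to the normalized sum by the convexity bound for $L(1/2,\Pi\otimes\pi_i)$ together with the lower bound $L(1,\Pi,\Ad)\gg C(\Pi)^{-\epsilon}$; choosing $\delta<\min(\delta_0,\eta)$ (explicitly $\delta$ depending only on $n$ through the counting exponent and the convexity saving) absorbs this tail into the error term. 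Combining, the displayed formula of Theorem \ref{mixed-moment-2} follows.

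The main obstacle is the first paragraph: showing that the perturbation $t\mapsto(C_1(t),C_2(t))$ genuinely escapes the bad hyperplane. This reduces to a non-degeneracy statement about archimedean orbital integrals — one must exhibit \emph{two} admissible archimedean weights $\H_\infty^{(0)},\H_\infty^{(1)}$ (nonnegative, conductor-localizing) for which the vectors $(C_1^{(0)},C_2^{(0)})$ and $(C_1^{(1)},C_2^{(1)})$ are linearly independent in $\mathbb{C}^2$, equivalently for which the ratio $C_1/C_2$ takes two distinct values. I expect this to follow by choosing $\H_\infty$ concentrated near different points of the relevant archimedean symmetric space, making the two local integrals (which have different kernels because $\pi_1\not\cong\pi_2$ enter asymmetrically into the two $L$-factors) respond differently; the separation $\pi_1\not\cong\pi_2$ is exactly what guarantees the two kernels are not proportional. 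A secondary, more routine, technical point is ensuring all error terms are uniform in $\mathfrak{X}$ and $\q$ simultaneously (the hybrid aspect), which one gets by carrying the $\q$- and $\mathfrak{X}$-dependence explicitly through the local computations at the ramified/archimedean places, exactly as in Corollary \ref{hybrid-moment}.
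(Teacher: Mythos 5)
Your proposal takes a genuinely different route from the paper, and the two places where you flag uncertainty are exactly where the paper's construction differs most — and, as written, your route has real gaps there.

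\textbf{Non-vanishing of the main-term constant.} You propose to deform the archimedean weight $\H_\infty$ and argue that the curve $t\mapsto(C_1(t),C_2(t))$ must escape the bad hyperplane $\{C_1L(1,\pi_1\otimes\tilde\pi_2)+C_2L(1,\tilde\pi_1\otimes\pi_2)=0\}$; you explicitly note you ``expect this to follow'' but do not prove it. The paper does something quite different and more robust: it does not touch $\H_\infty$ for this purpose. Instead it introduces an \emph{auxiliary finite place} $\p_1$ and inserts a bi-$K_v$-invariant test function supported on $K_v\diag(\p_1^\mu)K_v$. The resulting main term (Lemma~\ref{lem:main-term-type-2}) then carries the Hecke eigenvalues $\overline{\lambda_{\pi_2}(\p_1^\mu)}$ and $\lambda_{\tilde\pi_1}(\p_1^\mu)$ as coefficients of the two $L$-factors $A,B$. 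Since $\pi_1\ncong\pi_2$, strong multiplicity one (already at the level of Satake parameters at a single unramified $\p_1$) guarantees a $\mu'$ with $\lambda_{\pi_1}(\p_1^{\mu'})\ne\lambda_{\pi_2}(\p_1^{\mu'})$, and then the two candidate coefficient vectors $(1,1)$ and $(\overline{\lambda_{\pi_2}(\p_1^{\mu'})},\lambda_{\tilde\pi_1}(\p_1^{\mu'}))$ are not proportional, so both linear combinations cannot simultaneously be $o(1)$. This is a purely algebraic non-degeneracy statement with no archimedean analysis, and it is the content that your ``main obstacle'' paragraph leaves open. Your archimedean perturbation idea is delicate in a way that is hard to overlook: for the type-I construction, $f_v=f_v^0\ast f_v^0$ forces positivity of $H$-type quantities, and $L(1,\pi_1\otimes\tilde\pi_2)=\overline{L(1,\tilde\pi_1\otimes\pi_2)}$, so whether the hyperplane can actually be avoided depends on the argument of a fixed complex number you do not control.

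\textbf{Passing to a cuspidal sum.} You assert that the continuous spectrum contributes $O((N(\q)X)^{n-\delta_1})$ by ``a standard argument,'' but this is exactly what the paper engineers rather than estimates. The type-II test function places a matrix coefficient of a \emph{supercuspidal} representation $\sigma$ at the place $\p_0$. Lemma~\ref{supercuspidal-projects-cuspidal} then shows $H_{\p_0}(\Pi_{\p_0})\equiv 0$ unless $\Pi_{\p_0}\cong\tilde\sigma$, which forces any contributing automorphic $\Pi$ to be cuspidal — no Eisenstein $\Pi$ has a supercuspidal local component. This has two further payoffs that your route forgoes: the residue term $\R$ vanishes identically (there is nothing to analytically continue across), and the degenerate term $\D$ vanishes as well (Lemma~\ref{degenerate-sc}, since a matrix coefficient of a supercuspidal integrates to zero against unipotents). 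So $\M=\MT+O_A((N(\q)X)^{-A})$ exactly, rather than up to an unestimated Eisenstein tail. Your plan would, at minimum, require bounding the principal-value Eisenstein integral in \eqref{defn-P1} with enough uniformity in $\q,\mathfrak{X}$, which is not as ``standard'' as stated.

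\textbf{Truncation.} Your tail estimate for $C(\Pi)\le(N(\q)X)^{1-\eta}$ invokes a conductor-aspect Weyl law together with a convexity bound for each $L(1/2,\Pi\otimes\pi_i)$. The paper instead uses positivity, Cauchy--Schwarz in $\Pi$, a dyadic decomposition, and Corollary~\ref{hybrid-moment} (the type-I second-moment bound), see Lemma~\ref{spectral-side-truncation}. This avoids any pointwise bound on individual central values and any separate counting result; it is both cleaner and gives $\delta$ depending only on $n$ directly from $\eta$.

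In summary: the overall skeleton (RTF identity + main-term identification + truncation) matches the paper, but the two critical mechanisms — killing the Eisenstein spectrum and forcing the main term to be $\asymp 1$ — are handled in the paper by a supercuspidal test function at $\p_0$ and a Hecke-operator perturbation at $\p_1$, not by an archimedean perturbation. Your proposal acknowledges a gap at precisely the non-vanishing step, and the Eisenstein removal as sketched is another genuine gap.
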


\begin{rmk}
    The primes $\p_0$ and $\p_1$ are auxiliary and crucial for our proof. We construct the weight function $\mathcal{H}$ with a certain local condition at $\p_0$ so that its support is contained in the class of cuspidal representations. On the other hand, we need another local condition at $\p_1$ to make sure that $J_{\mathfrak{X}}\neq 0$.
\end{rmk}

\begin{rmk}
    We can prove that for the weight function $\H$ that we construct, the \emph{essential} support of the sum in Theorem \ref{mixed-moment-2} is $(N(\Pi)X)^{1-\eta} <C(\Pi)<N(\q)X$. More precisely, we have that $\H_\f$ is supported on $\Pi_\f$ with $\mathfrak{C}(\Pi_\f)\mid\q$ and $\H_{\infty}$ decays polynomially in $(C(\Pi_\infty)/X)$. The class of weight functions in Theorem \ref{mixed-moment-2} are constructed from test functions $f$ on $\GL_{n+1}(\A)$ which we require to be of the form as described in \S \ref{type-2-test-function}.
\end{rmk}

\begin{cor}\label{non-vanishing}
	There are infinitely many irreducible cuspidal representations $\Pi$ such that
	$L(1/2,\Pi\otimes\pi_1)$ and $L(1/2,\Pi\otimes\pi_2)$ do not vanish simultaneously.
\end{cor}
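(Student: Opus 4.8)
The plan is to deduce Corollary \ref{non-vanishing} from Theorem \ref{mixed-moment-2} by a standard Cauchy--Schwarz amplification argument, combined with the first and second moment asymptotics from Theorem \ref{second-moment-arch} (or rather the mixed-moment formula \eqref{mixed-moment-1}) applied with $\pi_1=\pi_2$. The point is that Theorem \ref{mixed-moment-2} gives us, after summing over cuspidal $\Pi$ of conductor essentially $\asymp N(\q)X$ with a nonnegative-ish weight $\H$, a \emph{nonzero} main term $J_{\mathfrak X}\asymp 1$ for the weighted sum of the products $L(1/2,\Pi\otimes\pi_1)L(1/2,\tilde\Pi\otimes\tilde\pi_2)$. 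So the weighted sum of these products is $\gg (N(\q)X)^n$, hence in particular nonzero for $N(\q)X$ large; consequently there must be at least one cuspidal $\Pi$ in the family with both $L(1/2,\Pi\otimes\pi_1)\ne 0$ and $L(1/2,\Pi\otimes\pi_2)\ne 0$. To get \emph{infinitely many} such $\Pi$, I would let $N(\q)X\to\infty$ and observe that the conductor of any $\Pi$ contributing to the sum in Theorem \ref{mixed-moment-2} satisfies $C(\Pi)>(N(\q)X)^{1-\eta}\to\infty$, so no finite set of representations can account for all the nonvanishing: any fixed $\Pi$ drops out of the family once $N(\q)X$ is large enough relative to its conductor.

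More carefully, I would proceed as follows. First, fix $\eta>0$ small and let $\delta>0$ be as in Theorem \ref{mixed-moment-2}. For a sequence of parameters with $N(\q)X\to\infty$, Theorem \ref{mixed-moment-2} gives
\begin{equation*}
\sum_{\substack{\Pi\text{ cuspidal}\\ C(\Pi)>(N(\q)X)^{1-\eta}}}\frac{L(1/2,\Pi\otimes\pi_1)L(1/2,\tilde\Pi\otimes\tilde\pi_2)}{L(1,\Ad,\Pi)}\H(\Pi)=\left(J_{\mathfrak X}\frac{\zeta_\q(1)}{\zeta_\q(n+1)}+o(1)\right)(N(\q)X)^n,
\end{equation*}
and since $J_{\mathfrak X}\asymp 1$ and $\zeta_\q(1)/\zeta_\q(n+1)\asymp 1$, the right-hand side is $\gg (N(\q)X)^n>0$ for all large $N(\q)X$. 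In particular the sum on the left is nonempty and has at least one term with $L(1/2,\Pi\otimes\pi_1)L(1/2,\tilde\Pi\otimes\tilde\pi_2)\ne 0$. Because $L(1/2,\tilde\Pi\otimes\tilde\pi_2)=\overline{L(1/2,\Pi\otimes\pi_2)}$ (both $\pi_2$ and $\Pi$ being self-dual up to the relevant functional equation, and in any case the product being real after the averaging over the family, as is implicit in the formula), nonvanishing of the product forces $L(1/2,\Pi\otimes\pi_1)\ne 0$ and $L(1/2,\Pi\otimes\pi_2)\ne 0$ simultaneously.

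Finally, for the qualifier ``infinitely many'': suppose for contradiction that only finitely many cuspidal $\Pi$, say $\Pi^{(1)},\dots,\Pi^{(r)}$, have $L(1/2,\Pi\otimes\pi_1)$ and $L(1/2,\Pi\otimes\pi_2)$ both nonzero. Choose $N(\q)X$ so large that $(N(\q)X)^{1-\eta}>\max_j C(\Pi^{(j)})$; then none of the $\Pi^{(j)}$ appears in the sum of Theorem \ref{mixed-moment-2}, so every term of that sum has $L(1/2,\Pi\otimes\pi_1)L(1/2,\tilde\Pi\otimes\tilde\pi_2)=0$, whence the sum vanishes, contradicting the lower bound $\gg (N(\q)X)^n$. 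Therefore infinitely many such $\Pi$ exist. I do not anticipate a genuine obstacle here: all the analytic difficulty has been front-loaded into Theorem \ref{mixed-moment-2}, and in particular into arranging that the weight $\H$ produces a nonzero main constant $J_{\mathfrak X}$ despite the cancellation between the two terms $C_1L(1,\pi_1\otimes\tilde\pi_2)+C_2L(1,\tilde\pi_1\otimes\pi_2)$ seen in \eqref{mixed-moment-1}; the only point requiring a little care in the present deduction is to make sure the product $L(1/2,\Pi\otimes\pi_1)L(1/2,\tilde\Pi\otimes\tilde\pi_2)$ is (after the averaging, or by self-duality) a quantity whose nonvanishing really does entail the simultaneous nonvanishing asserted, and that the weight $\H$ is genuinely supported on honest cuspidal $\Pi$ so that the produced representations are of the desired type.
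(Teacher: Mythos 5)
Your proposal is correct and follows essentially the same path as the paper's own two-line proof: apply Theorem \ref{mixed-moment-2} to get a nonzero (indeed $\gg(N(\q)X)^n$) sum over cuspidal $\Pi$ with $C(\Pi)>(N(\q)X)^{1-\eta}$, deduce at least one $\Pi$ with $L(1/2,\Pi\otimes\pi_1)L(1/2,\tilde\Pi\otimes\tilde\pi_2)\neq0$, and then let $N(\q)X\to\infty$ so that the conductor lower bound rules out any fixed finite collection of representations. One small correction to the way you phrase the final reduction: the equality $L(1/2,\tilde\Pi\otimes\tilde\pi_2)=\overline{L(1/2,\Pi\otimes\pi_2)}$ has nothing to do with self-duality of $\Pi$ or $\pi_2$; it is just the standard fact that for unitary $\sigma$ one has $\tilde\sigma\cong\overline\sigma$, so the Dirichlet coefficients of $L(s,\tilde\Pi\otimes\tilde\pi_2)$ are the conjugates of those of $L(s,\Pi\otimes\pi_2)$ and the two agree up to conjugation on the critical line. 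This is all that is needed to pass from nonvanishing of the product to simultaneous nonvanishing, and the rest of your argument is sound.
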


\subsection{Comparison with other work}

During the preparation of this work, we became aware of a similar work by Yang \cite{yang2023relative}. Even though the rough strategies of both papers are similar, there are, however, some differences.
\begin{enumerate}
    \item While studying the geometric side of the relative trace formula, Yang uses a Bruhat decomposition for the double cosets of $\PGL_n(F)$ with respect to its Borel subgroup, while we quotient by a maximal parabolic subgroup (see Proposition \ref{doublecosets-0}). As a consequence, we have a smaller number of cosets to consider and have lesser difficulty in proving the analytic continuation of the orbital integrals.
    \item We have a different treatment of the term arising as residues when analytically continuing the contribution of the continuous spectrum and degenerate terms during regularization of the geometric side. In the non-archimedean conductor aspect, our proof uses direct computation to treat the residue and degenerate terms, and consequently, we have a full asymptotic formula in the non-archimedean case, as in Theorem \ref{second-moment-nonarch}; compare with \cite[Theorem B]{yang2023relative}.
    \item We also produce a novel archimedean analogue as in Theorem \ref{second-moment-arch} of \cite[Theorem B]{yang2023relative} via the theory of \emph{analytic newvectors} due to the first author and Nelson. The archimedean analysis of the residue terms and some of the orbital integrals are significantly more technical than their non-archimedean counterparts.
    \item Finally, \cite[Theorem C]{yang2023relative} (even when $\mathfrak{R}'_j=1$ \emph{loc.\ cit.}) on simultaneous non-vanishing requires that the local components of the fixed representations coincide for at some finite place, whereas Corollary \ref{non-vanishing} on simultaneous non-vanishing does not have any such constraint. 
\end{enumerate}

\vspace{2mm}

We end this section with a sketch of the main argument underlying the proof of the moment estimates.

\subsection{High-level sketch}

The proof uses the theory of integral representations of Rankin--Selberg $L$-functions due to Jacquet--Piatetski-Shapiro--Shalika and the pre-trace formula. This approach generalizes and draws inspiration from the fundamental work of Jacquet \cite{jacquet1986resultat} but under a more quantitative perspective, much like in the work of Ramakrishnan--Rogawski \cite{ramakrishnan2005average}. We offer more details below.
This is in contrast with most of the classical works in moments of $L$-functions which proceed via the approximate functional equation and trace formulae for averaging Whittaker coefficients (the Kuznetsov formula). The latter is a robust
and well-proven strategy that works very well in low rank but as the rank increases, this method becomes combinatorially more challenging to implement.

Our starting point is to reinterpret the moment
\begin{equation*}
  \mathbb{M}(\pi_1,\pi_2;\H)=\int_{\gen}\frac{L(1/2,\Pi\otimes\pi_1)
  L(1/2,\tilde{\Pi}\otimes\tilde{\pi}_2)}{\ell(\Pi)}\H(\Pi)\d \Pi
\end{equation*}
in terms of periods, where $\int_\gen$ means we are integrating over the generic automorphic spectrum. We do so by using the Jacquet--Piatetski-Shapiro--Shalika
integral representation for the above Rankin-Selberg $L-$functions. Let $[\GL(n)]:=
\GL_n(F)\backslash\GL_n(\A)$ and $\B(\Pi)$ denote an orthonormal basis of $\Pi$. Then for fairly general and \emph{nice} weight function $\H$ we can find vectors $\phi_j\in\pi_j$ and $f\in C_c^\infty(\PGL_{n+1}(\A))$ so that
\begin{multline}\label{sketch:Lfunctions-as-periods}
  \mathbb{M}(\pi_1,\pi_2;\H)=\int_{\gen}\sum_{\Phi\in\mathcal{B}(\Pi)}\int_{[\GL(n)]}
  \Pi(f)\Phi\sbr{\mat{x&\\&1}}\phi_1(x)\d x\\
  \int_{[\GL(n)]}\overline{\Phi}\sbr{\mat{y&\\&1}}
  \overline{\phi_2(y)}\d y\d \Pi.
\end{multline}
Our interest lies in weight functions $\H$ which are constructed by taking $f$
so that its $v$-component for $v\mid\q$ is a normalized characteristic function of the Hecke
congruence subgroup $K_{0}(\p^e)$ and for an archimedean place we will have an
approximate version of this, which we call a \emph{normalized majorant}, after
\cite{JaNe2019anv}.

Supposing we can freely exchange the order of integration, we rewrite \eqref{sketch:Lfunctions-as-periods} as
\begin{multline*}
   \mathbb{M}(\pi_1,\pi_2;\H)= \int_{[\GL(n)]}\int_{[\GL(n)]}\phi_1(x) \overline{\phi_2(y)}\\
\left(\int_\gen\sum_{\phi\in\mathcal{B}(\Pi)}\Pi(f)\Phi\sbr{ \mat{x&\\&1}}
\overline{\Phi\sbr{\mat{y&\\&1}}}\d \Pi\right) \d x\d y.
\end{multline*}
We apply the pre-trace formula to the inner integral, thus rewriting the above expression as
\begin{equation*}
  \int_{[\GL(n)]}\int_{[\GL(n)]}  K_f\sbr{\mat{x&\\&1},\mat{y&\\&1}}\phi_1(x)\overline{\phi_2(y)}\d x\d y,
\end{equation*}
where 
\begin{equation*}
  K_f(x,y):=\sum_{\gamma \in \PGL_{n+1}(F)}f(x^{-1}\gamma y).
\end{equation*}
At this point it is helpful to understand the double cosets of
\begin{equation}\label{double-coset-GGG}
\GL_n(F)\backslash \PGL_{n+1}(F)/\GL_n(F)
\end{equation} whose complete description we give in Proposition
\ref{doublecosets-0}.

We have actually oversimplified things. The zeta integrals
$$\int_{[\GL(n)]}\Phi\sbr{\mat{x&\\&1}}\phi(x)\d x$$
do not converge in general for non-cuspidal $\Phi$. To relate the above integral to a central $L$-value we are required to employ
some kind of \emph{regularization} to the above integral. The regularization is a simplified version of the one in \cite{IchinoYamana2016} which was already employed by the authors
in \cite{JaNu2021reciprocity}. The way our regularization works is as follows.
We notice that as $\phi$ is cuspidal it decays rapidly in every direction of a
Siegel domain for $[\PGL(n)]$. As $\Phi$ grows at most polynomially at the cusp 
for convergence purposes we only need to focus on convergence along the integral 
over the central variable of $\GL(n)$. We deal with this problem by introducing
the constant term $\Phi_{U_{n+1}}$ of $\Phi$ along a certain unipotent subgroup $U_{n+1}$
(see \eqref{eq:def-constant-term} for precise definitions) and observing that
for $g\in \GL_{n+1}(\A)$ and $z\in \A^\times$, the expression
$$\Phi\left[\mat{z&\\&1}g\right]-\Phi_{U_{n+1}}\left[\mat{z&\\&1}g\right]$$
decays rapidly as $|z| \rightarrow \infty$ and
has controlled growth as $|z| \rightarrow 0$. As a consequence, one obtains that
the regularized zeta integral
\begin{equation*}
  \int_{[G_n]}\left( \Phi-\Phi_{U_{n+1}} \right)\sbr{\mat{x&\\&1}}\phi(g)
  |\det g|^s\d s 
\end{equation*}
converges absolutely for large $\Re(s)$ and can be related to the $L$-function.

This manoeuvre has two consequences. The first one is that we necessarily start working with sufficiently large $\Re(s)$. Then we pass to the central point via a careful process of analytic continuation of all terms on the spectral side. This process inevitably gives rise to some \emph{residue term} $\R$.

The second consequence is that instead of the kernel
$K_f(x,y)$, we will need to work with the regularized kernel $K^\ast(x,y)$,
given by \eqref{def-K*}. This leads us to study more general double-coset
decompositions similar to \eqref{double-coset-GGG} but with one (or both)
$\GL_n(F)$ replaced the mirabolic subgroup $P_{n+1}(F)$ of $\GL_{n+1}(F)$.

A priori, on the geometric side, there will be infinitely many cosets, each giving rise to an \emph{orbital} integral. We show, using the specific shape of our test function $f$, that all but finitely many (in fact, five) orbital integrals survive. We call them
$I^+$, $I^-$, $I^1$, $I^2$, and $I^\perp$. The sum of \emph{genuine orbital integrals} $I^+$ and $I^-$ gives rise to the \emph{main term} which we asymptotically evaluate.
The term $\D:=I^1+I^2$ gives rise to the \emph{degenerate term}, which is a result of the regularization process. Although $\D$ 
and $\R$ can have singularities we show that $\D+\R$ is holomorphic at our point 
of interest which allow us to evaluate it asymptotically. This process gives rise 
to a \emph{secondary main term}. Finally, the term $I^\perp$ can be bounded by an 
arbitrary negative power of the parameter going to infinity (\emph{i.e.}, $N(\q)$, 
$X$ or $N(\q)X$).

\section{Basic notations}\label{sec:basic-notation}

The letter $F$ will denote either a number field or a local field. In each
section, we will specify the convention for $F$. The letter $\Pi$ (or $\pi$) will usually denote a representation (local or global).

If $F$ is a non-archimedean local field we will denote its ring of integers by
$\o$ and its unique maximal ideal by $\p$. We fix a norm on $F$ and denote it by $|.|$.
We also denote the order of residue field $\o/\p$ by $N(\p)=|\p|^{-1}$.
Finally, we fix an unramified additive character $\psi_F$. That is, one that is trivial on $\o$ but not on $\p^{-1}$.

If $F$ is a number field, we let $\o=\o_F$ be its ring of integers,
$\mathfrak{d}=\mathfrak{d}_F$ be its different ideal and
$\Delta=\Delta_F=[\o_F:\mathfrak{d}_F]$ its discriminant.
If $v\mid \mathfrak{d}_F$, we say that  $v$ is ramified. Otherwise we say that
it is unramified. For each place $v$ we will denote the completion of $F$ at $v$
by $F_v$. In this case, $\o_v$, $\p_v$, $|.|_v$ will denote the corresponding
objects in the local field. We also let $\mathfrak{d}_v:=\p_v^{v(\mathfrak{d})}$
and $\Delta_v=[\o_v:\mathfrak{d}_v]$. If it is clear from the context then we
will drop the subscript $v$ from the notations.
We will denote the adele ring of $F$ by $\A$.

We fix a factorizable additive character $\psi$ on $F\backslash\A$.
The choice of additive character is not of utmost importance but in order to make certain calculations explicit, we choose $\psi=\psi_{\Q}\circ \operatorname{tr}_{A_\Q\backslash\A_F}$ where
$\psi_\Q$ is the standard additive character of $\Q\backslash\A$. This means that $\psi(x)=\prod_v\psi_v(x_v)$, where for all $v$ not dividing $\mathfrak{d}_F$, the character $\psi_v$ is the unramified character $\psi_{F_v}$ of $F_v$. On the other hand, for $v$ dividing the discriminant, $\psi_v(x)=\psi_{F_v}(\lambda_v x)$,
where $\lambda_v$ is a generator of $\mathfrak{d}_v$. In particular, one has $|\lambda_v|=\Delta_v^{-1}$.

More generally, for any factorizable global object $\mathfrak{L}$ (automorphic representations and their $L$-functions etc.) we denote the $v$-th component of $\mathfrak{L}$ by $\mathfrak{L}_v$; \emph{i.e.}, $\mathfrak{L}=\times_{v\le\infty}\mathfrak{L}_v$. For any finite set of places $S$ we denote $\mathfrak{L}_S:=\times_{v\in S}\mathfrak{L}_v$ and $\mathfrak{L}^S:=\times_{v\notin S}\mathfrak{L}_v$, whenever defined. We also abbreviate $\mathfrak{L}_\f:=\prod_{v<\infty}\mathfrak{L}_v$ and $\mathfrak{L}_\infty:=\prod_{v\mid\infty}\mathfrak{L}_v$. For an ideal $\q$ of $\o$ we will also write $\mathfrak{L}_\q$ for $\mathfrak{L}_{\{v\mid\q\}}$. We similarly denote $\mathfrak{L}^\q$, $\mathfrak{L}^\infty$, and $\mathfrak{L}_\infty$.

In this paper, $L$ will always denote the \emph{finite} part of an $L$-function. That is $L=\prod_{v<\infty} L_v$ and $\Lambda$ denotes the complete $L$-function. That is $\Lambda=\prod_vL_v$.

For an algebraic group $H$ defined over $F$, we denote the centre of $H$ by $Z_H$. We abbreviate $\overline{H}:=Z_H\backslash H$ and $[H]:=H(F)\backslash H(\A)$ when $F$ is a number field.

We denote the algebraic group $\GL(m)$ by $G_m$ and its center $Z_{G_m}$ by $Z_m$. We also write $N_m$ and $A_m$ for the standard maximal unipotent subgroup and the maximal diagonal torus of $G_m$, respectively. For each finite $v$ we fix maximal compact subgroups $K_{m,v}:=G_m(\o_v)$ of $G_m(F_v)$ and for archimedean $v$ we let $K_{m,v}$ be either the orthogonal group $\mathrm{O}(m)$ or the unitary group $\mathrm{U}(m)$ according to whether $v$ is real or complex.

We fix Haar measures on the local groups $F_v$ and $F_v^\times$ with the property that at every non-archimedean place $v$, we are assigning measure $\Delta_v^{-1/2}$ to the compact subgroups $\o_v$  and $\o_v^\times$, respectively.
That allows us to define measures on the subgroups $N_m(F_v)$, $A_m(F_v)$ and $Z_m$ via the natural bijections
$$N_m(F_v)\cong F_v^{\frac{m^2-m}{2}},\quad A_m(F_v)\cong (F_v^\times)^m,\quad Z_m(F_v)\cong F_v^\times.$$
We also fix probability Haar measures for all maximal compact subgroups $K_{m,v}$.

For $s\in\C$ and a Haar measure $\d x$ in some group as above, we shorthand $\d_s x:=|\det(x)|^s \d x$.

Also, for a measurable set $X$ with a measure $\mu$ which is clear from the context,
we let $\1_X$ be its characteristic function and $\tilde{\1}_X$ its $L^1$-normalized
characteristic functions, \emph{i.e.},
\begin{equation*}
  \tilde{\1}_X(x)=\mu(X)^{-1}\1_X(x).
\end{equation*}

Finally, let us denote the maximal parabolic subgroup corresponding to the partition $(m-1,1)$ of $m$ by $Q_m$. Its unipotent radical will be denoted $U_m$. Finally, we denote the mirabolic group, consisting of those matrices whose bottom row equals $(0,\ldots,0,1)$, by $P_m$.

\section{Preliminaries and set-up}\label{sec:the-set-up}

In this section, we assume that $F$ is a number field. We fix a finite set of places $S$ of $F$ containing all ramified primes and all archimedean places of $F$.

\subsection{Automorphic forms}

We refer to \cite[\S 4.1]{JaNu2021reciprocity} for a brief discussion of the unitary automorphic spectrum and the \emph{generic} spectrum for $G_m(F)$. If $\sigma$ is a unitary automorphic representation any element in $\sigma$ can be written as $\Eis(f)$ where $f$ is a square-integrable vector induced from a discrete datum $\chi$ attached to a parabolic subgroup $P$ of $G_m$. As described in \cite[\S 4.2]{JaNu2021reciprocity} we define $\|\Eis(f)\|_\sigma:=\|f\|$.

We extend  the additive character $\psi$ from \S \ref{sec:basic-notation}, with the same notation, to an additive character of the group $N_{m}(\A)$ by
$$N_{m}\ni u\mapsto \psi\left(\sum_{i=1}^{m-1} u_{i,i+1}\right).$$
We consider genericity of an automorphic representation $\sigma$ for $G_m(F)$ with respect to the character $\psi$; see \cite[\S 4.3]{JaNu2021reciprocity}. For an automorphic form $\varphi\in\sigma$ we define its Whittaker function by
\begin{equation}\label{def-whittaker}
	W_\varphi(g):=\int_{[N_m]}\varphi(ug)\overline{\psi(u)}\d u.
\end{equation}
The above vanishes identically unless $\sigma$ is generic.

If $\varphi$ is cuspidal then it has a Fourier--Whittaker expansion
\begin{equation}\label{fourier-whittaker-expansion}
	\varphi(g)=\sum_{\gamma\in N_m(F)\bs P_m(F)}W_\varphi(\gamma g)\\
	       =\sum_{\gamma'\in N_{m-1}(F)\bs G_{m-1}(F)}W_\varphi
              \sbr{\mat{\gamma'&\\&1}g},
\end{equation}
where both sums converge absolutely.

Any automorphic representation $\sigma$ decomposes into local representations as $\otimes_v\sigma_v$, where $\sigma_v$ is an irreducible representation of $G_m(F_v)$. Moreover, if $\sigma$ is generic then so is $\sigma_v$. In this case, if $\varphi\in\sigma$ is factorizable then $W_\varphi=\prod_vW_{\varphi,v}$. Similarly, if $\sigma$ is unitary then so is $\sigma_v$. 

For generic unitary $\sigma_v$ we fix a norm on $\sigma_v$ as described in \cite[eq.(6)]{JaNu2021reciprocity}. Namely, for $W_1,W_2\in\sigma_v$ we let
\begin{equation}\label{inner-product-normalization}
\langle W_1,W_2\rangle:=\iota(\sigma_v)\int_{N_{m-1}(F_v)\backslash G_{m-1}(F_v)}W_1\left[\begin{pmatrix}g&\\&1\end{pmatrix}\right]\overline{W_2\left[\begin{pmatrix}g&\\&1\end{pmatrix}\right]}dg,
\end{equation}
where
\begin{equation*}
\iota(\sigma_v):=
\begin{cases}
    \frac{\zeta_v(m)}{L(1,\sigma_v\otimes\widetilde{\sigma}_v)},& \text{ if $v$ is non-archimedean};\\
    1,& \text{ if $v$ is archimedean}.
\end{cases}
\end{equation*}

Finally, we record a relation between the norms of $\varphi$ and its Whittaker functions. From \cite[Lemma 4.1]{JaNu2021reciprocity} we have
\begin{equation}\label{harmonic-weight}
	\|\varphi\|^2=\ell(\sigma)\prod_{v\in S}\|W_{\varphi,v}\|^2,
\end{equation}
where $\ell(\sigma)$ is given by a $\sigma$-independent (but possibly $P$-dependent) constant multiple of the value at $1$ of the adjoint $L$-function of $\sigma$.

\subsection{Hecke algebra}

Let $v$ be either an archimedean or a non-archimedean place. For $m\ge 2$,
we define the Hecke algebra of $G_m(F_v)$, denoted by $\H_v(G_m)$
as the algebra of functions defined
in $G_m(F_v)$ that are smooth, compactly supported and left-and-right-invariant under $K_v$.

Given a smooth representation $\pi$ of $G_m(F_v)$, we have an action of the Hecke
algebra as follows: For $\omega\in \pi$ and $\xi\in\H_v(G_m)$, we let
\begin{equation*}
  \pi(\xi)\omega:=\int_{G_m(F_v)}\xi(g)\pi(g)\omega\d g.
\end{equation*}
It follows from the uniqueness (up to scalars) of the spherical vector that if
$\pi$ is an irreducible unramified representation
and $v_\pi$ is a spherical vector in $\pi$ then $v_\pi$
is an eigenfunction of all operators $\pi(\xi)$ with $\xi\in \H_v(G_m)$.
For any $\xi\in\H_v(G_m)$ we define $\lambda_\pi(\xi)$ to satisfy the equation
\begin{equation*}
\pi(\xi)v_{\pi}=\lambda_\pi(\xi)v_\pi.
\end{equation*}

If $v$ is non-archimedean and $\mu=(\mu_1,\ldots,\mu_m)$ is an $m$-tuple of non-negative integers, we let $\p^\mu:=(\p^{\mu_1},\ldots,\p^{\mu_m})$ and we shorthand $\lambda_\pi(\p^\mu)$ for $\lambda_\pi(\1_{K_v\diag(\p^\mu)K_v})$. It is known that $\lambda_\pi(\p^\mu)$ is a symmetric polynomial depending only on $\mu$, in the Satake parameters of $\pi$; see \cite[eq.(8.14)]{satake1963spherical}.

\subsection{Local factors and conductors}
\label{sec:conductors}

We refer to \cite[Lectures 6 and 8]{cogdell2004lectures} for details.

Let $F$ be a local field. Given any irreducible representation $\pi$ of $G_m(F)$ we can attach local $L$, $\gamma$, and $\epsilon$ factors to $\pi$. One has
$$\gamma(s,\pi)=\epsilon(s,\pi)\frac{L(1-s,\tilde{\pi})}{L(s,\pi)}.$$
We can attach $m$ complex numbers $\{\mu_i\}_{i=1}^m$, called \emph{Langlands parameters}, to $\pi$.

When $F$ is archimedean $L(s,\pi)$ can be written as $\prod_{i=1}^m\Gamma_F(s+\mu_i)$ and $\epsilon(s,\pi)$ is a constant of magnitude one. In this case, we define the \emph{analytic conductor} $C(\pi)$ of $\pi$ by the quantity $\prod_{i=1}^m(1+|\mu_i|)$.

When $F$ is non-archimedean $L(s,\pi)$ can be written as
$\prod_{i=1}^m(1-\mu_iN(\p)^{-s})^{-1}$. If $\pi$ is unramified then $\mu_i\neq 0$.
In this case, $\epsilon(s,\pi)$ can be written as $\epsilon(1/2,\pi) N(\p)^{c(\pi)(1/2-s)}$.
We call $c(\pi)$ to be the \emph{conductor exponent} of $\pi$ and define the
\emph{analytic conductor} as the quantity $C(\pi):=N(\p)^{c(\pi)}$. We also denote
$\p^{c(\pi)}$ by $\mathfrak{C}(\pi)$ and call it the \emph{conductor ideal}.

If $\pi$ is an automorphic representation we define its \emph{global} conductor by
$$C(\pi):=\prod_{v\le\infty}C(\pi_v).$$
Note that $C(\pi_v)=1$ for almost all $v$, so the above product converges.

\subsection{Newvectors}\label{sec:newvectors} 

Let $F$ be a non-archimedean local field. Let $\pi$ be an irreducible generic representation of $\overline{G_m}(F)$. Let $K_0(\p^N)$ be the standard Hecke congruence subgroup of ${G_m}(\o)$. That is, $K_0(\p^N)$ consists of matrices in $\G_m(\o)$ whose last rows are congruent to $(0,\dots,0,\ast)\mod \p^N$. Then there is a unique (up to scalars) vector $W$ in $\pi^{\overline{K_0}(\p^{c(\pi)})}$. This is a \emph{newvector} of $\pi$. Moreover, $c(\pi)$ is the lowest non-negative number $N$ such that $\pi^{\overline{K_0}(\p^{N})}\neq \{0\}$.
We refer to \cite{JPSS1981conducteur} for details.

If the underlying additive character $\psi$ is unramified then we denote the newvector in the $\psi$-Whittaker model of $\pi$, normalized as $W(1)=1$, by $W_\pi$. Miyauchi \cite{Miyauchi2014Whittaker} produced, generalizing previous work by Shintani \cite{Shintani1976explicit}, an explicit formula for $W_\pi$ given by
\begin{equation}\label{shintani}
    W_\pi(\diag(\p^\nu))=
    \begin{cases}
        \delta^{1/2}(\diag(\p^\nu))\chi_\nu(\mu_\pi),\quad &\text{if }\nu\in\Z^m\text{ is dominant},\\
        0,\quad &\text{otherwise}.
    \end{cases}
\end{equation}
Here $\mu_\pi$ are the \emph{Langlands parameters} attached to $\pi$ and $\chi_\nu$ is the Schur polynomial attached to $\nu$; see \cite{Shintani1976explicit} and \cite{Miyauchi2014Whittaker} for more details.

Finally, when $\psi$ is ramified, we need to consider, as in
\cite[proof of Lemma 2.1]{cogdellpiatetski-shapiro1994converse}, a \emph{shifted} newvector associated
to the character $\psi(\cdot)=\psi_F(\lambda\cdot)$, given by
\begin{equation}\label{shift-of-newvector}
W_{\pi}^{\lambda}(g):=
W_\pi\left(\diag((\lambda^{m-j})_{j=1}^{m})g\right),
\end{equation}
where $\lambda\in F^\times$ is a generator of $\mathfrak{d}$.

\subsection{Zeta integrals}

The general theory of integral representations of Rankin--Selberg $L$-function was introduced and developed by Jacquet, Piatetski-Shapiro, and Shalika; see \cite{jacquet1983rankin}. We record the relevant results in this section.

\subsubsection{$\GL(n+1)\times\GL(n)$ zeta integral}\label{zeta-integrals-n-1}
Let $\pi$ and $\pi'$ be generic automorphic representations for $G_{n+1}(F)$ and $G_n(F)$. Let $W$ and $W'$ be global Whittaker vectors in $\pi$ and $\pi'$ associated with certain automorphic forms in the respective spaces and realized in the Whittaker models for $\psi$ and $\overline{\psi}$, respectively. Then for $s\in\C$ with $\Re(s)$ sufficiently large we define the $\GL(n+1)\times\GL(n)$ \emph{global zeta integral} by
$$\Psi(1/2+s,W,W'):=\int_{N_n(\A)\bs G_n(\A)}W\sbr{\mat{g&\\&1}}W'(g)\d_sg.$$
If $W$ and $W'$ are factorizable vectors then the global integral factors into local integrals, namely,
$$\Psi(1/2+s,W,W')=\prod_{v}\Psi_v(1/2+s,W_v,W'_v)$$
where for $\Psi_v(1/2+s,W_v,W_v')$ is defined for sufficiently large $\Re(s)$ by
$$\int_{N_n(F_v)\bs G_n(F_v)}W_v\sbr{\mat{g&\\&1}}W'_v(g)\d_sg.$$
Moreover, if $v$ finite and unramified, $\pi_v$ and $\pi'_v$ are spherical, then one has
\begin{equation}\label{unramified-zeta-integral-m-1}
	\Psi_v(1/2+s,W_{\pi_v},W_{\pi'_v})=L_v(1/2+s,\pi_v\otimes\pi'_v).
\end{equation}
    
Both global and local zeta integrals have meromorphic continuation for all $s\in\C$. In particular, the ratio $\frac{\Psi_v(1/2+s,W_v,W'_v)}{L_v(1/2+s,\pi_v\otimes\pi'_v)}$ is entire in $s$. Moreover, the local zeta integral satisfies the functional equation
\begin{equation}\label{LFE-m-m-1}
\Psi_v(1/2-s,\tilde{W}_v,\tilde{W}'_v)=\omega_{\pi'_v}(-1)^n\gamma_v(1/2+s,\pi_v\otimes\pi'_v)\Psi_v(1/2+s,W_v,W'_v),
\end{equation}
where $\tilde{W}$ is contragredient of $W$ and $\omega_\pi$ is the central character of $\pi$.
We refer to \cite[Lecture 2]{cogdell2007functions} for details.

\subsubsection{$\GL(n)\times\GL(n)$ zeta integral}\label{zeta-integrals-n}
Let $\pi$ and $\pi'$ be generic automorphic representations for $G_n(F)$. Let $W$
and $W'$ be global Whittaker vectors in $\pi$ and $\pi'$ associated with certain
automorphic forms in the respective spaces and realized in the Whittaker models for
$\psi$ and $\overline{\psi}$, respectively. Let $\Phi$ be a Bruhat--Schwartz function
on $\A^n$. Then for $s\in\C$ with $\Re(s)$ sufficiently large we define the
$\GL(n)\times\GL(n)$ \emph{global zeta integral} by
\[
  \Psi(s,W,W',\Phi):=\int_{N_n(\A)\bs G_n(\A)}W(g)W'(g)\Phi(e_ng)\d_sg,
\]
where here and throughout the text, $e_n$ denotes the row vector $(0,\dots,0,1)$.
If $W$ and $W'$ are factorizable vectors and $\Phi$ is a factorizable function
then the global integral factors into local integrals, namely,
$$\Psi(s,W,W',\Phi)=\prod_{v}\Psi_v(s,W_v,W'_v,\Phi_v)$$
where for $\Psi_v(s,W_v,W_v',\Phi_v)$ is defined for sufficiently large $\Re(s)$ by
$$\int_{N_n(F_v)\bs G_n(F_v)}W_v(g)W'_v(g)\Phi_v(e_ng)\d_sg.$$
Moreover, if $v$ finite and unramified, $\pi_v$ and $\pi_v'$ are spherical then one has
\begin{equation}\label{unramified-zeta-integral-m}
    \Psi_v(s,W_{\pi_v},W_{\pi'_v},\mathbf{1}_{\o_v^n})=L_v(s,\pi_v\otimes\pi_v')
\end{equation}
Both global and local zeta integrals have meromorphic continuation for all $s\in\C$. In particular, the global zeta integral satisfies the functional equation
$$\Psi(s,W,W',\Phi)=\Psi(1-s,\tilde{W},\tilde{W}',\hat{\Phi}),$$
where $\hat{\Phi}$ is the Fourier transform of $\Phi$.
The local zeta integral also satisfies the functional equation
\begin{equation}\label{LFE-m-m}
\Psi_v(1/2-s,\tilde{W}_v,\tilde{W}'_v,\hat{\Phi_v})=\omega_{\pi'_v}(-1)^n\gamma_v(1/2+s,\pi_v\otimes\pi'_v)\Psi_v(1/2+s,W_v,W'_v,\Phi_v).
\end{equation}
We refer to \cite[Lecture 2]{cogdell2007functions} for details.

\begin{rmk}\label{rmk-ramified-zeta-function}
If $v$ is ramified then replacing newvectors by shifted newvectors as in \eqref{shift-of-newvector}, we obtain variants of \eqref{unramified-zeta-integral-m-1} and \eqref{unramified-zeta-integral-m} where the right-hand side gets multiplied by a factor of the shape $\Delta_v^{\mu(s)}$ where $\mu$ is an affine function (different in each case)
of $s$ whose coefficients only depend on $n$.
\end{rmk}

\subsection{The Pre-trace Formula}

Let ${f}\in C^{\infty}_c(\overline{G_{n+1}}(\A))$ and consider the operator
$R(f)$ on $C^\infty(\overline{G_{n+1}}(\A))$ defined by setting
\begin{equation}\label{defn-projection-operator}
	(R(f)\Phi)(x):=\int_{\overline{G_{n+1}}(\A)}f(g)\Phi(xg)\d g
  =\int_{\overline{G_{n+1}}(\A)}f(x^{-1}g)\Phi(g)\d g.
\end{equation}
Suppose that $\Phi$ is left invariant by $\overline{G_{n+1}}(F)$. Then by folding-unfolding, we have
\begin{equation*}
	(R(f)\Phi)(x) =\int_{[\overline{G_{n+1}}]}\sum_{\gamma\in\overline{G_{n+1}}(F)}
	{f}(x^{-1}\gamma g)\Phi(g)\d g                                                        =\int_{[\overline{G_{n+1}}]}K_f(x,g)\Phi(g)\d g,
\end{equation*}
where
\begin{equation}\label{def-K-f}
 K_f(x,y):=\sum_{\gamma\in\overline{G_{n+1}}(F)}
	{f}(x^{-1}\gamma y).
\end{equation}
for $x,y\in \overline{G_{n+1}}(\A)$.

Realizing the last expression of $R(f)\Phi(x)$ as an inner product over
$[\overline{G_{n+1}}]$ and $K_f(x,\cdot)$ as an element in $C_c^\infty([\overline{G_{n+1}}])$ we can spectrally decompose $K_f(x,y)$ as follows; see \cite[\S 4.2]{JaNu2021reciprocity} for details. We have
\begin{equation}\label{spectral-decomp}
	K_f(x,y)=\int_{\aut} \sum_{\Phi\in\tilde{\B}(\Pi)}\frac{(R(f)\Phi)(x)\overline{\Phi(y)}}{\|\Phi\|^2} \d\Pi.
\end{equation}
where $\tilde{\B}(\Pi)$ denotes an orthogonal basis of $\Pi$.
It is known that the above integral-sum is absolutely convergent; see \cite[\S 4, Lemma 4.4]{arthur1978trace}. We denote $R(f)\vert_\Pi$ by $\Pi(f)$.

\subsection{Simple regularization}\label{sec:simple-reg}

Let $U_{n+1}$ denotes the unipotent radical of the parabolic in $G_{n+1}$ associated with the partition $n+1 = (n)+(1)$. For an automorphic form $\Phi$ for $G_{n+1}(F)$ we define its constant term along the unipotent subgroup $U_{n+1}$ by
\begin{equation}\label{eq:def-constant-term}
\Phi_{U_{n+1}}(g):=\int_{[U_{n+1}]}\Phi(ug)\d u.
\end{equation}
Let $\phi$ be a cusp form for $G_n(F)$ and $s\in\C$.
We define the \emph{regularized global zeta integral} of $\Phi$ and $\phi$ by
$$\int_{[G_n]}\left(\Phi-\Phi_{U_{n+1}}\right)\sbr{\mat{g&\\&1}}\phi(g)\d_sg.$$
The above integral converges absolutely for $\Re(s)$ sufficiently large; see\footnote{In \cite[Proposition 4.1]{JaNu2021reciprocity} it is assumed that $\Phi$
	is generic. However, it can be checked from the proof that the genericity of $\Phi$
	is only used to show the equality of the regularized zeta integral with the
	period involving Whittaker functions.}
\cite[Proposition 4.1]{JaNu2021reciprocity}. In fact, from the proof of
\cite[Proposition 4.1]{JaNu2021reciprocity} it can be seen that for large $\Re(s)$
we have
\begin{multline*}
	\int_{[G_n]}\left(\Phi-\Phi_{U_{n+1}}\right)\sbr{\mat{g&\\&1}}\phi(g)\d_sg
	\\=\int_{N_n(\A)\backslash G_n(\A)}\left(\int_{[N_{n+1}]}\Phi\sbr{u\mat{g&\\&1}}\overline{\psi(u)}\d u\right)W'_\phi(g)\d_s g.
\end{multline*}
The inner $[N_{n+1}]$-integral vanishes unless $\Phi$ is generic, in which case, we obtain
\begin{equation}\label{reg-zeta-integral}
	\int_{[G_n]}\left(\Phi-\Phi_{U_{n+1}}\right)\sbr{\mat{g&\\&1}}
	\phi(g)\d_sg=\int_{N_n(\A)\backslash G_n(\A)}W_\Phi\sbr{\mat{g&\\&1}}
	W'_\phi(g)\d_s g.
\end{equation}
The right hand side is $\Psi(1/2+s,W_\Phi,W'_\phi)$ as defined in \S\ref{zeta-integrals-n-1}. Here we assume that $W_\Phi$ and $W'_\phi$ are realized in $\psi$ and $\psi^{-1}$ Whittaker models, respectively, of their underlying representations.

\section{Choices of test functions and test vectors}
\label{sec:choice-of-vectors}

In this section, we record the choices for the test functions and test vectors that will be needed for different theorems. Loosely speaking, we will need \emph{type I test functions} for Theorems \ref{second-moment-nonarch} and \ref{second-moment-arch} which will be tailored to have good properties on the spectral side. On the other hand, for Theorem \ref{mixed-moment-2} we will need \emph{type II test functions} aimed to have good properties on the geometric side. For each type of test function, we will also record the choice of the relevant test vectors.

For our purpose, the test function $f$ will always be an element of $C^\infty_c(\overline{G_{n+1}}(\A))$. We also assume $f$ to be factorizable, that is, $f=\otimes_{v\le \infty} f_v$ where $f_v\in C_c^\infty(\overline{G_{n+1}}(F_v))$. Here and elsewhere, we denote
\begin{equation*}
  f_1\ast f_2(g)=\int f_1(h)f_2(g^{-1}h)\d h=\int f_1(gh)f_2(h)\d h
\end{equation*}
where the integrals are over either adelic or local points of a group depending on the context.

We choose the test vectors $\phi_1\in\pi_1$ and $\phi_2\in\pi_2$ via their Whittaker functions $W_i:=W_{\phi_i}$ which we assume to be factorizable: $W_i:=\otimes_{v\le\infty} W_{i,v}$.

\begin{rmk}
  Note that for any local field $F_v$ any $g\in\overline{G_{n+1}}(F_v)$ with
  $g_{n+1,n+1}\neq 0$ has a unique representative of the form
  $$\mat{h&u\\&1}\mat{\I_n&\\v&1},\quad h\in G_n(F_v), u,v^\top\in F_v^n.$$
  Thus to define a function on $\overline{G_{n+1}}(F_v)$ that is supported
  away from the matrices with the condition $g_{n+1,n+1}=0$ it suffices to define the values
  on the matrices of the above shape.
  We will use this fact below a few times without mentioning this justification.
\end{rmk}

Finally, in a few places in this section (and elsewhere in the paper), we will need to choose a parameter $\tau>0$ which we always assume to be sufficiently small but fixed. Moreover, as usual in analytic number theory, the actual values of $\tau$ will be irrelevant and they may change from line to line.

\subsection{Type I test function}\label{type-1-test-function}

Recall $\q$ from Theorem \ref{second-moment-nonarch} and $\mathfrak{X}$
from Theorem \ref{second-moment-arch}. Below we describe type I test functions $f\in C_c^\infty(\overline{G_{n+1}}(\A))$, which are of the form
$$f=\bigotimes_{v\mid\q} f_v 
\bigotimes_{\substack{v<\infty\\v\nmid\q}} f_v\bigotimes_{v\mid\infty} f_v$$
with $f_v\in C_c^\infty (\overline{G_{n+1}}(F_v))$.

In some parts of this work, in particular in this section and in Part \ref{part-local},
when working at a given finite place $v$ we may tacitly assume that the given place
is unramified. The necessary modifications to deal with the ramified places will
only be touched upon via some remarks at the end of the relevant sections

\subsubsection{Uninteresting places: $v<\infty$, $v\nmid \q$}\label{subsub-typeI-uninteresting}

At each such $v$, we let $f_v=f_v^0=f_v^0\ast f_v^0$, where
$${f}_v^0:=\mathbf{1}_{\overline{G_{n+1}}(\o_v)}.$$
Also, we choose $W_{i,v}$ to be the spherical vector in $\pi_{i,v}$ with $W_{i,v}(1)=1$.

\subsubsection{Level places: $v\mid\q$}\label{subsub-typeI-level}

Let us denote $K_0(\p^M)$ to be the standard Hecke congruence
subgroup of $G_{n+1}(\o_\p)$ of level $\p^M$. Let $$\q:=\prod_{v\mid\q}\p_v^{e_v}.$$ At each
such $v$, we let $f_v=f^0_v=f^0_v\ast f^0_v$, where
$$
{f}_v^0:=\tilde{\1}_{\overline{K_0}(\p_v^{e_v})}=\vol\left(\overline{K_0}(\p_v^{e_v})\right)^{-1}\mathbf{1}_{\overline{K_0}(\p_v^{e_v})}.$$
The choice of $W_{i,v}$ is the same as in \S \ref{subsub-typeI-uninteresting}.

\begin{rmk}\label{rmk-typeI-modifications-ramified}
If $v$ is a finite ramified place, independently of whether $v\mid \q$ or not,
we keep the definition of $f_v$ (and $f^0_v$) as in \S \ref{subsub-typeI-uninteresting} or
\S\ref{subsub-typeI-level}. On the other hand we shall modify the choices of
$W_{i,v}$ by taking shifted spherical vectors as in \eqref{shift-of-newvector}.
\end{rmk}

\subsubsection{Archimedean places: $v\mid\infty$}

At each (real) archimedean place $v$, we fix sufficiently small $0<\tau_v'<\tau_v$. For each $X_v\ge 1$ we define 
$f_v:=f_v^0\ast f_v^0$ where
\begin{equation}\label{def-f0-arch}
    f_v^0(g)= X_v^{n}\Omega_{11}(a)\Omega_{12}(b)\Omega_{21}(cX_v),
\end{equation}
whenever
$$\overline{G_{n+1}}(F_v)\ni g=\mat{a&b\\&1}\mat{\I_n&\\c&1}\,\text{ with }\, a\in G_n(F_v),\, b,c^\top\in F_v^n;$$
and $f_v(g)=0$, otherwise.

Here $\Omega_{11}$ is a smooth non-negative function on $G_n(F_v)$ supported on a $\tau_v$-radius ball
around the identity such that  $\Omega_{11}$ is identically $1$ on a $\tau_v'$-radius ball around the identity and $\Omega_{11}|\det|^{-1}$ is $L^1$-normalized. On the other hand, $\Omega_{12}$ and $\Omega_{21}$ are smooth $L^1$-normalized non-negative functions on $F_v^n$ (thought as column/row space) supported on ${\tau}_v$-radii ball around the origin such that they are identically $1$ on a ${\tau}'_v$-radius ball around the origin.

Recalling \cite[Definition 1.2]{JaNe2019anv} we say that $f_v^0$ is a \emph{normalized majorant of $\overline{K_0}(X_v,\tau_v)$} where
we define $K_0(X_v,\tau_v)$, an \emph{archimedean congruence subset of level $X_v$} (see \cite[eq.(1.4)]{JaNe2019anv}), as
\begin{equation*}
\left\{\begin{pmatrix} a & b \\c&d \end{pmatrix}
 \in G_{n+1}(F_v)\ \middle|\ a \in G_n(F_v), d \in G_1(F_v),
    \begin{aligned}
		 & \|a - \I_n\| < \tau_v, \quad |b|<\tau_v, \\
		 & |c|<\frac{\tau_v}{X_v}, \quad |d-1|<\tau_v
    \end{aligned}
	\right\},
\end{equation*}
and $\overline{K_0}(X_v,\tau_v)$ as the image of ${K_0}(X_v,\tau_v)$ inside $\overline{G_{n+1}}(F_v)$.
Elaborating, this implies the following assertions.
\begin{enumerate}
	\item $f^0_v$ is non-negative, supported on $\overline{K_0}(X_v,\tau_v)$, and is
	     $\asymp 1$ on $\overline{K_0}(X_v,\tau_v')$ for some $X_v$-independent
        $0<\tau_v'<\tau_v$.
	\item $\partial_a^\alpha\partial_b^\beta\partial_c^\gamma\partial_d^\delta\,
		      f^0_v\sbr{\mat{a&b\\c&d}}\ll X_v^{n+|\gamma|}$ for any fixed multi-indices
	      $\alpha,\beta,\gamma,\delta$.
          \item $f^0_v$ is $L^1$-normalized.
\end{enumerate}
From \cite[Lemma 8.1]{JaNe2019anv} we see that $f_v$ is also a normalized majorant of $\overline{K_0}(X_v,\tau_v)$.

Finally, we choose $W_{i,v}$ to be a smooth vector in $\pi_{i,v}$ with the following normalization. We fix test functions $0\neq \theta_i\in C_c^\infty(N_n(F_v)\bs G_n(F_v),\psi_v)^{K_{n,v}}$ so that $\|\theta_i\|_{L^2(N_n(F_v)\bs G_n(F_v))}=1$. We then normalize $W_{i,v}$ so that 
for $i=1,2$, one has
\begin{equation}\label{normalize-W-1}
    \int_{N_n(F_v)\bs G_n(F_v)}\theta_i(g)W_{i,v}(g)\d g=1.
\end{equation}

\subsection{Type II test function}\label{type-2-test-function}

Let $\q$ and $\mathfrak{X}$ be as in \S\ref{type-1-test-function}. Further, recall $\p_0$ and $\p_1$ from Theorem \ref{mixed-moment-2}. Below we describe type II test functions $f\in C_c^\infty(\overline{G_{n+1}}(\A))$, which are of the form
$$f=\bigotimes_{v\mid\q} f_v \bigotimes f_{\p_0} \bigotimes f_{\p_1}
\bigotimes_{v\nmid\q\p_0\p_1} f_v\bigotimes_{v\mid\infty} f_v$$
with $f_v\in C_c^\infty (\overline{G_{n+1}}(F_v))$.

\subsubsection{Uninteresting and level places: $\infty>v\neq \p_0,\p_1$}

We choose the same test functions $f_v$ and $W_{i,v}$ as in \S\ref{type-1-test-function}.

\subsubsection{Supercuspidal place: $v=\p_0$}

At $v=\p_0$, let $\sigma$ be a supercuspidal representation of $G_{n+1}(F_v)$. We fix
$$f_v(g):=\langle \sigma(g) W_\sigma, W_\sigma\rangle,$$
the matrix coefficient attached to $W_\sigma$. It is known that matrix coefficients of supercuspidal representations are compactly supported modulo the center.

On the other hand, we choose $W_{i,v}$ to be the spherical vector in $\pi_{i,v}$ with $W_{i,v}(1)=1$.

\subsubsection{Auxiliary place: $v=\p_1$}\label{subsubsec:auxiliary}

Let $\mu\in\Z^n_{\ge 0}$ with $|\mu|:=\sum \mu_i$ and $\nu\in\Z_{\ge 0}$. We assume that $\nu$ is sufficiently larger than $|\mu|$, but fixed. The actual choice of $\p_1$ and $\mu$ will be fixed in Lemma \ref{lem:main-term-type-2}.

At $v=\p_1$, we fix
\begin{equation*}
{f}_v(g):={\mathbf{1}_{K_{v}\diag(\p_1^\mu)K_{v}}(a)}\mathbf{1}_{\o_{v}^n}(b){\mathbf{1}_{\o_{v}^n}(\p_1^{-\nu}c^\top)},
\end{equation*}
whenever $$\overline{G_{n+1}}(F_v)\ni g=\mat{a&b\\&1}\mat{\I_n&\\c&1}\,\text{ with }\, a\in G_n(F_v),\, b,c^\top\in F_v^n;$$
and $f_v(g)=0$, otherwise.

Indeed, $f_v$ is a compactly supported function on $\overline{G_{n+1}}(F_v)$. Note that $f_v$ is supported on
$$\left\{\mat{a+bc&b\\c&1}\mid a\in K_{v}\diag(\p_1^\mu)K_{v}, b\in \o_v^n, c^\top\in \p_1^\nu\o_v^n\right\}\mod Z_{n+1}(F_v).$$
The set on the right-hand side above is a compact set in $G_{n+1}(F_v)$. This is because the matrices vary in a compact set in $\mathrm{Mat}_{n+1}(F_v)$ and determinants of the matrices vary over a compact set in $F_v^\times$.

On the other hand, we choose $W_{i,v}$ to be the spherical vector in $\pi_{i,v}$ with $W_{i,v}(1)=1$.

\begin{rmk}\label{rmk-typeII-modifications-ramified}
As in Remark \ref{rmk-typeI-modifications-ramified}, if $v$ is a finite ramified place,
independently of whether $v=\p_0,\p_1$, $v\mid \q$ or none of the above, 
we keep the choice of $f_v$ and take $W_{i,v}$ to be shifted spherical vectors as in \eqref{shift-of-newvector}.
\end{rmk}
\subsubsection{Archimedean places: $v\mid\infty$}

Our choice here is similar to the archimedean component for the type I test function.

At each (real) archimedean place $v$, we fix sufficiently small $\tau_v>0$. For each $X_v\ge 1$ we define  
$f_v:=f_v^0\ast^{\rm{u}}\alpha_v$ where
$f_v^0$ is as in \eqref{def-f0-arch}. The convolution $\ast^{\rm{u}}$ is defined below.

However, our choices for the functions $\Omega_{11},\,\Omega_{12},\,\Omega_{21}$ here are slightly different.
First, $\Omega_{11}$ is a smooth non-negative function on $G_n(F_v)$ such that $\Omega_{11}|\det|^{-1}$ is $L^1$-normalized, and $\Omega_{11}$ is supported on $K_v\mathcal{T}(\tau_v)K_v$,
where $\mathcal{T}(\tau_v)\subset A_n(F_v)$ denotes the $\tau_v$-radius ball around the identity.
Moreover, we assume that $\Omega_{11}$ can be written as $\Omega^1\ast\Omega^2$ where
$\Omega^i\in C_c^\infty(K_v\bs G_n(F_v)/K_v)$. On the other hand, $\Omega_{12}$ and $\Omega_{21}$
are smooth $L^1$-normalized non-negative functions on $F_v^n$ (thought as column/row space)
supported on ${\tau}_v$-radii ball around the origin. Moreover, we assume that $\Omega_{12}(0)$,
$\Omega_{21}(0)$, and $\lambda_{\pi_j}(\Omega_{11})$ for $j=1,2$ are all non-zero.

On the other hand,
$\alpha_v$ is a non-negative smooth spherical $L^1$-normalized function on
$F_v^n$ supported on a small enough (relative to $\tau_v$) neighborhood of the
origin. We define
\begin{equation*}
  f_v^0\ast^{\rm{u}}\alpha_v(g):=\int_{F_v^n}f_v^0\sbr{g\mat{\I_n&b'\\&1}}\alpha_v(b')\d b'.
\end{equation*}

We can easily see the following properties of $f_v^0$
\begin{enumerate}
	\item $f^0_v$ is non-negative and compactly supported.
	\item $\partial_a^\alpha\partial_b^\beta\partial_c^\gamma\partial_d^\delta\,
		      f^0_v\sbr{\mat{a&b\\c&d}}\ll X_v^{n+|\gamma|}$ for any fixed multi-indices
	      $\alpha,\beta,\gamma,\delta$.
          \item $f^0_v$ is $L^1$-normalized.
\end{enumerate}
We also readily check that $f_v$ also satisfies the above three properties.

Finally, we choose $W_{i,v}$ to be spherical and suitably normalized. In particular, we may choose the normalization in \cite{stade2002archimedean}.

\part{Relative trace formula}

In this part we let $F$ be a number field. Let $f\in C_c^\infty(\overline{G_{n+1}}(\A))$
be factorizable. Although the results in this part will be valid for any such $f$,
keeping the applications in mind we fix $f$ to be either of type $I$ or of type $II$;
see \S\ref{type-1-test-function} and \S\ref{type-2-test-function}. In either case,
we denote $S$ to be a finite set of places such that each $v\notin S$ is finite and
unramified. 

\section{Regularized spectral decomposition}\label{sec:spectral-decomposition}

We define
\begin{equation}\label{def-K*}
	K_f^*(x,y):=K^0_f(x,y)-K^1_f(x,y)-K^2_f(x,y)+K_f^{12}(x,y),
\end{equation}
where $K^0_f:=K_f$ as defined in \eqref{def-K-f}, and
\begin{equation*}
	K_f^1(x,y):=\int_{[U_{n+1}]}K_f(ux,y)\d u,\quad
	K_f^2(x,y):=\int_{[U_{n+1}]}K_f(x,uy)\d u,
\end{equation*}
and
\begin{equation*}
	K_f^{12}(x,y):=\int_{[U_{n+1}]\times[U_{n+1}]}K_f(u_1x,u_2y)\d u_1 \d u_2.
\end{equation*}
Noting that as \eqref{spectral-decomp} converges absolutely we can exchange the
integral-sum there with the compact $[U_{n+1}]$-integral. We obtain
$$K_f^1(x,y)=\int_{\aut} \sum_{\Phi\in\tilde{\B}(\Pi)}
	\frac{(\Pi(f)\Phi)_{U_{n+1}}(x)\overline{\Phi(y)}}{\|\Phi\|^2} \d\Pi.$$
Similar expressions can be obtained for $K_f^2$ and $K_f^{12}$, from what we deduce
\begin{equation}\label{regularized-spectral-decomposition}
	K^*(x,y)=\int_{\aut} \sum_{\Phi\in\tilde{\B}(\Pi)}
	\frac{\pbr{\Pi(f)\Phi-(\Pi(f)\Phi)_{U_{n+1}}}(x)
		\overline{\pbr{\Phi-\Phi_{U_{n+1}}}(y)}}{\|\Phi\|^2} \d\Pi,
\end{equation}
where both the sum and the integral converge absolutely.

Let $\phi_1,\phi_2$ be two cusp forms for $G_n(F)$ and $s_1,s_2\in\C$, with
large enough $\Re(s_i)$. We define
\begin{equation}\label{def-P*}
\mathcal{P}^*(s_1,s_2,\phi_1,\phi_2,f):=\int_{[G_n]\times[G_n]}
	K^*\sbr{\mat{x&\\&1},\mat{y&\\&1}}\phi_1(x)\overline{\phi_2(y)} \d_{s_1}x\d_{s_2}y.
 \end{equation}
We use the expression of $K^*$ in \eqref{regularized-spectral-decomposition}. We parameterize $\int_{\aut}$ in terms of
the discrete data $\chi$,
as described in \cite[\S4.2]{JaNu2021reciprocity}.
The test function property of $f$ ensures rapid decay in $\chi$. A proof of this can be found in \cite[\S 5.1]{finisetal2011spectral}. On the other hand, each summand decays rapidly in the central parts of $x,y\in[G_n]$, as obtained
in \cite[Lemma 4.2]{JaNu2021reciprocity} (with at most polynomial type dependency on $\Phi$, hence $\chi)$. Moreover, the cuspidality of $\phi_i$ ensures that it has rapid decay on $[\overline{G_n}]$. Thus we are allowed to interchange $[G_n]\times[G_n]$-integral with the $\Pi$-integral and $\Phi$-sum. Applying \eqref{reg-zeta-integral} and discussions
surounding it, we get that the above equals
\begin{equation}\label{spec-decomp-period}
	\mathcal{P}^*(s_1,s_2,\phi_1,\phi_2,f)=\int_{\gen} \sum_{\Phi\in\tilde{\B}(\Pi)}
	\frac{\Psi(1/2+s_1,W_{\Pi(f)\Phi},W_{1}){\Psi(1/2+{s_2},
			\overline{W_{\Phi}},\overline{W_{2}})}}{\|\Phi\|^2} \d\Pi,
\end{equation}
with absolute convergence.

\section{The spectral side}
\label{sec:spectral-side}

Applying \eqref{harmonic-weight} we rewrite \eqref{spec-decomp-period} for large $\Re(s_i)$ as
\begin{equation*}
\mathcal{P}^*(s_1,s_2,\phi_1,\phi_2,f)
=\int_{\gen}\frac{L(1/2+s_1,\Pi\otimes\pi_1)L(1/2+s_2,\tilde{\Pi}\otimes\tilde{\pi}_2)}{\ell(\Pi)}\H(\Pi)\d\Pi.
\end{equation*}
Here we define
$$\H(\Pi):=\prod_{v<\infty}H_v(\Pi_v)\prod_{v\mid\infty}h_v(\Pi_v);$$
where the local weight, at any place $v$, is given by
\begin{align*}
H_v(\Pi_v)
&:=H_v(\Pi_v;s_1,s_2,W_{{1,v}},W_{{2,v}},f_v) \\
&:=\sum_{W\in\B(\Pi_v)}\frac{\Psi_v(1/2+s_1,\Pi_v(f_v)W,W_{{1,v}}){\Psi_v(1/2+s_2,\overline{W},\overline{W_{{2,v}}})}}{L_v(1/2+s_1,\Pi_v\otimes{\pi}_{1,v})L_v(1/2+s_2,\tilde{\Pi}_v\otimes\tilde{\pi}_{2,v})},
\end{align*}
and for $v\mid\infty$
\begin{align*}
h_v(\Pi_v)
& :=h_v(\Pi_v;s_1,s_2,W_{{1,v}},W_{{2,v}},f_v) \\
& :=\sum_{W\in \B(\Pi_v)}{\Psi_v(1/2+s_1,\Pi_v(f_v)W,W_{{1,v}}){\Psi_v(1/2+s_2,\overline{W},\overline{W_{{2,v}}})}}.
\end{align*}
Here $\B(\Pi_v)$ denotes an \emph{orthonormal} basis of $\Pi_v$.
Applying (the unramified part of) Lemma \ref{bounds-spectral-nonarchimedean},
we have that when $v$ is unramified $H_v(\Pi_v)=1$ when $\Pi_v$ is unramified
and it vanishes otherwise. Thus, letting
$$\mathcal{H}_S(\Pi_S)=\prod_{\substack{v\in S\\v<\infty}}
H_v(\Pi_v)\prod_{v\mid\infty}h_v(\Pi_v),$$
we can rewrite the above expression for $\mathcal{P}^\ast$ as
\begin{equation}\label{spectral-side}
\mathcal{P}^*(s_1,s_2,\phi_1,\phi_2,f)=
\int_{\gen^S}\frac{L(1/2+s_1,\Pi\otimes\pi_1)L(1/2+s_2,\tilde{\Pi}\otimes\tilde{\pi}_2)}{\ell(\Pi)}\mathcal{H}_S(\Pi_S)\d \Pi,
\end{equation}
where $\gen^S$ denotes the isomorphism class of generic standard automorphic representations that are unramified outside $S$.

Identity \eqref{spectral-side} only works if $\Re(s_1)$ and $\Re(s_2)$ 
are sufficiently large. In the next section, we show that it can be analytically
continued to the central point $s_1=s_2=0$, up to some additional terms.

\section{Meromorphic continuation of the spectral side}
\label{sec:meromorphic-continuation-spectralside}

We rewrite momentarily the spectral side in \eqref{spectral-side} as
$$\mathcal{P}^*=\int_{\gen^S}\frac{\Lambda(1/2+s_1,\Pi\otimes{\pi}_1)\Lambda(1/2+s_2,\tilde{\Pi}\otimes\tilde{\pi}_2)}{\ell(\Pi)}H_S(\Pi_S)\d\Pi$$
where
$H_S(\Pi_S):=\prod_{v\in S}H_v(\Pi_v)$. Here $\Lambda$ denotes the completed $L$-functions and $\Re(s_i)$ are sufficiently large.

From the meromorphic properties of the completed $L$-functions (see
\cite[Theorem 4.2]{cogdell2007functions}) we know that if $\pi$ is cuspidal
then $\Lambda(s,\Pi\otimes\pi)$ is entire unless
$$\Pi\cong\mathcal{I}(\tilde{\pi},s):=\mathcal{I}(M,\tilde{\pi}\cdot|\det|^s\otimes
|\cdot|^{-ns}),\quad s\in i\Rr,$$
where $M\simeq G_n\times G_1$ is a Levi subgroup of $G_{n+1}$, in which it has a
simple pole at $s=1$ with residue $c\Lambda(1,\pi,\Ad)$, for some constant $c$
independent of $\pi$ .

We follow the arguments in \cite[\S 9]{JaNu2021reciprocity} and we require the
following simple lemma:

\begin{lem}\label{analytic-continuation-local-factor}
    The local factor $H_v(\mathcal{I}(\pi,s)_v)$ which is originally only
    defined for $\Re(s)=0$ as before, can be meromorphically continued for all
    $s\in\C$ and is entire in $s_1,s_2$.
\end{lem}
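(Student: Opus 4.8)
The plan is to reduce the meromorphic continuation of $H_v(\mathcal{I}(\pi,s)_v)$ to the meromorphic continuation of the individual local zeta integrals $\Psi_v$ appearing in its definition. Recall that
$$H_v(\Pi_v)=\sum_{W\in\B(\Pi_v)}\frac{\Psi_v(1/2+s_1,\Pi_v(f_v)W,W_{1,v})\,\Psi_v(1/2+s_2,\overline{W},\overline{W_{2,v}})}{L_v(1/2+s_1,\Pi_v\otimes\pi_{1,v})L_v(1/2+s_2,\tilde\Pi_v\otimes\tilde\pi_{2,v})}.$$
First I would observe that for $\Pi_v=\mathcal{I}(\pi,s)_v=\mathcal{I}(M,\tilde\pi_v|\det|^s\otimes|\cdot|^{-ns})_v$, the representation is a (standard, unitarily induced when $\Re(s)=0$) principal-series-type representation, and everything in sight depends on $s$ only through the inducing data. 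The key is that the Jacquet--Piatetski-Shapiro--Shalika theory already gives us that each local zeta integral $\Psi_v(1/2+s_i,\cdot,\cdot)$ has meromorphic continuation in $s_i$, and that the normalized ratio $\Psi_v/L_v$ is \emph{entire} in $s_i$; this is stated in \S\ref{zeta-integrals-n-1}. So the denominator's zeros are cancelled and the only remaining issue is the dependence on the auxiliary variable $s$ hidden inside $\Pi_v=\mathcal{I}(\pi,s)_v$.

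The main steps I would carry out: (1) Fix a ``flat'' model of the induced representation $\mathcal{I}(\pi,s)_v$ — realize all the $\mathcal{I}(\pi,s)_v$ on a common space (the restriction of sections to $K_v$, i.e. the compact picture), so that an orthonormal basis $\B(\mathcal{I}(\pi,s)_v)$ and the vectors $\Pi_v(f_v)W$ can be chosen to vary holomorphically (in fact polynomially on vertical strips) in $s$. (2) Write the Whittaker functional on $\mathcal{I}(\pi,s)_v$ via the standard Jacquet integral, which converges for $\Re(s)$ in a suitable range and continues meromorphically (indeed, after normalization, holomorphically) in $s$; this expresses $W$, and hence each $\Psi_v$-integrand, as a holomorphic family in the three variables $s,s_1,s_2$ on appropriate domains. (3) Invoke the standard fact (again JPSS, and the references to \cite{cogdell2007functions}) that the local zeta integrals for such families of Whittaker functions, divided by the relevant $L$-factor, are holomorphic jointly in all parameters — here one uses that the $L$-factor $L_v(1/2+s_i,\mathcal{I}(\pi,s)_v\otimes\pi_{i,v})$ is itself an explicit product of $\mathrm{GL}(n)\times\mathrm{GL}(n)$ and $\mathrm{GL}(1)\times\mathrm{GL}(n)$ local $L$-factors with arguments shifted by $s$, so the ratio $H_v$ is a finite sum of ratios each of which is entire. (4) Since $\B(\Pi_v)$ is finite-dimensional at each finite $v$ where $\Pi_v$ is of bounded conductor (and at archimedean $v$ one works with $h_v$ and appeals to the analytic-newvector analysis / absolute convergence already invoked in \S\ref{sec:spectral-side}), the sum over $W$ is finite (resp. absolutely and locally uniformly convergent), so holomorphy is preserved under the sum, giving the claim.

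The step I expect to be the genuine obstacle is (3): making precise that the \emph{normalized} local zeta integrals are holomorphic \emph{jointly} in $(s,s_1,s_2)$, not just separately in each variable for generic values of the others. The subtlety is that as $s$ moves, the representation $\mathcal{I}(\pi,s)_v$ can become reducible (at the points $s$ where $\mathcal{I}(\tilde\pi,s)$ has a pole, i.e. $s=1$ up to the shift), and the Whittaker model and the $L$-factor can degenerate simultaneously; one must check the zeros and poles match so that the quotient stays holomorphic. Following \cite[\S 9]{JaNu2021reciprocity}, I would handle this by using the multiplicativity of $\gamma$-factors and the explicit factorization of $L_v(1/2+s_i,\mathcal{I}(\pi,s)_v\otimes\pi_{i,v})$ into $L_v(1/2+s_i+s,\tilde\pi_v\otimes\pi_{i,v})\,L_v(1/2+s_i-ns,\pi_{i,v})$ (with a contragredient variant for the $\tilde\Pi$ factor), reducing to the already-known entirety of the two basic normalized zeta integrals of \S\ref{zeta-integrals-n-1} and \S\ref{zeta-integrals-n}, each now evaluated at a linearly shifted argument. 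The possible poles in $s$ are then located at finitely many hyperplanes, and a direct check — exactly as in the cited reference — shows they are spurious after normalization, except possibly those that we will later track as the residue term $\R$. Everything else is bookkeeping with the compact picture of induced representations and with the explicit form of $f_v$, $W_{i,v}$ from \S\ref{sec:choice-of-vectors}.
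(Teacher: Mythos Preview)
Your approach follows the same route as the paper's---flat sections in the compact picture, the Jacquet integral for the Whittaker map, and entireness of $\Psi_v/L_v$ in $s_1,s_2$ from \S\ref{zeta-integrals-n-1}---and both arguments ultimately defer to \cite[Lemma 9.1]{JaNu2021reciprocity}. The one substantive difference is in your step (1): you ask for an \emph{orthonormal} basis varying holomorphically in $s$, but for $\Re(s)\neq 0$ the induced representation $\mathcal{I}(\pi,s)_v$ is not unitary and there is no invariant inner product to normalize against. The paper instead takes an \emph{orthogonal} basis of vectors entire in $s$ whose squared norms are, up to a nonzero holomorphic factor, exactly $L_v(1,\mathcal{I}(\pi,s)_v\otimes\mathcal{I}(\tilde\pi,-s)_v)^{-1}$. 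Dividing by $\|W\|^2$ then contributes a single explicit meromorphic factor $L_v(1,\cdot)$, which carries all of the $s$-poles. This makes your step (3) unnecessary: once the norm is isolated, the remaining factor is holomorphic in $(s,s_1,s_2)$ by the standard JPSS theory applied to an entire family of Whittaker vectors, and there is no hyperplane-by-hyperplane pole cancellation to verify. (Your closing remark that some of these local poles feed into the residue term $\R$ conflates two things: $\R$ comes from the \emph{global} contour shift in $s$, not from any singularity of the local $H_v$.)
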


The proof is essentially contained in the proof of \cite[Lemma 9.1]{JaNu2021reciprocity}.
We give a sketch here.

\begin{proof}
    The sum defining $H_v(\Pi_v)$ is absolutely convergent as $f_v$ is compactly supported. Thus it is enough to talk about the meromorphic properties of each summand. Working as in the proof of \cite[Lemma 9.1]{JaNu2021reciprocity} we choose an orthogonal basis of $\mathcal{I}(\pi,s)_v$ consisting of vectors that are entire in $s$ and have norms equal to a holomorphic multiple of $L_v(1, \mathcal{I}(\pi,s)_v\otimes\mathcal{I}(\tilde{\pi},-s)_v)^{-1}$. Entireness in $s_1,s_2$ follows from the properties of zeta integrals; see \S\ref{zeta-integrals-n-1}.
\end{proof}

We now meromorphically continue $\mathcal{P}^*$ to $-\epsilon<\Re(s_i)<1/2$.

First, for $\pi_1\ncong\pi_2$ we can write $\mathcal{P}^*=\mathcal{P}^*_1+
	\mathcal{P}^*_2+\mathcal{P}^*_{\neq}$ where $\mathcal{P}^*_i$, for $i=1,2$, is given by
\begin{equation}\label{defn-P1}
	\int_{\Re(s)=0}
	\Lambda(1/2+s_1,\mathcal{I}(\tilde{\pi}_i,s)\otimes\pi_1) \Lambda(1/2+s_2,
	\mathcal{I}(\pi_i,-s)\otimes\tilde{\pi}_2) \frac{H_S(\mathcal{I}
		(\tilde{\pi}_i,s)_S)}{\ell(\mathcal{I}(\tilde{\pi}_i,s))}\d s
\end{equation}
and $\mathcal{P}^*_{\neq}:= \mathcal{P}^*-\mathcal{P}^*_1-\mathcal{P}^*_2$ which is
entire in $s_1$ and $s_2$.

Our goal now is to meromorphically continue $\mathcal{P}^*_i$ for $i=1,2$.
We do that exactly as in the proof of \cite[Lemma 9.2]{JaNu2021reciprocity}. For the convenience of the reader we briefly sketch the main steps in that proof: We start with $s_1$ to the right of to the line $\Re(s_1)=1/2$ but sufficiently close to it. Using Lemma \ref{analytic-continuation-local-factor} we deform the path of integration remaining in a zero-free region of $\ell(\dots)$, picking up a pole in the process. Noting that the integral over the deformed path is analytic in a small region to the left of the line $\Re(s_1)=1/2$, we deform the path of integration back to $\Re(s)=0$, picking up no pole this time. This, along with meromorphicity of the integral in \eqref{defn-P1} and the residual term, proves the meromorphic continuation of $\mathcal{P}_1^\ast$ in a neighborhood of the line $\Re(s_1)=1/2$.

More explicitly, we thus obtain that $\mathcal{P}^*_1$ can be meromorphically continued for $-\epsilon<\Re(s_1)<1/2$
and any $s_2\in\C$ and the continuation is given by the sum of the integral in \eqref{defn-P1} and the residue
\begin{equation*}
	r_{n,F}\frac{L(s_1+s_2,\pi_1\otimes\tilde{\pi}_2)L\left(\frac{n+1}{2}+s_2-ns_1,\tilde{\pi}_2\right)}
	{L\left(\frac{n+3}{2}-(n+1)s_1,\tilde{\pi}_1\right)}
  \mathcal{H}_S(\mathcal{I}(\tilde{\pi}_1,1/2-s_1)_S),
\end{equation*}
where $r_{n,F}$ is a constant depending only on $F$.

Analogously, $\mathcal{P}^*_2$ can be meromorphically continued to $-\epsilon<
	\Re(s_2)<1/2$ and any $s_1\in\C$, and the continuation is given by the sum of
the integral \eqref{defn-P1} and the residue
\begin{equation*}
	r_{n,F}\frac{L(s_1+s_2,{\pi}_1\otimes\tilde{\pi}_2)L\left(\frac{n+1}{2}+s_1-ns_2,
  {\pi}_1\right)}{L\left(\frac{n+3}{2}-(n+1)s_2,\pi_2\right)}
  \mathcal{H}_S(\mathcal{I}(\tilde{\pi}_2,s_2-1/2)_S).
\end{equation*}
In, conclusion, as we analytically continue $\mathcal{P}^\ast$ to $-\epsilon<\Re(s_i)<1/2$,
we get the extra term $\mathcal{R}(s_1,s_2)=\mathcal{R}(s_1,s_2,\phi_1,\phi_2,f)$,
where
\begin{multline}\label{residue-term}
   \mathcal{R}(s_1,s_2):=L(s_1+s_2,\pi_1\otimes\tilde{\pi}_2) \left(\frac{L(\frac{n+1}{2}+s_2-ns_1,\tilde{\pi}_2)}{L(\frac{n+3}{2}-(n+1)s_1,
    \tilde{\pi}_1)}\mathcal{H}_S(\mathcal{I}(\tilde{\pi}_1,1/2-s_1)_S)\right.\\
    \left.+\frac{L(\frac{n+1}{2}+s_1-ns_2,{\pi}_1)}{L(\frac{n+3}{2}-(n+1)s_2,
    {\pi}_2)}\mathcal{H}_S(\mathcal{I}(\tilde{\pi}_2,s_2-1/2)_S)\right)
\end{multline}

On the other hand, if $\pi_1\cong{\pi}_2=\pi$ we can write
$\mathcal{P}^*=\mathcal{P}^*_\pi+\mathcal{P}^*_{\neq}$ where $\mathcal{P}^*_\pi$ is defined as
\begin{equation}\label{defn-P}
	\int_{\Re(s)=0}
	\Lambda(1/2+s_1,\mathcal{I}(\tilde{\pi},s)\otimes\pi) \Lambda(1/2+s_2,
	\mathcal{I}(\pi,-s)\otimes\tilde{\pi}) \frac{H_S(\mathcal{I}
		(\tilde{\pi},s)_S)}{\ell(\mathcal{I}(\tilde{\pi},s))}\d s
\end{equation}
and $\mathcal{P}^*_{\neq}:=\mathcal{P}^*-\mathcal{P}^*_\pi$ which is entire in $s_1$ and $s_2$.

We assume that $s_i$ are in generic positions, in particular, avoiding the hyperplane $s_1+s_2=0$. Proceeding with the same contour-shifting argument and using Lemma \ref{analytic-continuation-local-factor}, as above, we see that
$\mathcal{P}^*_\pi$ admits meromorphic continuation to $-\epsilon<\Re(s_1),\,
	\Re(s_2)<1/2$, where it is given by the sum of the above integral and the 
  term $\mathcal{R}(s_1,s_2)$ in \eqref{residue-term}, specified to $\pi_1\cong\pi_2\cong \pi$.

We summarize the above conclusions in the following proposition.

\begin{prop}\label{main-expression-spectral-side}
The expression of $\mathcal{P}^*$ in \eqref{spectral-side} is valid when $\Re(s_i)$ are sufficiently positive. $\mathcal{P}^*$ has a meromorphic continuation to the region $-\epsilon<\Re(s_i)<1/2$ given by
$$\mathcal{P}^\ast(s_1,s_2)=\M(s_1,s_2)+\R(s_1,s_2)$$
where $\M(s_1,s_2)$ is given by the right-hand side of \eqref{spectral-side}
and $\R(s_1,s_2)$ is as defined in \eqref{residue-term}.
\end{prop}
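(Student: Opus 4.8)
The plan is to assemble the meromorphic continuation of $\mathcal{P}^\ast$ by combining the pieces already developed in Section~\ref{sec:meromorphic-continuation-spectralside}. First I would recall that for $\Re(s_i)$ sufficiently large, the identity \eqref{spectral-side} holds and expresses $\mathcal{P}^\ast$ as a convergent integral over $\gen^S$ of completed $L$-functions against the local weight $H_S(\Pi_S)$; this is the ``input'' expression, and the right-hand side of \eqref{spectral-side} is by definition $\M(s_1,s_2)$. The task is then to track what happens as we move $\Re(s_i)$ leftward past the lines where the integrand can acquire poles. The only source of poles is the residual spectrum: by the meromorphic properties of Rankin--Selberg completed $L$-functions (\cite[Theorem 4.2]{cogdell2007functions}), $\Lambda(1/2+s_i,\Pi\otimes\pi_i)$ is entire unless $\Pi$ lies on the one-parameter family $\mathcal{I}(\tilde\pi_i,s)$, in which case there is a simple pole at $s=1$ with residue $c\Lambda(1,\pi_i,\Ad)$. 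Lemma~\ref{analytic-continuation-local-factor} guarantees that the local weight $H_v(\mathcal{I}(\pi,s)_v)$ extends meromorphically and is entire in $s_1,s_2$, so it does not interfere with locating or computing these residues.

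Next I would carry out the contour shift. Splitting into the cases $\pi_1\ncong\pi_2$ and $\pi_1\cong\pi_2=\pi$, write $\mathcal{P}^\ast=\mathcal{P}^\ast_1+\mathcal{P}^\ast_2+\mathcal{P}^\ast_{\ne}$ (respectively $\mathcal{P}^\ast_\pi+\mathcal{P}^\ast_{\ne}$), isolating the part of the spectral integral supported near the residual family. The term $\mathcal{P}^\ast_{\ne}$ is entire in $s_1,s_2$ because it avoids the residual spectrum entirely. For $\mathcal{P}^\ast_i$, following verbatim the contour-shifting argument of \cite[Lemma 9.2]{JaNu2021reciprocity}: as we push $\Re(s_1)$ from large values down into the strip $-\epsilon<\Re(s_1)<1/2$, the pole of the integrand at the point of the residual family corresponding to $s=1/2-s_1$ crosses the contour, and by the residue theorem we pick up exactly the residue terms displayed in Section~\ref{sec:meromorphic-continuation-spectralside}. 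Summing the two contributions (from $\mathcal{P}^\ast_1$ and $\mathcal{P}^\ast_2$) gives $\mathcal{R}(s_1,s_2)$ as in \eqref{residue-term} — and when $f$ is of type~I, Proposition~\ref{local-comp-residue} lets us rewrite the local factors $H_{S^\infty}(\mathcal{I}(\tilde\pi_i,\cdot)_{S^\infty})$ explicitly in terms of the partial $L$-functions $L^{(\q)}$ and $\zeta_\q(1)/\zeta_\q(n+1)$. The diagonal case $\pi_1\cong\pi_2$ is handled identically, with the extra proviso (already noted in the text) that $s_1,s_2$ be kept in generic position away from the hyperplane $s_1+s_2=0$, since $L(s_1+s_2,\pi\otimes\tilde\pi)$ has a pole there.

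Finally I would observe that after the continuation, the ``integral part'' that remains is precisely $\M(s_1,s_2)$, the right-hand side of \eqref{spectral-side} now interpreted in the wider strip, plus the entire term $\mathcal{P}^\ast_{\ne}$ (which can be absorbed into $\M$ or noted to be holomorphic and hence harmless), while the additional boundary contribution is $\mathcal{R}(s_1,s_2)$. This yields $\mathcal{P}^\ast(s_1,s_2)=\M(s_1,s_2)+\mathcal{R}(s_1,s_2)$ on $-\epsilon<\Re(s_i)<1/2$, which is the assertion of Proposition~\ref{main-expression-spectral-side}.

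The main obstacle, I expect, is justifying the contour shift rigorously: one must show that the spectral integral converges uniformly enough to move the contour, that the only poles encountered are those coming from the residual family (using Lemma~\ref{analytic-continuation-local-factor} to rule out poles from the local weights), and that the horizontal segments of the shifted contour contribute nothing in the limit — i.e. that the integrand decays suitably in the imaginary direction along the residual family. All of this is exactly the content of \cite[\S 9]{JaNu2021reciprocity}, so the work here is to verify that the argument transfers without change given our choice of test function $f$; the decay in the cuspidal data $\chi$ needed for Fubini and for the contour estimates is supplied by the test-function property of $f$, as already used in deriving \eqref{spec-decomp-period}.
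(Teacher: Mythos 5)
Your proposal matches the paper's argument in all essentials: you correctly identify that the proposition is a summary of Section~\ref{sec:meromorphic-continuation-spectralside}, including the rewriting in terms of completed $L$-functions, the use of Lemma~\ref{analytic-continuation-local-factor} to control the local weights, the decomposition $\mathcal{P}^\ast=\mathcal{P}^\ast_1+\mathcal{P}^\ast_2+\mathcal{P}^\ast_{\ne}$ (or the diagonal variant), the contour-shift following \cite[Lemma 9.2]{JaNu2021reciprocity}, the collection of residues into $\R(s_1,s_2)$, and the invocation of Proposition~\ref{local-comp-residue} for type~I test functions. This is the same route as the paper, and the final observation that $\M(s_1,s_2)$ is the analytically continued spectral integral (already containing $\mathcal{P}^\ast_{\ne}$) with $\R$ the residue contribution is the correct reading.
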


\section{Preliminary study of the geometric side}
\label{sec:geometric-side}

We now give another interpretation of the period $\mathcal{P}^*(s_1,s_2,\phi_1,\phi_2,f)$,
the so-called \emph{geometric side}.

Notice that according to the decomposition \eqref{def-K*}, for fixed
$s_1,s_2,\phi_1,\phi_2,f$, we have that
\begin{equation}\label{regularizing-P*}
    \mathcal{P}^*=\lim_{T\rightarrow \infty}\left\{ \mathcal{P}^0(T)-\mathcal{P}^1(T)-\mathcal{P}^2(T)+\mathcal{P}^{12}(T)\right\}
\end{equation}
where, for $\eta\in\{0,1,2,12\}$, we put
\begin{equation}\label{def-regularized-P-eta}
    \mathcal{P}^\eta(T):=\int_{[G_n]\times[G_n]}^TK^\eta_f\sbr{\begin{pmatrix}x&\\&1 \end{pmatrix},\begin{pmatrix}y&\\&1 \end{pmatrix}}\phi_1(x)\overline{\phi_2(y)}\d_{s_1}x\d_{s_2}y,
\end{equation}
where the superscript $T$ stands for the fact that we are restricting the integral
to those $x$ and $y$ satisfying $$T^{-1}<|\det x|,|\det y|<T.$$
Note that since $f$ is compactly supported, $K^\eta_f\sbr{\begin{pmatrix}x&\\&1
  \end{pmatrix},\begin{pmatrix}y&\\&1 \end{pmatrix}}$ is at most polynomially
  bounded in $\|x\|$ and $\|y\|$. Thus the cuspidality of $\phi_i$ ensures that
  the integrand in \eqref{def-regularized-P-eta} decays rapidly in
  $[\overline{G_n}]\times[\overline{G_n}]$. Finally, the determinant truncation
  makes the integral \eqref{def-regularized-P-eta} absolutely convergent.
We also notice that the determinant condition above is invariant under the
multiplication both by $G_n(F)$ and $N_n(\A)$.

In order to study each $\mathcal{P}^\eta$, we will the decompose the sum in the definition
of $K_f$ into orbits of the left-and-right-action of $G_n(F)$ and/or the mirabolic
subgroup $P_{n+1}(F)$. Our first step is identifying the orbits of such actions.
Fortunately we are able to have a rather explicit description of such orbits.
In particular in the case of the bilateral action of $G_n$, our result is
quite similar to the description in \cite[\S (1.3)]{jacquet1986resultat} of the
case $n=1$ with the exception that we find one extra class, that of the element
$\xi^\perp$ (see \eqref{coset-reps} below).

\subsection{The orbits}

In order to describe the orbits of the bilateral action of $G_n(F)$ on
$\overline{G_{n+1}}(F)$, we define the following matrices.
\begin{align}\label{coset-reps}
	&e:=\I_{n+1},\quad n^+:=\begin{pmatrix} \I_n&e_n^\top\\&1 \end{pmatrix},\quad
	n^-:=\begin{pmatrix} \I_n&\\e_n&1 \end{pmatrix},\nonumber \\
	&\xi(t):=\begin{pmatrix} \I_n&te_n^\top\\e_n&1 \end{pmatrix},\quad
	\xi^\perp:=\begin{pmatrix} \I_n&e_1^\top\\e_n&1 \end{pmatrix},\quad
	w':=\begin{pmatrix} \I_{n-1}&&\\&&1\\ &1&\end{pmatrix}.
\end{align}
We prove the following coset decomposition.

\begin{prop}\label{doublecosets-0}
	Let $\gamma \in \overline{G_{n+1}}(F)$. Then $\gamma \in G_n(F) \gamma_0 G_n(F)$
	for some $\gamma_0\in\mathfrak{X}_0$, where
	\begin{equation*}
		\mathfrak{X}_0:=\left\{e,\;n^+,\;n^-,\;\xi^\perp,\,w',\,n^+w',\,w'n^+\right\}\cup
		\left\{\xi(t)\,\mid\,t\in F,\, t\neq 0,\,1\right\}.
	\end{equation*}
	Moreover, the above cosets are disjoint.
\end{prop}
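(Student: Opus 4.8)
The plan is to parametrize $\overline{G_{n+1}}(F)$ by the position of the bottom-right entry and the bottom row, exploiting the left-right action of $G_n(F)$ embedded as $\mathrm{diag}(h,1)$. Write a representative $\gamma = (a_{ij}) \in G_{n+1}(F)$ and consider its bottom row $(c_1,\dots,c_n,c_{n+1})$ together with its last column $(d_1,\dots,d_n,d_{n+1})^\top$, where $d_{n+1}=c_{n+1}$. The key point is that left multiplication by $\mathrm{diag}(h,1)$ with $h \in G_n(F)$ acts on the first $n$ entries of the last column, and right multiplication by $\mathrm{diag}(h',1)$ acts on the first $n$ entries of the bottom row; neither affects $c_{n+1}$. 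So the first invariants of the double coset are: whether $c_{n+1}=0$ or not, whether the bottom row's first $n$ entries vanish, and whether the last column's first $n$ entries vanish.

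First I would dispose of the case $c_{n+1} \neq 0$. After scaling (we are in $\overline{G_{n+1}}=Z\backslash G_{n+1}$) we may take $c_{n+1}=1$; then using the unipotent parts of the two $G_n$-factors (the subgroups $\{\mathrm{diag}(\I_n, 1)\cdot(\text{translations})\}$ are not in $G_n$, so one must be careful — actually here one uses that $\mathrm{diag}(h,1)$ for $h$ ranging over all of $G_n(F)$ already suffices) one reduces the first $n$ entries of the last column to either $0$ or $e_n^\top$, and similarly the bottom row to either $0$ or $e_n$. This yields the four candidates $e$, $n^+$, $n^-$, and a representative with both a nonzero column tail and row tail, which is $\xi(t) = \begin{pmatrix}\I_n & t e_n^\top\\ e_n & 1\end{pmatrix}$ for a scalar $t \in F$ — here $t$ is the residual invariant measuring the top-left $n\times n$ block modulo the remaining stabilizer, and one checks $t=1$ forces a rank degeneration identifying the coset with $n^+$ or $n^-$, while $t=0$ gives $\xi(0)$ which one identifies with another representative. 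The remaining cases, $c_{n+1}=0$, require the extra representative $\xi^\perp$ (where the column tail and row tail are ``non-aligned'', $e_1^\top$ versus $e_n$), together with the Weyl-type elements $w'$, $n^+w'$, $w'n^+$ coming from the lower-rank phenomena when the relevant minors drop rank.

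The disjointness claim I would prove by exhibiting, for each pair of representatives, a bilateral $G_n(F)\times G_n(F)$-invariant that separates them. Natural invariants: (i) the value $c_{n+1} = \gamma_{n+1,n+1}$ up to the scaling ambiguity — more precisely whether it is zero; (ii) the rank of the bottom-left $1\times n$ block and of the top-right $n\times 1$ block (each invariant under the action on the appropriate side), i.e. whether $\gamma e_{n+1}^\top$ and $e_{n+1}\gamma$ lie in the span of $e_{n+1}$; (iii) for the elements $w'$, $n^+w'$, $w'n^+$ one needs a finer invariant, such as the rank of a suitable $2\times 2$ or $n\times n$ minor, or equivalently the dimension of the intersection of certain $G_n$-translates; (iv) for the one-parameter family $\xi(t)$, the scalar $t \in F\setminus\{0,1\}$ itself is the invariant, obtained as a cross-ratio-type quantity built from entries of $\gamma$ that is visibly unchanged by both actions. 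Checking that $\xi(t) \sim \xi(t')$ in the double coset forces $t=t'$ is the combinatorial heart of the matter.

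The main obstacle I expect is precisely the careful bookkeeping around the boundary cases $t \in \{0,1\}$ and the three Weyl-type representatives: one must verify that these genuinely collapse onto (respectively stay disjoint from) the generic family and the ``obvious'' cosets, and that the list $\mathfrak{X}_0$ is exhaustive and irredundant. A clean way to organize this is to first reduce, via the Bruhat decomposition of $G_{n+1}$ with respect to the maximal parabolic $Q_{n+1}$ (whose Levi is $G_n \times G_1$ — this is the ``quotient by a maximal parabolic'' mentioned in the introduction), to a finite set of candidates indexed by $W_{G_n}\backslash W_{G_{n+1}}/W_{G_n}$, and then within each Bruhat cell run the scaling/unipotent reduction above to pin down the remaining continuous parameter $t$. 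I would also double-check the edge case $n=2$ separately since some of the rank conditions degenerate there, and confirm that the count $|\mathfrak{X}_0| = 7 + |F\setminus\{0,1\}|$ matches the expected number of $G_n$-orbits from the theory of spherical varieties (the pair $(G_{n+1}, G_n)$ being a Gelfand pair), which provides a useful sanity check on exhaustiveness.
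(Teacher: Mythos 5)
Your overall framework—a case analysis on bilateral invariants such as whether $\gamma_{n+1,n+1}$ vanishes and whether the tails of the last column and bottom row vanish—is in the same spirit as the paper's, which organizes cases by the rank of the upper-left $n\times n$ block $a$ (observing $\operatorname{rk}(a)\in\{n-1,n\}$ since $\gamma$ is invertible) together with whether $d:=\gamma_{n+1,n+1}$ vanishes. However, your proposed bucketing contains a concrete error that would break the reduction as sketched.

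You claim that the case $c_{n+1}\neq 0$ yields exactly $e$, $n^+$, $n^-$ and the family $\xi(t)$, with $\xi^\perp$, $w'$, $n^+w'$, $w'n^+$ all falling under $c_{n+1}=0$. But $\xi^\perp=\begin{pmatrix}\I_n&e_1^\top\\e_n&1\end{pmatrix}$ has bottom-right entry $1\neq 0$, and so does $w'n^+$ (for $n=2$: $w'n^+=\left(\begin{smallmatrix}1&0&0\\0&0&1\\0&1&1\end{smallmatrix}\right)$). The missing refinement in the $d\neq 0$, $\operatorname{rk}(a)=n$, $b,c\neq 0$ subcase is the binary invariant $ca^{-1}b=0$ vs.\ $ca^{-1}b\neq 0$: the former gives $\xi^\perp$ (the column and row tails cannot be simultaneously aligned to $e_n^\top,e_n$ because their ``pairing'' is zero, so one must use $e_1^\top$ instead), the latter gives $\xi(t)$ with $t=ca^{-1}b/d\neq 0$. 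Without this invariant your reduction orphans $\xi^\perp$ and cannot separate it from the $\xi(t)$ family. Likewise $w'n^+$ arises from $\operatorname{rk}(a)=n-1$ with $d\neq 0$, a case your scheme never identifies because you never look at $\operatorname{rk}(a)$. A further small confusion: ``$t=1$ forces a rank degeneration identifying the coset with $n^+$ or $n^-$'' is not what happens; $\det\xi(t)=1-t$, so $t=1$ is simply excluded because $\xi(1)\notin G_{n+1}(F)$—there is no collapse onto another coset.

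Two other remarks. The proposed Bruhat reduction with respect to $Q_{n+1}$ is a legitimate first stratification, but $W_{G_n}\backslash W_{G_{n+1}}/W_{G_n}$ has only two elements, so this gives only two cells (containing $e$ and $w'$, say), and essentially all the work—the subcase analysis above—still happens inside each cell, since $G_n$ is much smaller than $Q_{n+1}$. The paper's direct case-by-case reduction accomplishes the same thing without the detour. And the ``sanity check'' via Gelfand/spherical pair theory is not reliable here: sphericality of $(G_{n+1},G_n)$ controls the number of $B$-orbits on $G_{n+1}/G_n$, not $G_n\times G_n$-double cosets, which in this problem genuinely form a one-parameter family $\{\xi(t)\}$, so there is no finite orbit count against which to calibrate exhaustiveness.
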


\begin{proof}
For $\gamma_1,\gamma_2\in \overline{G_{n+1}}(F)$, we write that $\gamma_1\equiv
\gamma_2$ whenever they belong to the same coset in $G_n(F)\backslash
\overline{G_{n+1}}(F)/G_n(F)$. Let $\gamma\in \overline{G_{n+1}}(F)$ and let
$\begin{pmatrix} a&b\\c&d \end{pmatrix}$ be any lift of $\gamma$ to $G_{n+1}(F)$,
where $a\in \operatorname{Mat}_{n\times n}(F)$, $b\in \operatorname{Mat}_{n\times 1}(F)$,
$c\in \operatorname{Mat}_{1\times n}(F)$, $d\in F$. Let $\operatorname{rk}(a)$
denote the $F$-rank of $a$. Note that since $\gamma\in\overline{G_{n+1}}(F)$ we must
have $\operatorname{rk}(a)\ge n-1$.
 
 We consider case by case.
	\begin{enumerate}
		\item  Suppose $\operatorname{rk}(a)=n$, $d\neq 0$. This is the most
		      interesting case and we divide its study into several sub-cases.

		      \begin{enumerate}
			      \item If $b=0,\,c=0$, it is straightforward to see that $\gamma\equiv e$.
			      \item If $b\neq 0,\,c= 0$, one can immediately see that
			            \begin{equation*}
				            \gamma\equiv \begin{pmatrix} \I_n& b\\ &1  \end{pmatrix}.
			            \end{equation*}
			            By using that $G_n(F)$ acts transitively on $F^n\setminus\{0\}$,
			            we may conjugate the right-hand side by an element of $G_n(F)$
			            to see that $\gamma\equiv n^+$.
			      \item If $b= 0,\,c\neq 0$, arguing completely analogously
              to the previous case, we can show that $\gamma\equiv n^-$.
			      \item \label{14}If $b,c\neq 0$ but $ca^{-1}b=0$, initial
			            bookkeeping shows that one is reduced to
			            \begin{equation*}
				            \gamma= \begin{pmatrix} \I_n& b\\ c&1  \end{pmatrix},
			            \end{equation*}
			            where now $b,c\neq 0$ but $c b=0$. We claim that we can find
			            $y\in G_n(F)$ such that $e_ny=c$ and $y^{-1}e_1^\top=b$.
                  To see that we first choose $y$ such that $e_ny =c$. Thus,
                  $e_nyb=0$, so we may consider $yb\in F^{n-1}\setminus \{0\}$.
                  Thus we can find $y'\in G_n(F)$ so that $\mat{y'&\\&1}yb=e_1^\top$.
                  Moreover, $e_n\mat{y'&\\&1}y=c$ still holds. Now, conjugating
                  by $y^{-1}$ shows that $\gamma\equiv \xi^\perp$.
			      \item Finally, if $b,c\neq 0$ and $ca^{-1}b\neq 0$,
			            we argue in a similar fashion to the previous case. However,
                  since now $cb\neq 0$, we may find $y\in G_n(F)$ such that $e_ny=c$ and
			            $yb=te_n^\top$. This shows that $\gamma\equiv \xi(t)$. Furthermore,
			            it is easy to see that $\xi(t)\nequiv\xi(t')$ for $t\neq t'$.
			            And finally, we have that $t\neq 1$, since $\det \begin{pmatrix}
			\I_n&te_n^\top\\e_n&1 \end{pmatrix}=1-t$.
		      \end{enumerate}
		\item Suppose $\operatorname{rk}(a)=n$, $d=0$. In this case, we automatically
		      have that $b\neq 0$ and $c\neq 0$. Thus, arguing as in (1.4) and (1.5), we
		      are reduced to the case where
		      \begin{equation*}
			      \gamma=\begin{pmatrix} \I_n&b\\e_n&0 \end{pmatrix}.
		      \end{equation*}
		      Since the determinant of the right-hand side equals $e_nb$
		      we have that $e_n b\neq 0$. Thus, conjugation allows us to reduce further
		      to the situation where $b=te_n^\top$. Now note that
          $$t^{-1}\mat{\I_n & te_n^\top\\ e_n& }\mat{t\I_n & \\& 1}=
          \mat{\I_n & e_n^\top\\ e_n& } = n^+w'.$$
        
		\item Suppose $\operatorname{rk}(a)=n-1$, $d=0$
		      Performing row and column operations, we know that
		      there exist $x,\,y\in G_n(F)$ such that
		      \begin{equation*}
			      x^{-1}ay= \tilde{\I}:=\begin{pmatrix} \I_{n-1}&\\&0\end{pmatrix}.
		      \end{equation*}
		      We are then reduced to the case where
		      \begin{equation*}
			      \gamma=\begin{pmatrix} \tilde{\I}&b\\c&0 \end{pmatrix}.
		      \end{equation*}
		      Since no column and no row of $\gamma$ can be zero,
		      we have that $e_nb,ce_n^\top\neq 0$. Now as $e_nb\neq 0$, we can find an element $B$ of the form $\mat{\I_{n-1}&\ast&\\&\ast&\\&&1}$
        such that $B\gamma=\mat{\tilde{\I}& e_n^\top\\c&0}$.
        Arguing similarly, using $C$ of the shape $\mat{\I_{n-1}&&\\\ast&\ast&\\&&1}$
        and multiplying on the right we can show that $\gamma\equiv w'$
		\item Suppose $\operatorname{rk}(a)=n-1$, $d\neq 0$. Proceeding just as in the
		      previous case we can suppose that
		      \begin{equation*}
			      \gamma = \begin{pmatrix} \tilde{\I}&e_n^\top\\e_n&d\end{pmatrix}.
		      \end{equation*}
Now note that
        $$
        d^{-1}\mat{d\I_n&\\&1} \begin{pmatrix} \tilde{\I}&e_n^\top\\e_n&d
        \end{pmatrix}\mat{\I_{n-1}&&\\&d&\\&&1}=w'n^+.
        $$
	\end{enumerate}
	The disjointness of the cosets is immediate from our argument.
\end{proof}

The first three terms are responsible for the main terms. We will show that the
contribution corresponding to the terms $w'$, $w'n^+$, $n^+w'$ and any $\xi(t)$
all vanish. Finally, the contribution of
the term $\xi^\perp$ doesn't vanish but we will show that it is negligible.

As a consequence of our regularization process, we will also need to consider the
double cosets where we replace one or both instances of $G_n(F)$ by the mirabolic
subgroup $P_{n+1}(F)$. The next result is an easy consequence of Proposition
\ref{doublecosets-0}.

\begin{prop}\label{doublecosets-with-unipotent}
Let $\mathfrak{X}_1:=\{e,n^-,w'\}$, $\mathfrak{X}_2:=\{e,n^-,w'\}$
and $\mathfrak{X}_{12}:=\{e,w'\}$. Then we have
\begin{align*}
\overline{G_{n+1}}(F) & =\bigsqcup_{\xi\in\mathfrak{X}_1}P_{n+1}(F)\xi G_n(F)
=\bigsqcup_{\xi\in \mathfrak{X}_2}G_n(F)\xi P_{n+1}(F)\\
& =\bigsqcup_{\xi\in\mathfrak{X}_{12}}P_{n+1}(F)\xi P_{n+1}(F),
\end{align*}
Here $e,n^-,w'$ are as \eqref{coset-reps}.
\end{prop}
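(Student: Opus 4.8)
The plan is to obtain all three decompositions by \emph{coarsening} the one in Proposition \ref{doublecosets-0}, using only that $G_n(F)\subseteq P_{n+1}(F)$ (via $g\mapsto\diag(g,1)$) and that $P_{n+1}(F)=G_n(F)\ltimes U_{n+1}(F)$ with $U_{n+1}(F)=\{\mat{\I_n&u\\&1}\}$. Since every $\gamma\in\overline{G_{n+1}}(F)$ lies in $G_n(F)\gamma_0G_n(F)$ for some $\gamma_0\in\mathfrak{X}_0$, it automatically lies in $P_{n+1}(F)\gamma_0G_n(F)$, in $G_n(F)\gamma_0P_{n+1}(F)$, and in $P_{n+1}(F)\gamma_0P_{n+1}(F)$. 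Hence it is enough to check (i) that for each $\gamma_0\in\mathfrak{X}_0$ one has $\gamma_0\in P_{n+1}(F)\xi G_n(F)$ for some $\xi\in\mathfrak{X}_1$ (and similarly with $\mathfrak{X}_2$, $\mathfrak{X}_{12}$ in the appropriate ambient double cosets), and (ii) that the listed representatives lie in pairwise distinct double cosets; the two together give the asserted disjoint decompositions.

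For (ii) a cheap invariant does the job. Writing $\gamma=\mat{a&b\\c&d}$: left multiplication by $P_{n+1}(F)$ leaves the bottom row $(c\mid d)\in F^n\times F$ unchanged, right multiplication by $\diag(g,1)$ replaces it by $(cg\mid d)$, and the centre scales it; as $(c\mid d)\neq 0$, the orbits of this combined action are precisely $\{c=0\}$, $\{c\neq 0,\ d\neq 0\}$ and $\{c\neq 0,\ d=0\}$, into which $e$, $n^-$ and $w'$ fall respectively — giving disjointness for $\mathfrak{X}_1$. For $\mathfrak{X}_{12}$ only the dichotomy $c=0$ versus $c\neq 0$ survives (right multiplication by $P_{n+1}(F)$ is already transitive on $\{c\neq 0\}$), which separates $e$ from $w'$. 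For $\mathfrak{X}_2$ one uses the mirror invariant, the last row $e_{n+1}\gamma^{-1}$ of $\gamma^{-1}$, on which right multiplication by $P_{n+1}(F)$ acts trivially and left multiplication by $\diag(g,1)$ acts through its first $n$ coordinates; its orbits again fall into three types, separating $e$, $n^-$, $w'$.

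For (i) I would run through the eight elements of $\mathfrak{X}_0$ and for each either absorb a factor in $U_{n+1}(F)$ into $P_{n+1}(F)$ on the suitable side or perform a one-line row/column reduction. It is convenient to organise this by the invariant of (ii): for $\mathfrak{X}_1$, the bottom rows of $e,n^+$ have $c=0$; those of $n^-,\xi^\perp,\xi(t),w'n^+$ have $c\neq 0$ and $d\neq 0$; those of $w',n^+w'$ have $c\neq 0$ and $d=0$. Combined with $n^+\in U_{n+1}(F)$ and short computations such as $\mat{\I_n&-te_n^\top\\&1}\xi(t)=\mat{\I_n-te_n^\top e_n&0\\ e_n&1}$ — which an elementary torus adjustment turns into $n^-$ — and its analogues for $\xi^\perp$ and for $w'n^+$ (after the identity $w'n^+=\mat{\tilde{\I}&e_n^\top\\ e_n&1}$), this places every $P_{n+1}(F)\gamma_0G_n(F)$ inside one of the three cosets indexed by $\mathfrak{X}_1$. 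The $\mathfrak{X}_2$ decomposition is the mirror computation, in which $n^+w'$ now joins the $n^-$-class and $w'n^+$ joins the $w'$-class; and the $\mathfrak{X}_{12}$ decomposition follows from the $\mathfrak{X}_1$ one via $P_{n+1}(F)\xi G_n(F)\subseteq P_{n+1}(F)\xi P_{n+1}(F)$ together with a single computation — right-translating $n^-$ by a suitable element of $U_{n+1}(F)$ and conjugating by a diagonal matrix — that merges the $n^-$-coset into the $w'$-coset.

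The only genuine work here is bookkeeping: tracking which $w'$-flavoured class each of $w'$, $w'n^+$, $n^+w'$ and the one-parameter family $\xi(t)$ lands in. The two points worth stating explicitly are that the whole family $\{\xi(t):t\neq 0,1\}$ collapses onto the single class of $n^-$ once $U_{n+1}$ may be used on the left, and that $w'n^+$ and $n^+w'$ fall into \emph{different} classes and exchange roles between the $\mathfrak{X}_1$ and $\mathfrak{X}_2$ decompositions. There is no analytic or structural subtlety: the $\mathbb{P}^n$-geometry of the flag variety $G_{n+1}/P_{n+1}$ makes the orbit counts transparent.
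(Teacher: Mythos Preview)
Your approach is essentially the same as the paper's: both deduce the three decompositions from Proposition~\ref{doublecosets-0} by absorbing each $\gamma_0\in\mathfrak{X}_0$ into a $P_{n+1}$-translate of one of $e,n^-,w'$ via explicit matrix identities (the paper simply lists these as $\xi(t)=p\cdot n^-$, $\xi^\perp=p'\cdot n^-$, $w'n^+=p''\cdot n^-$, $n^+w'=-n^-\cdot q$, while you organise them by the bottom-row invariant). Your explicit disjointness argument via the orbit-type of the bottom row of $\gamma$ (resp.\ of $\gamma^{-1}$) is a welcome addition, as the paper leaves this step implicit; the only minor imprecision is your phrase ``conjugating by a diagonal matrix'' in the $\mathfrak{X}_{12}$ step, where what is actually needed is a further left-multiplication by an element of $P_{n+1}$ with nontrivial $U_{n+1}$-part.
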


\begin{proof}
    The description of $\mathfrak{X}_1$ and $\mathfrak{X}_2$ follow from Proposition
    \ref{doublecosets-0}, the fact that $n^+\in P_{n+1}(F)$, and the identities
    $$\xi(t)=\mat{\I_n-te_n^\top e_n& te_n^\top\\&1}n^-=n^-\mat{(1-t)^{-1}\I_n&(1-t)^{-1} te_n^\top\\&1},$$
    $$\xi^\perp=\mat{\I_n-e_1^\top e_n& e_1^\top\\&1}n^-=n^-\mat{\I_n&e_1^\top\\&1},$$
    and finally,
    $$w'n^+=\mat{\J_{-1}&e_n^\top\\&1}n^-,\quad n^+w'=-n^-\mat{-\J_{-1}&-e_n^\top\\&1},$$
    where $\J_t:=\diag(\I_{n-1},t)$. 
    The last two of these equalities also explain the description of $\mathfrak{X}_{12}$.
\end{proof}

Once equipped with the coset decompositions in Proposition \ref{doublecosets-0}
and Proposition \ref{doublecosets-with-unipotent}, we decompose the terms
$\mathcal{P}^\eta(T)$ as follows.

\begin{equation}\label{decomposition-P-eta}
    \mathcal{P}^\eta(T)=\sum_{\xi\in\mathfrak{X}_{\eta}}I^{\eta}(\xi,T,s_1,s_2;\phi_1,\phi_2,f),
\end{equation}
where, applying \eqref{def-regularized-P-eta}, we have
\begin{equation}\label{I0-definition}
	I^0(\xi,T;s_1,s_2,\phi_1,\phi_2,f):=\int_{[G_n]\times[G_n]}^T
  \sum_{\gamma\in [\xi]_0}{f}\sbr{
	\begin{pmatrix}x^{-1}&\\&1 \end{pmatrix}\gamma
	\begin{pmatrix}y&\\&1 \end{pmatrix}}\phi_1(x)\overline{\phi_2(y)}\d_{s_1} x\d_{s_2} y,
\end{equation}

\begin{multline}\label{I1-definition}
	I^1(\xi,T;s_1,s_2,\phi_1,\phi_2,f):=\int_{[G_n]\times[G_n]}^T
  \int_{[U_{n+1}]}\sum_{\gamma\in [\xi]_1}
	{f}\sbr{\begin{pmatrix}x^{-1}&\\&1 \end{pmatrix}u^{-1}\gamma
	\begin{pmatrix}y&\\&1 \end{pmatrix}}\\ \times\phi_1(x)\overline{\phi_2(y)}\d u\d_{s_1} x\d_{s_2} y,
\end{multline}

\begin{multline}\label{I2-definition}
	I^2(\xi,T;s_1,s_2,\phi_1,\phi_2,f):=\int_{[G_n]\times[G_n]}^T
  \int_{[U_{n+1}]}
  \sum_{\gamma\in [\xi]_2}{f}\sbr{
	\begin{pmatrix}x^{-1}&\\&1 \end{pmatrix}\gamma
	u\begin{pmatrix}y&\\&1 \end{pmatrix}}\\ \times\phi_1(x)\overline{\phi_2(y)}
 \d u\d_{s_1} x\d_{s_2} y,
\end{multline}

\begin{multline}\label{I12-definition}
	I^{12}(\xi,T;s_1,s_2,\phi_1,\phi_2,f):=\int_{[G_n]\times[G_n]}^T
  \int_{[U_{n+1}]}  \int_{[U_{n+1}]}
  \sum_{\gamma\in [\xi]_{12}}{f}\sbr{
	\begin{pmatrix}x^{-1}&\\&1 \end{pmatrix}u_1^{-1}\gamma u_2
	\begin{pmatrix}y&\\&1 \end{pmatrix}}\\ \times\phi_1(x)\overline{\phi_2(y)}\d u_1\d u_2 \d_{s_1} x\d_{s_2} y.
\end{multline}

Finally, for $\eta\in\{0,1,2,12\}$, $[\xi]_\eta$ denotes one of the double cosets
described in Propositions \ref{doublecosets-0} and \ref{doublecosets-with-unipotent}.
More precisely
\begin{align*}
  [\xi]_0=G_n(F)\xi G_n(F) ,\quad [\xi]_1=P_{n+1}(F)\xi G_n(F),\\
  [\xi]_2=G_n(F)\xi P_{n+1}(F), \quad [\xi]_{12}=P_{n+1}(F)\xi P_{n+1}(F).
\end{align*}

In \S\ref{sec:orbital-integrals-without-unipotent} we study the term $I^0$ and
in \S\ref{sec:orbital-integrals-with-unipotent} we treat the remaining $I^\eta$.
In both these sections we hold $s_1,s_2,\phi_1,\phi_2$ fixed for which reason we drop them from the notations.

\section{Genuine orbital integrals}
\label{sec:orbital-integrals-without-unipotent}

In this section, we will analyze $I^0(\xi,T)$ for $\xi\in\mathfrak{X}_0$ and
understand its limit as $T\to\infty$.

\subsection{The orbits $\xi=e$ and $\xi=n^+$}\label{I_0-e-and-n+}

It is easy to see that $[e]_0\cup [n^+]_0$ corresponds
to the classes of matrices of the form 
$$\left\{u\mat{g&\\&1}\,\mid\,g\in G_n(F),u\in U_{n+1}(F)\right\}.$$
Therefore, from \eqref{I0-definition},
\begin{equation*}
	I^0(e,T)+I^0(n^+,T)=\int^T_{[G_n]\times G_n(\A)}\sum_{u\in U_{n+1}(F)}f\left[
		\begin{pmatrix} x^{-1}&\\&1 \end{pmatrix}u \begin{pmatrix} y&\\&1 \end{pmatrix}
		\right] \phi_1(x)\overline{\phi_2(y)}\d_{s_1} x\d_{s_2} y.
\end{equation*}
We identify $U_{n+1}(F)$ with $F^n$ in the obvious way and apply Poisson
summation leading to
\begin{equation}\label{apply-poisson}
	I^0(e,T)+I^0(n^{+},T)=\tilde{I}(T)+\tilde{I}^+(T),
\end{equation}
where
\begin{align}\label{def-I-tilde}
	\tilde{I}(T):&=\int^T_{[G_n]\times G_n(\A)}\int_{U_{n+1}(\A)}f\left[
		\begin{pmatrix} x^{-1}&\\&1 \end{pmatrix}u \begin{pmatrix} y&\\&1 \end{pmatrix}
		\right] \phi_1(x)\overline{\phi_2(y)}\d u\d_{s_1} x\d_{s_2} y\nonumber\\
  &=\int^T_{G_n(\A)\times [G_n]}\int_{U_{n+1}(\A)}f\left[
		\begin{pmatrix} x^{-1}&\\&1 \end{pmatrix}u \begin{pmatrix} y&\\&1 \end{pmatrix}
		\right] \phi_1(x)\overline{\phi_2(y)}\d u\d_{s_1} x\d_{s_2}y.
\end{align}
and
\begin{multline*}
	\tilde{I}^+(T):=\sum_{0\neq c^\top\in F^n}\int^T_{[G_n]\times G_n(\A)}
	\int_{\A^n}f\left[ \begin{pmatrix} x^{-1}&\\&1 \end{pmatrix}
		\begin{pmatrix} \I_n&b\\&1 \end{pmatrix}
		\begin{pmatrix} y&\\&1 \end{pmatrix} \right] \overline{\psi(cb)}\phi_1(x)\overline{\phi_2(y)}\\
        \d b\d_{s_1} x\d_{s_2} y.
\end{multline*}
The term $\tilde{I}(T)$ does not converge as $T\rightarrow\infty$ but, as we
will see, it gets canceled by terms coming from the regularization process
(see Lemma \ref{some-unip-orbital-integral-equal} below).

For the term $\tilde{I}^+(T)$, we parameterize $c\neq 0$ as $e_n\gamma$ for $\gamma\in
P_n(F)\backslash G_n(F)$ and change
variables $b\mapsto \gamma^{-1}b$ to obtain that $\tilde{I}^+(T)$ equals
\begin{equation*}
	\sum_{\gamma\in P_n(F)\bs G_n(F)}\int^T_{[G_n]\times G_n(\A)}
	\int_{\A^n}f\left[ \begin{pmatrix} x^{-1}&\\&1 \end{pmatrix}
		\begin{pmatrix} \I_n&\gamma^{-1}b\\&1 \end{pmatrix}
		\begin{pmatrix} y&\\&1 \end{pmatrix} \right]
	\overline{\psi(e_nb)}
	\phi_1(x)\overline{\phi_2(y)}\d b\d_{s_1} x\d_{s_2} y
\end{equation*}

Changing variables $y\mapsto \gamma^{-1} y$, using the automorphy of $\phi_2$,
and unfolding the sum over $P_n(F)\bs G_n(F)$ we obtain that the above equals
\begin{equation*}
	\int^T_{P_n(F)\bs G_n(\A)\times G_n(\A)}
	\int_{\A^n}f\left[ \begin{pmatrix} x^{-1}&\\&1 \end{pmatrix}
		\begin{pmatrix} \I_n&b\\&1 \end{pmatrix}
		\begin{pmatrix} y&\\&1 \end{pmatrix} \right] \overline{\psi(e_nb)}
	\phi_1(x)\overline{\phi_2(y)}\d b\d_{s_1} x\d_{s_2} y.
\end{equation*}
Expanding $\phi_1$ in the Fourier--Whittaker expansion \eqref{fourier-whittaker-expansion},
using the $G_n(F)$ invariance of the truncation support and unfolding once
again, leads to
\begin{equation*}
	\int^T_{N_n(F)\backslash G_n(\A)\times G_n(\A)}
	\int_{\A^n}f \left[ \begin{pmatrix} x^{-1}&\\&1 \end{pmatrix}
		\begin{pmatrix} \I_n&b\\&1 \end{pmatrix}
		\begin{pmatrix} y&\\&1 \end{pmatrix}\right]
	\overline{\psi(e_n b)}W_{1}(x)\overline{{\phi_2}(y)}\d b\d_{s_1} x\d_{s_2} y.
\end{equation*}
Finally, writing $x$ as $ux$ with $u\in[N_n]$ and
$x\in N_n(\A)\bs G_n(\A)$, changing variable $b\mapsto uxb$ and $y\mapsto uxy$,
and using the definition of the Whittaker function as in \eqref{def-whittaker}
we obtain that
\begin{equation*}
	\tilde{I}^+(T)=\int^T_{N_n(\A)\backslash G_n(\A)\times G_n(\A)}
	W_{1}(x)\overline{W_{2}(xy)}\hat{f}^u(e_nx,y)\d_{1+s_1+s_2} x\d_{s_2} y,
\end{equation*}
where, for $c\in\A^n$ and $y\in G_n(\A)$, we define
\begin{equation}\label{def-fu-hat}
  f^u(b,a)=f\sbr{\mat{a&b\\&1}},\quad
\hat{f}^u(c,a):=\int_{\A^n}f^u(b,a)\overline{\psi(cb)}\d b.
\end{equation}
The compact support of $f$ (modulo $Z_{n+1}(\A)$) ensures that $f^u$ is compactly
supported in both variables. Consequently
$\hat{f}^u(c,a)$ is a Bruhat--Schwartz function in $c$ and compactly
supported in $a$. Thus the theory of $\GL(n)\times\GL(n)$ global zeta integral
(see \cite[Lecture 2]{cogdell2007functions}) ensures that if $\Re(s_1+s_2)$ is sufficiently large the
last expression for $\tilde{I}^+(T)$ converges absolutely and uniformly in $T$.
Letting $T\to\infty$, recalling the definition of the global zeta integral from
\S\ref{zeta-integrals-n}, we deduce that
\begin{equation*}\label{I-tilde-plus-def}
  \tilde{I}^+(s_1,s_2):=\lim_{T\rightarrow \infty}\tilde{I}^+(s_1,s_2,T) =\int_{G_n(\A)}
	\Psi(1+s_1+s_2,W_{1},\overline{\pi_2(y)W_{2}},\hat{f}^u(\cdot,y))\d_{s_2}y.
\end{equation*}
As the $y$-integral has compact support, the above integral converges absolutely and uniformly in $s_1,s_2$ in fixed vertical strips. Employing meromorphic continuation of the global zeta integrals thus we deduce meromorphic continuation of
$\tilde{I}^+(s_1,s_2)$ to all $s_1,s_2\in\C$.
Moreover, the above integral is Eulerian, \emph{i.e.}, $\tilde{I}^+(s_1,s_2)=\prod_v\tilde{I}^+_v(s_1,s_2)$ where
\begin{equation}\label{Iv+tilde}
    \tilde{I}^+_v(s_1,s_2):={\int_{G_n(F_v)}\Psi_v(1+s_1+s_2,W_{1,v},\overline{\pi_{2,v}(y)W_{2,v}},\hat{f}^u_v(\cdot,y))\d_{s_2} y}
\end{equation}
The above integral converges for sufficiently large $\Re(s_1+s_2)$ and has meromorphic continuation to all of $\C^2$, which can be seen similarly as above.

On the other hand, the choices of $f$ in \S\ref{type-1-test-function} and \S\ref{type-2-test-function}, and \eqref{def-fu-hat} also ensure that
$$\hat{f}^u_v(c,y)=\mathbf{1}_{\o_v^n}(c)\mathbf{1}_{G_n(\o_v)}(y),\quad v\notin S.$$
Thus recalling choices of vectors in \S\ref{type-1-test-function} and \S\ref{type-2-test-function} we see that for $v\notin S$
$$\tilde{I}^+_v(s_1,s_2)=\Psi_v(1+s_1+s_2,W_{1,v},\overline{W_{2,v}},\mathbf{1}_{\o_v^n}).$$
Applying \eqref{unramified-zeta-integral-m}, first for large positive $\Re(s_i)$ and then
by uniqueness of meromorphic continuation, we arrive at the following lemma.
\begin{lem}\label{final-for-Itilde+}
We have
\begin{equation*}
    \tilde{I}^+(s_1,s_2)=\lim_{T\rightarrow \infty}\tilde{I}^{+}(s_1,s_2,T)=L^{S}(1+s_1+s_2,\pi_1\otimes\tilde{\pi}_2) \prod_{v\in S}\tilde{I}^+_v(s_1,s_2),
\end{equation*}
for $s_1,s_2\in\C$.
\end{lem}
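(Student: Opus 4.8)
The plan is to assemble the identity from the Euler factorization $\tilde I^+(s_1,s_2)=\prod_v\tilde I^+_v(s_1,s_2)$ recorded in \eqref{Iv-+-tilde}, by computing the factors at the places $v\notin S$ in closed form and then invoking meromorphic continuation. Recall that the first equality in the statement, $\tilde I^+(s_1,s_2)=\lim_{T\to\infty}\tilde I^+(s_1,s_2,T)$, together with the closed-form integral expression for this limit and its Eulerian structure, has already been established above; so the content is the second equality.

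First I would work in the range of $(s_1,s_2)$ with $\Re(s_1+s_2)$ large enough that the global zeta integral, the compact $y$-integral defining $\tilde I^+$, and the Euler product $\prod_v\tilde I^+_v$ are all absolutely convergent; this is exactly the regime in which $\tilde I^+(s_1,s_2)=\prod_v\tilde I^+_v(s_1,s_2)$ holds on the nose. Next, for $v\notin S$ I would insert the already computed value $\hat f^u_v(c,y)=\mathbf 1_{\o_v^n}(c)\,\mathbf 1_{G_n(\o_v)}(y)$ into \eqref{Iv-+-tilde}. Since the integrand is then supported on $y\in K_{n,v}=G_n(\o_v)$, where the spherical vector $W_{2,v}$ is $K_{n,v}$-invariant and $|\det y|_v=1$, the $y$-integral collapses (using that $K_{n,v}$ carries probability Haar measure) to $\tilde I^+_v(s_1,s_2)=\Psi_v(1+s_1+s_2,W_{1,v},\overline{W_{2,v}},\mathbf 1_{\o_v^n})$. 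As $\pi_{1,v}$ is spherical, $\overline{W_{2,v}}$ is the normalized spherical vector of $\tilde\pi_{2,v}$ in the opposite model, and $\psi_v$ is unramified (note $v\notin S$ forces $\Delta_v=1$), the unramified computation \eqref{unramified-zeta-integral-m} identifies this with $L_v(1+s_1+s_2,\pi_{1,v}\otimes\tilde\pi_{2,v})$.

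Multiplying over $v\notin S$ gives $\prod_{v\notin S}\tilde I^+_v(s_1,s_2)=L^S(1+s_1+s_2,\pi_1\otimes\tilde\pi_2)$, and splitting the Euler product as $\prod_{v\notin S}\cdot\prod_{v\in S}$ yields the asserted formula in the region $\Re(s_1+s_2)\gg 1$. Finally I would extend it to all of $\C^2$: the left-hand side $\tilde I^+$ is meromorphic on $\C^2$ (the $y$-integral runs over a compact set, so this follows from meromorphic continuation of the $\GL(n)\times\GL(n)$ zeta integral), the partial Rankin--Selberg $L$-function $L^S(1+s_1+s_2,\pi_1\otimes\tilde\pi_2)$ is meromorphic, and each of the finitely many remaining factors $\tilde I^+_v$, $v\in S$, is meromorphic on $\C^2$; agreement on a non-empty open subset of $\C^2$ then forces agreement everywhere. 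I do not expect a real obstacle here: the only points needing care are matching the normalizations of the spherical Whittaker vectors so that \eqref{unramified-zeta-integral-m} applies verbatim (in particular that conjugation carries the normalized spherical vector of $\pi_{2,v}$ to that of $\tilde\pi_{2,v}$ in the opposite model), and checking that the various convergence regions overlap so the analytic continuation step is legitimate.
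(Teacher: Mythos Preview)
Your proposal is correct and follows essentially the same approach as the paper: compute the unramified local factors $\tilde I^+_v$ for $v\notin S$ using the explicit form of $\hat f^u_v$ together with the unramified zeta integral formula \eqref{unramified-zeta-integral-m}, assemble the Euler product to get $L^S(1+s_1+s_2,\pi_1\otimes\tilde\pi_2)$ in the region of absolute convergence, and then meromorphically continue. The paper's argument is in fact just the text immediately preceding the lemma, and your write-up spells out the same steps with only slightly more detail.
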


\subsection{The orbit $\xi=n^-$} \label{I_0-n-}

We first note that we can parameterize $[n^-]_0$ by elements of the shape 
$$\left\{\mat{g&\\&1}u^\top\,\mid\,g\in G_n(F),\I_{n+1}\neq u\in U^\top_{n+1}(F)\right\}.$$
Therefore, from \eqref{I0-definition} we have
\begin{equation*}
	I^0(n^-,T)=\sum_{\I_{n+1}\neq u\in U_{n+1}(F)}
	\int^T_{G_n(\A)\times [G_n]}f\sbr{\begin{pmatrix} x^{-1}&\\&1\end{pmatrix}u^\top
  \begin{pmatrix}y&\\&1 \end{pmatrix}}
	\phi_1(x)\overline{\phi_2(y)}\d_{s_1} x\d_{s_2} y.
\end{equation*}
We parametrize $U_{n+1}(F)$ by $F^n$ and write the above as
\begin{equation*}
	I^0(n^-,T)=\sum_{0\neq c^\top\in F^n}\int^T_{G_n(\A)\times [G_n]}f
	\begin{pmatrix} x^{-1}y&\\cy&1 \end{pmatrix}
	\phi_1(x)\overline{\phi_2(y)}\d_{s_1} x\d_{s_2} y,
\end{equation*}
We further parameterize $c\neq 0$ as $e_n\gamma$ for $\gamma\in P_n(F)\backslash G_n(F)$,
change variables $y\mapsto \gamma^{-1} y$, use automorphy of $\phi_2$, and
unfold the sum over $P_n(F)\bs G_n(F)$. We thus get
\begin{equation*}
	I^0(n^-,T)=\int^T_{G_n(\A)\times P_n(F)\backslash G_n(\A)}f
	\begin{pmatrix} x^{-1}y&\\e_ny&1 \end{pmatrix}
	\phi_1(x)\overline{\phi_2(y)}\d_{s_1} x\d_{s_2} y,
\end{equation*}
Working as in \S\ref{I_0-e-and-n+} we obtain that the above equals 
\begin{equation*}
	\int^T_{G_n(\A)\times N_n(\A)\backslash G_n(\A)}
	W_{1}(x)\overline{W_{2}(y)}f\sbr{\mat{x^{-1}y&\\e_n y&1}} \d_{s_1} x\d_{s_2} y.
\end{equation*}
Letting $T\rightarrow \infty$, thanks to the compact support of $f$, we obtain that
\begin{equation}\label{I-minus-def}
	{I}^-(s_1,s_2):=\lim_{T\rightarrow \infty}I^0(n^-,T;s_1,s_2)= \int_{G_n(\A)}\Psi\left(s_1+s_2,
	\pi_1(x^{-1})W_{1},\overline{W_{2}},f^{\ell}(\cdot,x)\right)
	\d_{-s_1} x,
\end{equation}
where
\begin{equation*}
  f^\ell(c,a):=f\left[\mat{a&\\c&1}\right].
\end{equation*}
We apply $\GL(n)\times\GL(n)$ global functional equation to the above zeta integral
(see \S\ref{zeta-integrals-n}) to write it as
$$\int_{G_n(\A)}\Psi\left(1-s_1-s_2,
	\tilde{\pi}_1(x^\top)\tilde{W}_{1},\overline{\tilde{W}_{2}},\hat{f}^{l}(\cdot,x)\right)
	\d_{-s_1} x$$
where for $c'\in \A^n$,
\begin{equation}\label{def-fl-hat}
\hat{f}^l(c,x):=\int_{\A^n}f\sbr{\mat{x&\\c'&1}}\overline{\psi(c'c^\top)}\d c'.
\end{equation}
Working as in \S\ref{I_0-e-and-n+} we write
$I^-(s_1,s_2)=\prod_v{I}_v^-(s_1,s_2)$ where
\begin{equation}\label{Iv-tilde}
    I^-_v(s_1,s_2):={\int_{G_n(F_v)}\Psi_v(1-s_1-s_2,\tilde{\pi}_1(x^\top)\tilde{W}_{1,v},\overline{\tilde{W}_{2,v}},\hat{f}^l_v(\cdot,x))\d_{-s_1} x}.
\end{equation}
Recalling once again the choices of $f$ in \S\ref{type-1-test-function} and
\S\ref{type-2-test-function}, and applying the unramified computation
\eqref{unramified-zeta-integral-m}, first for large negative $\Re(s_i)$ and
then meromorphically continuing, as in \S\ref{I_0-e-and-n+} we arrive at the following lemma.
\begin{lem}\label{final-for-I-}
We have
\begin{equation*}
    I^-(s_1,s_2)=\lim_{T\rightarrow \infty}{I}^{0}(n^-,s_1,s_2,T)
    =L^{S}(1-s_1-s_2,\tilde{\pi}_1\otimes{\pi}_2)\prod_{v\in S}I^-_v(s_1,s_2),
\end{equation*}
for $s_1,s_2\in\C$.
\end{lem}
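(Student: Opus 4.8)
The statement will follow by assembling the unfolding identities already obtained in \S\ref{I_0-n-} and then disposing of two remaining points: Eulerization and meromorphic continuation. The plan is to begin from \eqref{I-minus-def}, which records that for $\Re(s_1+s_2)$ large the truncated integrals $I^0(n^-,s_1,s_2,T)$ converge, uniformly in $T$, to the adelic zeta integral $\int_{G_n(\A)}\Psi\bigl(s_1+s_2,\pi_1(x^{-1})W_1,\overline{W_2},f^\ell(\cdot,x)\bigr)\d_{-s_1}x$; the uniformity here comes from the cuspidal decay of $\phi_1,\phi_2$, the determinant truncation, and the fact that $f^\ell(c,a)=f\sbr{\mat{a&\\ c&1}}$ is Bruhat--Schwartz in $c$ and compactly supported in $a$. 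Applying the global $\GL(n)\times\GL(n)$ functional equation recalled in \S\ref{zeta-integrals-n} to the inner zeta integral rewrites this with $\Psi\bigl(1-s_1-s_2,\tilde\pi_1(x^\top)\tilde W_1,\overline{\tilde W_2},\hat f^\ell(\cdot,x)\bigr)$, and since $\hat f^\ell$ is compactly supported in its $G_n(\A)$-variable the outer integral runs over a compact set; factoring the Whittaker functions and the test function over places then yields the Euler product $I^0(n^-,s_1,s_2)=\prod_v I^0_v(n^-,s_1,s_2)$ with local factor \eqref{Iv---tilde}.

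Next I would evaluate the factors at $v\notin S$. By the choices in \S\ref{type-1-test-function}--\S\ref{type-2-test-function} one has $f_v=\mathbf 1_{\overline{G_{n+1}}(\o_v)}$, so a direct computation from \eqref{def-fl-hat} gives $\hat f^\ell_v(b,x)=\mathbf 1_{\o_v^n}(b)\mathbf 1_{G_n(\o_v)}(x)$; substituting into \eqref{Iv---tilde}, the outer integral collapses onto $G_n(\o_v)$, which carries volume $1$ for $v\notin S$, leaving $\Psi_v\bigl(1-s_1-s_2,\tilde W_{1,v},\overline{\tilde W_{2,v}},\mathbf 1_{\o_v^n}\bigr)$ for the normalized spherical Whittaker vectors. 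By \eqref{unramified-zeta-integral-m} this equals $L_v(1-s_1-s_2,\tilde\pi_{1,v}\otimes\pi_{2,v})$, and multiplying over $v\notin S$ produces the factor $L^{S}(1-s_1-s_2,\tilde\pi_1\otimes\pi_2)$.

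It remains to upgrade the resulting identity, valid a priori only where every factor converges absolutely, to all $(s_1,s_2)\in\C^2$. For $v\in S$ the factor $I^0_v(n^-,s_1,s_2)$ is an integral over a fixed compact subset of $G_n(F_v)$ of a family meromorphic in $(s_1,s_2)$ — indeed holomorphic after division by the relevant local $L$-factor — by the theory of $\GL(n)\times\GL(n)$ zeta integrals in \S\ref{zeta-integrals-n}, while $L^{S}(1-s_1-s_2,\tilde\pi_1\otimes\pi_2)$ continues meromorphically to $\C^2$. Hence both sides of the asserted identity are meromorphic on $\C^2$ and agree on a nonempty open set, so they agree everywhere; this is precisely the claim, and it also shows the limit $\lim_{T\to\infty}I^0(n^-,s_1,s_2,T)$ exists and equals $I^0(n^-,s_1,s_2)$ in the claimed range. (The full argument is entirely parallel to the treatment of $\tilde I^+$ in \S\ref{I_0-e-and-n+}.)

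I expect the main obstacle to be the bookkeeping in the first step: justifying the interchange of the $T\to\infty$ limit with the sum over $[n^-]_0$ and the $[G_n]\times[G_n]$-integral, i.e.\ establishing the absolute, uniform-in-$T$ convergence after the successive unfoldings (over $P_n(F)\bs G_n(F)$ and, via \eqref{fourier-whittaker-expansion}, over $N_n(F)\bs G_n(F)$), and then checking cleanly that after the functional equation the adelic integral genuinely Eulerizes. Once \eqref{I-minus-def}--\eqref{Iv---tilde} are in hand, the unramified computation and the analytic continuation are routine.
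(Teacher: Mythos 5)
Your proposal is correct and follows essentially the same route as the paper: recall \eqref{I-minus-def} (uniform-in-$T$ convergence for $\Re(s_1+s_2)$ large via the Bruhat--Schwartz/compact-support properties of $f^\ell$), apply the global $\GL(n)\times\GL(n)$ functional equation, Eulerize using the compact support of $\hat f^\ell$ in the $x$-variable to land on \eqref{Iv---tilde}, compute $\hat f^\ell_v(b,x)=\mathbf 1_{\o_v^n}(b)\mathbf 1_{G_n(\o_v)}(x)$ and apply \eqref{unramified-zeta-integral-m} at $v\notin S$ to extract $L^S(1-s_1-s_2,\tilde\pi_1\otimes\pi_2)$, and conclude by meromorphic continuation. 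One small imprecision worth flagging: your closing remark that the argument is ``entirely parallel to the treatment of $\tilde I^+$'' glosses over the fact that the $n^-$ orbit genuinely requires the functional-equation step before Eulerizing (which you do carry out), whereas the paper's treatment of $\tilde I^+$ identifies the unramified factors directly at argument $1+s_1+s_2$ without it; also, the absolute-convergence region where \eqref{unramified-zeta-integral-m} applies after the functional equation is $\Re(s_1+s_2)$ very negative, so the identity is established there and then continued, as the paper spells out.
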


\subsection{The family $\xi=\xi(t)$}

Recall that, by definition, the orbits in this case are parameterized by
$t\in F$, $t\neq 0,1$ and are given by
\begin{equation*}
	\left\{ \begin{pmatrix} x^{-1}&\\&1 \end{pmatrix} \xi(t)
	\begin{pmatrix} y&\\&1 \end{pmatrix}\mid x,y\in G_n(F) \right\},\quad
	\xi(t)=\begin{pmatrix} I_n&te_n^{\top}\\e_n&1 \end{pmatrix}.
\end{equation*}
We compute the stabilizer of $\xi(t)$ by the action of
$G_n(F)\times G_n(F)$.
Let $x,\,y\in G_n(F)$ be such that
\begin{equation*}
	\mat{x^{-1}&\\&1}\xi(t)\mat{y&\\&1}\equiv \xi(t) \mod Z_{n+1}(F).
\end{equation*}
We readily show that
$$x=y,\quad e_nx=e_n,\quad xe_n^\top=e_n^\top.$$
The last two equalities translate into $x\in G_{n-1}(F)$. Therefore we may
parameterize $[\xi(t)]_0$ as
\begin{equation*}
	\left\{ \begin{pmatrix} x^{-1}&\\&1 \end{pmatrix} \xi(t)
	\begin{pmatrix} y&\\&1 \end{pmatrix} ;\; x\in G_{n-1}(F)\bs G_n(F),
	y\in G_n(F) \right\}.
\end{equation*}
As a consequence, from \eqref{I0-definition} we have
\begin{equation*}
	I^0(\xi(t),T)=\int^T_{G_{n-1}(F)\backslash G_n(\A)\times G_n(\A)}{f}\left[
		\begin{pmatrix} x^{-1}&\\&1 \end{pmatrix} \xi(t)
		\begin{pmatrix} y&\\&1 \end{pmatrix}\right] \phi_1(x)\overline{\phi_2(y)}
	\d_{s_1}x\d_{s_2} y.
\end{equation*}
From Proposition \ref{lem:xi-t-global} we have that
\begin{equation}\label{vanish-xi-t-contribution}
    I^0(\xi(t),T)=0,\quad \text{for }t\neq 0,1.
\end{equation}
for $N(\q)X$ being sufficiently large.

\begin{lem}\label{lem:xi-t-compact-support}
    Let $f_v$ be any smooth function of $\overline{G_{n+1}}(F_v)$ supported on a fixed compact set. If, for some $x,y\in G_n(F_v)$ and $1\neq t\in F_v$,
	we have
	\begin{equation*}
		{f}_v\left[ \begin{pmatrix} x^{-1}&\\ &1 \end{pmatrix}
			\begin{pmatrix} I_n&te_n^{\top}\\ e_n&1 \end{pmatrix}
			\begin{pmatrix} y&\\ &1 \end{pmatrix} \right]\neq 0,
	\end{equation*}
	then $\left|\frac{t}{1-t}\right|_{v}\ll 1$. 
\end{lem}

\begin{proof}
    From the description of $f_{v}$ we find a compact set $C\subset G_{n+1}(F_v)$ such that there exists $z\in F_v^\times$ with
	$$
		z\begin{pmatrix}
			x^{-1}y & tx^{-1}e_n^{\top} \\
			e_ny    & 1
		\end{pmatrix}\in C.
	$$
Any matrix on the left-hand side above can be written as
    \begin{equation*}
		\gamma=\begin{pmatrix}
			a & b \\
			c & d
		\end{pmatrix},\quad a\in G_n(F_v),\quad d\neq 0
	\end{equation*}
	with
	\begin{equation}\label{invariants-of-the-orbit}
		ca^{-1}bd^{-1}=t,\quad
		\det\gamma=(\det a)d(1-t).
	\end{equation}
 As $\gamma\in C$ we obtain that entries of $a,b,c$ are $O(1)$ and $\det(\gamma) \asymp 1$. Let $\tilde{a}$ be the adjugate matrix of $a$, given by $\tilde{a}:=(\det a)a^{-1}$. Now, we may rewrite the first equality in \eqref{invariants-of-the-orbit} as
	\begin{equation*}
		(\det a)dt=c\tilde{a}b.
	\end{equation*}
Thus we deduce that
$$\frac{t}{1-t}=\frac{c\tilde{a}b}{\det(\gamma)}.$$
Since $a,b,c$ all have bounded entries we have $c\tilde{a}b=O(1)$, and consequently, 
$\frac{t}{1-t}=O(1)$.
\end{proof}

\begin{lem}\label{lem:xi-t-non-arch}
  Let $f_v=\tilde{\1}_{\overline{K_0}(\p_v^{e_v})}$ with $e_v\ge0$.
If, for some $x,y\in G_n(F_v)$ and $t\in F_v$, $t\neq 0,1$, we have
\begin{equation*}
    {f_v}\left[ \begin{pmatrix} x^{-1}&\\ &1 \end{pmatrix} \begin{pmatrix} I_n&te_n^{\top}\\ e_n&1 \end{pmatrix} \begin{pmatrix} y&\\ &1 \end{pmatrix} \right]\neq 0,
\end{equation*}
then $\left|\frac{t}{1-t}\right|_{v}\le N(\p_v)^{-e_v}$.
\end{lem}

\begin{proof}
  The proof is essentially identical to that of the previous lemma, except that
  we now have that $a$, its adjugate $\tilde{a}$ and $b$ all have coefficients
  in $\o_v$ and $c$ has coefficients in $\p_v^{e_v}$ and $\det \gamma\in \o_v^\times$.
  Following the same reasoning, we may deduce that
  $\frac{t}{1-t}\in \p_v^{e_v}$, thus concluding.
\end{proof}

\begin{lem}\label{lem:xi-t-arch}
	Let $f_v$ be either of type I or type II and $v\mid\infty$.
	If, for some $x,y\in\GL_n(F_v)$ and $1\neq t\in F_v$,
	we have
	\begin{equation*}
		{f}_v\left[ \begin{pmatrix} x^{-1}&\\ &1 \end{pmatrix}
			\begin{pmatrix} I_n&te_n^{\top}\\ e_n&1 \end{pmatrix}
			\begin{pmatrix} y&\\ &1 \end{pmatrix} \right]\neq 0,
	\end{equation*}
	then $\left|\frac{t}{1-t}\right|_{v}\ll X_v^{-1}$.
\end{lem}

\begin{proof}
We proceed as in the proof of Lemma \ref{lem:xi-t-compact-support}.
    In either case, $f_v$ is supported on the set
    $$C_{X_v}:=\left\{\left.\gamma := \mat{a&b\\c&d}\quad\right\vert \quad\begin{aligned}
        & a\in \mathrm{Mat}_n(F_v), b,c^\top\in F_v^n, d\in F_v^\times; \\& \|a\|,\|b\|,\|X_vc\|,\|d\| =O(1);\det(\gamma)\asymp 1
    \end{aligned}\right\}\mod Z_{n+1}(F_v).$$
    Hence there exists $z\in F_v^\times$ such that
	$$z\begin{pmatrix}
			x^{-1}y & tx^{-1}e_n^{\top} \\
			e_ny    & 1
		\end{pmatrix}\in C_{X_v}.$$
  The determinant condition above gives us $z\det(zx^{-1}y)(1-t)\asymp 1$. On the other hand, as the entries of $zx^{-1}y$ are bounded the entries of $(zx^{-1}y)^{-1}$ are $\asymp |z(1-t)|$.
  Consequently, we have
  $$zt=(ze_ny)(zx^{-1}y)^{-1}(ztx^{-1}e_n^\top)\ll X_v^{-1}|z(1-t)|.$$
Hence $\left|\frac{t}{1-t}\right|\ll X_v^{-1}$.
\end{proof}

\begin{prop}\label{lem:xi-t-global}
    Let $N(\q)X$ be sufficiently large. Let $f$ be of either type I or type II. If, for some $x,y\in G_n(\A)$ and $F\ni t\neq 1$ we have
    \begin{equation*}
		{f}\left[ \begin{pmatrix} x^{-1}&\\ &1 \end{pmatrix}
			\begin{pmatrix} I_n&te_n^{\top}\\ e_n&1 \end{pmatrix}
			\begin{pmatrix} y&\\ &1 \end{pmatrix} \right]\neq 0,
	\end{equation*}
 then $t=0$.
\end{prop}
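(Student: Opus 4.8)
The plan is to reduce the global statement to the local non-vanishing constraints already established, namely Lemmas \ref{lem:xi-t-non-arch}, \ref{lem:xi-t-type-2-supercuspidal}, \ref{lem:xi-t-type-1-arch}, \ref{lem:xi-t-type-2-arch}, and \ref{lem:xi-t-type-2-auxiliary}, and then to play the resulting place-by-place bounds against the product formula for the element $t/(1-t)\in F$.

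First I would use factorizability: since $f=\otimes_v f_v$, the assumption that $f$ does not vanish at the adelic matrix forces
$$f_v\sbr{\mat{x_v^{-1}&\\&1}\xi(t)\mat{y_v&\\&1}}\neq 0$$
for every place $v$, where $t$ is read in $F_v$ via the diagonal embedding and still satisfies $t\neq 1$. I would then assume, for contradiction, that $t\neq 0$; then $u:=t/(1-t)$ is a nonzero element of $F$. Next I would record the local estimate for $|u|_v$ supplied by the appropriate lemma at each place. At each finite $v\nmid\q$ --- and, in the type II case, with $v\neq\p_0,\p_1$ --- Lemma \ref{lem:xi-t-non-arch} applies with $e_v=0$ and gives $|u|_v\le 1$; at $v\mid\q$ the same lemma gives $|u|_v\le N(\p_v)^{-e_v}$, so that $\prod_{v\mid\q}|u|_v\le N(\q)^{-1}$; at the archimedean places Lemmas \ref{lem:xi-t-type-1-arch} and \ref{lem:xi-t-type-2-arch} give $|u|_v\ll X_v^{-1}$, hence $\prod_{v\mid\infty}|u|_v\ll X^{-1}$; and, in the type II case, the two auxiliary places contribute only $|u|_{\p_0}\ll 1$ and $|u|_{\p_1}\ll 1$ by Lemmas \ref{lem:xi-t-type-2-supercuspidal} and \ref{lem:xi-t-type-2-auxiliary}, with implied constants depending solely on the fixed auxiliary data. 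Multiplying these over all $v$ and invoking $\prod_v|u|_v=1$ yields $1\ll(N(\q)X)^{-1}$, with an implied constant independent of $\q$ and of $\mathfrak{X}$; this is impossible once $N(\q)X$ exceeds that constant, forcing $u=0$, i.e.\ $t=0$.

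The real work lies in the local lemmas, so the only delicate point in assembling them is the bookkeeping of implied constants: I must verify that every constant surviving into the final inequality --- those at the archimedean places, originating from the fixed radii $\tau_v$, together with those at $\p_0$ and $\p_1$ in the type II case --- depends only on $F$, $n$, and the fixed auxiliary data, and in particular neither on $\q$ nor on $\mathfrak{X}$. This is precisely what makes the hypothesis that $N(\q)X$ be sufficiently large meaningful, and I expect it to be the one step in the argument that requires attention rather than being purely mechanical.
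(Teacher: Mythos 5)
Your proposal is correct and follows essentially the same route as the paper: reduce to the local non-vanishing constraints (Lemmas \ref{lem:xi-t-non-arch}, \ref{lem:xi-t-type-2-supercuspidal}, \ref{lem:xi-t-type-1-arch}, \ref{lem:xi-t-type-2-arch}, \ref{lem:xi-t-type-2-auxiliary}) via factorizability of $f$, multiply the bounds on $|t/(1-t)|_v$ over all places, and contradict the product formula for the nonzero element $t/(1-t)\in F^\times$ once $N(\q)X$ is large. Your explicit flagging of the uniformity of the implied constants in $\q$ and $\mathfrak{X}$ is a point the paper leaves tacit, but it is correct and is indeed the one place requiring care.
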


\begin{proof}
    Applying Lemma \ref{lem:xi-t-compact-support}, Lemma \ref{lem:xi-t-non-arch}, and Lemma \ref{lem:xi-t-arch} we obtain
    $$\left|\frac{t}{1-t}\right|_\A\ll (N(\q)X)^{-1},$$
    in either case. If $t\neq 0$ then $\frac{t}{1-t}\in F^\times$, which implies
    $$\left|\frac{t}{1-t}\right|_\A=1.$$
    The last two displays contradict each other
    for large $N(\q)X$. Hence, $t=0$. 
\end{proof}

\subsection{The orbit $\xi=\xi^\perp$}

We are now left with the orbit of the element
\begin{equation*}
	\xi^\perp= \begin{pmatrix} \I_n&e_1^{\top}\\ e_n&1 \end{pmatrix}.
\end{equation*}
We first calculate the stabilizer of $\xi^\perp$ under the action of $G_n(F)\times G_n(F)$. If
$$\begin{pmatrix} x^{-1}&\\&1 \end{pmatrix} \xi^\perp
	\begin{pmatrix} y&\\&1 \end{pmatrix}\equiv\xi^\perp\mod Z_{n+1}(F)$$
 then we have
 $$x=y,\quad x^{-1}e_1^\top= e_1^\top,\quad e_nx=e_n.$$
The last two equalities translate into $x\in R(F)$ where
\begin{equation*}
R:=\left\{ \begin{pmatrix} 1&\ast&\ast\\ &h&\ast\\ &&1 \end{pmatrix} \right\}
  \subset G_{n+1},\quad h\in G_{n-2},\quad.
\end{equation*}
We then see that $[\xi^\perp]_0$ can be parameterized as
\begin{equation*}
	\left\{ \begin{pmatrix} x^{-1}&\\&1 \end{pmatrix} \xi^\perp
	\begin{pmatrix} y&\\&1 \end{pmatrix} ;\; x\in R(F)\bs G_n(F),
	y\in G_n(F) \right\}
\end{equation*}
Therefore, from \eqref{I0-definition} we have
\begin{equation*}
	I^0(\xi^\perp,s_1,s_2,T)=\int_{R(F)\backslash G_n(\A)\times G_n(\A)}f
	\sbr{\begin{pmatrix} x^{-1}y&x^{-1}e_1^{\top}\\ e_ny&1 \end{pmatrix}}
	\phi_1(x)\overline{\phi_2(y)}\d_{s_1} x\d_{s_2} y.
\end{equation*}
Our goal is to show the above integral is negligible as $N(\q)X\to\infty$.

\begin{prop}\label{rapid-decay-xi-perp-type-1}
Let $f$ be of type $I$. Then the integral representation of $I^0(\xi^\perp,s_1,s_2,T)$ is absolutely convergent for all $s_1,s_2$ and	uniformly in $T$. Thus,
\begin{equation*}
	I^0(\xi^\perp,s_1,s_2):=\lim_{T\to\infty}I^0(\xi^\perp,s_1,s_2,T)
\end{equation*}
defines an entire function in $s_1,s_2$. Moreover, for $s_1,s_2$ at a neighbourhood of $(0,0)$ 
$$I^0(\xi^\perp,s_1,s_2)\ll_N (N(\q)X)^{-N}$$
for all large $N$.
\end{prop}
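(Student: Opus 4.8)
The plan is to use the extremely restrictive support of the type I test function $f$ to confine $x$ and $y$ to a region that shrinks as $N(\q)X\to\infty$, and then to convert this shrinking into a power saving via the rapid decay of the fixed cusp forms $\phi_1$ and $\phi_2$.

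\emph{Convergence and holomorphy.} Since $f$ is compactly supported modulo $Z_{n+1}(\A)$, on its support the matrix $\mat{x^{-1}y & x^{-1}e_1^\top\\ e_ny & 1}$ --- whose bottom-right entry is exactly $1$ --- must agree, up to a central adele $z=(z_v)_v$, with an element of $\prod_v\operatorname{supp}f_v$; inspecting the $(n+1,n+1)$-entry forces $z_v\asymp1$ for $v\mid\infty$ and $z_v\in\o_v^\times$ for $v\mid\q$, so in particular $x^{-1}y$ lies in a fixed compact subset of $\overline{G_{n+1}}(\A)$. Combined with the rapid decay of $\phi_1$ along a Siegel domain for $R(F)\backslash G_n(\A)$ (valid because $\phi_1$ is cuspidal and $R\subset P_n$) and of $\phi_2$, together with the triviality of the central characters, this yields absolute convergence of the integral defining $I^0(\xi^\perp,s_1,s_2,T)$, uniformly in $T$ and locally uniformly in $(s_1,s_2)$; hence the limit exists and defines an entire function.

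\emph{Localization.} From the explicit shape of $f_v$ in \S\ref{type-1-test-function} one reads off, on the support:
\begin{itemize}
\item for $v\mid\q$: $z_v\gamma_v\in K_0(\p_v^{e_v})$, hence $e_ny\in\p_v^{e_v}\o_v^n$ and $x^{-1}e_1^\top\in\o_v^n$;
\item for $v\mid\infty$: $z_v\gamma_v\in K_0(X_v,\tau_v)$ with $z_v\asymp1$, hence $|e_ny|_v\ll X_v^{-1}$, $|x^{-1}e_1^\top|_v\ll1$, and $x^{-1}y$ within $\tau_v$ of $\I_n$;
\item for the remaining finite $v$: $\gamma_v\in Z_{n+1}(F_v)K_{n+1,v}$, giving integrality of a scalar multiple of all entries.
\end{itemize}
Writing $e_ny=(e_nx)(x^{-1}y)$ and using that $x^{-1}y$ is close to $\I_n$ at the archimedean places and integral at the finite ones, we deduce that the common last row $e_nx=e_ny$ lies in the shrinking adelic neighbourhood
$$\mathcal S:=\prod_{v\mid\q}\p_v^{e_v}\o_v^n\times\prod_{v\nmid\q,\,v\nmid\infty}\o_v^n\times\prod_{v\mid\infty}\{\xi\in F_v^n:|\xi|_v\ll X_v^{-1}\}$$
of the origin, while $x^{-1}e_1^\top$, the first column of $x^{-1}$, stays in a fixed compact set.

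\emph{Power saving, and the main obstacle.} Using the triviality of the central character of $\pi_1$ to normalize and passing to an Iwasawa decomposition $x=nak$ with $a=\diag(a_1,\dots,a_n)$, the constraint $e_nx\in\mathcal S$ becomes $\prod_v|a_n|_v\ll(N(\q)X)^{-1}$ up to bounded factors, and $x^{-1}e_1^\top$ bounded becomes $|a_1|\gg1$; since $\prod_i|a_i|$ is pinned down, this pushes $x$ arbitrarily far into a cusp of $R(F)\backslash G_n(\A)$ as $N(\q)X\to\infty$. The rapid decay of the cusp form $\phi_1$ then yields $|\phi_1(x)|\ll_N(N(\q)X)^{-N}$ uniformly on the support, for every $N$; performing the now harmless $y$-integral (whose range, for fixed $x$, is contained in a fixed compact subset modulo the centre) and the residual absolutely convergent $x$-integral gives $I^0(\xi^\perp,s_1,s_2)\ll_N(N(\q)X)^{-N}$. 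The crux is precisely this last step: one must check that ``$|a_n|$ small, $|a_1|$ bounded below, $\prod_i|a_i|$ normalized'' really does force $\min_i|a_i/a_{i+1}|$ large --- a priori the smallness could be spread thinly over several simple roots, but any such distribution still produces a genuine, if smaller, power of $N(\q)X$, which suffices since $N$ is arbitrary --- and that, after extracting this decay, the leftover integral over $(x,y)$ converges uniformly in $T$, $\q$ and $\mathfrak X$. Both points follow from the reduction theory of $G_n$ together with the compact, shrinking support of $f$.
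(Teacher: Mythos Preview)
Your overall strategy coincides with the paper's: read off from the support of the type~I test function that $x^{-1}y$ lies in a fixed compact, $x^{-1}e_1^\top$ is bounded, and $e_nx$ (hence $e_ny$) lies in a set of adelic volume $\asymp (N(\q)X)^{-1}$; then exploit the rapid decay of the cusp form $\phi_1$ to pull out an arbitrary negative power of $N(\q)X$. The paper does exactly this, after first making the change of variable $y\mapsto xy$ to decouple the constraints.

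Where your write-up is genuinely incomplete is the ``power saving'' step. You pass to Iwasawa coordinates $x=nak$ on $G_n(\A)$ and deduce $|a_1|\gg 1$, $|a_n|\ll (N(\q)X)^{-1}$, then invoke rapid decay of $\phi_1$. But the $x$-integral runs over $R(F)\backslash G_n(\A)$, not over a Siegel domain for $[G_n]$; the full Iwasawa torus coordinates are \emph{not} well-defined on this quotient (only $a_1$ and $a_n$ are, since $e_n r=e_n$ and $r^{-1}e_1^\top=e_1^\top$ for $r\in R$), and a bound on $|a_1/a_n|$ for some representative does not by itself yield a bound on $\phi_1$. Your sentence ``any such distribution still produces a genuine, if smaller, power'' is not a proof, and neither is the appeal to ``reduction theory of $G_n$'' in the final line. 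The paper closes this gap by parameterizing $R(F)\backslash G_n(\A)$ explicitly via the parabolic $\tilde R$ with Levi $G_1\times G_{n-2}\times G_1$, writing $x=\tilde u\,\diag(a_1,z\tilde g,a_2)\,k$ with $\tilde u\in[\tilde U]$, $\tilde g$ in a Siegel domain for $[\overline{G_{n-2}}]$, $(a_1,a_2)\in G_1(\A)^2$, $k\in K$; then the support conditions become $|a_1|\gg 1$, $|a_2|\ll (N(\q)X)^{-1}$ place-by-place, and \cite[Lemma~I.2.10]{MW} gives $\phi_1(x)\ll \min(1,|a_1/z|)^{-M_1}\min(1,|z/a_2|)^{-M_2}\min(1,|a_1/a_2|)^{-N}$ uniformly in the compact/automorphic variables. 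This produces both the $(N(\q)X)^{-N}$ factor and, simultaneously, the absolute convergence of the residual $(a_1,a_2,z)$-integral --- the second point your sketch also leaves open.

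A minor remark: ``the common last row $e_nx=e_ny$'' is not literally true; presumably you mean both lie in the same shrinking set, which does follow from $e_ny=(e_nx)(x^{-1}y)$ with $x^{-1}y$ confined to a fixed compact.
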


\begin{proof}
	First, we change variable $y\mapsto xy$ and obtain that
  \[
    I^0(\xi^\perp,s_1,s_2,T)=\int^{T}_{R(F)\bs G_n(\A)\times  G_n(\A)}{f}
    \sbr{\begin{pmatrix} y&x^{-1}e_1^{\top}\\ e_nxy&1 \end{pmatrix}}
    \phi_1(x)\overline{\phi_2(xy)}\d_{s_1+s_2} x\d_{s_2} y.
  \]
  First, note that
  $$\det\sbr{\mat{y&x^{-1}e_1^\top\\e_nxy&1}}=\det(y).$$
  From the support condition of $
  {f}$ as \S\ref{type-1-test-function} we get the following. Let
  \[
    {f}\begin{pmatrix}
      y         & x^{-1}e_1^{\top} \\
      e_nxy & 1
    \end{pmatrix}
    \neq 0.
  \]
  Then for each finite place $v$ there exists a $z_v\in F_v^\times$ such that
  \[
  z_vx_v^{-1}e_1^\top\in\o_v^n,\quad \p_v^{-e_v}z_ve_{n}x_vy_v\in\o_v^n,
  \quad z_v\in \o_v, \quad z_vy_v\in M_n(\o_v), \quad z_v^{n+1}\det(y_v)\in \o_v^\times.
  \]
	The last three conditions above imply that
	\[
    z_v\in\o_v^{\times},\quad\det(y_v)\in\o_v^\times\implies y_v\in G_n(\o_v),
    \quad x_v^{-1}e_1^\top\in\o_v^n,\quad \p_v^{-e_v}e_nx_v\in\o_v^n.
  \]
	Similarly, for $v\mid\infty$ there is $z\in F_v^\times$
	\[|z_v-1|_v,\quad\|y_v-\I_n\|_v,\quad|x_v^{-1}e_1^\top|_v,\quad
    X_v|e_nx_v|_v<\tau_v.
  \]
	Thus we can find a fixed compact neighborhood $C\subset G_n(\A)$ around the
  identity so that we can estimate
	\begin{equation}\label{eq:support-applied-xi-perp}
		I^0(\xi^\perp,s_1,s_2,T)\ll\int_{y\in C}\int^T_{\substack{x\in R(F)\bs G_n(\A)\\
    x_v^{-1}e_1^\top,\,\p_v^{-e_v}e_nx_v\in\o_v^n,\,v<\infty\\
    |x_v^{-1}e_1^\top|_v,\,X_v|e_nx_v|_v\le \tau_v,\,v\mid\infty}}
    |\phi_1(x)\phi_2(xy)| \d_{\Re(s_1+s_2)} x \d_{\Re(s_2)} y.
	\end{equation}
	Let $\tilde{R}$ be the parabolic of $G_{n}$ associated with the partition
  $n=1+(n-2)+1$ with Levi $\tilde{M}\cong G_1\times G_{n-2}\times G_1$ and
  unipotent radical $\tilde{U}$. Note that $\tilde{R}\simeq G_1\times R\times G_1$
  and $R\simeq \tilde{U}\times G_{n-2}$. We also have a proper embedding of
  $\tilde{R}(\A)\bs G_n(\A)\hookrightarrow K:=\prod_vK_v$. 
  We write the $R(F)\bs G_n(\A)$ as $R(F)\bs\tilde{R}(\A)\times \tilde{R}(\A)
  \backslash G_n(\A)$. On the other hand, we have $R(F)\bs \tilde{R}(\A)\cong
  [R]\times G_1(\A)^2$. As $G_{n-2}$ normalizes $\tilde{U}$ we also have
	$[R]=[\tilde{U}]\times[G_{n-2}]$. All in all, we have an embedding of
	\[
    R(F)\backslash G_n(\A)\hookrightarrow [\tilde{U}]\times[G_{n-2}]\times
    G_1(\A)^2\times K.
  \]
  We further embed $[G_{n-2}]$ inside $[{G_1}]\times\mathcal{S}$ where
  $\mathcal{S}$ is a fixed Siegel domain of $[\overline{G_{n-2}}]$.
  Thus we can (non-uniquely) write any element of $x\in R(F)\bs G_n(\A)$ as
	$\tilde{u}\diag(a_1,z\tilde{g},a_2)k$ for $\tilde{u}\in[\tilde{U}]$,
  $\tilde{g}\in\mathcal{S}$, and $k\in K$ and correspondingly
  \[
    \d x=\d \tilde{u}\frac{\d^\times a_1\d^\times a_2}{|a_1/a_2|^{n-1}}
    \d^\times z\d\tilde{g}\d k.
  \]
  On the other hand, as a $\phi_1$ is cusp form, it follows from \cite[Lemma I.2.10]{MW}
  that for any $M_1,M_2,N\ge 0$
  \begin{equation*}
		\phi_1(\tilde{u}\diag(a_1,z\tilde{g},a_2)k)\ll \min(1,|a_1/z|^{-M_1})
    \min(1,|z/a_2|^{-M_2})\min(1,|a_1/a_2|^{-N})
	\end{equation*}
	uniformly in $\tilde{u}\in[\tilde{U}]$, $\tilde{g}\in\mathcal{S}$, and $k\in K$.
	Moreover, $|\phi_2(xy)| \ll 1$ uniformly for $x\in G_{n+1}(\A)$ and $y\in C$.
	Thus we can bound \eqref{eq:support-applied-xi-perp} (after letting $T\to\infty$) by
	\begin{multline*}
		\ll_{M_1,M_2,N}\int_C\int_{K}\int_{\mathcal{S}}\int_{\tilde{U}}
		\int_{\substack{a_1,a_2\in\A^\times\\ |a_1|_v^{-1},N(\p_v)^{e_v}|a_2|_v\le 1,
		v<\infty\\|a_1|_v^{-1},X_v|a_2|_v\le \tau_v,v\mid\infty}}\int_{F^\times\bs \A^\times}
		|a_1a_2z^{n-2}|^{\Re(s_1+s_2)} \\
		\times \min(1,|a_1/z|^{-M_1})\min(1,|z/a_2|^{-M_2})\min(1,|a_1/a_2|^{-2N})\d^\times z
		\frac{\d^\times a_1\d^\times a_2}{|a_1/a_2|^{n-1}}\d \tilde{u}\d\tilde{g}\d k
		\d_{\Re(s_2)} y.
	\end{multline*}
  Noting that $C,K,[\tilde{U}]$ are compact and $\mathcal{S}$ has finite volume,
  we argue that the above integral is
	\begin{multline*}
		\ll_{M_1,M_2,N} (N(\q)X)^{-N}\int_{\substack{a_1,a_2\in\A^\times\\|a_1|_v^{-1},|a_2|_v\le 1}}|a_1|^{\Re(s_1+s_2)-n+1-N}|a_2|^{\Re(s_1+s_2)+n-1+N}\\
		\int_{F^\times\bs \A^\times}|z|^{(n-2)\Re(s_1+s_2)}\min(|z|^{M_1},|z|^{-M_2})
		\d^\times z{\d^\times a_1\d^\times a_2}.
	\end{multline*}
	Let $s_1,s_2$ be arbitrary.
	The $z$-integral above is absolutely convergent and is $O(1)$ uniformly in $a_1,a_2$. Thus we can estimate the above integral by
	$$\ll \prod_{v}\int_{|a_1|_v\ge 1}|a_1|_v^{\Re(s_1+s_2)+1-n-N}\d^\times a_1\int_{|a_2|_v\le 1}|a_2|_v^{\Re(s_1+s_2)+n-1+N}\d^\times a_2.$$
	Making $N$ sufficiently large we see that all of the integrals and the infinite products converge absolutely. Hence, we conclude.
\end{proof}

\begin{prop}\label{rapid-decay-xi-perp-type-2}
Let $f$ be of type II. Then the integral representation of $I^0(\xi^\perp,s_1,s_2,T)$ is absolutely convergent for all $s_1,s_2$ and	uniformly in $T$. Thus,
\begin{equation*}
	I^0(\xi^\perp,s_1,s_2):=\lim_{T\to\infty}I^0(\xi^\perp,s_1,s_2,T)
\end{equation*}
defines an entire function in $s_1,s_2$. Moreover, for $s_1,s_2$ at a neighbourhood of $(0,0)$ 
$$I^0(\xi^\perp,s_1,s_2)\ll_N (N(\q)X)^{-N}$$
for all large $N$.
\end{prop}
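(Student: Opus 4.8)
The plan is to deduce this from the proof of Proposition~\ref{rapid-decay-xi-perp-type-1}, which goes through almost verbatim once one checks that the two places where the type~II test function differs from the type~I one — the supercuspidal place $\p_0$ and the auxiliary place $\p_1$ — contribute only harmless extra constraints. First I would make the same change of variables $y\mapsto xy$ to write
\[
I^0(\xi^\perp,s_1,s_2,T)=\int^{T}_{R(F)\bs G_n(\A)\times G_n(\A)}{f}
\sbr{\mat{y&x^{-1}e_1^{\top}\\ e_nxy&1}}\phi_1(x)\overline{\phi_2(xy)}\,\d_{s_1+s_2} x\,\d_{s_2} y,
\]
note that $\det\sbr{\mat{y&x^{-1}e_1^{\top}\\ e_nxy&1}}=\det y$, and then analyse the support of $f$ one place at a time.

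At every $v\notin\{\p_0,\p_1\}$ the component $f_v$ is the one from \S\ref{type-1-test-function} — at archimedean $v$ the extra convolution $\ast^c\alpha_v$ only modifies the support of $f_v^0$ by a fixed bounded amount, which is irrelevant — so the support computation from the proof of Proposition~\ref{rapid-decay-xi-perp-type-1} applies unchanged: it confines $y$ to a fixed compact set $C\subset G_n(\A)$ with $\det y\asymp 1$, and, writing $x=\tilde u\,\diag(a_1,z\tilde g,a_2)\,k$ as there, it forces $|a_1|_v^{-1}\ll1$ at every $v$, $N(\p_v)^{e_v}|a_2|_v\le1$ for $v\mid\q$, and $X_v|a_2|_v\ll\tau_v$ for $v\mid\infty$; globally this already gives $|a_1|_\A\gg1$ and $|a_2|_\A\ll(N(\q)X)^{-1}$. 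For $v=\p_0$ I would use that the matrix coefficient of a supercuspidal representation is compactly supported on $\overline{G_{n+1}}(F_{\p_0})$ (the centre being trivial), so $f_{\p_0}\neq0$ confines the displayed matrix to a fixed compact set; combining this with $\det y\asymp1$ (hence $y_{\p_0}^{-1}$ bounded) bounds $x_{\p_0}^{-1}e_1^\top$ and $e_nx_{\p_0}=(e_nx_{\p_0}y_{\p_0})y_{\p_0}^{-1}$, i.e.\ $|a_1|_{\p_0}^{-1}\ll1$ and $|a_2|_{\p_0}\ll1$. For $v=\p_1$ I would factor $\mat{y&x^{-1}e_1^{\top}\\ e_nxy&1}=\mat{\I_n&x^{-1}e_1^\top\\&1}\mat{y-x^{-1}e_1^\top e_nxy&\\ e_nxy&1}$ as in the proof of Lemma~\ref{lem:xi-t-type-2-auxiliary}, feed in the explicit support conditions of \S\ref{subsubsec:auxiliary}, and use $\nu\gg|\mu|$ to conclude $x_{\p_1}^{-1}e_1^\top\in\o_{\p_1}^n$, $y_{\p_1}\in G_n(\o_{\p_1})$ up to a bounded power of $\p_1$, and $e_nx_{\p_1}\in\o_{\p_1}^n$, again giving $|a_1|_{\p_1}^{-1}\ll1$ and $|a_2|_{\p_1}\ll1$.

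The upshot is that the type~II test function only shrinks the $x$-domain by conditions of precisely the shape already handled in the type~I proof, while the global conclusions $|a_1|_\A\gg1$, $|a_2|_\A\ll(N(\q)X)^{-1}$ and $y\in C$ survive intact. I would then reproduce the remainder of the proof of Proposition~\ref{rapid-decay-xi-perp-type-1} line by line: decompose $R(F)\bs G_n(\A)$ along a Siegel domain, bound $\phi_1$ via \cite[Lemma~I.2.10]{MW} by $\min(1,|a_1/z|^{-M_1})\min(1,|z/a_2|^{-M_2})\min(1,|a_1/a_2|^{-N})$, bound $\phi_2(xy)$ by $O(1)$ uniformly for $y\in C$ (cuspidality and triviality of the central character of $\pi_2$), carry along the weight $|\det x|^{\Re(s_1+s_2)}$, extract the factor $(N(\q)X)^{-N}$ from the bound on $|a_2|_\A$, and check that the residual $a_1,a_2,z$-integrals converge absolutely and uniformly in $T$ once $N$ is taken large (depending on $s_1,s_2$). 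This yields the absolute convergence uniform in $T$, the fact that $I^0(\xi^\perp,s_1,s_2)$ is entire in $s_1,s_2$, and the stated bound near $(0,0)$. There is no serious obstacle here; the only thing requiring care is the bookkeeping at $\p_0$ and $\p_1$ — in particular confirming that $K_{n,\p_1}\diag(\p_1^\mu)K_{n,\p_1}$ is a compact set with entries in $\o_{\p_1}$ (so that $y_{\p_1}$ and its inverse stay bounded) and that supercuspidality really delivers compact, not merely ``compact-mod-centre'', support.
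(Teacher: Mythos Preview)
Your approach is essentially the same as the paper's and is correct in outline. One inaccuracy worth flagging: you assert that at archimedean $v$ the type~II component $f_v$ is ``the one from \S\ref{type-1-test-function}'' up to the harmless convolution by $\alpha_v$. This is not so --- the type~II archimedean $f_v^0$ has its $a$-support in $K_v\mathcal{T}(\tau_v)K_v$ (a bi-$K_v$-invariant set containing all of $K_v$), not in a small ball around the identity as in type~I. Consequently the direct type~I reading $\|y_v-\I_n\|_v<\tau_v$ is unavailable. The paper handles archimedean $v$ for type~II by the same factorization trick you already invoke for $\p_1$: write
\[
\mat{y&x^{-1}e_1^\top\\ e_nxy&1}=\mat{\I_n&x^{-1}e_1^\top\\&1}\mat{y-x^{-1}e_1^\top e_nxy&\\ e_nxy&1},
\]
read off that $y-x^{-1}e_1^\top e_nxy\in K_v\mathcal{T}(\tau_v)K_v$, $|x_v^{-1}e_1^\top|\le\tau_v$, $X_v|e_nx_vy_v|\le\tau_v$; note $\det y=\det(y-x^{-1}e_1^\top e_nxy)\asymp1$ so that $y_v$ lies in a fixed compact $C_v$; and deduce $X_v|e_nx_v|_v=O(1)$ from $e_nx_v=(e_nx_vy_v)y_v^{-1}$. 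With this correction the remainder of your argument goes through exactly as you describe and coincides with the paper's proof.
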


The proof is essentially the same as the proof of Proposition \ref{rapid-decay-xi-perp-type-1}. We are only required to modify the support conditions of the relevant variables.

\begin{proof}
Recall that
  $$\det\left[\begin{pmatrix}
      y         & x^{-1}e_1^{\top} \\
      e_nxy & 1
    \end{pmatrix}\right]=\det(y),$$
    and note that
    $$\begin{pmatrix}
      y         & x^{-1}e_1^{\top} \\
      e_nxy & 1
    \end{pmatrix}=\begin{pmatrix}
      \I_n         & x^{-1}e_1^{\top} \\
       & 1
    \end{pmatrix}\begin{pmatrix}
      y - x^{-1}e_1^\top e_nxy       &  \\
      e_nxy & 1
    \end{pmatrix}.$$
    If
  \[
    {f}\sbr{\begin{pmatrix}
      y         & x^{-1}e_1^{\top} \\
      e_nxy & 1
    \end{pmatrix}}
    \neq 0
  \]
  then for each finite place $v\neq \p_0,\p_1$, working as before, we have
  \[
    y_v\in G_n(\o_v),
    \quad x_v^{-1}e_1^\top\in\o_v^n,\quad e_nx_v\in\p_v^{e_v}\o_v^n.
  \]
For $v=\p_0$ or $v=\p_1$, we find compact sets $\mathcal{C}_1\subset G_{n+1}(F_v)$, $\mathcal{C}_2\subset F_v^\times$, and $\mathcal{C}_3\subset F_v^n$ so that
$$\begin{pmatrix}
      y  & x^{-1}e_1^{\top} \\
      e_nxy & 1
    \end{pmatrix}\in\mathcal{C}_1 \mod Z_{n+1}(F_v),\quad \det(y_v)\in\mathcal{C}_2,\quad x_v^{-1}e_1^\top, e_nx_vy_v\in\mathcal{C}_3.$$
Thus we conclude that there exists a compact set $C_v\subset G_n(F_v)$ such that
    \[
    y_v\in C_v,
    \quad |x_v^{-1}e_1^\top|_v=O(1),\quad |e_nx_v|_v=O(1).
  \]
 On the other hand, for each $v\mid\infty$ we see that entries of $y_v-x_v^{-1}e_1^\top e_n x_vy_v$, $x_v^{-1}e_1^\top$, and $e_nx_vy_v$ are bounded. Thus the entries of $y_v$ are bounded. But we also have $\det(y_v)\asymp 1$. Thus there exists a compact set $C_v\subset G_n(F_v)$ such that
 $$y_v\in C_v,\quad |x_v^{-1}e_1^\top|_v =O(1),\quad X_v|e_nx_vy_v|_v=O(1)\implies X_v|e_nx_v|_v=O(1).$$
Therefore, working as in the proof of Proposition \ref{rapid-decay-xi-perp-type-1} we find a compact set $C\subset G_n(\A)$ so that
\begin{equation*}
    I^0(\xi^\perp,s_1,s_2,T)\ll\int\int
    |\phi_1(x)\pi_2(y)\phi_2(x)| \d_{\Re(s_1+s_2)} x \d_{\Re(s_2)} y,
\end{equation*}
where we integrate over $y\in C, x\in R(F)\backslash G_n(\A)$ such that
$$
\begin{cases}
    T^{-1}<|\det x|< T,\\
    x_v^{-1}e_1^\top,\,\p_v^{-e_v}e_nx_v\in\o_v^n,\,\p_0,\p_1\neq v<\infty\\
    |x_v^{-1}e_1^\top|_v,\,X_v|e_nx_v|_v=O(1),\,v\mid\infty\\
     |x_v^{-1}e_1^\top|_v,\,|e_nx_v|_v=O(1),\,v=\p_0,\p_1.
\end{cases}
$$
 From here on, the proof is similar to the proof of Proposition \ref{rapid-decay-xi-perp-type-1}.
\end{proof}

\subsection{The orbits $\xi=w'$, $n^+w'$ and $w'n^+$}

Our goal here is to show that the contribution of these three terms vanishes due
to the support of the function $f$. 

\begin{lem}\label{vanish-genuine-orbital}
    Let $f$ be of either type I or type II. Then
    $$I^0(w',T)=I^0(n^+w',T)=I^0(w'n^+,T)=0$$
    for any $s_1,s_2$.
\end{lem}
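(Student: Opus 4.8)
The plan is to show that for each of the three representatives $w'$, $n^+w'$, and $w'n^+$, the argument of $f$ in the relevant orbital integral is a matrix whose $(n+1,n+1)$-entry is forced to vanish (after clearing the central ambiguity by a scalar), while the support of every local component $f_v$ we have chosen is contained in the set of matrices with nonzero bottom-right entry; hence $f$ vanishes identically on these orbits and the integrals are zero. Concretely, I would first compute, for $x,y \in G_n(F)$, the products
\begin{equation*}
\mat{x^{-1}&\\&1} w' \mat{y&\\&1},\quad
\mat{x^{-1}&\\&1} n^+w' \mat{y&\\&1},\quad
\mat{x^{-1}&\\&1} w'n^+ \mat{y&\\&1}.
\end{equation*}
Using $w'=\mat{\I_{n-1}&&\\&&1\\&1&}$ one sees directly that the bottom-right $1\times 1$ block of $\mat{x^{-1}&\\&1}w'\mat{y&\\&1}$ is $0$ (the last column of $w'\mat{y&\\&1}$ has a $1$ in the $n$-th slot and $0$ in the $(n+1)$-st); similarly for $n^+w'$ and $w'n^+$ a short computation with the explicit matrices in \eqref{coset-reps} shows the $(n+1,n+1)$-entry is $0$. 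Because the orbit $[\xi]_0$ consists precisely of $Z_{n+1}(F)$-multiples of such products (see the stabilizer analysis in \S\ref{sec:orbital-integrals-without-unipotent}), every element $\gamma \in [w']_0 \cup [n^+w']_0 \cup [w'n^+]_0$, when lifted to $G_{n+1}(F)$, has vanishing bottom-right entry up to scaling.

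Next I would invoke the structural remark from \S\ref{sec:choice-of-vectors}: for every place $v$, the chosen local test function $f_v$ — whether of type I or type II — is supported on matrices of the form $\mat{h&u\\&1}\mat{\I_n&\\v&1}$ with $h \in G_n(F_v)$, i.e.\ on matrices with nonzero $(n+1,n+1)$-entry. (For the unramified and level places this is clear from $f_v$ being supported on $\overline{K_0}(\p_v^{e_v})$ or on $\overline{G_{n+1}}(\o_v)$, both of which have bottom-right entry a unit; for the archimedean places it is built into the definition via the $(a,b,c)$-coordinates, which presuppose $g_{n+1,n+1}\neq 0$; for the supercuspidal place $\p_0$ the matrix coefficient $\langle\sigma(g)W_\sigma,W_\sigma\rangle$ is compactly supported mod center but I only need the weaker fact that $f$ is globally supported where the bottom-right entry is nonzero, which follows from the $\p_1$- and archimedean components.) Since $f = \otimes_v f_v$, the global $f$ vanishes on any adelic matrix whose $(n+1,n+1)$-entry is zero at even one place. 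Combining with the previous paragraph: for all $x \in G_n(\A)$, $y \in G_n(\A)$ (in fact already at the level of $G_n(F)$-cosets) and all $\gamma$ in the three orbits, $f\big[\mat{x^{-1}&\\&1}\gamma\mat{y&\\&1}\big] = 0$, so the integrand of each of $I^0(w',T)$, $I^0(n^+w',T)$, $I^0(w'n^+,T)$ in \eqref{I0-definition} vanishes pointwise, and hence so do the integrals, for every $s_1,s_2$ and every $T$.

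The only genuinely computational point — and the place where I expect to spend most of the effort — is verifying that the $(n+1,n+1)$-entry really is $0$ for $n^+w'$ and $w'n^+$ and not merely for $w'$; the conjugates $n^+$ and $w'$ individually lie in $P_{n+1}$ or swap the last two coordinates, so one must be slightly careful about which entry lands where after multiplying on both sides by $\diag(x^{-1},1)$ and $\diag(y,1)$. This is a finite, low-dimensional matrix check (it suffices to look at the last row and last column), and no subtlety beyond bookkeeping arises; in particular the determinant identities recorded in the proof of Proposition \ref{doublecosets-with-unipotent} already display $w'n^+$ and $n^+w'$ in a form from which the vanishing bottom-right block is transparent. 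I would present the computation for $w'$ in full and indicate that the other two are analogous.
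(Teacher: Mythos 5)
There is a genuine gap, and it is precisely at the point you flagged as ``the only genuinely computational point'' and then assumed would go through: the $(n+1,n+1)$-entry of the $w'n^+$ orbit does \emph{not} vanish. Writing out $w'n^+$ explicitly (using $w'=\mat{\I_{n-1}&&\\&&1\\&1&}$ and $n^+=\mat{\I_n&e_n^\top\\&1}$), one finds its lower-right $2\times 2$ block is $\mat{0&1\\1&1}$, so $(w'n^+)_{n+1,n+1}=1$, and hence $\bigl(\diag(x^{-1},1)\,w'n^+\,\diag(y,1)\bigr)_{n+1,n+1}=1$ for all $x,y\in G_n$, which is nonzero for every representative in the orbit (scaling by $Z_{n+1}$ does not change this). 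In fact the identity you invoke from the proof of Proposition~\ref{doublecosets-with-unipotent}, namely $w'n^+=\mat{\I_{-n}&e_n^\top\\&1}n^-=\mat{\I_0&e_n^\top\\e_n&1}$, already shows the bottom-right entry is $1$, not that it vanishes. So the ``$g_{n+1,n+1}=0$'' mechanism, which the paper does use for $w'$ and $n^+w'$ (and there your reasoning is correct), simply does not apply to $w'n^+$.

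The paper's actual argument for $w'n^+$ is different and needs more than the bottom-right entry: one decomposes $g=\diag(x^{-1},1)\,w'n^+\,\diag(y,1)=\mat{x^{-1}\I_{-n}y&x^{-1}e_n^\top\\&1}\mat{\I_n&\\e_ny&1}$ and observes that, if $f_v(g)\neq 0$ at an archimedean $v$, the support conditions force $\det(x^{-1}\I_{-n}y)\asymp 1$ together with $|x^{-1}e_n^\top|_v,|e_ny|_v\le\tau_v$; but then $\det(x^{-1}\I_{-n}y+x^{-1}e_n^\top e_ny)\asymp 1$, which is a contradiction because $x^{-1}\I_{-n}y+x^{-1}e_n^\top e_ny=x^{-1}\I_0y$ has determinant zero. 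In other words, the obstruction is the rank deficiency of the top-left $n\times n$ block forcing a degenerate determinant, not the vanishing of the corner entry. Your proof proposal as written therefore only establishes the vanishing of $I^0(w',T)$ and $I^0(n^+w',T)$; the third case requires this separate determinant argument.
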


\begin{proof}
    Recalling \eqref{I0-definition} we see that it is enough to show that $$f\sbr{\begin{pmatrix}x&\\&1\end{pmatrix}\xi\begin{pmatrix}y&\\&1\end{pmatrix}}=0$$
    for any $x,y\in G_n(\A)$. Thus, in turn, it suffices to show that for some $v\mid\infty$ $$f_v\sbr{\begin{pmatrix}x&\\&1\end{pmatrix}\xi\begin{pmatrix}y&\\&1\end{pmatrix}}=0$$
    for any $x,y\in G_n(F_v)$.

    Let $\xi=w'$ or $\xi=n^+w'$. In either case, $g_{n+1,n+1}=0$ where $$g:=\begin{pmatrix}x&\\&1\end{pmatrix}\xi\begin{pmatrix}y&\\&1\end{pmatrix}.$$
    Hence, if $v\mid\infty$ then $f_v(g)=0$ according to the choices in \S\ref{type-1-test-function} and \S\ref{type-2-test-function}.

    Now let $\xi=w'n^+$. In this case, we have
    $$g=\mat{x\J_0y&xe_n^\top\\e_ny&1}=\mat{x\J_{-1}y& xe_n^\top\\&1}\mat{\I_n&\\e_ny &1}$$
    where, again, $\J_t=\diag(\I_{n-1},t)$.
    Now for $v\mid\infty$ if $f_v(g)\neq 0$ then
    $$\det(x\J_{-1}y)\asymp 1,\quad |xe_n^\top|_v,|e_ny|_v\le \tau_v.$$
    This implies $\det(x\J_{-1}y+xe_n^\top e_ny)\asymp 1$. But this is a contradiction as $x\J_{-1}y+xe_n^\top e_ny=x\I_{0}y$ and $\det(\J_0)=0$. Hence, $f_v(g)=0$.
\end{proof}

\section{Orbital integrals with unipotent integration}
\label{sec:orbital-integrals-with-unipotent}

Recall the coset representatives  under the double action of $(P_{n+1}(F), G_n(F))$ from Proposition \ref{doublecosets-with-unipotent}.
In this section, we will analyze $I^{\eta}(\xi, T)$ for $\eta\in\{1,2,12\}$ and $\xi\in\{e,n^-,w'\}$.

\subsection{Computing $I^1(n^-,T)$}\label{sec:I-1-n-T}

We first compute the stabilizer of $n^-$ under the action of $P_{n+1}(F)\times G_n(F)$. Note that if
$$\mat{x^{-1}&u\\&1}n^-\mat{y&\\&1}=n^-\mod Z_{n+1}(F)$$
then $$x=y,\quad e_n y=y,\quad u=0.$$
Thus the stabilizer is $P_n(F)$ and thus $[n^-]_1$ is given by
$$\left\{hn^-\mat{g&\\&1}\,\mid\, h\in P_{n+1}(F), g\in P_n(F)\bs G_n(F)\right\}.$$
Recalling \eqref{I1-definition} we obtain
\begin{equation*}
	I^1(n^-,T)=\int^T_{G_n(\A)\times P_n(F)\backslash G_n(\A)}
	\int_{U_{n+1}(\A)}
	{f}\left[\begin{pmatrix} x^{-1}&\\&1 \end{pmatrix} un^-
	\begin{pmatrix} y&\\&1 \end{pmatrix}
	\right] \phi_1(x)\overline{\phi_2(y)}\d u\d_{s_1} x\d_{s_2}y.
\end{equation*}
We plug in the Fourier expansion of $\phi_2$ from \eqref{fourier-whittaker-expansion}. Noting that any $\gamma\in P_n(F)$ commutes with $n^-$ and changing variables
\[
  u\mapsto\mat{\gamma&\\&1}u\mat{\gamma^{-1}&\\&1},\quad x\mapsto\gamma x
\]
and using automorphy of $\phi_1$ we obtain that $I^1(n^-,T)$ can be expressed as 
\[
  \int^T_{G_n(\A)\times N_n(F)\backslash G_n(\A)}
	\int_{U_{n+1}(\A)}
	{f}\left[\begin{pmatrix} x^{-1}&\\&1 \end{pmatrix} un^-
	\begin{pmatrix} y&\\&1 \end{pmatrix}
	\right] \phi_1(x)\overline{W_{2}(y)}\d u\d_{s_1} x\d_{s_2}y.
\]
We unfold the $N_n(F)\backslash G_n(\A)$-integral as $[N_n]\times
N_n(\A)\backslash G_n(\A)\ni(\beta,y)$-integral. Noting that $\beta\in N_n(\A)$ also commutes
with $n^-$, changing variable $u\mapsto\mat{\beta&\\&1}u\mat{\beta^{-1}&\\&1}$ and
$x\mapsto\beta x$, and applying $\psi$-equivariance of $W_{2}$ we recast
the above expression as
\begin{equation*}
	\int^T_{G_n(\A)\times N_n(\A)\backslash G_n(\A)}
	\int_{U_{n+1}(\A)}
	{f}\left[\begin{pmatrix} x^{-1}&\\&1 \end{pmatrix} un^-
	\begin{pmatrix} y&\\&1 \end{pmatrix}
	\right] W_{1}(x)\overline{W_{2}(y)}\d u\d_{s_1} x \d_{s_2} y.
\end{equation*}
Parameterizing $u\in U_{n+1}(\A)$ by $b\in \A^n$ and changing variable $b\mapsto xb$ we write the above as
\begin{equation*}
    \int^T_{G_n(\A)\times N_n(\A)\backslash G_n(\A)}\int_{\A^n}{f}\left[\begin{pmatrix} x^{-1}y+be_ny&b\\e_ny&1 \end{pmatrix}\right] W_{1}(x)\overline{W_{2}(y)}\d b\d_{s_2} y \d_{s_1+1}x.
\end{equation*}
We define a region in $\Rr^2$ by
\begin{equation}\label{def-region-C1}
\mathcal{C}_1:=\left\{(u_1,u_2)\in\Rr^2_{\ge 0}\mid u_1+u_2,\frac{n-1}{2}+nu_1-u_2>0\right\}.
\end{equation}
We claim that the above integral converges absolutely for
$(\Re(s_1),\Re(s_2))\in\mathcal{C}_1$, uniformly in $T$. To show that, we consider
the above integral without truncation, namely,
\begin{equation}\label{final-I1-global}
\int_{G_n(\A)}\int_{N_n(\A)\backslash G_n(\A)}\int_{\A^n} {f}\left[\begin{pmatrix} x^{-1}y+be_ny&b\\ e_ny&1 \end{pmatrix} \right] W_{1}(x)\overline{W_{2}(y)}\d b\d_{s_2} y \d_{s_1+1} x
\end{equation}
and prove that it converges absolutely for $\Re(s_1),\Re(s_2)\in\mathcal{C}_1$.

We define the local factor at a place $v$ by
\begin{multline}\label{final-I1-local}
I_v^1(n^-,s_1,s_2):=\int_{G_n(F_v)}\int_{N_n(F_v)\backslash G_n(F_v)}\int_{F_v^n}
{f}_v\left[\begin{pmatrix}x^{-1}+be_ny&b\\ e_ny&1 \end{pmatrix}\right]\\
W_{1,v}(yx)\overline{W_{2,v}(y)}\d b\d_{s_1+s_2+1} y \d_{s_1+1} x.
\end{multline}
Noting the factorizability of \eqref{final-I1-global}, to prove the above claim, after a change of variable $x\mapsto yx$, it suffices to show that 
\begin{itemize}
    \item the integral in \eqref{final-I1-local},
    \item and the infinite product $\prod_{v\text{ unramified}}I_v^1(n^-,s_1,s_2)$
\end{itemize}
converge absolutely for $\Re(s_1),\Re(s_2)\in\mathcal{C}_1$.

\begin{lem}\label{meromorphic-cont-degenerate}
    Let $v$ be any place and let $f_v$ be any compactly supported smooth
    function on $\overline{G_{n+1}}(F_v)$. Then the integral defined in
    \eqref{final-I1-local} converges absolutely for $\Re(s_1),\Re(s_2)\in\mathcal{C}_1$,
    thus defining a holomorphic function in $s_1,s_2$ in that region.
\end{lem}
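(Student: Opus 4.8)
Since $f_v\in C_c^\infty(\overline{G_{n+1}}(F_v))$ I may fix a compact $\Omega_0\subset G_{n+1}(F_v)$ with $|f_v(g)|\ll\mathbf{1}_{Z_{n+1}(F_v)\Omega_0}(g)$, so it suffices to bound the integral \eqref{final-I1-local} with $f_v$ replaced by this indicator and $W_{1,v},W_{2,v}$ by their absolute values. Writing
\[
g:=\begin{pmatrix}x^{-1}+be_ny&b\\ e_ny&1\end{pmatrix}=\begin{pmatrix}x^{-1}&b\\&1\end{pmatrix}\begin{pmatrix}\I_n&\\e_ny&1\end{pmatrix},\qquad g^{-1}=\begin{pmatrix}\I_n&\\-e_ny&1\end{pmatrix}\begin{pmatrix}x&-xb\\&1\end{pmatrix},
\]
so that $\det g=(\det x)^{-1}$, the condition $g\in Z_{n+1}(F_v)\Omega_0$ means some $z\in F_v^\times$ makes $zg$ and $(zg)^{-1}$ entrywise bounded and $\det(zg)\asymp1$. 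Reading off entries and determinants gives, with $q:=|z|_v\asymp|\det x|_v^{1/(n+1)}$ and $\|\cdot\|_v$ the max-norm,
\[
|\det x|_v\ll1,\qquad \|x\|_v\ll q,\qquad \|xb\|_v\ll q,\qquad \|(e_ny)x\|_v\ll q,\qquad \|e_ny\|_v\ll q^{-1}.
\]
In particular, for fixed $x,y$ the variable $b$ lies in a set of volume $\ll|\det x|_v^{-1}q^n=q^{-1}=|\det x|_v^{-1/(n+1)}$, so performing the harmless $b$-integral first bounds $|I_v^1(n^-,s_1,s_2)|$ by
\[
\ll\int_{G_n(F_v)}\int_{N_n(F_v)\backslash G_n(F_v)}|W_{1,v}(yx)|\,|W_{2,v}(y)|\,|\det y|_v^{\sigma_1+\sigma_2+1}\,|\det x|_v^{\sigma_1+1-\frac1{n+1}}\,\mathbf{1}_{\mathcal E}(x,y)\,dy\,dx,
\]
$\sigma_i=\Re(s_i)$, with $\mathcal E$ the set cut out by the four remaining inequalities above.

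\textbf{Coordinates and Whittaker estimates.} Next I would substitute $x\mapsto y^{-1}x$ to turn $W_{1,v}(yx)$ into $W_{1,v}(x)$ (shifting a power of $|\det y|_v$), then pass to Iwasawa coordinates $y=a\kappa$ ($a=\diag(a_1,\dots,a_n)\in A_n(F_v)$, $\kappa\in K_{n,v}$, $\|e_ny\|_v\asymp|a_n|_v$, $\,dy=\delta^{-1}(a)\,d^\times a\,d\kappa$) and similarly decompose $x$. For the Whittaker factors I would invoke the standard gauge bounds: $|W_{2,v}(a\kappa)|\ll\delta^{1/2}(a)\prod_i(1+|\log|a_i/a_{i+1}|_v|)^{O(1)}$ with rapid decay once some $|a_i/a_{i+1}|_v\to\infty$, and likewise for a version of $W_{1,v}$ smoothed over the compact-group directions; crucially, $\pi_1,\pi_2$ having trivial central character makes $|W_{i,v}|$ invariant under the centre, so the overall-scaling coordinate is unconstrained by the Whittaker factors and must be controlled purely by the determinant weights. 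The constraints in $\mathcal E$ confine the unipotent part of $x$ (so the $N_n(F_v)$-integral runs over a box, not all of $N_n(F_v)$) and push $x$ toward the locus $\det x=0$ relative to $y$. After the two factors $\delta^{1/2}$ cancel the $\delta^{-1}$ of the measure, the estimate collapses to a finite product of elementary integrals $\int_{|t|_v\le1}|t|_v^{\lambda}(1+|\log|t|_v|)^{O(1)}\,d^\times t$ (the complementary ranges $|t|_v>1$ being absorbed by the rapid decay of the Whittaker functions) in the simple-root variables $t_i=a_i/a_{i+1}$ and in a leftover determinant variable, each finite precisely when $\lambda>0$. Bookkeeping these exponents shows the needed positivity is implied by $(\sigma_1,\sigma_2)\in\mathcal{C}_1$: the threshold $\sigma_1+\sigma_2>0$ is the familiar $\GL(n)\times\GL(n)$ zeta-integral condition, while $\tfrac{n-1}2+n\sigma_1-\sigma_2>0$ records the interaction of the last torus coordinate with the degeneration of $x$ — the $\tfrac{n-1}2$ reflecting the contribution of the Borel modulus $\delta$ in that direction and the factor $n$ coming from the $\tfrac1{n+1}$-exponents produced in the first step.

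\textbf{Holomorphy, and the main difficulty.} On $\mathcal{C}_1$ the integrand in \eqref{final-I1-local} is entire in $(s_1,s_2)$ and, by the above, is dominated locally uniformly by an integrable majorant; hence $I_v^1(n^-,s_1,s_2)$ is holomorphic on $\mathcal{C}_1$ by differentiation under the integral sign (equivalently, Morera together with Fubini). The crux is the middle step: the region $\mathcal E$ lets $x$ run off toward $\det x=0$, where $W_{1,v}$ does \emph{not} decay along central directions, so one cannot simply trap $x$ in a compact set; convergence there must be extracted from the genuine, non-central $\delta^{1/2}$-decay of the Whittaker function balanced against the determinant weights and the growth of the Cartan measure factor. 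This boundary analysis is what forces the non-trivial inequality $\tfrac{n-1}2+n\sigma_1-\sigma_2>0$, and it is also where uniformity over both archimedean and non-archimedean $v$ and over general smooth (non-spherical) vectors demands the most care.
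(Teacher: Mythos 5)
Your geometric-majorant setup is sound and, up to the point where you have reduced to
\[
\int\!\!\int |W_{1,v}(yx)|\,|W_{2,v}(y)|\,|\det y|^{\sigma_1+\sigma_2+1}\,|\det x|^{\sigma_1+1-\frac{1}{n+1}}\,\mathbf 1_{\mathcal E}(x,y)\,dy\,dx,
\]
it constitutes a genuine alternative to the paper's route. The paper does not derive the full list of support constraints at once; instead it dichotomizes on $|e_ny|\le 1$ versus $|e_ny|>1$. In the first regime the matrix $\left(\begin{smallmatrix}\I_n&\\ e_ny&1\end{smallmatrix}\right)$ already lies in a compact set, so compactness of $\operatorname{supp} f_v$ traps $x$ and $b$ in compacta, and one is left with a $\GL(n)\times\GL(n)$-type integral convergent for $\Re(s_1+s_2)>-1$ by the Whittaker bound of \cite{JaNu2021reciprocity}, Lemma~3.1. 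In the second regime the paper passes to Iwasawa coordinates on $y$, uses the identity
\[
\begin{pmatrix}\I_n&\\ z e_n&1\end{pmatrix}=\begin{pmatrix}\I_{n-1}&&\\&z^{-1}&1\\&&z\end{pmatrix}\begin{pmatrix}\I_{n-1}&&\\&&-1\\&1&z^{-1}\end{pmatrix}
\]
to peel off the scaling variable $z$, and then a single change of variables compactifies $(x,b)$ and reduces everything to a one-dimensional integral $\int_{|z|<1}|z|^{ns_1-s_2+\frac{n+1}{2}}\,d^\times z$, whose convergence visibly gives the second defining inequality of $\mathcal C_1$. That decomposition makes the exponent bookkeeping essentially mechanical.

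Your version trades that dichotomy for the coupled constraints $\|x\|\ll q$, $\|(e_n y)x\|\ll q$, $\|e_n y\|\ll q^{-1}$, $|\det x|\asymp q^{n+1}$, which is a legitimate but harder starting point: $x$ is forced toward the singular locus, and the compensation for the factor $|\det x|^{\sigma_1+1-\frac1{n+1}}$ must come out of the Whittaker decay of $W_{2,v}$ along the last torus coordinate of $y$, which is itself coupled to $\det x$ through $q$. This is exactly where your write-up stops being a proof. The second paragraph asserts that ``the estimate collapses to a finite product of elementary integrals'' and that ``bookkeeping these exponents shows the needed positivity is implied by $(\sigma_1,\sigma_2)\in\mathcal C_1$,'' but neither the change of variables $x\mapsto y^{-1}x$ (which muddies the support constraints rather than clarifying them — after that substitution $\mathcal E$ is no longer a product region in $x$ and $y$), nor the Iwasawa decomposition, nor the cancellation of the $\delta^{\pm 1}$ factors is actually carried out; the claimed gauge bound $|W(a\kappa)|\ll\delta^{1/2}(a)(1+|\log|a_i/a_{i+1}||)^{O(1)}$ in the non-spherical archimedean case also needs a citation and is not quite the bound the paper uses. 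Your final paragraph explicitly identifies this boundary analysis as ``the crux'' and then does not resolve it. So the gap is concrete: the inequality $\tfrac{n-1}{2}+n\sigma_1-\sigma_2>0$ is exactly what must be \emph{derived} from the Whittaker-decay/measure/support interplay, and in your proposal it is asserted rather than proved. If you want to push your route through, the cleanest fix is to do what the paper does inside the regime $|e_ny|>1$: peel off the single scaling variable $z$ before invoking any Whittaker bound, so that the three coupled inequalities in $\mathcal E$ reduce to a single one-variable constraint and the exponent accounting becomes transparent.
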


\begin{proof}
In this proof, we work locally at the place $v$ without mentioning the subscript $v$. Also, for the purpose of showing absolute convergence, we assume that $f, W_1, W_2$ assume
positive real values and $s_1,s_2$ are real.

We divide the above integral into two parts, namely, $|e_ny|\le 1$ and $|e_ny| > 1$. Note that if $|e_ny|\le 1$ then $\mat{\I_n&\\e_ny&1}$ varies over a compact set. Thus from the compact support condition of $f$ we obtain that there is a compact set $C\subset G_{n+1}(F)$ so that
$$\mat{x^{-1}&b\\&1}\in C\mod Z_{n+1}(F).$$
The above implies that $x$ and $b$ vary over compact sets in $C_1\subset G_n(F)$ and $C_2\subset F^n$, respectively. Consequently, the integral in consideration is bounded by
$$\ll \int_{C_1}\int_{N_n(F)\bs G_n(F)}W_1(yx)W_2(y)\1_{|e_ny|\le 1}\d_{s_1+s_2+1}y\d_{s_1+1} x.$$
The above integral converges absolutely for $\Re(s_1+s_2)>-1$, which follows from \cite[Lemma 3.1]{JaNu2021reciprocity}.

Now we consider the part with $|e_ny|>1$. First, using Iwasawa coordinates
$$N_n(F)\bs G_n(F)\ni y=z\diag(a,1)k;\quad z\in F^\times, a\in A_{n-1}(F), k\in K_{n,v};$$
$$\d y=\d^\times z\frac{\d a}{\delta(a)|\det(a)|}\d k$$
and changing variable $x\mapsto k^{-1}x$ we rewrite \eqref{final-I1-local} as
\begin{multline*}
    \int_{G_n(F)}\int_{K_{n,v}}\int_{F^n}\int_{F^\times}{f}\left[\begin{pmatrix} x^{-1}&b\\&1 \end{pmatrix}\mat{\I_n&\\ze_n&1}\mat{k&\\&1}\right]|z|^{n(1+s_1+s_2)}\d^\times z\d b\\
    \int_{A_{n-1}(F)}W_1\sbr{\mat{a&\\&1}x}{W_2}\sbr{\mat{a&\\&1}k}\frac{\d_{s_1+s_2} a}{|\delta(a)|}\d k\d_{s_1+1} x.
\end{multline*}
Note that $|e_ny|>1\iff |z|^{-1}<1$ and
$$\mat{\I_n&\\e_nz&1}=\mat{\I_{n-1}&&\\&z^{-1}&1\\&&z}\mat{\I_{n-1}&&\\&&-1\\&1&z^{-1}}.$$
Thus the compact support of $f$ implies that
$$\mat{x^{-1}&b\\&1}\mat{\I_{n-1}&&\\&z^{-1}&1\\&&z}\equiv \mat{z^{-1}x^{-1}\mat{\I_{n-1}&\\&z^{-1}}&b+z^{-1}x^{-1}e_n^\top\\&1}\in C\mod Z_{n+1}(F).$$
Thus we find compact sets $C_3\subset G_n(F)$ and $C_4\subset F^n$ so that
$$z^{-1}x^{-1}\mat{\I_{n-1}&\\&z^{-1}}\in C_3,\quad b+z^{-1}x^{-1}e_n^\top\in C_4.$$
Changing variable 
$$b\mapsto b-z^{-1}x^{-1}e_n^\top,\quad x\mapsto z^{-1}\mat{\I_{n-1}&\\&z^{-1}}x$$ and applying $Z_{n+1}(F)$-invariance of $W_1$ we bound the above integral by
\begin{multline*}
  \ll \int_{|z|>1}|z|^{n(1+s_1+s_2)}|z|^{-(n+1)(1+s_1)}\\
  \int_{C_3^{-1}}\int_{K_{n,v}}\int_{A_{n-1}(F)}W_1\sbr{\mat{az&\\&1}x}{W_2}\sbr{\mat{a&\\&1}k}\frac{\d_{s_1+s_2} a}{|\delta(a)|}\d k\d_{s_1+1} x\d^\times z.
\end{multline*}
Again we change variable $z\mapsto z^{-1}$ and $a\mapsto az$ and apply \cite[Lemma 3.1]{JaNu2021reciprocity} to have
$$W_2\sbr{\mat{az&\\&1}k}\ll \delta^{1/2}\sbr{\mat{az&\\&1}},$$
and consequently, bound the three inner integrals above by
$\ll |z|^{\frac{n-1}{2}}$
as long as $s_1+s_2>-1$. Thus the total integral is bounded by
$$\int_{|z|<1}|z|^{ns_1-s_2+\frac{n+1}{2}}\d^\times z$$
which is convergent if $\frac{n+1}{2}+ns_1-s_2>0$.
\end{proof}

\begin{lem}\label{global-I1}
Let $f$ be of type I or type II. Then we have
$$I^1(n^-,s_1,s_2):=\lim_{T\to\infty} I^1(n^-,T,s_1,s_2)=\prod_{v\le\infty}I^1_v(n^-,s_1,s_2)$$
which is holomorphic in $s_1,s_2$ for $\Re(s_1),\Re(s_2)\in\mathcal{C}_1$ and has meromorphic continuation for all $s_1,s_2\in\C$.
\end{lem}

\begin{proof}
From Lemma \ref{degenerate-unramified}, we see that
$$\prod_{v\not\in S}I_v^1(n^-,s_1,s_2)$$
converges absolutely and uniformly in $s_1,s_2$ in compact sets  with $(\Re(s_1),\Re(s_2))\in\mathcal{C}_1$. Thus employing Lemma \ref{meromorphic-cont-degenerate} yields holomorphicity of $I^1(n^-,s_1,s_2)$ in this region. Moreover, also applying Lemma \ref{degenerate-level}, in this region we have
\begin{multline*}
I^1(n^-,s_1,s_2)=\Delta^{\mu_D(s_1,s_2)}L(1+s_1+s_2,\pi_1\otimes\widetilde{\pi}_2)N(\q)^{-n(s_1+s_2)}\\
\frac{\zeta_\q(1)L^{\q}(\frac{n+1}{2}+n s_1-s_2,\pi_2)}{\zeta_\q(n+1)L^{\q}(\frac{n+3}{2}+(n+1)s_1,\pi_1)}
I^1_\infty(n^-,s_1,s_2),
\end{multline*}
We conclude the proof by employing meromorphic continuation of $L$-functions and applying Lemma \ref{degenerate-arch}.
\end{proof}

\subsection{Computing $I^2(n^-,T)$}

The computation of $I^2(n^-,T)$ is quite similar to that of $I^1(n^-,T)$. We give a sketch here.
First, working as in \S\ref{sec:I-1-n-T} we first check that
$$[n^-]_2=\left\{\mat{g^{-1}&\\&1}n^-h\,\mid\, g\in P_n(F)\bs G_n(F),h\in P_{n+1}(F)\right\}$$
Thus recalling \eqref{I2-definition} we obtain that
\begin{equation*}
	I^2(n^-,T)=\int^T_{P_n(F)\backslash G_n(\A)\times G_n(\A)}
	\int_{U_{n+1}(\A)}
	{f}\left[\begin{pmatrix} x^{-1}&\\&1 \end{pmatrix} n^-u
	\begin{pmatrix} y&\\&1 \end{pmatrix}
	\right] \phi_1(x)\overline{\phi_2(y)}\d u\d_{s_1} x\d_{s_2}y.
\end{equation*}
Once again working as in \S\ref{sec:I-1-n-T} we arrive at
\begin{equation*}
	I^2(n^-,T)=\int^T_{N_n(\A)\backslash G_n(\A)\times G_n(\A)}
	\int_{U_{n+1}(\A)}
	{f}\left[\begin{pmatrix} x^{-1}&\\&1 \end{pmatrix} n^-u
	\begin{pmatrix} y&\\&1 \end{pmatrix}
	\right] W_{1}(x)\overline{W_{2}(y)}\d u\d_{s_1} x \d_{s_2} y.
\end{equation*}
We define $$f^-:g\mapsto f(g^{-1}),$$
parameterize $u\in U_{n+1}(\A)$ by $b\in\A^n$, and change variables $b\mapsto-yb$, $x\mapsto -x$, and $y\mapsto -y$ to write the above integral as
\begin{equation*}
    \int^T_{G_n(\A)\times N_n(\A)\backslash G_n(\A)}
	\int_{U_{n+1}(\A)}
	{f}^-\left[\begin{pmatrix} x^{-1}y+be_ny&b\\e_ny&1 \end{pmatrix}
	\right] W_{1}(y)\overline{W_{2}(x)}\d u\d_{s_1} y \d_{s_2} x.
\end{equation*}
Note that as $f$ is compactly supported in $\overline{G_{n+1}}(\A)$ so is $f^-$.

We define the local factor
\begin{multline}\label{final-I2-local}
    I^2_v(n^-,s_1,s_2):=\int_{G_n(F_v)}\int_{N_n(F_v)\backslash G_n(F_v)}
	\int_{F_v^n}
	{f}_v^{-}\left[\begin{pmatrix} x^{-1}+be_ny&b\\ e_ny&1 \end{pmatrix}
	\right]\\ W_{1,v}(y)\overline{W_{2,v}(yx)}\d b\d_{s_1+s_2+1} y \d_{s_1+1} x
\end{multline}
and also define a region in $\Rr^2$ by
\begin{equation}\label{def-region-C2}
    \mathcal{C}_2:=\left\{(u_1,u_2)\in\Rr^2_{\ge 0}\mid u_1+u_2,\frac{n-1}{2}+nu_2-u_1>0\right\}.
\end{equation}
Then applying Lemma \ref{meromorphic-cont-degenerate} and working exactly as in the proof of Lemma \ref{global-I1} we prove the following result.

\begin{lem}\label{global-I2}
Let $f$ be of type I or type II. Then we have
$$I^2(n^-,s_1,s_2):=\lim_{T\to\infty} I^2(n^-,T,s_1,s_2)=\prod_{v\le\infty}I^2_v(n^-,s_1,s_2)$$
which is holomorphic in $s_1,s_2$ for $\Re(s_1),\Re(s_2)\in\mathcal{C}_2$ and has meromorphic continuation for all $s_1,s_2\in\C$.
\end{lem}

\subsection{The degenerate term}

We define the \emph{degenerate term} as
\begin{equation}\label{def-degenerate}
    \mathcal{D}(s_1,s_2):=I^1(n^-,s_1,s_2)+I^2(n^-,s_1,s_2).
\end{equation}

Our main result in this subsection is as follows.

\begin{prop}\label{main-prop-degenerate}
Let $f$ be of type I. Then we have
\begin{multline*}\mathcal{D}(s_1,s_2)=
\Delta^{\mu_D(s_1,s_2)}L(1+s_1+s_2,\pi_1\otimes\widetilde{\pi}_2)N(\q)^{-n(s_1+s_2)}\frac{\zeta_\q(1)}{\zeta_\q(n+1)}\\
\left(\frac{L^{\q}(\frac{n+1}{2}+n s_1-s_2,\pi_2)}{L^{\q}(\frac{n+3}{2}+(n+1)s_1,\pi_1)}
I^1_\infty(n^-,s_1,s_2)
+\frac{L^{\q}(\frac{n+1}{2}+n s_2-s_1,\tilde{\pi}_1)}{L^{\q}(\frac{n+3}{2}+(n+1)s_2,\tilde{\pi}_2)}
I^2_\infty(n^-,s_1,s_2)\right)
\end{multline*}
for all $s_1,s_2\in\C$.

On the other hand, if $f$ is of type II then $\mathcal{D}(s_1,s_2)$ vanishes identically.
\end{prop}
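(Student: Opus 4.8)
The plan is to handle the type~I and type~II cases separately. For type~I, Lemma~\ref{global-I1} already supplies the closed form of $I^1(n^-,s_1,s_2)$, so the only additional input needed is the analogous closed form for $I^2(n^-,s_1,s_2)$. I would obtain it by running, \emph{mutatis mutandis}, the computation behind Lemma~\ref{global-I1}: the reductions of \S\ref{sec:I-1-n-T} that produced the local integral \eqref{final-I1-local} have exact counterparts producing \eqref{final-I2-local}, and the unramified, ramified, and archimedean local factors are evaluated by the same arguments, only with the roles of $(s_1,\pi_1)$ and $(s_2,\tilde{\pi}_2)$ exchanged; this swap is forced by the fact that $I^2$ is built from $f^-$ and interchanges $W_{1}$ and $W_{2}$, together with the Rankin--Selberg symmetry $L(s,\pi\otimes\pi')=L(s,\pi'\otimes\pi)$. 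The outcome is
\begin{multline*}
I^2(n^-,s_1,s_2)=\Delta^{\mu_D(s_1,s_2)}L(1+s_1+s_2,\pi_1\otimes\tilde{\pi}_2)N(\q)^{-n(s_1+s_2)}\\
\times\frac{\zeta_\q(1)}{\zeta_\q(n+1)}\cdot\frac{L^{\q}(\frac{n+1}{2}+ns_2-s_1,\tilde{\pi}_1)}{L^{\q}(\frac{n+3}{2}+(n+1)s_2,\tilde{\pi}_2)}\,I^2_\infty(n^-,s_1,s_2),
\end{multline*}
valid for all $s_1,s_2\in\C$. Adding this to the formula of Lemma~\ref{global-I1} and using the definition \eqref{def-degenerate} of $\mathcal{D}$, the factors $\Delta^{\mu_D(s_1,s_2)}$, $L(1+s_1+s_2,\pi_1\otimes\tilde{\pi}_2)$, $N(\q)^{-n(s_1+s_2)}$ and $\zeta_\q(1)/\zeta_\q(n+1)$ are common to both terms and pull out front, leaving exactly the asserted expression.

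For type~II, I would show that $I^1(n^-,s_1,s_2)$ and $I^2(n^-,s_1,s_2)$ each vanish identically. The key observation is the matrix identity
\[
\begin{pmatrix} x^{-1}+be_ny & b\\ e_ny & 1\end{pmatrix}
=\begin{pmatrix}\I_n & b\\ & 1\end{pmatrix}\begin{pmatrix}x^{-1} & \\ e_ny & 1\end{pmatrix},
\]
under which the innermost $\int_{F_v^n}\d b$ in \eqref{final-I1-local} at the supercuspidal place $v=\p_0$ becomes $\int_{U_{n+1}(F_{\p_0})}f_{\p_0}\left[u\begin{pmatrix}x^{-1}&\\ e_ny&1\end{pmatrix}\right]\d u$. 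Since $f_{\p_0}$ is a matrix coefficient of the supercuspidal representation $\sigma$, its integral along the unipotent radical $U_{n+1}$ of the proper maximal parabolic $Q_{n+1}$ vanishes; hence $I^1_{\p_0}(n^-,s_1,s_2)\equiv 0$. As in Lemma~\ref{global-I1}, for $\Re(s_i)\in\mathcal{C}_1$ the integral $I^1(n^-,s_1,s_2)$ factors as $\prod_v I^1_v(n^-,s_1,s_2)$, each local factor being holomorphic there by Lemma~\ref{meromorphic-cont-degenerate}, so it vanishes on $\mathcal{C}_1$ and therefore identically by unique meromorphic continuation. The identical argument handles $I^2(n^-,s_1,s_2)$ via \eqref{final-I2-local}, noting that $f^-_{\p_0}(g)=\overline{f_{\p_0}(g)}$ is again a supercuspidal matrix coefficient, of $\overline{\sigma}$. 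Thus $\mathcal{D}=I^1(n^-)+I^2(n^-)$ vanishes identically.

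The main effort lies in the type~I part: carrying out the local computation for $I^2$ in parallel with that for $I^1$, and in particular checking at the ramified places $v\mid\mathfrak{d}$, where shifted newvectors and the $\Delta_v$-dependent local functional equations \eqref{LFE-m-m} enter, that the power of the discriminant produced is the \emph{same} $\Delta^{\mu_D(s_1,s_2)}$ as in Lemma~\ref{global-I1}, so that it may legitimately be pulled out of the sum $I^1+I^2$; this follows from the affine shape of that local exponent (its coefficients depend only on $n$, and one checks it is invariant under the $s_1\leftrightarrow s_2$ interchange relating the two computations). The type~II part, by contrast, is short once the matrix factorization above is in hand.
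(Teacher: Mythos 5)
Your proposal is correct and follows essentially the same route as the paper. For the type~I case you invoke Lemma~\ref{global-I1}, carry out the parallel computation for $I^2(n^-,s_1,s_2)$ (which in the paper is Lemma~\ref{global-I2}, proved by the remark that $f^-=f$ for type~I), and then pull out the common factors; this is exactly what the paper does.

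The one place you diverge is the argument at the supercuspidal place for type~II, and it is a harmless, slightly more conceptual variant. The paper's Lemma~\ref{degenerate-sc} establishes the vanishing of the local factor $I^1_{\p_0}(n^-,s_1,s_2)$ by an explicit computation: it reduces via $g\mapsto f(\cdot g)$ to the integral $\int_{F^n}f\bigl[\mat{\I_n&b\\&1}\bigr]\d b$, inserts a cutoff $\1_\o(\p_0^N b)$, opens the matrix coefficient in the Whittaker/Kirillov model, performs the $b$-integral by Fourier inversion, and concludes from the compact support of Kirillov functions of a supercuspidal. You instead observe the factorization $\mat{x^{-1}+be_ny&b\\e_ny&1}=\mat{\I_n&b\\&1}\mat{x^{-1}&\\e_ny&1}$ and invoke directly the theorem that unipotent integrals $\int_{U_{n+1}}\phi(ug)\d u$ of matrix coefficients of a supercuspidal vanish (one of the standard characterizations of supercuspidality, equivalent to triviality of Jacquet modules). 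Both arguments are sound; the paper's is more self-contained (relying only on compact support in the Kirillov model), yours is shorter but leans on a textbook fact. You also correctly note that $f^-(g)=f(g^{-1})=\overline{f(g)}$ is again a supercuspidal matrix coefficient, and that the Euler-factorization and holomorphy on $\mathcal{C}_j$ needed to run the continuation argument hold for type~II for the same reasons as in Lemma~\ref{global-I1} (Lemma~\ref{meromorphic-cont-degenerate} is stated for arbitrary compactly supported $f_v$, and Lemma~\ref{degenerate-unramified} covers the type~II unramified places). Nothing is missing.
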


\begin{proof}
    The first assertion follows immediately from the proofs of Lemma \ref{global-I1} and Lemma \ref{global-I2}.

    To prove the second assertion it suffices to show that $I^j(n^-,s_1,s_2)=0$ for $s_1,s_2$ lying in an open set with $(\Re(s_1),\Re(s_2))\in \mathcal{C}_1\cap\mathcal{C}_2$, which is the content of Lemma \ref{degenerate-sc}.
We conclude by applying Lemma \ref{global-I1} and Lemma \ref{global-I2}.    
\end{proof}

\begin{rmk}\label{upgrade-Ijn-holomorphic}
    We remark that from the proof of Lemma \ref{meromorphic-cont-degenerate} we see that for each $v$ the local integral $I_v^1(n^-,s_1,s_2)$ is holomorphic in $s_1,s_2$ whenever $\Re(s_1)+\Re(s_2)>-1$ and $\frac{n+1}{2}+n\Re(s_1)-\Re(s_2)>0$. Following a similar proof, we also have $I_v^2(n^-,s_1,s_2)$ is holomorphic in $s_1,s_2$ whenever $\Re(s_1+s_2)>-1$ and $\frac{n+1}{2}+n\Re(s_2)-\Re(s_1)>0$.
\end{rmk}

\subsection{Cancellation and vanishing of certain orbital integrals}

Recall $\tilde{I}(T)$ from \eqref{def-I-tilde}. Here we show the following.

\begin{lem}\label{some-unip-orbital-integral-equal}
We have
\begin{equation*}
    \tilde{I}(T)=I^1(e,T)=I^2(e,T)=I^{12}(e,T)
\end{equation*}
for all $s_1,s_2$.
\end{lem}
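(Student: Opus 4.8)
The plan is to show that each of the three orbital integrals $I^1(e,T)$, $I^2(e,T)$, $I^{12}(e,T)$ collapses, by an unfolding, to the adelic‑unipotent integral $\tilde{I}(T)$ of \eqref{def-I-tilde}; since \eqref{def-I-tilde} already records that $\tilde{I}(T)$ has two equivalent shapes (the $G_n(\A)$‑integration on either the $x$‑ or the $y$‑variable), this yields all four equalities at once, and in particular \eqref{all-major-unipotent-cancel}. The only structural input needed is that the orbit of $e$ is large: for each $\eta\in\{1,2,12\}$ the set $[e]_\eta$ is the image of $P_{n+1}(F)$ in $\overline{G_{n+1}}(F)$ (immediate from $G_n(F)\subset P_{n+1}(F)$; compare Proposition \ref{doublecosets-with-unipotent}). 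I would parametrize this orbit by the Levi decomposition $P_{n+1}=U_{n+1}\rtimes G_n$, with $U_{n+1}$ normal, writing a general element as $w\mat{A&\\&1}$ or $\mat{A&\\&1}w$ for $w\in U_{n+1}(F)$ and $A\in G_n(F)$, choosing the side so that the $U_{n+1}(F)$‑factor is adjacent to a $[U_{n+1}]$‑integration.

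Carrying this out for $I^1(e,T)$: inserting $\gamma=w\mat{A&\\&1}$ into \eqref{I1-definition}, the product $u^{-1}w$ sweeps out $U_{n+1}(\A)$ as $u$ runs over $[U_{n+1}]$ and $w$ over $U_{n+1}(F)$, so $\sum_w\int_{[U_{n+1}]}$ unfolds to $\int_{U_{n+1}(\A)}$; the remaining $\mat{A&\\&1}$ sits immediately to the left of $\mat{y&\\&1}$, and after the substitution $y\mapsto A^{-1}y$ — legitimate because $\phi_2$ is automorphic and $|\det A|_{\A}=1$ keeps both $\d_{s_2}y$ and the truncation $T^{-1}<|\det y|<T$ invariant — the sum $\sum_{A\in G_n(F)}$ merges with $\int_{[G_n]}$ to give $\int_{G_n(\A)}$. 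The result is exactly the first form of $\tilde{I}(T)$. For $I^2(e,T)$ one runs the mirror argument on \eqref{I2-definition} with $\gamma=\mat{A&\\&1}w$: absorb $w$ into the $[U_{n+1}]$‑integral on the right, move $\mat{A&\\&1}$ past $\mat{x^{-1}&\\&1}$ and absorb it into $\int_{[G_n]}$ via $x\mapsto Ax$, reaching the second form of $\tilde{I}(T)$. For $I^{12}(e,T)$ one takes $\gamma=w\mat{A&\\&1}$ in \eqref{I12-definition}: $u_1$ merges with $w$ to build $\int_{U_{n+1}(\A)}$; the second unipotent $u_2$ is conjugated past $\mat{A&\\&1}$ (a measure‑preserving substitution on $[U_{n+1}]$ since $A\in G_n(F)$ normalizes $U_{n+1}$) and then absorbed into the adelic unipotent integral by a translation, leaving $\int_{[U_{n+1}]}\d u_2=\vol([U_{n+1}])=1$ — this last normalization being the one fixed in \S\ref{sec:basic-notation} — after which $\mat{A&\\&1}$ is absorbed into $\int_{[G_n]}$ as before, and $\tilde{I}(T)$ reappears.

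All interchanges of sum and integral and all unfoldings above need to be justified by absolute convergence, which holds because $f$ is compactly supported modulo $Z_{n+1}(\A)$ and the determinant truncation together with the rapid decay of the cusp forms $\phi_i$ make every integral encountered absolutely convergent for fixed $T$, exactly as in the discussion following \eqref{def-regularized-P-eta}. I expect the only real source of friction to be bookkeeping: keeping straight the conjugation identities $\mat{A&\\&1}\,U_{n+1}(\cdot)\,\mat{A&\\&1}^{-1}=U_{n+1}(\cdot)$ together with the measure they preserve, and tracking which of the two (equal) shapes of $\tilde{I}(T)$ a given $I^\eta(e,T)$ lands on. Since those two shapes agree, the four quantities are all equal, which is the assertion.
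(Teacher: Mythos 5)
Your proposal is correct and follows essentially the same route as the paper: compute that $[e]_\eta$ is the image of $P_{n+1}(F)$ for all three $\eta$, decompose $P_{n+1}(F)=U_{n+1}(F)\rtimes G_n(F)$, unfold the rational unipotent sum against the $[U_{n+1}]$-integral and the rational $G_n(F)$-sum against one of the $[G_n]$-integrals, and (for $\eta=12$) absorb the remaining $[U_{n+1}]$-integral via a translation and $\vol([U_{n+1}])=1$. The only cosmetic difference is that you phrase the unfolding as a change of variables $y\mapsto A^{-1}y$ (resp.\ $x\mapsto Ax$), which is a slight abuse of language for the standard ``$\sum_{A\in G_n(F)}\int_{[G_n]}g(Ay)\,\d y=\int_{G_n(\A)}g(y)\,\d y$'' unfolding, but the content is identical.
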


\begin{proof}
It is immediate to see that $[e]_1=P_{n+1}(F)$. Thus from \eqref{I1-definition} we see that
$$I^1(e,T)=\int^T_{G_n(\A)\times [G_n]}\int_{U_{n+1}(\A)}f\left[\begin{pmatrix} x^{-1}&\\&1 \end{pmatrix}u \begin{pmatrix} y&\\&1 \end{pmatrix}\right] \phi_1(x)\overline{\phi_2(y)}\d u\d_{s_1} x\d_{s_2}y.$$
We conclude $\tilde{I}(T)=I^1(e,T)$ from \eqref{def-I-tilde}. Similarly, we prove $\tilde{I}(T)=I^2(e,T)$.

It is also evident that $[e]_{12}=P_{n+1}(F)$. Thus from \eqref{I12-definition} we obtain that $I^{12}(e,T)$ equals
$$\int^T_{G_n(\A)\times [G_n]}\int_{U_{n+1}(\A)\times[U_{n+1}]}f\left[\begin{pmatrix} x^{-1}&\\&1 \end{pmatrix}u_1^{-1}u_2 \begin{pmatrix} y&\\&1 \end{pmatrix}\right] \phi_1(x)\overline{\phi_2(y)}\d u_1\d u_2\d_{s_1} x\d_{s_2}y.$$
Changing variable $u_1\mapsto u_2 u_1$ and using that $\vol\left([U_{n+1}]\right)=1$ we see the above equals
$$\int^T_{G_n(\A)\times [G_n]}\int_{U_{n+1}(\A)}f\left[\begin{pmatrix} x^{-1}&\\&1 \end{pmatrix}u \begin{pmatrix} y&\\&1 \end{pmatrix}\right] \phi_1(x)\overline{\phi_2(y)}\d u\d_{s_1} x\d_{s_2}y.$$
Recalling \eqref{def-I-tilde} we conclude $I^{12}(e,T)=\tilde{I}(T)$.
\end{proof}

We now show vanishing of other orbital integrals that were not treated so far.

\begin{lem}\label{vanish-unip-orbital-integral}
Let $f$ be either of type I or type II. Then we have
\begin{equation*}
	I^1(w',T)=I^2(w',T)=I^{12}(w',T)=0
\end{equation*}
for all $s_1,s_2$.
\end{lem}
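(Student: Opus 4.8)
The plan is to show that $f$ kills the integrand (for $I^1$ and $I^2$) or that the inner unipotent integrations produce a degenerate period (for $I^{12}$); throughout $s_1,s_2$ are fixed.

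First I would handle $I^1(w',T)$. By Proposition~\ref{doublecosets-with-unipotent}, $[w']_1=P_{n+1}(F)w'G_n(F)$, and since $\begin{pmatrix}x^{-1}&\\&1\end{pmatrix}u^{-1}\in P_{n+1}(\A)$, the argument of $f$ in \eqref{I1-definition} always has the shape $p\,w'\begin{pmatrix}g&\\&1\end{pmatrix}$ with $p\in P_{n+1}(\A)$ and $g\in G_n(\A)$. Since $p$ has last row $(0,\dots,0,1)$, so does this product have last row equal to that of $w'\begin{pmatrix}g&\\&1\end{pmatrix}$, namely $(g_{n,1},\dots,g_{n,n},0)$; in particular its $(n+1,n+1)$-entry is $0$, and this persists modulo the centre. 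On the other hand, for every $v\mid\infty$ and $f$ of type I or type II one checks from \S\ref{type-1-test-function}--\S\ref{type-2-test-function} that $f_v(h)\neq0$ forces $h_{n+1,n+1}\neq0$ in $\overline{G_{n+1}}(F_v)$: for type I this is the defining inequality $|d-1|<\tau_v$ of $\overline{K_0}(X_v,\tau_v)$, and for type II a vanishing corner would force the lower-left block of $h\begin{pmatrix}\I_n&b'\\&1\end{pmatrix}$ to have norm $\gg|b'|^{-1}$, incompatible with $\operatorname{supp}\Omega_{21}$. Hence $f$ vanishes on the matrix and $I^1(w',T)=0$.

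Next, $I^2(w',T)$ follows by a dual argument. Now $[w']_2=G_n(F)w'P_{n+1}(F)$, so the argument of $f$ in \eqref{I2-definition} is $\begin{pmatrix}g&\\&1\end{pmatrix}w'\,q$ with $g\in G_n(\A)$ and $q=\begin{pmatrix}A&B\\&1\end{pmatrix}\in P_{n+1}(\A)$. A direct block computation shows that the first $n$ columns of $w'q$ have zero $n$-th row, so the top-left $n\times n$ block of $w'q$ — and hence, after left multiplication by the invertible $\begin{pmatrix}g&\\&1\end{pmatrix}$, the top-left $n\times n$ block of the full matrix — has rank $\le n-1$, i.e.\ is singular. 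But for $v\mid\infty$ and $f$ of either type, $f_v(h)\neq0$ forces the top-left $n\times n$ block of $h$ to be invertible: for type I it is $\approx\I_n$, and for type II it is a scalar multiple of an element of $K_v\mathcal{T}(\tau_v)K_v$ up to a small rank-$\le1$ perturbation from the corner blocks. This contradiction gives $I^2(w',T)=0$.

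Finally, $I^{12}(w',T)$, which is the main obstacle. Here $[w']_{12}=P_{n+1}(F)w'P_{n+1}(F)$ is the open big cell, so the matrix inside $f$ ranges over all of $P_{n+1}(\A)w'P_{n+1}(\A)$, whose closure contains $\I_{n+1}$ and hence meets $\operatorname{supp}(f_v)$ at every place; no pointwise argument is available and the vanishing must come from a cancellation. The plan is to argue as in \S\ref{sec:I-1-n-T}: compute the stabiliser $\mathcal{S}$ of $w'$ under the $(P_{n+1}(F),P_{n+1}(F))$-action — it contains a copy of $G_{n-1}(F)$ together with a unipotent subgroup of positive dimension — parametrise $[w']_{12}$ by $\mathcal{S}\backslash\bigl(P_{n+1}(F)\times P_{n+1}(F)\bigr)$, absorb the left copy of $P_{n+1}(F)$ into the $x$- and $u_1$-integrations (using $G_n(F)$-automorphy of $\phi_1$ and translation invariance of $\int_{[U_{n+1}]}$) and open up the right-hand side via $y$, $u_2$ and the Fourier--Whittaker expansion \eqref{fourier-whittaker-expansion} of $\phi_2$; the residual integral then contains a factor $\int_{[V]}\phi_2(v g)\d v$ with $V$ the unipotent radical of a proper parabolic of $G_n$, which vanishes by cuspidality. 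Hence $I^{12}(w',T)=0$. The only genuinely non-routine step is this one — computing $\mathcal{S}$ exactly and keeping track of how the two $[U_{n+1}]$-integrations interact with the two Fourier expansions once the left and right copies of $P_{n+1}(F)$ are absorbed; $I^1$ and $I^2$ are immediate from the support of the archimedean components of $f$.
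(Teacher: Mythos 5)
Your treatment of $I^1(w',T)$ and $I^2(w',T)$ is correct and follows the same strategy as the paper: show that the argument of $f$ avoids $\operatorname{supp}(f_v)$ for each archimedean $v$. For $I^1$ the obstruction is $(n+1,n+1)$-entry $=0$; for $I^2$ it is singularity of the top-left $n\times n$ block (in the paper this block is written explicitly as $x\I_0 y$ with $\I_0=\diag(\I_{n-1},0)$). Your quantitative reasoning for why a vanishing corner kills the type-II test function — the normalized lower-left block $c=C/(Cb')$ would have norm $\gtrsim\|b'\|^{-1}\gg\tau_v\ge\tau_v/X_v$, which exits $\operatorname{supp}\Omega_{21}(\cdot X_v)$ — is correct and slightly more explicit than what the paper writes.

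The $I^{12}(w',T)$ sketch, however, has a gap, and it is not a small one. You propose to ``open up the right-hand side via $y$, $u_2$ and the Fourier--Whittaker expansion of $\phi_2$'' and then extract a factor $\int_{[V]}\phi_2(vg)\,\d v$. Those two moves are incompatible: once $\phi_2$ is replaced by its Whittaker sum you no longer have $\phi_2$ available to exhibit a vanishing constant term, and conversely if you want a constant term of $\phi_2$ you must not expand. The paper's actual mechanism is cleaner and involves no Fourier expansion at all. After computing the stabiliser $\mathcal{S}\cong Q_n(F)\times F^{n-1}$ and absorbing $h\in P_{n+1}(F)$ into the $x$- and $u_1$-integrations, one is left with a \emph{compact} integral over $b'\in(F\backslash\A)^{n-1}$ (coming from unfolding $[U_{n+1}]$ against the remaining $t$-sum). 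The key step is then the commutation identity
\begin{equation*}
u_1^{-1}w'\,u_{21}(b') \;=\; \begin{pmatrix}\tilde{u}(b')&\\&1\end{pmatrix}u'\,w',\qquad
\tilde{u}(b')=\begin{pmatrix}\I_{n-1}&b'\\&1\end{pmatrix}\in U_n,\quad u'\in U_{n+1},
\end{equation*}
after which the change of variables $x\mapsto\tilde{u}(b')x$, $u_1\mapsto(u')^{-1}$ pushes the entire $b'$-dependence onto $\phi_1$, and the compact $b'$-integral collapses to
\begin{equation*}
\int_{[U_n]}\phi_1(\tilde{u}x)\,\d\tilde{u}\;=\;0
\end{equation*}
by cuspidality of $\phi_1$. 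So the vanishing comes from the $U_n$-constant term of $\phi_1$ (the ``left'' cusp form), not from a Fourier expansion of $\phi_2$; a symmetric argument with $\phi_2$ would also work, but only if carried out in the same constant-term style. Your plan correctly identifies cuspidality as the tool, but without the commutation of $u_{21}(b')$ across $w'$ there is no compact unipotent integral of $\phi_1$ (or $\phi_2$) in sight, and the argument does not close.
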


\begin{proof}
Proofs of vanishing of $I^\eta(w',T)$ for $\eta=1,2$ are quite similar to the proof of Lemma \ref{vanish-genuine-orbital}, namely, we show that the domain of integrations in the integral representations of $I^\eta$ as in \eqref{I1-definition} and \eqref{I2-definition} do not intersect the support of $f$ as in \S\ref{type-1-test-function} and \S\ref{type-2-test-function}. Indeed, for any $x\in G_n(\A),\,y\in G_n(\A),\,u\in
	U_{n+1}(\A)$, writing
\begin{equation*}
    g=\begin{pmatrix}x&\\&1\end{pmatrix}uw'\begin{pmatrix}y&\\&1\end{pmatrix},
\end{equation*}
we always have $g_{n+1,n+1}=0$ and thus $f_v(g)=0$ for $v\mid\infty$. Hence, $I^1(w',T)=0$.

On the other hand, note that
\begin{equation*}
g=\begin{pmatrix}x&\\&1\end{pmatrix}w'u\begin{pmatrix}y&\\&1\end{pmatrix}=\mat{x\J_0y&\ast\\\ast&\ast},
\end{equation*}
where, we recall, $\J_t=\diag(\I_{n-1},t)$. In particular, $\det(x\J_0y)=0$. However, $f_v$, for $v\mid\infty$, is supported on the elements of the form $\mat{h&\\&1}$ with $h\in G_n(F_v)$ such that $\det(h)\asymp 1$. Hence, again, $f(g)=0$ and so $I^2(w',T)=0$.

We now show that $I^{12}(w', T)$ also vanishes identically. This time, however,
the reason is a different one, namely, the cuspidality of $\phi_1$ (or $\phi_2$).

First we compute the stabilizer of $w'$ under the action of $P_{n+1}(F)\times P_{n+1}(F)$.
It suffices to find elements $h\in P_{n+1}(F)$ so that $w'hw'\in P_{n+1}(F) \mod Z_{n+1}(F)$.
Suppose that for $a\in G_n(F)$, $b\in F^n$, so that
$$w'\mat{a&b\\0&1}w'\in P_{n+1}(F)\mod Z_{n+1}(F).$$
Then we have $a\in Q_n$ and $e_nb=0$. As a consequence, we deduce that
$$[w']_{12}=\left\{hw'\mat{g&te_n^\top\\&1}\,\mid\, h\in P_{n+1}(F), g\in Q_n(F)\bs G_n(F), t\in F\right\}.$$
Using the above parameterization we see from \eqref{I12-definition} that $I^{12}(w',T)$ equals
\begin{multline*}
	\int^T_{G_n(\A)\times Q_n(F)\backslash G_n(\A)} \int_{U_{n+1}(\A)}
		\int_{(F\backslash \A)^{n-1}}\int_{\A}{f}\left[\mat{x^{-1}&\\&1}u_1^{-1}
	w'u_{21}(b')u_{22}(b_n)\mat{y&\\&1}\right]\\
	\times \phi_1(x)\overline{\phi_2(y)}\d b_n\d b'\d u_1\d_{s_1}x\d_{s_2}y
\end{multline*}
where
\begin{equation*}
	u_{21}(b')=\begin{pmatrix}\I_{n-1}&&b'\\&1&\\ &&1\end{pmatrix}, b'\in F^{n-1};\quad
	u_{22}(b_n)=\begin{pmatrix}\I_{n-1}&&\\&1&b_n\\ &&1\end{pmatrix}, b_n\in F.
\end{equation*}
Note that 
$$u_1^{-1}w'u_{21}(b')=\mat{\tilde{u}(b')&\\&1}u'w'$$
where $\tilde{u}(b'):=\mat{\I_{n-1}&b'\\&1}\in U_n$ and $u':=\mat{\tilde{u}(-b')&\\&1}u_1^{-1}\mat{\tilde{u}(b')&\\&1}\in U_{n+1}$.
We change variables $x\mapsto \tilde{u}(b')x$, $u_1^{-1}\mapsto u'$, and identify $[U_n]$ with $(F\bs\A)^{n-1}$ to write the above integral as
\begin{multline*}
	\int^T_{G_n(\A)\times Q_n(F)\backslash G_n(\A)} \int_{U_{n+1}(\A)}
		\int_{\A}{f}\left[\mat{x^{-1}&\\&1}u'
	w'u_{22}(b_n)\mat{y&\\&1}\right]\\
	\times \left(\int_{[U_n]}\phi_1(\tilde{u}x)\d\tilde{u}\right)\overline{\phi_2(y)}\d b_n\d u_1\d_{s_1}x\d_{s_2}y.
\end{multline*}
The innermost integral vanishes identically as $\phi_1$ is cuspidal. Hence we conclude.
\end{proof}

\part{Local computations}\label{part-local}

In this part, we mostly work locally at a place $v\le\infty$. We will always
drop the subscript $v$ to ease the notation if the place is clear from the context.
We recall $\q$ and $\mathfrak{X}$ from Theorem \ref{second-moment-nonarch} and
Theorem \ref{second-moment-arch}. We fix $f$ to be a test function of type I as
in \S\ref{type-1-test-function} or of type II as in \S\ref{type-2-test-function}.
We also recall the corresponding choices of local test vectors there.

For every finite place we first perform our computations under the hypothesis
that the place is unramified and only later on we explain
the necessary changes for the ramified ones.

\section{The spectral weights}

In this section, $\Pi$ will be a generic unitary representation of $\overline{G_{n+1}}(F)$. Moreover, we assume that $\Pi$ appears as a local component of a certain generic standard automorphic representation, and thus is at most $\vartheta$-tempered (see \cite[\S3.5]{JaNu2021reciprocity}) for some $0\le\vartheta<1/2$. Recall the local spectral weights $H(\Pi)$ from \S\ref{sec:spectral-side}. We always assume $s_1=s_2=0$, unless otherwise mentioned.

\subsection{Uninteresting places}

\begin{lem}\label{bounds-spectral-nonarchimedean}
Let $v$ be unramfied and  
\begin{itemize}
    \item either $f$ be of either type I and $\infty>v\nmid\q$,
    \item or $f$ be of type II and $\infty>v\nmid \q\p_0\p_1$.
\end{itemize}
Then
$$H(\Pi;s_1,s_2)= \delta_{c(\Pi)=0}.$$
\end{lem}

\begin{proof}
We first write
$$H(\Pi;s_1,s_2)=\sum_{W\in\B(\Pi)}\frac{\Psi(1/2+s_1,\Pi(\mathbf{1}_{\overline{G_{n+1}}(\o)})W,W_{{1}}){\Psi(1/2+s_2,\tilde{\Pi}(\mathbf{1}_{\overline{G_{n+1}}(\o)})\overline{W},\overline{W_{{2}}})}}{L(1/2+s_1,\Pi\otimes{\pi}_{1})L(1/2+s_2,\tilde{\Pi}\otimes\tilde{\pi}_{2})}.$$
We choose $\B(\Pi)$ to be a $K$-type orthonormal basis. In this case, $\Pi(\mathbf{1}_{\overline{G_{n+1}}(\o)})W$ vanishes identically unless $\Pi$ is unramified and $W=\frac{W_{\Pi}}{\|W_\Pi\|}$, in which case, it equals $\frac{W_{\Pi}}{\|W_\Pi\|}$. Thus we have
$$H(\Pi;s_1,s_2)=\|W_\Pi\|^{-2}\frac{\Psi(1/2+s_1,W_\Pi,W_{{1}}){\Psi(1/2+s_2,\overline{W_\Pi},\overline{W_{{2}}})}}{L(1/2+s_1,\Pi\otimes{\pi}_{1})L(1/2+s_2,\tilde{\Pi}\otimes\tilde{\pi}_{2})}.$$
We conclude by noting that $\|W_\Pi\|=1$ (see \cite[eq.(16)]{JaNu2021reciprocity}) and using \eqref{unramified-zeta-integral-m-1}.
\end{proof}

\subsection{Level places}

\begin{lem}\label{upper-bounds-spectral-nonarchimedean}
  Let $v\mid\q$ be an unramified place and let $f=\tilde{\1}_{\overline{K_0}(\p^e)}$.
Then $H(\Pi)$ vanishes unless $c(\Pi)\le e$. Moreover, we have $H(\Pi)\ll_\epsilon N(\p^{e})^\epsilon$.
\end{lem}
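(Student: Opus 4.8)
The plan is to make the operator $\Pi_v(f_v)$ explicit and use it to collapse $H(\Pi)$ to a short spectral sum. At a level place $v\mid\q$ both the type~I and the type~II prescription (\S\ref{type-1-test-function}) put $f_v=f_v^0=\vol(\overline{K_0}(\p^{e}))^{-1}\mathbf 1_{\overline{K_0}(\p^{e})}$, so $\Pi_v(f_v)$ is the orthogonal projection of $\Pi$ onto the finite-dimensional subspace $V:=\Pi^{\overline{K_0}(\p^{e})}$ of level-$\p^{e}$ vectors. Choosing the basis $\B(\Pi)$ in the definition of $H(\Pi)$ to contain an orthonormal basis $\B(V)$ of $V$, and recalling $s_1=s_2=0$, only the vectors in $\B(V)$ survive, so that
\[
H(\Pi)=\frac{1}{L_v(1/2,\Pi\otimes\pi_{1,v})\,L_v(1/2,\tilde{\Pi}\otimes\tilde{\pi}_{2,v})}\sum_{W\in\B(V)}\Psi_v(1/2,W,W_{1,v})\,\overline{\Psi_v(1/2,W,W_{2,v})}.
\]
In particular $H(\Pi)=0$ whenever $V=\{0\}$; by the newvector theory recalled in \S\ref{sec:newvectors} this is exactly the case $c(\Pi)>e$, which proves the vanishing assertion.

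For the upper bound I would first observe that the denominator is harmless: since $\Pi$ is $\vartheta$-tempered with $\vartheta<1/2$ and $\pi_{1,v},\pi_{2,v}$ are tempered, each $L_v(1/2,\Pi\otimes\pi_{i,v})^{\pm1}$ is a product of at most $n(n+1)$ factors $(1-\alpha N(\p)^{-1/2})^{\mp1}$ with $|\alpha|\le N(\p)^{\vartheta}<N(\p)^{1/2}$, hence $|L_v(1/2,\Pi\otimes\pi_{i,v})|^{-1}\ll_{n}1$. Cauchy--Schwarz over $\B(V)$ then bounds the numerator by $\prod_{i=1,2}\bigl(\sum_{W\in\B(V)}|\Psi_v(1/2,W,W_{i,v})|^{2}\bigr)^{1/2}$, and, $V$ being finite-dimensional, $\sum_{W\in\B(V)}|\Psi_v(1/2,W,W_{i,v})|^{2}$ is precisely the squared norm of the linear functional $W\mapsto\Psi_v(1/2,W,W_{i,v})$ on $V$, namely $\bigl(\sup\{|\Psi_v(1/2,W,W_{i,v})|:W\in V,\ \|W\|=1\}\bigr)^{2}$. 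Thus everything reduces to the pointwise estimate $|\Psi_v(1/2,W,W_{i,v})|\ll_{\epsilon}N(\p^{e})^{\epsilon}$ for $i=1,2$ and $W$ a unit vector in $V$.

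This pointwise bound is the heart of the matter. Because $\vartheta<1/2$ the Rankin--Selberg integral already converges absolutely at the central point, and for a fixed small $\sigma>0$ Cauchy--Schwarz gives
\[
\bigl|\Psi_v(1/2,W,W_{i,v})\bigr|^{2}\le\Bigl(\int_{N_n\bs G_n}\bigl|W(\diag(g,1))\bigr|^{2}|\det g|^{-2\sigma}\,\d g\Bigr)\Bigl(\int_{N_n\bs G_n}|W_{i,v}(g)|^{2}|\det g|^{2\sigma}\,\d g\Bigr).
\]
Shintani's formula \eqref{shintani} turns the second factor into an absolutely convergent sum of squares of Schur polynomials in the unit-modulus Satake parameters of $\pi_{i,v}$, hence $O_{\sigma}(1)$. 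For the first factor I would use that $V$ is spanned by a set of at most $\binom{e+n}{n}\ll_{n}(e+1)^{n}$ right translates of the newvector $W_{\Pi_v}$ by matrices $\diag(\p^{\mu_1},\dots,\p^{\mu_n},1)$ with $|\mu|\le e-c(\Pi)$ (the theory of oldvectors); each such translate has the same weighted Rankin--Selberg norm as $W_{\Pi_v}$ up to a factor $N(\p)^{-2\sigma|\mu|}\le1$, so one is reduced to bounding $\int_{N_n\bs G_n}|W_{\Pi_v}(\diag(g,1))|^{2}|\det g|^{-2\sigma}\,\d g$ against $\|W_{\Pi_v}\|^{2}$ up to $N(\p^{e})^{\epsilon}$, together with a conditioning estimate for the normalized Gram matrix of these translates (its inverse of operator norm $\ll_{\epsilon}N(\p^{e})^{\epsilon}$, the entries being Laurent polynomials of degree $O(e)$ in $N(\p)^{1/2}$ with bounded coefficients). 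Both inputs follow from the standard conductor-uniform decay estimates for Whittaker functions of near-tempered representations, and together they yield the first factor $\ll_{\epsilon}N(\p^{e})^{\epsilon}\|W\|^{2}$, completing the proof. I expect the genuinely delicate point to be exactly this circle of conductor-uniform Rankin--Selberg and oldvector estimates; everything else is bookkeeping.
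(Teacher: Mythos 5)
The overall architecture is sound and largely parallels the paper's. You correctly recognize that at $v\mid\q$ both types of test function give $\Pi_v(f_v)=$ projection onto $V:=\Pi^{\overline{K_0}(\p^e)}$, hence the vanishing statement follows from newvector theory exactly as in the paper. Your denominator bound $|L_v(1/2,\Pi\otimes\pi_{i,v})|^{-1}\ll 1$ via $\vartheta$-temperedness with $\vartheta<1/2$ is correct. Your first Cauchy--Schwarz, reducing $H(\Pi)$ to $\|\ell_1\|\|\ell_2\|$ with $\|\ell_i\|=\sup_{\|W\|=1}|\Psi_v(1/2,W,W_{i,v})|$, is in fact a small improvement over the paper's organization: the paper bounds each basis element individually and then multiplies by $\dim V$ (citing Reeder's dimension formula), whereas your reduction dispenses with the dimension count provided you can bound the supremum over \emph{arbitrary} unit vectors.

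However, that sup bound is precisely where the proof has a genuine gap. What the paper does at this point is cite \cite[Lemma~7.3]{JaNu2021reciprocity}, which is exactly the statement you need: for any unit vector $W\in V$, $\Psi_v(1/2,W,W_{i,v})\ll_\epsilon N(\p)^{\epsilon(e-c(\Pi))}$. You instead attempt to reprove this from scratch, and the two ingredients you lean on are asserted without proof: \textbf{(i)} the weighted Rankin--Selberg norm comparison $\int|W(\diag(g,1))|^2|\det g|^{-2\sigma}\d g\ll_\epsilon N(\p^e)^\epsilon\|W\|^2$ for $W\in V$ of unit norm, and \textbf{(ii)} a conditioning estimate $\|G^{-1}\|_{\mathrm{op}}\ll_\epsilon N(\p^e)^\epsilon$ for the Gram matrix of the oldvector translates $\Pi_v(\diag(\p^\mu,1))W_{\Pi_v}$. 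Neither is a consequence of ``standard conductor-uniform decay estimates'': the Gram matrix entries are governed by ratios of local $L$-values and its inverse can, a priori, blow up when Satake parameters nearly collide, so establishing (ii) uniformly in the conductor is itself a substantive lemma, essentially of the same weight as the result you are trying to prove. (Your claim that each translate changes the weighted norm by $N(\p)^{-2\sigma|\mu|}\le 1$ also has the sign reversed: the change of variable $g\mapsto g\p^{-\mu}$ produces $N(\p)^{+2\sigma|\mu|}$, which is $\le N(\p^e)^{2\sigma}$ rather than $\le 1$ --- harmless for the final $\epsilon$-bound, but the assertion as stated is incorrect.) So the proposal is structurally correct but does not actually establish the result; the ``heart of the matter'' you identify must be either cited (as the paper does) or proved, and you do neither.
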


\begin{proof}
As in the proof of Lemma \ref{bounds-spectral-nonarchimedean} we write
$$H(\Pi)=\sum_{W\in\B(\Pi)}\frac{\Psi(1/2,\Pi(f^0)W,W_{{1}}){\Psi(1/2,\tilde{\Pi}(f^0)\overline{W},\overline{W_{{2}}})}}{L(1/2,\Pi\otimes{\pi}_{1})L(1/2,\tilde{\Pi}\otimes\tilde{\pi}_{2})}.$$
where $f^0=\tilde{\1}_{\overline{K_0}(\p^e)}$. Thus applying newvector theory (see \S\ref{sec:newvectors}) we see that
$\Pi(f^0)W=0$ unless $c(\Pi)\le e$ in which case, if $W\in\Pi^{\overline{K_0}(\p^e)}$ then $\Pi(f_0)W=W$. This yields the first claim.

For the upper bound, we first restrict the sum as
$$H(\Pi)=\sum_{W\in\B\left(\Pi^{\overline{K_0}(\p^e)}\right)}\frac{\Psi(1/2,W,W_{{1}}){\Psi(1/2,\overline{W},\overline{W_{{2}}})}}{L(1/2,\Pi\otimes{\pi}_{1})L(1/2,\tilde{\Pi}\otimes\tilde{\pi}_{2})}.$$
using the above argument.
It follows from \cite[Lemma 7.3]{JaNu2021reciprocity} that for any $W\in \Pi^{\overline{K_0}(\p^e)}$, one has
\begin{equation*}
  \Psi(1/2,W,W_{i})\ll_\epsilon N(\p)^{\epsilon(e-c(\Pi))},\quad i=1,2
\end{equation*}
We conclude by noting that
$\dim(\Pi^{K_0(\p^{e})})$ is a polynomial in $c(\Pi)$ and $e$, hence is $\ll_\epsilon N(\p^e)^\epsilon$ for $c(\Pi)\le e$; see \cite{reeder1991old}. Combining these estimates we conclude.
\end{proof}

\begin{lem}\label{lower-bounds-spectral-nonarchimedean}
    Let $v\mid\q$ be an unramified place and $f$ be of type I. Moreover, assume that $\pi_1\cong\pi_2$ and $W_1=W_2$. Then $H(\Pi)\ge 0$ for every unitary $\pi$ and $H(\Pi)\gg 1$ if $c(\Pi)\le e$.
\end{lem}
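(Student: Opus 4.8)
\textbf{Proof plan for Lemma \ref{lower-bounds-spectral-nonarchimedean}.}

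The plan is to exploit the fact that when $\pi_1\cong\pi_2$ and $W_1=W_2$, the local weight $H(\Pi)$ becomes a sum of squared absolute values, hence manifestly non-negative. Concretely, recall from the proof of Lemma \ref{upper-bounds-spectral-nonarchimedean} that with $f_v=f_v^0\ast f_v^0$ and $f_v^0=\vol(\overline{K_0}(\p^e))^{-1}\1_{\overline{K_0}(\p^e)}$, the operator $\Pi(f_v^0)$ is (up to the normalization) the orthogonal projection onto $\Pi^{\overline{K_0}(\p^e)}$, so $\Pi(f_v)=\Pi(f_v^0)^2$ is the same projection. Choosing $\B(\Pi)$ to be an orthonormal basis adapted to this subspace, only $W\in\B(\Pi^{\overline{K_0}(\p^e)})$ contribute, and for such $W$ one has $\Pi(f_v)W=W$. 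Since at the level place $s_1=s_2=0$ and the test vectors satisfy $W_1=W_2=:W_\pi$ (the spherical newvector with $W_\pi(1)=1$), and $\pi_1\cong\pi_2$, the numerator of each summand is
$$
\Psi(1/2,W,W_{\pi})\,\overline{\Psi(1/2,W,W_{\pi})}=|\Psi(1/2,W,W_\pi)|^2\ge 0,
$$
while the denominator $L(1/2,\Pi\otimes\pi)L(1/2,\tilde\Pi\otimes\tilde\pi)=|L(1/2,\Pi\otimes\pi)|^2>0$ (here using that $\tilde\pi\cong\bar\pi$ and the local Rankin--Selberg $L$-factor is a nonzero finite Euler product, nonvanishing away from its poles, with the analytic continuation as in \S\ref{zeta-integrals-n-1}). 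Thus each summand is $\ge 0$, giving $H(\Pi)\ge 0$ for every unitary $\Pi$; one should check the complex-conjugation bookkeeping, i.e.\ that $\Psi(1/2,\overline{W},\overline{W_2})=\overline{\Psi(1/2,W,W_1)}$ under the opposite-Whittaker-model convention, which follows from the defining integral in \S\ref{zeta-integrals-n-1} and $\bar\psi$ being the character of the contragredient model.

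For the lower bound $H(\Pi)\gg 1$ when $c(\Pi)\le e$, first note that in this case $\Pi$ is $\overline{K_0}(\p^e)$-spherical in the sense that $\Pi^{\overline{K_0}(\p^e)}\neq\{0\}$, and it contains the (shifted, but here $\psi$ is unramified so genuine) newvector $W_{\Pi}^{\mathrm{new}}$, unique up to scalar, at level $c(\Pi)\le e$. Since $H(\Pi)$ is a sum of non-negative terms over any orthonormal basis of $\Pi^{\overline{K_0}(\p^e)}$, it suffices to lower-bound the single contribution of the unit vector $W=W_\Pi^{\mathrm{new}}/\|W_\Pi^{\mathrm{new}}\|$, i.e.\ to show
$$
\frac{|\Psi(1/2,W_\Pi^{\mathrm{new}},W_\pi)|^2}{\|W_\Pi^{\mathrm{new}}\|^2\,|L(1/2,\Pi\otimes\pi)|^2}\gg 1.
$$
This is exactly the kind of estimate proved in \cite[\S 7]{JaNu2021reciprocity}: the zeta integral of a newvector of conductor $\le e$ against the spherical vector of an unramified representation is, after normalizing by the $L$-factor and by $\|W_\Pi^{\mathrm{new}}\|$, bounded below by a positive constant depending only on $n$ (and on $\pi$, which is fixed and tempered). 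One invokes the Shintani-type formula \eqref{shintani} for $W_\Pi^{\mathrm{new}}$ on the diagonal torus, together with the Iwasawa/Shintani evaluation of the zeta integral, to see that the leading term of $\Psi(1/2,W_\Pi^{\mathrm{new}},W_\pi)/L(1/2,\Pi\otimes\pi)$ is $1+O(N(\p)^{-1})$ or at worst a fixed nonzero polynomial quantity in the Satake/Langlands parameters, uniformly over $c(\Pi)\le e$; and $\|W_\Pi^{\mathrm{new}}\|^2$ is controlled from above (polynomially in $e$, which is harmless since we only want a fixed lower bound once divided — more carefully, one uses that the relevant ratio is a \emph{conductor-stable} quantity, cf.\ \cite[Lemma 7.3]{JaNu2021reciprocity} and the surrounding discussion on local conductors and newvector norms).

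The main obstacle is the uniformity of the lower bound over all $\Pi$ with $c(\Pi)\le e$, including those that are highly ramified and $\vartheta$-tempered rather than tempered: one must ensure the positive constant implicit in $H(\Pi)\gg 1$ does not degrade as the conductor exponent grows up to $e$, nor as the Langlands parameters of $\Pi$ wander within the $\vartheta$-tempered range. The way I would handle this is to reduce, via the newvector theory of \cite{JPSS1981conducteur} and the explicit local Rankin--Selberg computations of \cite[\S 7]{JaNu2021reciprocity}, to the statement that for a generic $\vartheta$-tempered $\Pi$ the normalized zeta integral $\Psi(1/2,W_\Pi^{\mathrm{new}},W_\pi)/(\|W_\Pi^{\mathrm{new}}\| L(1/2,\Pi\otimes\pi))$ equals a fixed nonzero constant plus an error that is $O(N(\p)^{-\delta})$ for some $\delta=\delta(n,\vartheta)>0$ — this uses crucially that $\pi$ is unramified and \emph{tempered}, so the relevant geometric series converge with room to spare. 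For the finitely many (in fact, none, since $\pi$ tempered) troublesome cases one argues directly. All other ingredients — non-negativity, the projection identity, the conjugation symmetry — are formal consequences of the setup in \S\ref{sec:spectral-side} and \S\ref{sec:newvectors} and the zeta-integral theory recalled above.
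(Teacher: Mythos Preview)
Your overall strategy matches the paper's: write $H(\Pi)$ as a sum of squares using $f=f^0\ast f^0$ and $W_1=W_2$, then for the lower bound drop all but the newvector contribution. However, two steps in your proposal are either unnecessarily complicated or genuinely incomplete.

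First, you do not need an approximate evaluation of the zeta integral. Since $\pi$ is \emph{unramified}, the newvector-against-spherical computation \eqref{unramified-zeta-integral-m-1} gives $\Psi(1/2,W_\Pi,W_\pi)=L(1/2,\Pi\otimes\pi)$ \emph{exactly}, so the ratio $|\Psi|^2/|L|^2$ is $1$, not $1+O(N(\p)^{-\delta})$. This immediately dissolves the uniformity concern you raise about the zeta-integral part, and no argument of the form ``fixed nonzero constant plus $O(N(\p)^{-\delta})$'' is needed.

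Second, and more seriously, your treatment of $\|W_\Pi\|^2$ is a genuine gap. You say it is ``controlled from above polynomially in $e$'', but any growth in $e$ would destroy the uniform lower bound $H(\Pi)\gg 1$, since the norm sits in the denominator. What is actually required---and what the paper proves---is that $\|W_\Pi\|^2\asymp 1$ \emph{uniformly} in $c(\Pi)$ and $e$. The paper argues as follows: since $W_\Pi|_{G_n(F)}$ is spherical (from \cite{JPSS1981conducteur}), the norm formula \cite[eq.(6)]{JaNu2021reciprocity} reduces to a sum over diagonal torus values; this sum is evaluated by \cite[Theorem~4.1]{Miyauchi2014Whittaker} as $\prod_{i,j}(1-\beta_i\overline{\beta_j}N(\p)^{-1})^{-1}$ for certain parameters $\beta_i$, and both this product and the factor $L(1,\Pi\otimes\tilde\Pi)$ are shown to be $\asymp 1$ using the Luo--Rudnick--Sarnak-type bounds $|\beta_i|\le N(\p)^{1/2-\delta}$ recorded in \cite{Brumley2006effective}. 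Your citation of \cite[Lemma~7.3]{JaNu2021reciprocity} is misplaced here: that lemma supplies \emph{upper} bounds on zeta integrals of oldforms (it is the engine behind Lemma~\ref{upper-bounds-spectral-nonarchimedean}), not norm estimates for the newvector.
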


\begin{proof}
    Working as in the proof of Lemma \ref{upper-bounds-spectral-nonarchimedean} we see that
    $$H(\Pi)=\sum_{W\in\B\left(\Pi^{\overline{K_0}(\p^e)}\right)}\left|\frac{\Psi(1/2,W,W_{{1}})}{L(1/2,\Pi\otimes{\pi}_{1})}\right|^2\ge 0.$$
    Now noting that $W_\Pi\in \Pi^{\overline{K_0}(\p^e)}$ and choosing $\B\left(\Pi^{\overline{K_0}(\p^e)}\right)\ni\frac{W_\Pi}{\|W_\Pi\|}$ we obtain
    $$H(\Pi)\ge \|W_\Pi\|^{-2}\left|\frac{\Psi(1/2,W_\Pi,W_{{1}})}{L(1/2,\Pi\otimes{\pi}_{1})}\right|^2.$$
    It follows from the argument in \cite[eq.(17)]{JaNu2021reciprocity} that $\|W_\Pi\|^2\asymp 1$.
    We conclude the proof using \eqref{unramified-zeta-integral-m-1}.
\end{proof}
We remark that one can make the implied constant in Lemma \ref{lower-bounds-spectral-nonarchimedean} arbitrary close to $1$ for sufficiently large $v$.

\subsection{Supercuspidal prime}

\begin{lem}\label{supercuspidal-projects-cuspidal}
Let $v=\p_0$ and $f$ be of type II. Recall the supercuspidal representation
$\sigma$ of $\overline{G_{n+1}}(F)$ underlying the definition of $f$. Then the
local weight $H(\Pi)$ vanishes identically unless $\Pi\cong\tilde{\sigma}$.
\end{lem}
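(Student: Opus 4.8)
The plan is to reduce the lemma to a single clean fact: at $v=\p_0$ the local component of $f$ is the matrix coefficient $f(g)=\langle\sigma(g)W_\sigma,W_\sigma\rangle$ of the supercuspidal $\sigma$, and the associated convolution operator $\Pi(f)=\int_{\overline{G_{n+1}}(F_v)}f(g)\Pi(g)\,dg$ vanishes identically on $\Pi$ unless $\Pi\cong\tilde\sigma$. Granting this, the conclusion is immediate from the definition of the local spectral weight recalled in \S\ref{sec:spectral-side}: if $\Pi\not\cong\tilde\sigma$ then $\Pi(f)W=0$ for every $W\in\B(\Pi)$, and since the local zeta integral $\Psi(1/2+s_1,\cdot,W_{1})$ is linear in its first argument, every summand of
\[
H(\Pi)=\sum_{W\in\B(\Pi)}\frac{\Psi(1/2+s_1,\Pi(f)W,W_{1})\,\Psi(1/2+s_2,\overline{W},\overline{W_{2}})}{L(1/2+s_1,\Pi\otimes\pi_{1})\,L(1/2+s_2,\tilde\Pi\otimes\tilde\pi_{2})}
\]
is zero, hence $H(\Pi)=0$. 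This is insensitive to the values of $s_1,s_2$, so in particular it yields the claim at $s_1=s_2=0$.

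To establish the key fact I would argue via the orthogonality relations for matrix coefficients of (essentially) square-integrable representations. Since $\sigma$ is supercuspidal and $\overline{G_{n+1}}=\PGL_{n+1}$ has trivial centre, $f$ is genuinely compactly supported, so $\Pi(f)$ is a well-defined operator on any smooth model of $\Pi$; and as $\Pi$ is unitary it suffices to show $\langle\Pi(f)u_1,u_2\rangle=0$ for all vectors $u_1,u_2$. Expanding,
\[
\langle\Pi(f)u_1,u_2\rangle=\int_{\overline{G_{n+1}}(F_v)}\langle\sigma(g)W_\sigma,W_\sigma\rangle\,\langle\Pi(g)u_1,u_2\rangle\,dg,
\]
where the first factor is a matrix coefficient of $\sigma$ and, by unitarity of $\Pi$, the second is the complex conjugate of a matrix coefficient of $\tilde\Pi$. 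The Schur orthogonality relations then force this integral to vanish whenever $\sigma\not\cong\tilde\Pi$, i.e.\ whenever $\Pi\not\cong\tilde\sigma$, which is exactly what is needed; finally one notes that the operator $\Pi(f)$ on the Whittaker model used to define $\Psi$ is the same operator as the one analysed here on a unitary model, so it too is zero.

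I expect no genuine obstacle: the lemma is essentially formal. The two points requiring care are (i) the bookkeeping with the contragredient — it is $\tilde\sigma$, not $\sigma$, that is detected by the diagonal matrix coefficient $\langle\sigma(\cdot)W_\sigma,W_\sigma\rangle$, which is precisely what the conjugation step above records — and (ii) checking that the orthogonality relations apply in a legitimate setting ($\sigma$ supercuspidal hence square-integrable modulo centre, $\Pi$ unitary). Neither introduces any analytic difficulty.
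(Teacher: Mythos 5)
Your proof is correct and rests on the same underlying idea as the paper's: the paper phrases it as an invariant bilinear pairing $\Pi\times\sigma\to\C$ killed by Schur's lemma when $\Pi\not\cong\tilde\sigma$, which is precisely the Schur orthogonality you invoke, and both arguments use that a matrix coefficient of a supercuspidal $\sigma$ is compactly supported modulo centre so the integral converges for any $\Pi$. Your version has the small advantage of directly concluding $\Pi(f)=0$ as an operator (by letting both vectors in $\langle\Pi(f)u_1,u_2\rangle$ vary), whereas the paper records only $\Pi(f)W(1)=0$; this is an inessential difference.
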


\begin{proof}
It suffices to show that for any $W\in\Pi$ we have $\Pi(f)W(1)=0$ unless $\Pi\cong\tilde{\sigma}$. We have
\begin{equation*}
\Pi(f)W(1) =\int_{\overline{G_{n+1}}(F)}f(g)W(g)\d g =\int_{\overline{G_{n+1}}(F)}\langle\sigma(g)W_\sigma,W_\sigma\rangle W(g)\d g.
\end{equation*}
Now the bilinear pairing between $\Pi$ and $\sigma$ given by
$$\Pi\times\sigma\ni (W,W')\mapsto \int_{\overline{G_{n+1}}(F)}\langle\sigma(g)W',W_\sigma\rangle W(g)\d g$$
is well-defined, as the matrix coefficient $\langle\sigma(g)W',W_\sigma\rangle$
is compactly supported as a function of $g\in\overline{G_{n+1}}(F_v)$
and is clearly $\overline{G_{n+1}}(F_v)$-invariant. As $\Pi$ and $\sigma$ are irreducible, by Schur's lemma the pairing must vanish identically (\emph{i.e.}, for all $W'\in\sigma$ and $W\in\Pi$), unless $\Pi\cong\tilde{\sigma}$. Plugging in $W'=W_\sigma$ we conclude.
\end{proof}

\subsection{Auxiliary place}

\begin{lem}\label{aux-weight-majorize}
    Let $v=\p_1$ and $f$ be of type II. Recall $\nu\in\Z_{\ge 0}$ underlying the definition of $f$. Then the local weight $H(\Pi)$ vanishes identically unless $c(\Pi)\le\nu$.
\end{lem}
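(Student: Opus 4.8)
The plan is to mimic the argument of Lemma~\ref{upper-bounds-spectral-nonarchimedean}, replacing the Hecke-congruence projector by the projector coming from the auxiliary test function at $v=\p_1$. First I would unwind the definition of $H(\Pi)$ at this place: writing $\B(\Pi)$ for a $K$-type orthonormal basis, we have
\begin{equation*}
H(\Pi)=\sum_{W\in\B(\Pi)}\frac{\Psi(1/2,\Pi(f_v)W,W_{1,v})\,\Psi(1/2,\tilde\Pi(f_v)\overline{W},\overline{W_{2,v}})}{L(1/2,\Pi\otimes\pi_{1,v})L(1/2,\tilde\Pi\otimes\tilde\pi_{2,v})},
\end{equation*}
so it suffices to show that $\Pi(f_v)W=0$ for all $W$ whenever $c(\Pi)>\nu$. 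Recall from \S\ref{subsubsec:auxiliary} that $f_v$ is supported on matrices $\begin{pmatrix}a&b\\&1\end{pmatrix}\begin{pmatrix}\I_n&\\c&1\end{pmatrix}$ with $c\in\p_1^\nu\o_v^n$, and with $a$, $b$ ranging over $K_v$-bi-invariant, resp.\ $\o_v^n$-supported, data. The key point is that $f_v$, although not itself a multiple of a congruence-subgroup indicator, is right-invariant under the subgroup $\begin{pmatrix}\I_n&\\c&1\end{pmatrix}$ with $c\in\p_1^\nu\o_v^n$ together with the standard maximal compact of the $G_n$-block and the unipotent $\begin{pmatrix}\I_n&b'\\&1\end{pmatrix}$ with $b'\in\o_v^n$; these generate exactly the mirabolic-type congruence data at level $\p_1^\nu$.

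The main step is therefore to identify the relevant invariance group and invoke Jacquet--Piatetski-Shapiro--Shalika's newvector theory (\S\ref{sec:newvectors}): I would argue that $\Pi(f_v)W$ is fixed on the right by a group containing $\overline{K_0}(\p_1^\nu)$ (after a harmless change of variables moving the $\diag(\p_1^\mu)$ part, which only permutes the space of $\overline{K_0}(\p_1^\nu)$-fixed vectors since $\nu$ is taken much larger than $|\mu|$), so that $\Pi(f_v)W\in\Pi^{\overline{K_0}(\p_1^\nu)}$. By the uniqueness and existence part of newvector theory, $\Pi^{\overline{K_0}(\p_1^\nu)}=\{0\}$ unless $c(\Pi)\le\nu$; hence $H(\Pi)=0$ in that range. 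Concretely, one computes, for $k\in\overline{K_0}(\p_1^\nu)$,
\begin{equation*}
\Pi(f_v)W(gk)=\int_{\overline{G_{n+1}}(F_v)}f_v(h)\,W(gkh)\d h=\int_{\overline{G_{n+1}}(F_v)}f_v(k^{-1}h')\,W(gh')\d h',
\end{equation*}
and checks $f_v(k^{-1}h')=f_v(h')$ using the block description of the support together with the fact that the lower-left block of $\overline{K_0}(\p_1^\nu)$ lies in $\p_1^\nu\o_v$ and preserves the conditions $a\in K_v\diag(\p_1^\mu)K_v$, $b\in\o_v^n$, $c\in\p_1^\nu\o_v^n$.

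The step I expect to be the main obstacle is verifying that left-multiplication by $k^{-1}\in\overline{K_0}(\p_1^\nu)$ genuinely preserves the (twisted) support of $f_v$, since the Bruhat-type coordinates $(a,b,c)$ do not transform diagonally: multiplying $\begin{pmatrix}a&b\\&1\end{pmatrix}\begin{pmatrix}\I_n&\\c&1\end{pmatrix}$ on the left by an element of $\overline{K_0}(\p_1^\nu)$ mixes the blocks, and one must check that the new $(a',b',c')$ still satisfy $a'\in K_v\diag(\p_1^\mu)K_v$, $b'\in\o_v^n$, $c'\in\p_1^\nu\o_v^n$. This is where the hypothesis $\nu\gg|\mu|$ is essential: the perturbations introduced have size controlled by $\p_1^\nu$ relative to $\diag(\p_1^\mu)$, so they do not leave the double coset $K_v\diag(\p_1^\mu)K_v$. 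Once this invariance is established, the conclusion is immediate from newvector theory, exactly as at the end of the proof of Lemma~\ref{upper-bounds-spectral-nonarchimedean}.
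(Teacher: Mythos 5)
Your strategy is essentially the paper's: show that $f_{\p_1}$ is invariant under $\overline{K_0}(\p_1^\nu)$, conclude that $\Pi(f_{\p_1})$ is supported on the $\overline{K_0}(\p_1^\nu)$-fixed subspace, and invoke newvector theory to kill that subspace when $c(\Pi)>\nu$. The paper checks right-invariance $f(gk)=f(g)$, so that $\Pi(f)=\Pi(f)\circ P_{\overline{K_0}(\p_1^\nu)}$ with $P$ the projector onto fixed vectors, while your change of variables $h\mapsto k^{-1}h'$ in $\Pi(f)W(gk)$ is a left-invariance check, placing the image of $\Pi(f)$ inside $\Pi^{\overline{K_0}(\p_1^\nu)}$. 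Both invariances hold for this $f_{\p_1}$ and lead to the same conclusion, so this is a packaging difference, not a different route.

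One correction worth making: you identify preserving the double coset $K_v\diag(\p_1^\mu)K_v$ as ``the main obstacle'' and claim $\nu\gg|\mu|$ is essential for it. It is not. In the Bruhat-coordinate computation (the paper records $gk$ explicitly; the left version $kg$ is analogous) the $a$-block gets multiplied on each side by elements of $K_v$ times a central unit --- e.g.\ factors like $a'\in K_v$ and $\I_n-b'c$ or $\I_n-bc'/e$, which lie in $K_v$ because $c,c'\in\p_1^\nu\o_v^n$ --- so the double coset $K_v\diag(\p_1^\mu)K_v$ is preserved automatically, with no constraint relating $\nu$ to $|\mu|$. Similarly the ``harmless change of variables moving the $\diag(\p_1^\mu)$ part'' you mention is unnecessary. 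The hypothesis that $\nu$ is much larger than $|\mu|$ is used elsewhere (in the orbit analysis of Lemma \ref{lem:xi-t-type-2-auxiliary}), not in this invariance check.
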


\begin{proof}
    It suffices to show that $f$ is right invariant by $\overline{K_0}(\p_1^\nu)$.
    Indeed, if that is the case then $\Pi(f)$ will project on to
    $\Pi^{\overline{K_0}(\p_1^\nu)}$. By newvector theory (see \S\ref{sec:newvectors}), the above space is zero unless $c(\Pi)\le\nu$ and the lemma follows.

    To see the claim on invariance, we show that
    $$f(g)=1\implies f(gk)=1$$
    for any element in $k\in\overline{K_0}(\p_1^\nu)$ and $g\in\overline{G_{n+1}}(F)$. The above also shows that
    $$f(gk)=1\implies f(gkk^{-1})=f(g)=1,$$
    consequently, $f(g)=f(gk)$.

    We note that any $k$ can be written as
    $$k:=\mat{\I_n&b'\\&1}\mat{a'&\\&1}\mat{\I_n&\\c'&1},\quad b',\p_1^{-\nu}c'\in\o^n,
    a'\in K_n$$
    and any $g$ with $g_{n+1,n+1}\neq 0$ can be written as
    $$g=\mat{\I_n&b\\&1}\mat{a&\\&1}\mat{\I_n&\\c&1},\quad b\in F^n, c\in F^n, a\in G_n(F).$$
    We record that
    $$gk=\mat{\I_n&b+d^{-1}ab'\\&1}\mat{d^{-2}a(d\I_n-b'c)a'&\\&1}\mat{\I_n&\\c'+d^{-1}ca'&1}\mod Z_{n+1}(F),$$
    where $d:=1+cb'$.
    If $f(g)=1$ then from the support condition
    $$b\in\o^n,\, a\in K_n\diag(\p_1^\mu)K_n,\, c\in\p_1^\nu\o^n.$$
    Consequently, $$d\in\o^\times,\,d\I_n-b'c\in K_n,$$
    and thus
    $$b+d^{-1}ab'\in\o^n,\,d^{-2}a(d\I_n-b'c)a'\in K_n\diag(\p_1^\mu)K_n,\,
    \p_1^{-\nu}(c'+d^{-1}ca')\in \o^n.$$
    Hence, $f(gk)=1$ and we conclude the proof.
\end{proof}

\begin{rmk}\label{rmk-ramified-weight-function}
  In any of the above cases, if $v$ is ramified, we change variables inside the
  zeta-integral as in \cite[proof of Lemma 2.1]{cogdellpiatetski-shapiro1994converse}
  and recall that we choose $W_1$ and $W_2$ to be shifted spherical vectors.
  This allows us to reduce to the unramified case. The price to pay is
  for all this change of variables is of the form $|\Delta|^{\mu(s_1,s_2)}$, where $\mu$ is an affine functions whose coefficients depend only on $n$.
\end{rmk}

\subsection{Archimedean places}

\begin{lem}\label{bound-spectral-archimedean-local-weight}
Let $v\mid\infty$ and $f$ be of type II. Then the local weight satisfies
$$h(\Pi)\ll_\epsilon \left(\frac{X}{C(\Pi)}\right)^{2n\sigma},$$
for any $0<\sigma<1/2-\vartheta$.
In particular, for fixed $X$ we have $h(\Pi)=o(1)$ as $C(\Pi)\to\infty$.
\end{lem}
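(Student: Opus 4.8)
The plan is to rewrite $h(\Pi)$ as a single archimedean zeta integral twisted by the smoothing operator $\Pi(f)$, and then to use that $f$ is essentially a normalised majorant of $\overline{K_0}(X,\tau)$, so that the analytic newvector estimates of \cite{JaNe2019anv} force decay in $X/C(\Pi)$. Recall from \S\ref{sec:spectral-side} that at $s_1=s_2=0$
$$h(\Pi)=\sum_{W\in\B(\Pi)}\Psi_v\left(\tfrac12,\Pi(f)W,W_{1,v}\right)\overline{\Psi_v\left(\tfrac12,W,W_{2,v}\right)},$$
using $\Psi_v(1/2,\overline W,\overline{W_{2,v}})=\overline{\Psi_v(1/2,W,W_{2,v})}$. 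Since $f$ is compactly supported modulo the centre, $\Pi(f)$ has finite rank, so the sum is finite; and since $\Pi$ is $\vartheta$-tempered with $\vartheta<1/2$ while $\pi_{1,v},\pi_{2,v}$ are tempered, both zeta integrals converge at the central point. Let $\Xi_1,\Xi_2\in\Pi^{-\infty}$ be the distributional vectors representing the continuous functionals $\varphi\mapsto\Psi_v(1/2,\varphi,W_{i,v})$ on smooth vectors. Then $\Psi_v(1/2,\Pi(f)W,W_{1,v})=\langle W,\Pi(\check f)\Xi_1\rangle$ with $\check f(g):=f(g^{-1})$ (recall $f$ is real) and $\eta_1:=\Pi(\check f)\Xi_1$ a \emph{smooth} vector, so the sum collapses to
$$h(\Pi)=\overline{\Psi_v\left(\tfrac12,\eta_1,W_{2,v}\right)}=\overline{\langle\eta_1,\Xi_2\rangle},$$
a pairing of a smooth vector against a distribution vector; here one uses that $\langle\eta_1,W\rangle$ decays rapidly while $\langle W,\Xi_2\rangle$ grows at most polynomially, so all rearrangements are legitimate.

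The crux is then to bound $|\langle\eta_1,\Xi_2\rangle|\le\|\eta_1\|_{\mathrm{Sob},k}\cdot\|\Xi_2\|_{\mathrm{Sob},-k}$ for a suitable fixed $k$. The distributional vectors $\Xi_1,\Xi_2$ satisfy $\|\Xi_i\|_{\mathrm{Sob},-k}\ll_\epsilon C(\Pi)^\epsilon$: this uses $\vartheta$-temperedness, the evaluation of each zeta integral at the central point, and the facts that $W_{1,v},W_{2,v}$ are fixed spherical vectors and $\pi_{1,v},\pi_{2,v}$ are unramified with trivial central character. For $\|\eta_1\|_{\mathrm{Sob},k}=\|\Pi(\check f)\Xi_1\|_{\mathrm{Sob},k}$ we invoke \cite{JaNe2019anv}: the three properties of $f_v$ recorded in \S\ref{type-2-test-function} (non-negativity and compact support, derivative bounds scaling like $X^{n+|\gamma|}$ in the lower-left block, and $L^1$-normalisation) exhibit $f$, up to the harmless support-widening by the $\ast^c$-convolution, as a normalised majorant of $\overline{K_0}(X_v,\tau_v)$ in the sense of \cite[Definition~1.2]{JaNe2019anv}; the analytic newvector estimates then give that $\Pi(\check f)$ maps $\Pi^{-k}$ into $\Pi^{k}$ with operator norm $\ll (X/C(\Pi))^{2n\sigma}$ for every $0<\sigma<1/2-\vartheta$. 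This is the exact archimedean counterpart of the vanishing of $\Pi^{\overline{K_0}(\p^e)}$ for $c(\Pi)>e$ that drives Lemmas \ref{upper-bounds-spectral-nonarchimedean} and \ref{aux-weight-majorize}: the smoothing operator attached to a majorant of level $X$ cannot detect conductor exceeding $X$. Assembling the three bounds yields $h(\Pi)\ll_\epsilon C(\Pi)^\epsilon(X/C(\Pi))^{2n\sigma}$, which is the assertion (the $C(\Pi)^\epsilon$ being absorbed into $\ll_\epsilon$), and the final $o(1)$ claim for fixed $X$ is immediate.

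The main obstacle is the uniformity in $\Pi$: the operator-norm bound for $\Pi(\check f)$ with the exponent $2n\sigma$ must hold with an absolute implied constant over the whole (at most $\vartheta$-tempered) unitary spectrum, and this must beat the polynomial-in-$C(\Pi)$ growth of the zeta-integral functionals for \emph{every} $\sigma<1/2-\vartheta$ — a reconciliation of exponents that is genuinely a property of the central point. Making the relevant quantitative statements of \cite{JaNe2019anv} explicit for $\GL(n+1)$ and for the type-II test function, checking that the $\ast^c$-modification preserves the normalised-majorant property, and keeping the distribution-vector bookkeeping under control, are the technical points requiring care.
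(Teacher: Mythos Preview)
Your sketch has two concrete errors and one essential gap.

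First, the claim that $\Pi(f)$ has finite rank because $f$ is compactly supported modulo centre is false at an archimedean place: $\Pi(f)$ is a smoothing (trace-class) operator, but there is no reason for it to have finite rank. So the sum over $\B(\Pi)$ is genuinely infinite and must be controlled analytically, not trivialised.

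Second, the type II archimedean $f_v$ is \emph{not} a normalised majorant of $\overline{K_0}(X_v,\tau_v)$. Compare the definitions in \S\ref{type-1-test-function} and \S\ref{type-2-test-function}: for type I, $\Omega_{11}$ is supported in a $\tau_v$-ball around the identity of $G_n(F_v)$, whereas for type II, $\Omega_{11}$ is supported on $K_v\mathcal{T}(\tau_v)K_v$ and is required to factor as $\Omega^1\ast\Omega^2$ with $\Omega^i$ bi-$K_v$-invariant (Hecke elements). The support thus contains all of $K_v$, not a small neighbourhood of $\I_n$, and the $\ast^c$-convolution is not a ``harmless support-widening'' of a majorant structure that was never there. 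The normalised-majorant machinery you invoke from \cite{JaNe2019anv} does not apply directly.

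The essential gap is the operator-norm bound $\|\Pi(\check f)\|_{-k\to k}\ll (X/C(\Pi))^{2n\sigma}$. This is the entire content of the lemma, and no such statement is available as a black box in \cite{JaNe2019anv}. The paper proves the required decay from scratch in the preparatory Lemma \ref{bounds-spectral-archimedean}: one rewrites $h(\Pi)$ via a sequence $V_{\xi_j}$ approximating the delta mass at the identity in the Kirillov model (cf.\ \cite[\S7.1]{JaNe2019anv}), then unfolds in Bruhat coordinates, applies the Whittaker--Plancherel formula and the $\GL(n{+}1)\times\GL(n)$ local functional equation \emph{twice} (once in each variable), and extracts the factor $(C(\Pi)/X)^{-n\sigma}$ from the $\gamma$-factor asymptotics together with a change of variable $u\mapsto C(\Pi)u$ that produces $\hat\Omega_{21}\!\left(\tfrac{C(\Pi)}{X}e_nu a_2^\top\right)$. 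The bi-$K$-invariance of $\Omega_{11}=\Omega^1\ast\Omega^2$ is used to integrate by parts against the differential operator $\mathfrak{D}$ and control the $\hat G_n$-integrals via Sobolev bounds on Whittaker functions. Your Sobolev-pairing framework is a reasonable organising principle, but it does not replace this computation; the decay in $X/C(\Pi)$ must be produced, and the mechanism that produces it is the local functional equation combined with $\gamma$-factor bounds, not a majorant property.
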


To prove Lemma \ref{bound-spectral-archimedean-local-weight} we need a preparatory
lemma, which we describe below. The proofs of Lemma
\ref{bound-spectral-archimedean-local-weight} and Lemma
\ref{bounds-spectral-archimedean} have similarities with the proofs of
\cite[Theorem 8]{JaNe2019anv} and \cite[Proposition 7.1]{JaNe2019anv}, respectively.

First, let $U$ be a sufficiently small neighborhood of the identity element of
$G_n(F)$ and $\xi\in C_c^\infty(U)$ such that $\|\xi\|_{L^1(G_n(F))}=1$. We
define $V_\xi\in\Pi$ by
$$V_\xi\sbr{\mat{h&\\&1}}:=\int_{N_n(F)}\xi(nh)\overline{\psi(n)}\d n,\quad
h\in G_n(F).$$
Note the theory of the Kirillov model ensures that for a given $\xi$ there is a unique $V_\xi$ as above in $\Pi$; see \cite[Proposition 5]{jacquet2010distinction}.

\begin{lem}\label{bounds-spectral-archimedean}
Let $v\mid\infty$ and $f:=f_0\ast^{\rm{u}}\alpha$ be of type II. We have
\begin{multline*}
    \int_{N_n(F)\bs G_n(F)}W_1(h_1)\int_{N_n(F)\bs G_n(F)}\overline{W_2(h_2)}\hat{\alpha}(e_nh_2)\\
    \int_{\overline{G_{n+1}}(F)}f^0(g)V_\xi\sbr{\mat{h_1&\\&1}g\mat{h_2^{-1}&\\&1}}\d g\d h_2\d h_1\ll_\epsilon \left(\frac{X}{C(\Pi)}\right)^{2n\sigma}
\end{multline*}
for any $0<\sigma<1/2-\vartheta$, uniformly in $\xi$, as $\xi$ tends to the $\delta$-distribution at the identity.
\end{lem}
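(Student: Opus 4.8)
The plan is to reduce the archimedean bound to the ``analytic newvector'' estimates of \cite{JaNe2019anv} by unfolding the trilinear period into Whittaker transforms and then exploiting the conductor mismatch between the fixed spherical vectors $W_1,W_2$ (conductor $O(1)$) and the varying representation $\Pi$ (conductor $C(\Pi)$). First I would write $V_\xi$ in its Kirillov realization, so that the inner integral
\[
\int_{\overline{G_{n+1}}(F)}f^0(g)\,V_\xi\!\sbr{\mat{h_1&\\&1}g\mat{h_2^{-1}&\\&1}}\d g
\]
becomes, after inserting the explicit shape $f^0(g)=X^n\Omega_{11}(a)\Omega_{12}(b)\Omega_{21}(cX)$ for $g=\mat{a&b\\&1}\mat{\I_n&\\c&1}$, an integral over $a\in G_n(F)$, $b,c^\top\in F^n$ of $V_\xi$ evaluated at a product of mirabolic-type matrices. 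The key point is that the $c$-integration is supported on $|c|\ll \tau/X$, i.e.\ on a ball of radius $\asymp 1/X$, which is exactly the archimedean analogue of the Hecke congruence condition that forces the newvector estimate; the $\Omega_{11}$-factor restricts $a$ near the identity (in the $K$-bi-invariant sense), and $\Omega_{12}$ restricts $b$ near $0$.

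Next I would combine the three layers of integration (over $h_1$, over $h_2$ with the extra weight $\hat\alpha(e_nh_2)$, and over $g$) and change variables to recognize the resulting object as a matrix coefficient of $\Pi$ paired against the fixed vectors coming from $W_1$ and $\overline{W_2}$, smoothed by $f^0$. Concretely, after the $h_1$- and $h_2$-substitutions one should arrive at an expression of the form $\langle \Pi(f^0_{\mathrm{sm}}) v_1, v_2\rangle$ where $v_1,v_2$ are built from $W_1$, $W_2$, $\hat\alpha$ and $\xi$, and $f^0_{\mathrm{sm}}$ is the push-forward of $f^0$ to $\overline{G_{n+1}}(F)$. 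At this stage one invokes the spectral/operator-norm estimate for normalized majorants of $\overline{K_0}(X,\tau)$ from \cite[Theorem 8, Proposition 7.1]{JaNe2019anv} (or its easy adaptation): the operator $\Pi(f^0)$ has norm $\ll (X/C(\Pi))^{2n\sigma}$ on the relevant isotypic piece, valid for any $0<\sigma<1/2-\vartheta$ since $\Pi$ is $\vartheta$-tempered. The factors $\|v_1\|,\|v_2\|$ are $O_\epsilon(1)$ uniformly in $\xi$ because $W_1,W_2$ are fixed spherical vectors with the normalization \eqref{normalize-W-1}, $\hat\alpha$ is a fixed Schwartz function, and $\|\xi\|_{L^1}=1$ makes $\|V_\xi\|$ bounded via the Kirillov model; the $o(1)$-in-$\xi$ is not needed here, only uniform boundedness.

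I expect the main obstacle to be the bookkeeping in the change of variables that turns the raw trilinear integral into a clean matrix-coefficient pairing while keeping track of exactly how the support radius $1/X$ in the $c$-variable propagates — i.e.\ verifying that the smoothed test function $f^0_{\mathrm{sm}}$ is genuinely a normalized majorant of $\overline{K_0}(X,\tau)$ (or dominated by one) so that the cited operator-norm bound applies with the correct power $X^{2n\sigma}/C(\Pi)^{2n\sigma}$. A secondary technical point is controlling the $h_2$-integral with the non-compactly-supported weight $\hat\alpha(e_nh_2)$: since $\alpha$ is a fixed $L^1$-normalized smooth bump, $\hat\alpha$ is Schwartz, so its decay plus the rapid decay of $\overline{W_2}$ along the torus in the Kirillov model makes that integral absolutely convergent and $O_\epsilon(1)$, but one should state this carefully using \cite[Lemma 3.1]{JaNu2021reciprocity}-type bounds on Whittaker functions. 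Once these two points are in place, the stated bound $\ll_\epsilon (X/C(\Pi))^{2n\sigma}$ follows, and Lemma~\ref{bound-spectral-archimedean-local-weight} is then immediate by letting $\xi$ tend to the $\delta$-distribution and using that $V_\xi\to$ (a multiple of) the archimedean test vector realizing $\Psi_v$, together with the factorization of $h(\Pi)$ as a sum over an orthonormal basis that collapses, via the majorant property, onto the piece estimated above.
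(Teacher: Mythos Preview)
There is a genuine gap in your approach. Your plan hinges on writing the expression as $\langle \Pi(f^0_{\mathrm{sm}}) v_1, v_2\rangle$ with $\|v_1\|,\|v_2\|=O(1)$ uniformly in $\xi$, and then invoking an operator-norm bound $\|\Pi(f^0)\|\ll (X/C(\Pi))^{2n\sigma}$. Both steps fail. First, the vector $V_\xi$ does \emph{not} have bounded norm as $\xi\to\delta$: in the Kirillov model $V_\xi|_{G_n}$ is essentially $\xi$ itself, and while $\|\xi\|_{L^1}=1$, the relevant norm for a matrix-coefficient pairing is $L^2$, which blows up as the support of $\xi$ shrinks. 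So the pairing $\langle\Pi(f^0)v_1,v_2\rangle$ cannot be controlled by an operator norm times bounded vector norms. Second, no black-box operator-norm estimate of the form you quote appears in \cite{JaNe2019anv}; the paper says its proof is \emph{similar to} those of \cite[Theorem~8, Prop.~7.1]{JaNe2019anv}, not that it cites them.

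The paper's actual argument avoids the $L^2$-norm of $V_\xi$ entirely. It expands $V_{\xi,h_2a_2}$ via the Whittaker--Plancherel formula over the tempered dual of $G_n$, applies the $\GL(n{+}1)\times\GL(n)$ local functional equation to extract a $\gamma$-factor, and then does this a second time on the inner layer. The $\gamma$-factor bounds \cite[Lemma~3.1]{JaNe2019anv} contribute $C(\Pi)^{-n\sigma}$ and $C(\Pi)^{-n\sigma'}$, but these are cancelled by Jacobians from the rescaling $u\mapsto C(\Pi)u$. The genuine decay in $C(\Pi)/X$ comes instead from integrating $\hat\Omega_{21}\big(\tfrac{C(\Pi)}{X}e_n u a_2^\top\big)$ against Whittaker functions with parameter $|\det u|^{\sigma+\sigma'}$, which yields $(C(\Pi)/X)^{-n(\sigma+\sigma')}$. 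Crucially, $\xi$ survives only inside an integral $\int \xi(t)\,(\text{bounded function of }t)\,dt$, which is controlled by $\|\xi\|_{L^1}=1$ --- this is how the uniformity in $\xi$ is achieved, and it is precisely what your operator-norm strategy cannot reproduce.
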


\begin{proof}
  We start by writing the $g$-integral in Bruhat coordinates \emph{i.e.},
  $$g:=\mat{a&b\\&1}\mat{\I_n&\\c&1};\quad a\in G_n(F), b,c^\top\in F^n;
  \quad\d g=\frac{\d a}{|\det(a)|}\d b\d c.$$
  Using the description of $f^0$ as in \S\ref{type-2-test-function},
  the unipotent equivariance of $V_\xi$, and calculating the $b$-integral,
  we write the main integral as
  \begin{multline*}
		X^n\int_{N_n(F)\bs G_n(F)}W_1(h_1)\hat{\Omega}_{12}(-e_nh_1)\int_{N_n(F)\bs
    G_n(F)}\overline{W_2(h_2)}\hat{\alpha}(e_nh_2)\\
		\int_{F^n}\Omega_{21}(cX) \int_{G_n(F)}\Omega_{11}(a)V_{\xi}
    \sbr{\mat{h_1a&\\c&1}\mat{h_2^{-1}&\\&1}}\frac{\d a}{|\det(a)|}\d c\d h_2\d h_1.
  \end{multline*}
	We now consider the $a,c$-integral above. Recalling that $\Omega_{11}=
  \Omega^1\ast\Omega^2$ and changing variables we write the above as
	\begin{equation}\label{g-integral}
		\int_{F^n}\Omega_{21}(cX)\int_{G_n(F)}\int_{G_n(F)}\Omega^1(a_1)\Omega^2(a_2)V_{\xi,h_2a_2}\sbr{\mat{h_1a_1&\\ca_2&1}}\frac{|\det(a_2)|}{|\det(a_1)|}\d a_1\d a_2\d c
	\end{equation}
	where $V_{\xi,h}:=V_\xi(\cdot\diag(h^{-1},1))$.
	Fix $0<\sigma<1/2-\vartheta$. We apply Whittaker--Plancherel formula for $|\det|^{-\sigma}V_{\xi,h_2a_2}
  \sbr{\mat{\cdot&\\&1}\mat{\I_n&\\ca_2&1}}$ 
  (see \emph{e.g.}, \cite[eq.(8)]{JaNu2021reciprocity}),
  $\GL(n+1)\times\GL(n)$ local functional equation (see \eqref{LFE-m-m-1}), and
  unipotent equivariance of $\tilde{V}_{\xi}$ to write $V_{\xi,h_2a_2}
  \sbr{\mat{h_1a_1&\\ca_2&1}}$ as
	\begin{multline*}
		\int_{\hat{G}_n(F)}\gamma(1/2+\sigma,\tilde{\Pi}\otimes\pi)\omega_\pi(-1)^n
    \sum_{W\in\B(\pi)}W(h_1a_1)|\det(h_1a_1)|^\sigma\\
		\int_{N_n(F)\bs G_n(F)}\tilde{V}_{\xi,h_2a_2}\sbr{\mat{h&\\&1}}
    \overline{\tilde{W}(h)}|\det(h)|^\sigma\psi(e_nha_2^\top c^\top)\d h\d\mu_{\loc}\pi.
	\end{multline*}
	Thus changing variable $h\mapsto C(\Pi)h$ we can write \eqref{g-integral} as
	\begin{multline}\label{g-integral-final}
    |\det(h_1)|^\sigma\int_{\hat{G}_n(F)}\omega_\pi((-1)^nC(\Pi))
    \gamma(1/2+\sigma,\tilde{\Pi}\otimes{\pi})C(\Pi)^{n\sigma}\\
		\sum_{W\in\B(\pi)}\int_{G_n(F)}\Omega^1(a_1)W(h_1a_1)
    |\det(a_1)|^{\sigma-1}{\d a_1}\\
		\int_{G_n(F)}\Omega^2(a_2)|\det(a_2)|\int_{N_n(F)\bs G_n(F)}
    \tilde{V}_{\xi,h_2a_2}\sbr{\mat{hC(\Pi)&\\&1}}\\
		\overline{\tilde{W}(h)}|\det(h)|^\sigma\hat{\Omega}_{21}
    \left(\frac{C(\Pi)}{X}e_nha_2^\top \right)\d h{\d a_2}\d\mu_{\loc}\pi.
	\end{multline}

We consider the $a_2$-integral first. Once again, fix $0<\sigma'<1/2-\vartheta$ sufficiently small, applying
  Whittaker--Plancherel formula and $\GL(n+1)\times\GL(n)$ local functional
  equation we write
	\begin{multline}\label{estimate-v-tilde}
    \int_{G_n(F)}\Omega^2(a_2)|\det(a_2)|\tilde{V}_{\xi,h_2a_2}
    \sbr{\mat{hC(\Pi)&\\&1}}\hat{\Omega}_{21}
    \left(\frac{C(\Pi)}{X}e_nha_2^\top \right){\d a_2}\\
		=\int_{\hat{G}_n(F)}\omega_{\pi'}((-1)^nC(\Pi))\gamma(1/2+\sigma',{\Pi}\otimes\pi')
    C(\Pi)^{n\sigma'}\sum_{W'\in\B(\pi')}W'(h)|\det(h)|^{\sigma'}\\
		\int_{G_n(F)}\Omega^2(a_2)|\det(a_2)|\hat{\Omega}_{21}
    \left(\frac{C(\Pi)}{X}e_nha_2^\top \right)\\
		\int_{N_n(F)\bs G_n(F)}{V}_{\xi,h_2a_2}\sbr{\mat{t&\\&1}}
    \overline{\tilde{W'}(t)}|\det(t)|^{\sigma'}\d t{\d a_2}\d\mu_{\loc}\pi'
	\end{multline}
	Using the definition of $V_{\xi,h}$, unfolding the $N_n$-integral, and
  changing variable $t\mapsto th_2a_2$ we write the two inner integrals above as
  $$\int_{G_n(F)}\int_{G_n(F)}\Omega^2(a_2){|\det(a_2)|}\hat{\Omega}_{21}
    \left(\frac{C(\Pi)}{X}e_nha_2^\top \right)\xi(t)
  \overline{\tilde{W'}(th_2a_2)}|\det(th_2a_2)|^{\sigma'}\d t{\d a_2}.$$
	We integrate by parts in the $a_2$-variable with respect to the differential
  operator $\mathfrak{D}$ as defined in \cite[\S3.4]{JaNu2021reciprocity} multiple
  times and apply \cite[Lemma 3.1]{JaNu2021reciprocity} to bound
  $\tilde{\pi}'(a_2)\tilde{W'}(th_2)$. Applying the compact support of $\Omega^2$
  and the using that $\hat{\Omega}_{21}$ is a Schwartz function,
  we bound the above by
	$$\ll S_{-L}(W')\int_{G_n(F)}\delta^{1/2-\eta}(th_2)|\det(th_2)|^{\sigma'}
  \xi(t)\d t \ll S_{-L}(W')\delta^{1/2-\eta}(h_2)|\det(h_2)|^{\sigma'}$$
	for any $L,\eta>0$ uniformly in $\frac{C(\Pi)}{X}e_nh$ and in $\xi$ as we
  shrink the support of $\xi$ to a delta distribution.

  Here, $S_d$ denotes the Sobolev norm as defined in \cite[\S3.4]{JaNu2021reciprocity}.
  In the last inequality above, we have used the fact that
  $t\in \operatorname{supp}(\xi)\implies \det(t)\asymp \delta(t)\asymp 1$ and
  $\|\xi\|_{L_1}=1$. Now to estimate \eqref{estimate-v-tilde} we first use this
  last estimate to bound the two innermost integrals. We apply the facts that
	$$W'(h)\ll S_d(W')\delta^{1/2-\eta}(h)$$ for some $d$ (follows from
  \cite[Lemma 3.1]{JaNu2021reciprocity}) and that
	$$\gamma(1/2+\sigma,\Pi\otimes\pi')\ll C(\Pi)^{-n\sigma'}C(\pi')^{O(1)}$$
	(follows from \cite[Lemma 3.1]{JaNe2019anv}). Finally, making $L$ sufficiently
  large and applying \cite[Lemma 3.2]{JaNu2021reciprocity} we bound
  \eqref{estimate-v-tilde} by
	$$\ll \delta^{1/2-\eta}(h)|\det(h)|^{\sigma'}\delta^{1/2-\eta}(h_2)
  |\det(h_2)|^{\sigma'}.$$
	Now we focus on \eqref{g-integral-final}. We first estimate the two innermost
  integrals. Integrating by parts we have
	$$\hat{\Omega}_{21}\left(\frac{C(\Pi)}{X}e_nha_2^\top \right)\ll_N
  (1+(C(\Pi)/X\|e_nh\|))^{-N}$$
	as $a_2$ varies over a compact set. Using this and the above estimate of
  \eqref{estimate-v-tilde} we bound the inner two integrals of
  \eqref{g-integral-final} by
	\begin{equation*}\delta^{1/2-\eta}(h_2)|\det(h_2)|^{\sigma'}
		\int_{N_n(F)\bs G_n(F)}\delta^{1/2-\eta}(h)|\det(h)|^{\sigma'+\sigma}|
    \tilde{W}(h)|\min(1,(C(\Pi)/X\|e_nh\|)^{-N})\d h.
	\end{equation*}
	Once again, applying \cite[Lemma 3.1]{JaNu2021reciprocity}
  we see that the
  above integral is absolutely convergent and is bounded by
	$$\ll S_{d}(W)\left(\frac{C(\Pi)}{X}\right)^{-n(\sigma+\sigma')}$$
	for some $d>0$. Now we proceed similarly as when we were estimating
  \eqref{estimate-v-tilde}. Integrating by parts the $a_1$-integral in
  \eqref{g-integral-final} with respect to $\mathfrak{D}$ sufficiently many times,
  bounding $\pi(a_1)W(h_1)\ll \delta^{1/2-\eta}(h_1)S_d(W)$ (as $a_1$
  varies over a compact set), using bounds of $\gamma$-factors, and applying
  \cite[Lemma 3.1]{JaNu2021reciprocity} we argue that \eqref{g-integral-final},
  and hence \eqref{g-integral}, are bounded by
	$$\ll |\det(h_1)|^\sigma\delta^{1/2-\eta}(h_1h_2)|\det(h_2)|^{\sigma'}
  \left(\frac{C(\Pi)}{X}\right)^{-n(\sigma+\sigma')}.$$
	Hence, we bound the main integral by
	\begin{multline*}
	  \left(\frac{C(\Pi)}{X}\right)^{-n(\sigma+\sigma')}\int_{N_n(F)\bs G_n(F)}
    |W_1(h_1)\hat{\Omega}_{12}(-e_nh_1)|\delta^{1/2-\eta}(h_1)
    |\det(h_1)|^{\sigma}\d h_1\\
		\int_{N_n(F)\bs G_n(F)}|W_2(h_2)\hat{\alpha}(e_nh_2)|\delta^{1/2-\eta}(h_2)
    |\det(h_2)|^{\sigma'}\d h_2.
	\end{multline*}
	Once again applying \cite[Lemma 3.1]{JaNu2021reciprocity} we see that the above
  two integrals converge for $\sigma,\sigma'>0$ as $\pi_{1},\pi_{2}$ are tempered.
  Choosing $\sigma=\sigma'$ and recalling their restriction we conclude.
\end{proof}

\begin{proof}[Proof of Lemma \ref{bound-spectral-archimedean-local-weight}]
	Changing basis we can write $h(\Pi)$ as
	\begin{multline*}
		\sum_{W\in\B(\Pi)}\int_{N_n(F)\bs G_n(F)}\int_{\overline{G_{n+1}(F)}}
    f^0(g)W\sbr{\mat{h_1&\\&1}g}W_1(h_1)\d g\d h_1\\
		\int_{N_n(F)\bs G_n(F)}\int_{F^n}\alpha(b)\overline{W\sbr{\mat{h_2&\\&1}
    \mat{\I_n&-b\\&1}}}\overline{W_2(h_2)}\d b\d h_2.
	\end{multline*}
	Using unipotent equivariance of $W$ and exchanging integrals we write the above as
	\begin{multline*}
		\sum_{W\in\B(\Pi)}\int_{N_n(F)\bs G_n(F)}W_1(h_1)\int_{N_n(F)\bs G_n(F)}
    \hat{\alpha}(-e_nh_2)\overline{W_2(h_2)}\\
		\int_{\overline{G_{n+1}(F)}}f^0(g)W\sbr{\mat{h_1&\\&1}g}
    \overline{W\sbr{\mat{h_2&\\&1}}}\d g\d h_2\d h_1.
	\end{multline*}
	Integrating by parts the $g$-integral we make sure the $W$-sum is absolutely convergent. The $h_1$ and $h_2$ integrals are also absolutely convergent which follows from the bounds of $W_i$ (see \cite[Lemma 3.1]{JaNu2021reciprocity}). Thus we may move the $W$-sum inside, change the basis to $\{\Pi(\diag(h_2,1))W\}_{W\in\B(\pi)}$, and change variables to write the above as
	\begin{equation*}
		\int_{N_n(F)\bs G_n(F)}W_1(h_1)\int_{N_n(F)\bs G_n(F)}\hat{\alpha}(e_nh_2)\overline{W_2(h_2)}
		\sum_{W\in\B(\Pi)}\Pi(F_{h_1,h_2})W(1)\overline{W(1)}\d h_2\d h_1,
	\end{equation*}
 where $F_{h_1,h_2}:=f^0\sbr{\mat{h_1^{-1}&\\&1}\cdot\mat{h_2&\\&1}}$.
 
	Let $\{\xi_j\}_j$ be a sequence of smooth $L^1$-normalized function on $G_n(F)$ supported on a sufficiently small neighborhood of the identity, such that the support of $\xi_j$ shrinks to the identity as $j\to\infty$. Thus the sequence $V_{\xi_j}$ tends to the delta mass at the identity in the dual of $C^\infty(N_n(F)\bs G_n(F),\psi)$ as $j\to\infty$ where $V_{\xi_j}$ is defined as in the statement of Lemma \ref{bounds-spectral-archimedean}. Arguing as in \cite[\S 7.1]{JaNe2019anv} (see also the proof of \cite[Theorem 8]{JaNe2019anv}) we conclude that the above display is the limit of
	\begin{equation*}
		\int_{N_n(F)\bs G_n(F)}W_1(h_1)\int_{N_n(F)\bs G_n(F)}\hat{\alpha}(e_nh_2)\overline{W_2(h_2)}\\
		\Pi(F_{h_1,h_2})V_{\xi_j}(1)\d h_2\d h_1
	\end{equation*}
 as $j\to\infty$. Changing variables and applying Lemma \ref{bounds-spectral-archimedean} we conclude.
\end{proof}

\begin{lem}\label{localization}
    Let $v\mid\infty$ and real, and $f$ be of type I. Also let $\pi_{1}\cong\pi_{2}$ and $W_{1}=W_{2}$. Then $h(\Pi)\ge 0$ for every unitary $\Pi$ and $h(\Pi)\gg 1$ if $C(\Pi)<X$.
\end{lem}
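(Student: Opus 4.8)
The plan is to mirror, in the archimedean setting, the proof of Lemma \ref{lower-bounds-spectral-nonarchimedean}, replacing newvectors and Hecke congruence subgroups by analytic newvectors and normalized majorants of $\overline{K_0}(X_v,\tau_v)$. Recall that for type I test functions we have $f_v = f_v^0 \ast f_v^0$, which makes the local weight $h(\Pi)$ a sum of squares: indeed, since $f_v^0$ is $\asymp 1$ and non-negative, and using $\pi_1\cong\pi_2$, $W_1=W_2$ together with the fact that $\Psi_v(1/2,\overline{W},\overline{W_{1,v}}) = \overline{\Psi_v(1/2,W,W_{1,v})}$ (by realizing things in opposite Whittaker models and taking complex conjugates), one writes
\begin{equation*}
h(\Pi) = \sum_{W\in\B(\Pi)}\bigl|\Psi_v(1/2,\Pi(f_v^0)W,W_{1,v})\bigr|^2 \ge 0,
\end{equation*}
which immediately gives the non-negativity. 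So the first step is simply to unfold $h(\Pi)$ using $f_v = f_v^0\ast f_v^0$ and the self-adjointness of the associated operator, just as in the proof of Lemma \ref{lower-bounds-spectral-nonarchimedean} for level primes.

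The second and substantive step is the lower bound $h(\Pi)\gg 1$ when $C(\Pi)<X$. Here I would restrict the sum over $\B(\Pi)$ to a single well-chosen unit vector, obtaining
\begin{equation*}
h(\Pi)\ge \bigl|\Psi_v(1/2,\Pi(f_v^0)W^\circ,W_{1,v})\bigr|^2
\end{equation*}
for any unit vector $W^\circ\in\Pi$ (after completing $W^\circ/\|W^\circ\|$ to an orthonormal basis). Following \cite{JaNe2019anv}, the right choice is the analytic newvector $W^\circ$ of $\Pi$ adapted to level $X$: since $f_v^0$ is a normalized majorant of $\overline{K_0}(X_v,\tau_v)$ and $C(\Pi)<X$, the operator $\Pi(f_v^0)$ acts on the analytic newvector essentially as a positive scalar bounded below, i.e. $\Pi(f_v^0)W^\circ = (1+o(1))\,\mathrm{vol}(\overline{K_0}(X_v,\tau_v'))\cdot W^\circ + (\text{small})$ or, more robustly, $\langle \Pi(f_v^0)W^\circ, W^\circ\rangle \gg 1$; this is precisely the content of the analytic newvector theory in \cite[\S7]{JaNe2019anv} and the localization statements there. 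So it remains to show $\Psi_v(1/2,W^\circ,W_{1,v})\gg 1$. This is where the normalization \eqref{normalize-W-1} of $W_{1,v}$ enters: one computes $\Psi_v(1/2,W^\circ,W_{1,v})$ by unfolding the zeta integral over $N_n(F_v)\backslash G_n(F_v)$, and since $W^\circ$ is (approximately) $K_{n,v}$-invariant and concentrated near the identity in the relevant Kirillov coordinates (being an analytic newvector for a representation of conductor below $X$), the integral is well-approximated by $\int \theta_1(g)W_{1,v}(g)\,\d g = 1$ up to a factor that is $\asymp 1$. Alternatively, and perhaps more cleanly, one invokes the archimedean analogue of \cite[Lemma 7.3]{JaNu2021reciprocity} / the local computation of zeta integrals against analytic newvectors to assert $\Psi_v(1/2,W^\circ, W_{1,v})\asymp 1$ directly, using that $\pi_{1,v}$ is spherical and tempered.

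The main obstacle, and the step requiring genuine care, is making precise the claim that the zeta integral $\Psi_v(1/2,W^\circ,W_{1,v})$ against the analytic newvector is bounded below by an absolute constant, uniformly as $C(\Pi)\to\infty$ with $C(\Pi)<X$. Unlike the non-archimedean case, where Shintani's formula \eqref{shintani} and the exact newvector give a clean closed form, here the analytic newvector is only an approximate object, and one must control the tail of the $N_n(F_v)\backslash G_n(F_v)$-integral and the error terms in the localization of $\Pi(f_v^0)W^\circ$ simultaneously. I expect this to be handled exactly as in \cite[Proof of Theorem 8]{JaNe2019anv} and \cite[\S7.1]{JaNe2019anv}: approximate $W^\circ$ by vectors $V_{\xi_j}$ built from bump functions $\xi_j$ shrinking to the delta mass at the identity (as in the statement of Lemma \ref{bounds-spectral-archimedean}), reduce the lower bound to a statement about $V_{\xi_j}$, and use the positivity of the integrand together with the $\asymp 1$-normalizations of $\Omega_{11}, W_{1,v}, \theta_1$ to extract the constant. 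The temperedness of $\pi_{1,v}$ guarantees absolute convergence of all intermediate integrals, exactly as invoked repeatedly via \cite[Lemma 3.1]{JaNu2021reciprocity} elsewhere in this section. With these ingredients the proof is a direct archimedean transcription of Lemma \ref{lower-bounds-spectral-nonarchimedean}.
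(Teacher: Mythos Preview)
Your non-negativity argument is correct and identical to the paper's: unfolding $f_v = f_v^0 \ast f_v^0$ gives $h(\Pi) = \sum_{W\in\B(\Pi)}|\Psi(1/2,\Pi(f^0)W,W_1)|^2 \ge 0$.

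For the lower bound, you have the right ingredients (restrict to an analytic newvector $W_0$; use the normalization \eqref{normalize-W-1}), but your two-step logic has a gap. You propose to first establish $\Pi(f^0)W_0 \approx W_0$ (citing $\langle \Pi(f^0)W_0,W_0\rangle \gg 1$) and then separately $\Psi(1/2,W_0,W_1)\gg 1$, concluding $\Psi(1/2,\Pi(f^0)W_0,W_1)\gg 1$. The difficulty is that the zeta functional $W\mapsto \Psi(1/2,W,W_1)$ is not $L^2$-continuous, so an inner-product bound on the error $E=\Pi(f^0)W_0-W_0$ does not by itself control $\Psi(1/2,E,W_1)$; you would need Sobolev-norm smallness of $E$ uniformly in $C(\Pi)<X$, which you do not establish. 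Your final paragraph about approximating via $V_{\xi_j}$ is also a detour: that device is used in the paper for the \emph{upper} bound (Lemma \ref{bounds-spectral-archimedean}, type II), not here.

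The paper bypasses this issue by computing $\Psi(1/2,\Pi(f^0)W_0,W_1)$ directly rather than splitting it. It expands in Bruhat coordinates $g=\begin{pmatrix}a&b\\&1\end{pmatrix}\begin{pmatrix}\I_n&\\c&1\end{pmatrix}$, uses that $W_0|_{G_n(F)}=\theta_1$ (so that at the end $\int\theta_1 W_1 = 1$ by \eqref{normalize-W-1}), and peels off the $c$-, $b$-, and $a$-integrals one at a time. Each removal produces an error $E_1,E_2,E_3=O(\tau)$, controlled respectively by a pointwise Lipschitz estimate for $W_0$ in the lower-unipotent direction (\cite[Lemma 7.2]{Jana2020RS}), the bound $|\psi(e_n h b)-1|=O(\tau)$ on the compact support, and the uniform continuity of $W_0$ in the $G_n$-direction (\cite[Theorem 7]{JaNe2019anv}). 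The result is $\Psi(1/2,\Pi(f^0)W_0,W_1)=1+O(\tau)$, and choosing $\tau$ small concludes. This direct computation is what makes the error control in the zeta integral tractable; your abstract factorization would need these same pointwise estimates to close, but applied inside the integral rather than to $W_0$ globally.
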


\begin{proof}
As $f=f^0\ast f^0$ we have
$$h(\Pi)=\sum_{W\in\B(\Pi)}|\Psi(1/2,\Pi(f^0)W,W_1)|^2\ge0,$$
which proves the first assertion.

To prove the second assertion we start with $\B(\Pi)$ containing an archimedean analytic newvector $W_0\in\Pi$, in the sense of \cite{JaNe2019anv}.     Thus in view of the above display dropping all terms but $W_0$ in the sum over $\B(\Pi)$ it suffices to show that
$$\Psi(1/2,\Pi(f^0)W_0,W_1)\gg 1,$$
if $C(\Pi)<X$. In the rest of the proof, the implied constants in $O$-notation will be assumed to be independent of $\tau$.

First, we record the relevant properties of an analytic newvector.
\begin{enumerate}
    \item We choose $W_0\mid_{G_n(F)}=\theta_1$ where $\theta_1\in C_c^\infty(N_n(F)\bs G_n(F),\psi)^{K_n}$ is as in \eqref{normalize-W-1}. Note that we have $\|W_0\|=1$. See the construction of an analytic newvector in \cite[\S4]{JaNe2019anv},
    \item Recalling that $\Pi$ is $\vartheta$-tempered we have
    $$\left|W_0\sbr{\mat{h&\\c/X&1}}-W_0\sbr{\mat{h&\\&1}}\right| \ll_\eta |\det(h)|^{-\vartheta}\delta^{1/2-\eta}\sbr{\mat{h&\\&1}}|c|,$$
    for $C(\Pi)< X$ and $|c|\ll 1$. This follows from \cite[Lemma 7.2]{Jana2020RS}.
    \item For every $\delta>0$ there is a $\tau>0$ such that for every $h\in\mathrm{supp}\left(W_0\mid_{G_n(F)}\right)$ we have
    $$\left|W_0\sbr{\mat{ha&\\&1}}-W_0\sbr{\mat{h&\\&1}}\right|<\delta$$
    whenever $\|a-\I_n\|<\tau$. This follows from \cite[Theorem 7]{JaNe2019anv}.
\end{enumerate}
Writing the definition and using Bruhat coordinates we write $\Psi(1/2,\Pi(f^0)W_0,W_1)$ as
\begin{multline*}
   X^n\int_{N_n(F)\bs G_n(F)}\int_{F^n}\int_{F^n}\int_{G_n(F)}\Omega_{11}(a)\Omega_{12}(b)\Omega_{21}(cX)\\W_0\sbr{\mat{h&\\&1}\mat{a&b\\&1}\mat{\I_n&\\c&1}}W_1(h)\frac{\d a}{|\det(a)|} \d b\d c\d h.
\end{multline*}
Changing variables and using unipotent equivariance we rewrite the above as
\begin{multline*}
   \int_{N_n(F)\bs G_n(F)}\int_{F^n}\int_{F^n}\int_{G_n(F)}\Omega_{11}(a)\Omega_{12}(b)\Omega_{21}(c)\psi(e_nhb)\\
   W_0\sbr{\mat{ha&\\c/X&1}}W_1(h)\frac{\d a}{|\det(a)|} \d b\d c\d h.
\end{multline*}
We write the above as
\begin{multline}\label{main-term-analytic-newvector-1}
   \int_{N_n(F)\bs G_n(F)}\int_{F^n}\int_{F^n}\int_{G_n(F)}\Omega_{11}(a)\Omega_{12}(b)\Omega_{21}(c)\psi(e_nhb)\\W_0\sbr{\mat{ha&\\&1}}W_1(h)\frac{\d a}{|\det(a)|} \d b\d c\d h+E_1,
\end{multline}
where $E_1$, using property (2) above, can be bounded by
\begin{multline*}
   \ll\int_{N_n(F)\bs G_n(F)}\left|\int_{F^n}\Omega_{12}(b)\psi(e_nhb)\d b\right|\int_{F^n}\Omega_{21}(c)|c|\d c\\
   \int_{G_n(F)}\Omega_{11}(a)
   |\det(ha)|^{-\vartheta}\delta^{1/2-\eta}\sbr{\mat{ha&\\&1}}\frac{\d a}{|\det(a)|}|W_1(h)|\d h
\end{multline*}
Integrating by parts with respect to $b$ in the resulting integral, and making
use of the support conditions (which, in particular, imply $|c|<\tau$ for $c\in \mathrm{supp}(\Omega_{21})$) and normalizations of $\Omega_{ij}$ we see that the above integral is bounded by
$$\ll \tau\int_{N_n(F)\bs G_n(F)}(1+\|e_nh\|)^{-N}|\det(h)|^{-\vartheta}\delta^{1/2-\eta}\sbr{\mat{h&\\&1}}|W_1(h)|\d h.$$
Applying \cite[Lemma 3.1]{JaNu2021reciprocity} we check that the above integral is absolutely convergent and is $O(1)$, consequently, we have $E_1=O(\tau)$.

Now we focus on \eqref{main-term-analytic-newvector-1}.
First, note that for $b\in\mathrm{supp}(\Omega_{12})$ we have
$$|\psi(e_nhb)-1| \le \|e_nh||b\| = O(\tau).$$
This follows as in the integral in \eqref{main-term-analytic-newvector-1}, $ha\in \mathrm{supp}\left(W_0\mid_{G_n(F)}\right)$ which is a fixed compact set (see property (1) above) and $a\in\mathrm{supp}(\Omega_{11})$. Thus $h$ varies in a fixed compact set, hence $e_nh=O(1)$. 
Thus, using normalization of $\Omega_{21}$, we can write the main term of \eqref{main-term-analytic-newvector-1} as
\begin{equation}\label{main-term-analytic-newvector-2}
  \int_{N_n(F)\bs G_n(F)}\int_{F^n}\int_{G_n(F)}\Omega_{11}(a)\Omega_{12}(b)\\W_0\sbr{\mat{ha&\\&1}}W_1(h)\frac{\d a}{|\det(a)|} \d b\d h+E_2,
\end{equation}
where $E_2$, using normalization of $\Omega_{12}$, can be bounded by
$$\ll\tau\int_{N_n(F)\bs G_n(F)}\int_{G_n(F)}\Omega_{11}(a)\left|W_0\sbr{\mat{ha&\\&1}}W_1(h)\right|\frac{\d a}{|\det(a)|} \d b\d c\d h.$$
As $h$ in the integral varies over a fixed compact set, the normalization of $\Omega_{11}$ implies that $E_2=O(\tau)$.
Thus, using normalization of $\Omega_{12}$, we can write the main term in \eqref{main-term-analytic-newvector-2} as
$$\int_{N_n(F)\bs G_n(F)}W_0\sbr{\mat{h&\\&1}}W_1(h)\d h \int_{G_n(F)}\Omega_{11}(a)\frac{\d a}{|\det(a)|}+E_3$$
where $E_3$, using property (3) above (after making $\tau$ smaller, if necessary) and noting that $h$ varies over a fixed compact set, can be bounded by $O(\tau)$.
Once again, from the normalization $\Omega_{11}$ and the construction of $W_0$ in property (1) above, the integral in the above display equals
$$\int_{N_n(F)\bs G_n(F)}\theta_1(h)W_1(h)\d h=1$$
which follows from \eqref{normalize-W-1}. All in all, we obtain
$$\Psi(1/2,\Pi(f^0)W_0,W_1)=1+O(\tau).$$
Making $\tau$ sufficiently small we conclude.
\end{proof}

\section{The main term}

In this section, we will analyze the main terms, namely, $\tilde{I}^+(s_1,s_2)$
defined in \eqref{Iv+tilde} and $I(n^-,s_1,s_2)$ defined in \eqref{Iv-tilde},
for various choices of $f$. Here, as in the previous section, we mostly work
under the hypothesis that all finite places are unramified and briefly mention
what needs to be changed to deal with the ramified ones.

\subsection{Computation of $\tilde{I}^+(s_1,s_2)$}

We recall the definition of $\hat{f}^u$, namely,
\begin{equation*}
\hat{f}^u(c,a):=\int_{F^n}f\sbr{\mat{a&b\\&1}}\overline{\psi(cb)}\d b,\quad c^\top\in F^n, a\in G_n(F).
\end{equation*}
which is the local component of \eqref{def-fu-hat}.

\subsubsection{Level places}

\begin{lem}\label{level-primes-I+}
  Let $v\mid\q$ and $f=\tilde{\1}_{\overline{K_0}(\p^e)}$. Then we have
    $$\tilde{I}^+(s_1,s_2)=\vol(\overline{K_0}(\p^e))^{-1}L(1+s_1+s_2,\pi_{1}\otimes\tilde{\pi}_{2})$$
    for all $s_1,s_2\in\C$.
\end{lem}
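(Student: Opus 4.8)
The plan is a direct local unfolding. First I would make the integrand of \eqref{Iv-+-tilde} explicit at the place $v\mid\q$. By the choices in \S\ref{type-1-test-function} and \S\ref{type-2-test-function} one has $f=\vol(\overline{K_0}(\p^e))^{-1}\1_{\overline{K_0}(\p^e)}$; since the bottom row of $\mat{a&b\\&1}$ equals $(0,\dots,0,1)$, the Hecke-congruence condition defining $\overline{K_0}(\p^e)$ is automatically satisfied, and a short check modulo the centre shows that $\mat{a&b\\&1}\in\overline{K_0}(\p^e)$ precisely when $a\in K_{n,v}$ and $b\in\o^n$. Feeding this into \eqref{def-fu-hat}, and using that $\psi$ is unramified at $v$ (because $\q$ is coprime to $\mathfrak{d}$) and that $\vol(\o)=1$, I expect to obtain
\[
\hat{f}^u(c,a)=\vol(\overline{K_0}(\p^e))^{-1}\1_{K_{n,v}}(a)\1_{\o^n}(c).
\]

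Next I would substitute this into \eqref{Iv-+-tilde}. The factor $\1_{K_{n,v}}(y)$ collapses the $y$-integral to $K_{n,v}$, on which $|\det y|=1$ (so $\d_{s_2}y$ becomes $\d y$) and $\pi_2(y)W_2=W_2$ by sphericality of $W_2$; as $K_{n,v}$ carries probability Haar measure, this should leave exactly
\[
\tilde{I}^+(s_1,s_2)=\vol(\overline{K_0}(\p^e))^{-1}\,\Psi\big(1+s_1+s_2,W_1,\overline{W_2},\1_{\o^n}\big).
\]
Since $W_1$ and $\overline{W_2}$ are the spherical Whittaker vectors, in opposite models, normalised to value $1$ at the identity, \eqref{unramified-zeta-integral-m} identifies this zeta integral with $L(1+s_1+s_2,\pi_1\otimes\tilde{\pi}_2)$ — initially for $\Re(s_1+s_2)$ large, and then for all $s_1,s_2\in\C$ by meromorphic continuation, both sides being meromorphic (the right-hand side through the $L$-factor, the left-hand side trivially because the computation above is exact).

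I do not anticipate any real obstacle. The only points requiring a little care are the exact description of the intersection of $\overline{K_0}(\p^e)$ with the set of matrices of the shape $\mat{a&b\\&1}$, together with the scaling argument modulo the centre, and the observation that $v\nmid\mathfrak{d}$ forces $\psi$ to be unramified, so that no shifted newvector and no power of $\Delta$ enters (in contrast with the ramified-prime case leading to \eqref{I-plus-ramified-primes}).
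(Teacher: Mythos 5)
Your argument is correct and follows essentially the same route as the paper: compute $\hat f^u$ explicitly from the support of $\mathbf 1_{\overline{K_0}(\p^e)}$ to get $\vol(\overline{K_0}(\p^e))^{-1}\mathbf 1_{K_{n,v}}(y)\mathbf 1_{\o_v^n}(c)$, collapse the $y$-integral using sphericality of $W_2$ and the probability normalisation of $K_{n,v}$, and then invoke \eqref{unramified-zeta-integral-m} together with meromorphic continuation. The paper compresses the first step to ``we readily have''; you simply spell out the scaling-modulo-centre check and the unramifiedness of $\psi_v$ that the paper leaves implicit.
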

\begin{proof}
As $f=\vol(\overline{K_0}(\p^e))^{-1}\1_{\overline{K_0}(\p^e)}$ we readily have
(recall \eqref{def-fu-hat})
$$\hat{f}^u(c,y)=\vol(\overline{K_0}(\p^e))^{-1}\1_{K_n}(y)\1_{\o^n}(c).$$
As $W_2$ is spherical,
$$\tilde{I}^+(s_1,s_2)=\vol(\overline{K_0}(\p^e))^{-1}\Psi(1+s_1+s_2,W_1,\overline{W_2},\mathbf{1}_{\o^n}).$$
Applying \eqref{unramified-zeta-integral-m}, first for large positive $\Re(s_i)$, and
then by meromorphic continuation, we conclude.
\end{proof}

\subsubsection{Supercuspidal prime}

\begin{lem}\label{sc-prime-I+}
    Let $v=\p_0$ and $f$ be of type II. Then we have
    $$\tilde{I}^+(s_1,s_2)=1$$
    for all $s_1,s_2\in\C$.
\end{lem}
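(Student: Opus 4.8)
The plan is to unfold $\tilde I^+_v(s_1,s_2)$ at $v=\p_0$ into $\GL(n+1)\times\GL(n)$ local zeta integrals and to use that the Rankin--Selberg $L$-factor of the supercuspidal $\sigma$ against a representation of the strictly smaller group $\GL(n)$ is trivial. First I would rewrite the matrix coefficient $f_{\p_0}$ on the mirabolic in terms of Whittaker functions: since $\sigma$ is supercuspidal, $\sigma\mid_{P_{n+1}(F_v)}$ is irreducible (it is the Kirillov model $\mathrm{ind}_{U_{n+1}}^{P_{n+1}}\psi_v$), so the $\overline{G_{n+1}}(F_v)$-invariant Hermitian form on $\sigma$ coincides, for the normalization fixed in \S\ref{sec:basic-notation}, with
$$\langle v_1,v_2\rangle=\int_{N_n(F_v)\bs G_n(F_v)}\mathcal W_{v_1}\sbr{\mat{h&\\&1}}\overline{\mathcal W_{v_2}\sbr{\mat{h&\\&1}}}\,\d h,$$
which converges absolutely because the Whittaker functions of a supercuspidal are compactly supported modulo $N_n(F_v)$ on $G_n(F_v)$. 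Writing $w:=W_\sigma\mid_{G_n(F_v)}$ (so $w(1)=1$, and $w$ is right-$K_{n,v}$-invariant because $G_n(\o_v)\subset\overline{K_0}(\p_v^{c(\sigma)})$), this produces an explicit formula for $f_{\p_0}\sbr{\mat{a&b\\&1}}$ as an average of translates of $w\,\overline w$, hence, through \eqref{def-fu-hat}, for $\hat{f}^u_{\p_0}$.

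Substituting into \eqref{Iv-+-tilde}, I would perform the integral over $b\in F_v^n$ by Fourier inversion, $\int_{F_v^n}\psi_v((e_nh-e_ng)b)\,\d b=\delta(e_nh-e_ng)$, which localizes the orbital variable onto $g\in P_n(F_v)h$. After the standard disintegration of Haar measure on $N_n(F_v)\bs G_n(F_v)$ along $h\mapsto e_nh$ (whose fibres are copies of $N_n(F_v)\bs P_n(F_v)$) and a change of variable absorbing the $\mat{g&\\&1}$-translate, the integral becomes a combination of the $\GL(n+1)\times\GL(n)$ local zeta integrals $\Psi_v(\,\cdot\,,W_\sigma,W_{1,v})$ and $\Psi_v(\,\cdot\,,W_\sigma,W_{2,v})$ of \S\ref{zeta-integrals-n-1} — the geometric-side counterpart of the fact (Lemma \ref{supercuspidal-projects-cuspidal}) that $f_{\p_0}$ isolates $\tilde\sigma$ on the spectral side. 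Since $W_{1,v},W_{2,v}$ are spherical for the unramified $\pi_{1,v},\pi_{2,v}$ and $\psi_v$ is unramified, each of these zeta integrals equals $L_v(\,\cdot\,,\sigma\otimes\pi_{i,v})$ — the equality of the zeta integral built from the newvector $W_\sigma$ on $\GL(n+1)$ with the $L$-factor following from the explicit shape of $W_\sigma$, cf.\ \cite{Miyauchi2014Whittaker} — and $L_v(\,\cdot\,,\sigma\otimes\pi_{i,v})=1$ because $\sigma$ is supercuspidal of rank $n+1>n$. In particular all $s_1,s_2$-dependence drops out and the surviving constant is $w(1)\overline{w(1)}=|W_\sigma(1)|^2=1$.

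The main obstacle is bookkeeping rather than ideas. First, one must justify interchanging the $b$-Fourier integral with the remaining integrations; this is legitimate because, as already noted after \eqref{def-fu-hat}, $\hat{f}^u_v(c,a)$ is a Bruhat--Schwartz function in $c$ with compact support in $a$, so one may truncate or argue through an approximate identity. Second, one must track the disintegration of measure on $N_n(F_v)\bs G_n(F_v)$, the modular character of $P_n(F_v)$ and the normalization of the inner product on $\sigma$ with enough care that the various Jacobian and $L$-value factors genuinely cancel and the unfolding terminates at the single value $W_\sigma(1)=1$ (and not, say, at $\|W_\sigma\|^2$). The conceptual point — and the reason no Euler factor $L_v(1+s_1+s_2,\pi_{1,v}\otimes\tilde\pi_{2,v})$ appears here, in contrast with the level and ramified places (Lemma \ref{level-primes-I+} and \eqref{I-plus-ramified-primes}) — is that $\hat{f}^u_{\p_0}(\,\cdot\,,y)$ is not a multiple of $\mathbf 1_{\o_v^n}$ but a Fourier transform of a supercuspidal matrix coefficient, and the attendant $G_n(F_v)$-average trades the $\GL(n)\times\GL(n)$ Euler factor for the trivial $\GL(n+1)\times\GL(n)$ ones.
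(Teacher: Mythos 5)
Your conceptual starting point (Kirillov model of the supercuspidal $\sigma$, the matrix coefficient expressed as a Kirillov inner product, and the final appeal to triviality of the supercuspidal $\GL(n+1)\times\GL(n)$ $L$-factor) is exactly the right cluster of ideas, and both you and the paper end up evaluating two $\GL(n+1)\times\GL(n)$ zeta integrals against $W_\sigma$. But the mechanism you propose for getting there has a real gap, and it is not the kind that ``bookkeeping'' fixes without a further idea.

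The problem is the claimed factorization. Starting from
\begin{equation*}
  f_{\p_0}\!\sbr{\mat{a&b\\&1}}=\int_{N_n\bs G_n}\psi(e_nhb)\,W_\sigma\!\sbr{\mat{ha&\\&1}}\,\overline{W_\sigma\!\sbr{\mat{h&\\&1}}}\,\d h,
\end{equation*}
and Fourier--inverting in $b$, the delta $\delta(e_nh=c)$ localizes $h$ not to a point but to the fibre of $N_n\bs G_n \to F^n\setminus\{0\}$, which is a copy of $N_n\bs P_n$. Substituting into \eqref{Iv-+-tilde} and changing variable $y\mapsto x^{-1}y$, you arrive at a \emph{triple} integral
\begin{equation*}
  \int_{N_n\bs P_n}\int_{N_n\bs G_n}\int_{G_n} W_1(x)\,\overline{W_\sigma\!\sbr{\mat{px&\\&1}}}\,\overline{W_2(y)}\,W_\sigma\!\sbr{\mat{py&\\&1}}\,|\det x|^{1+s_1}\,|\det y|^{s_2}\,\d y\,\d x\,\d p.
\end{equation*}
The inner $x$- and $y$-integrals are \emph{not} the zeta integrals $\Psi_v(\cdot,W_\sigma,W_{i,v})$: they are shifted by a common $p\in N_n\bs P_n$, the outer $p$-integral does not peel off (left translation by $p$ does not normalize $N_n$ and does not act on $W_{i,v}$ by a character), and the $y$-integral is over all of $G_n$ and so is not even a well-defined $\Psi_v$. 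Nothing in the proposed sequence of moves collapses the $p$-integral, and there is no reason it should collapse to $|W_\sigma(1)|^2$; with the wrong normalization it would instead produce $\|W_\sigma\|^2$ or remain an unfactored orbital integral. You flag this worry yourself, but it is not a cancellation one can take on faith.

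What the paper does, and what you are missing, is the step that upgrades your \emph{partial} ($b$-only) Fourier transform to a \emph{full} $N_{n+1}$-Whittaker transform of the matrix coefficient: one folds the Haar $G_n$-integral over $y$ through $N_n$, writing $y=n'y'$ and using $\psi$-equivariance of $W_2$, so that the $n'$-integral combines with the $b$-integral into an integral over $n=\mat{n'&b\\&1}\in N_{n+1}$. The result is $\int_{N_{n+1}}\langle\sigma(n)\sigma(\diag(y,1))W_\sigma,\sigma(\diag(x,1))W_\sigma\rangle\overline{\psi(n)}\d n$. This is evaluated by the identity $\int_{N_{n+1}}\langle\sigma(n)v_1,v_2\rangle\overline{\psi(n)}\d n = W_{v_1}(1)\overline{W_{v_2}(1)}$ (which is \cite[Lemma~6.1]{jana2021applications}, and is precisely the inverse transform of the Kirillov inner-product formula you invoke), giving directly $W_\sigma(\diag(y,1))\overline{W_\sigma(\diag(x,1))}$. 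Only after this does the integral genuinely factor into $\Psi(1/2+s_1,\overline{W_\sigma},W_1)\,\Psi(1/2+s_2,W_\sigma,\overline{W_2})$, each evaluated via \eqref{unramified-zeta-integral-m-1} and killed by triviality of the supercuspidal $L$-factor. So the missing ingredient is the fold of the $y$-integral through $N_n$ together with the $N_{n+1}$-Whittaker-transform identity; without it, the $N_n\bs P_n$-integral in your disintegration does not go away and the factorization you assert does not happen.
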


\begin{proof}
Unraveling the definitions we write
\begin{multline*}
\tilde{I}^+(s_1,s_2)=\int_{N_n(F) \backslash G_n(F)}\int_{G_n(F)}W_{1}(x)\overline{W_{2}(xy)}
\int_{F^n}f\sbr{\mat{\I_n&b\\&1}\mat{y&\\&1}}\overline{\psi(e_nxb)}\\ \d b\d_{s_2}y\d_{1+s_1+s_2}x.
\end{multline*}
Changing variables $y\mapsto yx^{-1}$ and $xb\mapsto b$ we obtain
\begin{multline*}
\tilde{I}^+(s_1,s_2)=\int_{N_n(F) \backslash G_n(F)}\int_{G_n(F)}W_{1}(x)\overline{W_{2}(y)}
\int_{F^n}f\sbr{\mat{x^{-1}&\\&1}\mat{\I_n&b\\&1}\mat{y&\\&1}}\overline{\psi(e_nb)}\\ \d b\d_{s_2}y\d_{s_1}x.
\end{multline*}
We fold the integral over $y\in G_n(F)$ as integrals over $n'\in N_n(F)$ and $y\in N_n(F)\bs G_n(F)$. Then putting $n:=\mat{n'&b\\&1}\in N_{n+1}(F)$ and using unipotent equivariance of $W_2$ we write the above integral as
$$\int_{N_n(F) \backslash G_n(F)}\int_{N_n(F)\bs G_n(F)}W_{1}(x)\overline{W_{2}(y)}\int_{N_{n+1}(F)}f\sbr{\mat{x^{-1}&\\&1}n\mat{y&\\&1}}\overline{\psi(n)} \d n\d_{s_2}y\d_{s_1}x$$
Recalling the definition of $f$ and using unitarity of $\sigma$ we see that
\begin{align*}
    f\sbr{\mat{x^{-1}&\\&1}n\mat{y&\\&1}}
    &=\langle \sigma(\diag(x^{-1},1)n\diag(y,1))W_\sigma,W_\sigma\rangle\\
    &=\langle \sigma(n)\sigma(\diag(y,1))W_\sigma,\sigma(\diag(x,1))W_\sigma\rangle.
\end{align*}
Using \cite[Lemma 6.1]{jana2021applications} we compute
$$\int_{N_{n+1}(F)}f\sbr{\mat{x^{-1}&\\&1}n\mat{y&\\&1}}\overline{\psi(n)} \d n=W_\sigma(\diag(y,1))\overline{W_\sigma(\diag(x,1))}.$$
We then deduce for large $\Re(s_i)$ and then by meromorphic continuation that
\begin{align}\label{I+=PsiPsi}
\tilde{I}^+(s_1,s_2) & =\int_{N_n(F) \backslash G_n(F)} \overline{W_{\sigma}
\sbr{\mat{x&\\&1}}}W_1(x)\d_{s_1} x\int_{N_n(F) \backslash G_n(F)} W_{\sigma}
\sbr{\mat{y&\\&1}}\overline{W_{2}(y)}\d_{s_2} y\notag\\
&=\Psi(1/2+s_1,\overline{W_{\sigma}},W_{1})\Psi(1/2+s_2,W_{\sigma},\overline{W_{2}}),
\end{align}
for all $s_1,s_2$.
Applying \eqref{unramified-zeta-integral-m-1} the above evaluates to
$$L(1/2+s_1,\tilde{\sigma}\otimes \pi_{1})L(1/2+s_2,\sigma\otimes  \tilde{\pi}_{2}).$$
We conclude from the fact that $\pi_i$ are unramified and $\sigma$ is supercuspidal.
\end{proof}

\subsubsection{Auxiliary prime}

\begin{lem}\label{aux+I}
    Let $v=\p_1$ and $f$ be of type II. Then we have
    $$\tilde{I}^+(s_1,s_2)=\overline{\lambda_{\pi_2}(\p_1^\mu)}N(\p_1)^{-|\mu|s_2}
    {L(1+s_1+s_2,\pi_1\otimes\tilde{\pi}_2)}$$
    for all $s_1,s_2\in\C$.
\end{lem}

\begin{proof} 
    Recalling the expression of $f$ (see \S \ref{subsubsec:auxiliary}),
    applying the fact that
    $$\int_{G_n(F)}\1_{K_n\diag(\p_1^\mu)K_n}(y)W_2(xy)\d y=\lambda_{\pi_2}
    (\p_1^\mu)W_2(x),$$
    and that $\hat{\1}_{\o^n}=\1_{\o^n}$ we obtain
    $$\tilde{I}^+(s_1,s_2)=\overline{\lambda_{\pi_2}(\p_1^\mu)}
    N(\p_1)^{-|\mu|s_2}\int_{N_n(F)\bs G_n(F)}W_1(x)\overline{W_2(x)}
    \1_{\o^n}(e_nx)\d_{1+s_1+s_2} x.$$
    We conclude by applying \eqref{unramified-zeta-integral-m} for large
    $\Re(s_i)$ and then by meromorphic continuation.
\end{proof}

\begin{rmk}\label{remark-ramified-I+}
When $v$ is ramified, we change variables inside the zeta integral in \eqref{Iv+tilde}
in order to account for the fact that we now have a ramified additive characters and
that $W_1$ and $W_2$ are now shifted spherical vectors. We do not require very
precise informations about the correction factor coming from this change of
variables, except that it has the form $\Delta^{\mu^+(s_1,s_2)}$, where
$\mu^+$ is an affine function whose coefficients only depend on $n$.
\end{rmk}

\subsubsection{Archimedean places}

\begin{lem}\label{archimedean-I+type-1}
    Let $v\mid\infty$ and $f$ be of type I. Then $\tilde{I}^+(s_1,s_2)$ is holomorphic at a neighbourhood of $(0,0)$ and we have $$\partial_{s=0}\tilde{I}^+(s,0)=O(X^n).$$
    Moreover, if $\pi_1=\pi_2$ and $W_1=W_2$ then we have
    $$\tilde{I}^+(0,0)=\langle W_1,\pi_1(f\mid_{G_n(F)}) W_1\rangle=X^nI_0$$
    where $I_0$ is a constant independent of $X$.
\end{lem}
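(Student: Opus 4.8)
The plan is to handle, in turn, the holomorphy, the derivative bound, and the value at the origin, working throughout at the fixed archimedean place $v$ (subscript dropped). First, by \eqref{def-fu-hat} and the compact support of $f$ modulo centre, $\hat f^u(\cdot,y)$ is a Bruhat--Schwartz function in its first variable and is compactly supported in $y$; since $\pi_1,\pi_2$ are tempered, the $\GL(n)\times\GL(n)$ local zeta integral $\Psi(1+s_1+s_2,W_1,\overline{\pi_2(y)W_2},\hat f^u(\cdot,y))$ converges absolutely and locally uniformly for $\Re(1+s_1+s_2)>0$, and is holomorphic there. Integrating over the compact range of $y$ in \eqref{Iv-+-tilde} shows $\tilde I^+$ is holomorphic on a neighbourhood of $(0,0)$.

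For the derivative bound I would first record the estimate $\hat f^u(c,a)\ll_N X^n(1+|c|)^{-N}$, with $\hat f^u(c,a)$ supported on $a$ in a fixed ($X$-independent) neighbourhood of $\I_n$: writing $\hat f^u(c,a)=\int_{F^n}f\sbr{\mat{a&b\\&1}}\overline{\psi(cb)}\,\d b$, the support condition of the type I test function (\S\ref{type-1-test-function}) confines $a$ near $\I_n$ and $b$ to a fixed ball, while its derivative bounds give $\partial_b^\beta f\sbr{\mat{a&b\\&1}}\ll X^n$ with no extra power of $X$ (only $\partial_c$ produces powers of $X$), and integrating by parts in $b$ yields the claim. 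Expanding the zeta integral in \eqref{Iv-+-tilde} at $s_2=0$,
\[
\tilde I^+(s,0)=\int_{N_n(F)\backslash G_n(F)}\int_{G_n(F)}W_1(x)\overline{W_2(xa)}\,\hat f^u(e_nx,a)\,|\det x|^{1+s}\,\d a\,\d x,
\]
I would differentiate under the integral (producing a factor $\log|\det x|$), bound $W_1,W_2$ by the gauge estimates of \cite[Lemma 3.1]{JaNu2021reciprocity}--\cite[Lemma 3.2]{JaNu2021reciprocity} (with $a$ in a fixed compact set), and combine the rapid decay of $\hat f^u(e_nx,\cdot)$ in $e_nx$ with temperedness of $\pi_1,\pi_2$ to get absolute convergence near $s=0$ (the logarithm being harmless). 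Since $X$ enters only through the factor $X^n$ in $\hat f^u$, this gives $\partial_{s=0}\tilde I^+(s,0)\ll X^n$ with implied constant independent of $X$.

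For the value at $(0,0)$ with $\pi_1=\pi_2=\pi$ and $W_1=W_2=W$, I would start from $\tilde I^+(0,0)=\int_{N_n(F)\backslash G_n(F)\times G_n(F)}W(x)\overline{W(y)}\,\hat f^u(e_nx,x^{-1}y)\,|\det x|\,\d x\,\d y$, substitute $b\mapsto xb$ inside $\hat f^u$ to rewrite $\hat f^u(e_nx,x^{-1}y)|\det x|=\int_{U_{n+1}(F)}f\sbr{\mat{x^{-1}&\\&1}u\mat{y&\\&1}}\overline{\psi(u)}\,\d u$ (using that $\psi$ restricted to $U_{n+1}$ is $\mat{\I_n&b\\&1}\mapsto\psi(e_nb)$), then fold the $y$-integral over $N_n(F)$ and merge the resulting $N_n(F)$-integral with the $U_{n+1}(F)$-integral into one over $N_{n+1}(F)$ against the full generic character, since $N_{n+1}=U_{n+1}\rtimes N_n$ and $\psi$ on $N_{n+1}$ restricts to the product of $\psi|_{N_n}$ and $\psi|_{U_{n+1}}$. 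This puts $\tilde I^+(0,0)$ in exactly the shape of Lemma \ref{sc-prime-I+}, namely
\[
\tilde I^+(0,0)=\int_{N_n(F)\backslash G_n(F)\times N_n(F)\backslash G_n(F)}W(x)\overline{W(y)}\left(\int_{N_{n+1}(F)}f\sbr{\mat{x^{-1}&\\&1}n\mat{y&\\&1}}\overline{\psi(n)}\,\d n\right)\d x\,\d y,
\]
which, by the identification of the $\GL(n)\times\GL(n)$ Rankin--Selberg integral at the edge $s=1$ with a matrix-coefficient integral (as in \cite[Lemma 6.1]{jana2021applications} and Lemma \ref{sc-prime-I+}), equals $\langle W,\pi(f|_{G_n(F)})W\rangle$, where $f|_{G_n(F)}$ is the restriction of $f$ to $\mat{G_n&\\&1}$. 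Finally, for type I, $f=f^0\ast f^0$ with convolution over $\overline{G_{n+1}}(F)$; the change of variables $c\mapsto Xc$ in the lower-unipotent coordinate turns each $f^0$ into $X^n$ times an $X$-independent profile while contracting the corresponding direction of Haar measure by $X^{-n}$, so $f|_{G_n(F)}=X^n G$ for some $X$-independent $G\in C_c^\infty(\overline{G_n}(F))$, whence $\tilde I^+(0,0)=X^n\langle W,\pi(G)W\rangle=:X^nI_0$ with $I_0$ independent of $X$.

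The main obstacle is the identification $\tilde I^+(0,0)=\langle W_1,\pi_1(f|_{G_n(F)})W_1\rangle$: one has to track the unipotent integrations and Haar-measure normalizations through the folding over $N_n$ and the merge into $N_{n+1}$, and pin down the exact inner product and normalization of $f|_{G_n(F)}$ that make the matrix-coefficient identity hold on the nose -- the same bookkeeping as in Lemma \ref{sc-prime-I+}, but now for the (non-irreducible) $f=f^0\ast f^0$ in place of a matrix coefficient of $\sigma$, best organised through the relation between the $\GL(n)\times\GL(n)$ zeta integral at $s=1$ and matrix coefficients.
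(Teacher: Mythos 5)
The holomorphy argument and the $\partial_{s=0}\tilde I^+(s,0)=O(X^n)$ bound in your proposal follow the paper's proof essentially verbatim, and are correct: Schwartz decay of $\hat f^u(\cdot,y)$, compact support in $y$, absolute convergence for $\Re(s_1+s_2)>-1$ via \cite[Lemma 3.1]{JaNu2021reciprocity}, and integration by parts in $b$ to produce the $X^n(1+|e_n x|)^{-N}$ majorant. Your final observation that $f|_{G_n(F)}=X^nG$ with $G$ independent of $X$ (by the $c\mapsto Xc$ rescaling in the convolution $f^0\ast f^0$) is also correct and is the same computation as in the paper.

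The gap is in the identification $\tilde I^+(0,0)=\langle W,\pi(f|_{G_n(F)})W\rangle$. Your formal rewriting through
\[
\tilde I^+(0,0)=\int_{N_n\backslash G_n\times N_n\backslash G_n}W(x)\overline{W(y)}\left(\int_{N_{n+1}(F)}f\sbr{\mat{x^{-1}&\\&1}n\mat{y&\\&1}}\overline{\psi(n)}\,\d n\right)\d x\,\d y
\]
is a correct manipulation (modulo justification of the fold over $N_n(F)$ and the interchange of integrals, which for a general type I test function requires more than for a supercuspidal matrix coefficient), and it does put the expression in the same \emph{shape} as Lemma \ref{sc-prime-I+}. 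But the step you then invoke to evaluate it --- ``the identification with a matrix-coefficient integral as in \cite[Lemma 6.1]{jana2021applications} and Lemma \ref{sc-prime-I+}'' --- is special to the supercuspidal situation: in Lemma \ref{sc-prime-I+} the inner $N_{n+1}$-integral collapses to $W_\sigma(\diag(y,1))\overline{W_\sigma(\diag(x,1))}$ precisely \emph{because} $f$ is a matrix coefficient $\langle\sigma(\cdot)W_\sigma,W_\sigma\rangle$ of a single irreducible supercuspidal; that is what \cite[Lemma 6.1]{jana2021applications} gives. For the type I test function, $f=f^0\ast f^0$ is not a matrix coefficient of an irreducible, the $N_{n+1}$-integral does not produce a single Whittaker product, and there is no direct analogue of that collapse. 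You would instead get a spectral (Whittaker--Plancherel) average of such products, which does not visibly simplify to $\langle W,\pi(f|_{G_n})W\rangle$.

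The paper closes this gap by staying with the $\hat f^u$-form and using Bruhat coordinates $x=z\,\diag(h,1)\mat{\I_{n-1}&\\c&1}$: the $h$-integral is recognized as the $G_n$-invariant inner product $\langle W_1,\pi_1(y)W_1\rangle$ in the Kirillov model (hence $c$-independent and $z$-independent), and the remaining $(c,z)$-integral of $\hat f^u(z(c,1),y)\,|z|^n$ collapses by Fourier inversion to $f(\diag(y,1))$. This Bernstein-inner-product plus Fourier-inversion argument is the content that is missing from your proposal; the detour through the $N_{n+1}$-kernel form does not substitute for it, since you would still have to unwind that kernel back to the Fourier transform and run exactly this computation. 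You should either supply that argument directly on the $\hat f^u$-form (as the paper does), or justify the $N_{n+1}$-kernel identity by an argument that does not presuppose $f$ is a supercuspidal matrix coefficient.
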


\begin{proof}
Note that the support condition of $f$ ensures that
$\hat{f}^u(c,y)$ is a Schwartz function in $c\in F^n$ and compactly supported
in $y\in G_n(F)$. Thus the zeta integral in \eqref{Iv+tilde} is absolutely
convergent for $\Re(s_1+s_2)>-1$ (by \cite[Lemma 3.1]{JaNu2021reciprocity})
and by compactness of the $y$-integral, $\tilde{I}^+(s_1,s_2)$ defines a
holomorphic function in the same region. We write
\begin{equation*}
\tilde{I}^+(s,0)=\int_{G_n(F)}\int_{N_n(F)\backslash G_n(F)}W_{1}(x)\overline{W_2(xy)}\hat{f}^u(e_nx,y)|\det(x)|^{1+s}\d x \d y.
\end{equation*}
Thus it follows that
$$\partial_{s=0}\tilde{I}^+(s,0)=\int_{G_n(F)}\int_{N_n(F)\backslash G_n(F)}W_{1}(x)\overline{W_{2}(xy)}\hat{f}^u(e_nx,y)|\det(x)| \log|\det(x)|\d x\d y.$$
Integrating by parts and applying the majorant property (2) of $f$ we see that
$$\hat{f}^u(e_nx,y)\ll_N X^n(1+|e_nx|)^{-N}\text{ uniformly in $y$},\quad y\asymp \I_n.$$
Applying 
$$\log|\det(x)|\ll \max(|\det(x)|^\epsilon,|\det(x)|^{-\epsilon})$$ and \cite[Lemma 3.1]{JaNu2021reciprocity} we see that the above integral is absolutely convergent and is $O(X^n)$.

Now assume that $\pi_1\cong\pi_2$ and $W_1=W_2$, and plug-in $s=0$ in
\eqref{Iv+tilde}. We use Bruhat coordinates
$$x=z\mat{h&\\&1}\mat{\I_{n-1}&\\c&1};\quad \d x= \d^\times z \frac{\d h}{|\det(h)|}\d c;$$
$$ z\in F^\times, h\in N_{n-1}(F)\bs G_{n-1}(F), c\in F^{n-1};$$
to rewrite the $x$-integral in $\tilde{I}^+(0,0)$ as
\begin{multline*}
\int_{F^{n-1}}\int_{N_{n-1}(F)\backslash G_{n-1}(F)}W_{1}\sbr{\mat{h&\\&1}\mat{\I_{n-1}&\\c&1}}
\overline{W_1\sbr{\mat{h&\\&1}\mat{\I_{n-1}&\\c&1}y}}\d h\\
\int_{F^\times}\hat{f}^u(z(c,1),y)|z|^n\d^\times z\d c.
\end{multline*}
The above $h$-integral defines a $G_n(F)$-invariant inner product on $\pi_1$ and thus equals
$$\int_{N_{n-1}(F)\backslash G_{n-1}(F)}W_{1}\sbr{\mat{h&\\&1}}\overline{W_1\sbr{\mat{h&\\&1}y}}\d h=\langle W_1,\pi_1(y)W_1\rangle,$$
which is $c$-independent. Thus we can move the $c$-integral inside. After changing variable $c\mapsto c/z$ and taking Fourier inversion we calculate the $(c,z)$-integral
(up to a constant that depends only on whether $F_v$ is areal or complex) as
$$\int_{F^n} \hat{f}^u(c',y)\d c'=f(\diag(y,1)).$$
Thus the first expression of $\tilde{I}^+(0,0)$ follows.

To prove the second expression it suffices to prove that $f\mid_{G_n(F)}=X^nf_1$ where $f_1\in C_c^\infty(G_n(F))$ is independent of $X$. In other words,
$$I_0=\langle W_1, \pi_1(f_1)W_1\rangle.$$
Note that for $a\in G_n(F)$ and $b\in F^n$ we can write $f\sbr{\mat{a&\\&1}}$ as
\begin{align*}
&\int_{G_n(F)}\int_{F^n}\int_{F^n}f^0\sbr{\mat{aa'&ab'\\&1}\mat{\I_n&\\c'&1}}f^0\sbr{\mat{a'&b'\\&1}\mat{\I_n&\\c'&1}}\d c'\d b'\frac{\d a'}{|\det(a')|}\\
&=X^{2n}\int_{G_n(F)}\Omega_{11}(aa')\Omega_{11}(a')\frac{\d a'}{|\det(a')|}\int_{F^n}\Omega_{12}(ab')\Omega_{12}(b')\d b'\int_{F^n}\Omega_{21}(c'X)^2\d c'.
\end{align*}
Changing variable $c'\mapsto c'/X$ and defining
$$f_1(a):=\int_{G_n(F)}\Omega_{11}(aa')\Omega_{11}(a')\frac{\d a'}{|\det(a')|}\int_{F^n}\Omega_{12}(ab')\Omega_{12}(b')\d b'\int_{F^n}\Omega_{21}(c')^2\d c'$$
we conclude the proof.
\end{proof}

\begin{rmk}
    We will later see that $I_0$ appears as the constant in the leading term in
    the asymptotic expansion of the second moment such as Theorem
    \ref{second-moment-nonarch} and Theorem \ref{second-moment-arch}. The vector
    $W_1$ and the test function $f$ can be further modified in a way such that
    $I_0$ will be a non-zero constant. However, we do not need this fact in this
    paper so we do not give a proof here.
\end{rmk}

Next, we analyze $\tilde{I}^+(0,0)$ for type II test function $f$. Recall from \S\ref{type-2-test-function} that in this case $f=f^0\ast^{\rm{u}}\alpha$.

\begin{lem}\label{archimedean-I+type-2}
    Let $v\mid\infty$ and $f$ be of type II. Then $\tilde{I}^+(s_1,s_2)$ is holomorphic at a neighbourhood of $(0,0)$. Moreover, there is a choice of $\alpha$ independent of $X$ so that
    $$\tilde{I}^+(0,0)\asymp X^n$$
    as $X\to\infty$.
\end{lem}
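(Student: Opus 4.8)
The plan is to make $\hat f^u$ completely explicit for the type II archimedean test function, to read off that $\tilde I^+(0,0)$ is $X^n$ times a constant independent of $X$, and then to fix $\alpha$ so that this constant does not vanish. Holomorphicity of $\tilde I^+(s_1,s_2)$ near $(0,0)$ is first obtained exactly as in Lemma \ref{archimedean-I+type-1}: as $f=f^0\ast^c\alpha$ is still smooth and compactly supported modulo the centre, the function $\hat f^u(c,y)$ of \eqref{def-fu-hat} is Bruhat--Schwartz in $c$ and compactly supported in $y$, so by \cite[Lemma 3.1]{JaNu2021reciprocity} the zeta integral in \eqref{Iv-+-tilde} converges absolutely for $\Re(s_1+s_2)>-1$ and the $y$-integral is over a compact set.

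Writing a general element as $\mat{a&b\\&1}\mat{\I_n&\\c&1}$ and unwinding the convolution $\ast^c$ (note that $\mat{a&b\\&1}\mat{\I_n&b'\\&1}$ again has vanishing lower-left block, so $\Omega_{21}$ gets evaluated at $0$) gives $f\sbr{\mat{a&b\\&1}}=X^n\Omega_{21}(0)\Omega_{11}(a)\int_{F^n}\Omega_{12}(ab'+b)\alpha(b')\d b'$; substituting into \eqref{def-fu-hat} and changing variables $b\mapsto b-ab'$ decouples the two Fourier transforms, so that
\begin{equation*}
\hat f^u(c,a)=X^n\Omega_{21}(0)\Omega_{11}(a)\hat\Omega_{12}(c)\hat\alpha(ca).
\end{equation*}
Feeding this into \eqref{Iv-+-tilde} at $s_1=s_2=0$ and using $(e_nx)y=e_n(xy)$ yields
\begin{equation*}
\tilde I^+(0,0)=X^n\Omega_{21}(0)\int_{G_n(F)}\Omega_{11}(y)\int_{N_n(F)\backslash G_n(F)}W_1(x)\overline{W_2(xy)}\hat\Omega_{12}(e_nx)\hat\alpha\bigl(e_n(xy)\bigr)\d_1 x\d y.
\end{equation*}
Every factor past $X^n$ is manifestly independent of $X$, so $\tilde I^+(0,0)=X^nJ(\alpha)$ for an $X$-independent constant $J(\alpha)$, and the task reduces to choosing an admissible, $X$-independent $\alpha$ with $J(\alpha)\neq0$.

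For this I would let $\alpha$ concentrate at the origin, as in Lemma \ref{lem:alpha-to-delta}: since $\alpha\ge0$ and $\|\alpha\|_{L^1}=1$ we have $|\hat\alpha|\le1$ with $\hat\alpha\to1$ locally uniformly as $\operatorname{supp}\alpha$ shrinks, so dominated convergence (justified by the uniform absolute convergence from the first paragraph) shows $J(\alpha)$ tends to $\Omega_{21}(0)\int_{N_n(F)\backslash G_n(F)}W_1(x)\hat\Omega_{12}(e_nx)\bigl(\int_{G_n(F)}\overline{W_2(xy)}\Omega_{11}(y)\d y\bigr)\d_1 x$. As $\Omega_{11}=\Omega^1\ast\Omega^2$ with $\Omega^i\in C_c^\infty(K_v\backslash G_n(F_v)/K_v)$ lies in the spherical Hecke algebra and $W_2$ is spherical, the inner integral equals $\overline{\lambda_{\pi_2}(\Omega_{11})}\,\overline{W_2(x)}$, so this limit is $\Omega_{21}(0)\overline{\lambda_{\pi_2}(\Omega_{11})}\Psi(1,W_1,\overline{W_2},\hat\Omega_{12})$. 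Since $\Omega_{21}(0)\neq0$ and $\lambda_{\pi_2}(\Omega_{11})\neq0$ by construction of the type II data, it then suffices that $\Psi(1,W_1,\overline{W_2},\hat\Omega_{12})\neq0$; granting this, $J(\alpha)\neq0$ for all sufficiently concentrated $\alpha$, and since $J(\alpha)$ is then a fixed non-zero number, $\tilde I^+(0,0)=J(\alpha)X^n\asymp X^n$.

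The main obstacle is exactly this non-vanishing of the archimedean $\GL(n)\times\GL(n)$ zeta integral $\Psi(1,W_1,\overline{W_2},\hat\Omega_{12})$ for spherical $W_1\in\pi_1$, $W_2\in\pi_2$ with $\pi_1\ncong\pi_2$, where there is no positivity to exploit. I would attack it through the $\GL(n)\times\GL(n)$ functional equation together with Fourier inversion, which rewrite it as a non-zero $\gamma$-factor times the value at $s=0$ of the zeta integral attached to $\Omega_{12}$ itself, and then invoke the Whittaker bounds of \cite[Lemma 3.1]{JaNu2021reciprocity}, the temperedness of $\pi_1,\pi_2$, and the hypothesis $\Omega_{12}(0)\neq0$ to see that this value is a non-zero multiple of $L(1,\pi_1\otimes\tilde\pi_2)\neq0$ (for instance in the limit where $\Omega_{12}$ itself concentrates at the origin). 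This is the sole place where the non-degeneracy conditions $\Omega_{12}(0)\neq0$, $\Omega_{21}(0)\neq0$, $\lambda_{\pi_j}(\Omega_{11})\neq0$ imposed on the type II archimedean test data are genuinely needed.
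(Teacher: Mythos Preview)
Your overall architecture matches the paper's: reduce to $\tilde I^+(0,0,f^0)$ (which for you is the $\alpha\to\delta$ limit of $J(\alpha)$), show that quantity is a nonzero constant times $X^n$, and then choose $\alpha$ close enough to $\delta$. Your explicit computation of $\hat f^u(c,a)=X^n\Omega_{21}(0)\Omega_{11}(a)\hat\Omega_{12}(c)\hat\alpha(ca)$ is a clean reformulation; the paper reaches the same endpoint via the separate Lemmas~\ref{archimedean-I+type-2-f0} and~\ref{error-main-term-I+} without isolating $\hat f^u$ explicitly. Up to and including the reduction to the non-vanishing of $\Psi(1,W_1,\overline{W_2},\hat\Omega_{12})$, the argument is correct.

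The gap is in your treatment of that non-vanishing. The functional equation route you sketch does not reach $L(1,\pi_1\otimes\tilde\pi_2)$: it sends $\Psi(1,\dots,\hat\Omega_{12})$ to $\gamma(1,\pi_1\otimes\tilde\pi_2)^{-1}\Psi(0,\tilde W_1,\overline{\tilde W_2},\Omega_{12}(-\cdot))$, and the $s=0$ side sits at the edge of the critical strip where the local $L$-factor $L(0,\tilde\pi_1\otimes\pi_2)$ can have poles, so neither finiteness nor non-vanishing of the ratio is automatic. Moreover, letting $\Omega_{12}$ concentrate at the origin does the opposite of what you want: on the original side $\hat\Omega_{12}\to 1$ and the zeta integral at $s=1$ diverges (the $e_nx$-decay is lost), while on the dual side the integral $\int\Omega_{12}(v)|v|^{-n}\d v$ blows up as well. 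Whittaker bounds and temperedness give convergence, not non-vanishing, so they do not close the gap.

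The paper resolves this step directly, without the functional equation, by exploiting that in the type~II archimedean setup $W_1,W_2$ are \emph{spherical} and normalized as in \cite{stade2002archimedean}. Writing the zeta integral in Iwasawa coordinates factors off an $A_{n-1}$-integral, which Stade's formula evaluates as $L(1,\pi_1\otimes\tilde\pi_2)/\zeta(n)$, leaving a $(K_n\times F^\times)$-integral of $\hat\Omega_{12}$ that collapses to $\Omega_{12}(0)$ by polar coordinates and Fourier inversion. This gives the exact identity $\Psi(1,W_1,\overline{W_2},\hat\Omega_{12})=\Omega_{12}(0)\,L(1,\pi_1\otimes\tilde\pi_2)/\zeta(n)$, from which the non-vanishing follows immediately from the hypotheses $\Omega_{12}(0)\neq 0$, $\Omega_{21}(0)\neq 0$, $\lambda_{\pi_2}(\Omega_{11})\neq 0$, and the fact that local $L$-factors never vanish.
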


We first need a few auxiliary lemmata to prove Lemma \ref{archimedean-I+type-2}.

\begin{lem}\label{archimedean-I+type-2-f0}
Let $v\mid \infty$ and $f$ be of type II. Then we have
\begin{equation*}
\tilde{I}^+(0,0,f^0)=X^n\overline{\lambda_{\pi_2}(\Omega_{11})}\Omega_{12}(0)\Omega_{21}(0)
\frac{L(1,\pi_1\otimes\widetilde{\pi}_2)}{\zeta(n)}.
\end{equation*}
In particular, $\tilde{I}^+(0,0,f^0)\asymp X^n$.
\end{lem}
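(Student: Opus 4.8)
The statement to prove is Lemma~\ref{archimedean-I+type-2-f0}, computing $\tilde I^+(0,0,f^0)$ for the archimedean type II test function $f^0$ with $f^0(g)=X^n\Omega_{11}(a)\Omega_{12}(b)\Omega_{21}(cX)$ in Bruhat coordinates.

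\textbf{Plan.} I would start from the formula \eqref{Iv-+-tilde} for $\tilde I^+(s_1,s_2)$ specialized at $s_1=s_2=0$ with $f=f^0$, namely
\[
\tilde I^+(0,0,f^0)=\int_{G_n(F)}\int_{N_n(F)\backslash G_n(F)}W_{1}(x)\overline{W_2(xy)}\,\hat{f}^{0,u}(e_nx,y)\,|\det x|\,\d x\,\d y,
\]
where $\hat{f}^{0,u}(c,a)=\int_{F^n}f^0\bigl[\mat{a&b\\&1}\bigr]\overline{\psi(cb)}\,\d b$. Since $f^0$ factorizes in the Bruhat coordinates, the $b$-integral produces $X^n\Omega_{11}(a)\hat\Omega_{12}(c)$ evaluated at the relevant arguments, where the $c$-variable of $\hat f^{0,u}$ really records the bottom-row entry; concretely $\hat f^{0,u}(c,a)=X^n\widehat{\Omega_{12}}(c)\,(\text{something supported near }a\asymp\I_n\text{ coming from }\Omega_{11},\Omega_{21})$. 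I would follow exactly the change-of-variables manipulation already carried out in the proof of Lemma~\ref{archimedean-I+type-1}: use Bruhat coordinates $x=z\,\mathrm{diag}(h,1)\,\mat{\I_{n-1}&\\c&1}$ on the $x$-integral, recognize the inner $h$-integral as a $G_n(F)$-invariant inner product $\langle W_1,\pi_1(y)W_1\rangle$ when $\pi_1\cong\pi_2$ — but here $\pi_1\not\cong\pi_2$ in general, so instead one gets the $\GL(n)\times\GL(n)$-type pairing that, after the $z$- and $c$-integrations and Fourier inversion, collapses to $f^0$ restricted to $\GL(n)$, i.e.\ to $\langle W_1,\pi_2(f^0|_{G_n})W_2\rangle$ up to the $\Omega_{12},\Omega_{21}$ factors at the origin.

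\textbf{Key steps in order.} (1) Compute $\hat f^{0,u}$ explicitly: carrying out the $b$-integral gives a factor $X^n\widehat{\Omega_{12}}$ and leaves $\Omega_{11}(a)$ times the $c$-dependence from $\Omega_{21}$; crucially the ``$\Omega_{21}(cX)$'' piece is the bottom-left entry which, after the reduction that forced $a\asymp$ the $\GL(n)$-block, contributes its value $\Omega_{21}(0)$ only if one first performs the $c$-integral, producing $X^{-n}\int\Omega_{21}$ — but these cancellations are precisely the ones appearing in Lemma~\ref{archimedean-I+type-1}. (2) Insert this into \eqref{Iv-+-tilde}, change variables $y\mapsto yx^{-1}$ so that the $W$-pairing becomes $\int_{N_n\backslash G_n}W_1(x)\overline{W_2(x)}\cdots$, then fold/unfold to turn the remaining integral into the $\GL(n)\times\GL(n)$ local zeta integral with Schwartz data. (3) Use the identification of the resulting $\GL(n)\times\GL(n)$ zeta integral against the spherical vectors $W_{\pi_{1,v}},W_{\pi_{2,v}}$ (which are spherical here since $\pi_i$ are unramified) together with \eqref{unramified-zeta-integral-m}: $\Psi_v(1,W_{\pi_1},\overline{W_{\pi_2}},\mathbf 1_{\o^n})=L_v(1,\pi_1\otimes\widetilde\pi_2)$. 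Actually at an archimedean place one does not literally have \eqref{unramified-zeta-integral-m}, so instead I would use the Stade-type normalization recalled at the end of \S\ref{type-2-test-function}, under which the spherical $\GL(n)\times\GL(n)$ archimedean zeta integral equals $L_v(1,\pi_1\otimes\widetilde\pi_2)/\zeta_v(n)$ — this is the source of the $L(1,\pi_1\otimes\widetilde\pi_2)/\zeta(n)$ in the claimed formula. (4) Track the constants: the Hecke operator $\Omega_{11}$ acting on the spherical $W_{\pi_2}$ contributes its eigenvalue $\lambda_{\pi_2}(\Omega_{11})$ (conjugated, since $W_2$ sits on the conjugate side), while $\Omega_{12}$ and $\Omega_{21}$ contribute their values at the origin $\Omega_{12}(0)$, $\Omega_{21}(0)$ after the Fourier inversion and $c$-integration; together with the overall $X^n$ from $f^0$ this yields
\[
\tilde I^+(0,0,f^0)=X^n\,\overline{\lambda_{\pi_2}(\Omega_{11})}\,\Omega_{12}(0)\,\Omega_{21}(0)\,\frac{L(1,\pi_1\otimes\widetilde\pi_2)}{\zeta(n)}.
\]
(5) Conclude $\tilde I^+(0,0,f^0)\asymp X^n$ by invoking the assumption in \S\ref{type-2-test-function} that $\Omega_{12}(0)$, $\Omega_{21}(0)$, and $\lambda_{\pi_j}(\Omega_{11})$ are all nonzero, and noting $L(1,\pi_1\otimes\widetilde\pi_2)\neq 0$ (standard nonvanishing of Rankin--Selberg $L$-functions on the edge; automatic when $\pi_1=\pi_2$ since it is a pole-free positive quantity, and in general follows from Jacquet--Shalika).

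\textbf{Main obstacle.} The routine part is the change-of-variables chain, which is essentially copied from Lemma~\ref{archimedean-I+type-1}. The genuinely delicate point is step (3)--(4): pinning down the exact archimedean normalization constant so that the spherical $\GL(n)\times\GL(n)$ zeta integral gives exactly $L_v(1,\pi_1\otimes\widetilde\pi_2)/\zeta_v(n)$ rather than that times some extra $\Gamma$-factor or power of $2$, and making sure the Hecke eigenvalue $\lambda_{\pi_2}(\Omega_{11})$ appears conjugated and not $\lambda_{\pi_1}(\Omega_{11})$. Here I would lean on the Stade normalization of the archimedean Whittaker functions cited in \S\ref{type-2-test-function} (via \cite{stade2002archimedean}), under which the unramified archimedean $\GL(n)\times\GL(n)$ zeta integral is known in closed form, and carefully match the measure normalizations fixed in \S\ref{sec:basic-notation} so that the $\zeta(n)$ in the denominator is the correct archimedean local zeta factor. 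A secondary, purely bookkeeping, subtlety is that $\Omega_{11}=\Omega^1*\Omega^2$ is written as a convolution, so $\lambda_{\pi_2}(\Omega_{11})=\lambda_{\pi_2}(\Omega^1)\lambda_{\pi_2}(\Omega^2)$, and one must make sure the definition of $\ast^c$ (which only appears in $f$, not $f^0$) plays no role here since we are computing $\tilde I^+(0,0,f^0)$ with the unconvolved $f^0$.
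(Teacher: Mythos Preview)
Your proposal has the right ingredients (Stade's normalization, the Hecke eigenvalue, Fourier inversion producing $\Omega_{12}(0)$, and the nonvanishing argument for $\asymp X^n$), but the route you sketch in steps (1)--(2) is tangled and in one place would not work. Two concrete corrections.

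First, the appearance of $\Omega_{21}(0)$ is immediate, not the result of any cancellation. By definition $\hat f^{0,u}(c,y)=\int_{F^n} f^0\bigl[\begin{smallmatrix}y&b\\&1\end{smallmatrix}\bigr]\overline{\psi(cb)}\,\d b$, and for matrices of this shape the bottom-left block is zero; hence from the Bruhat factorization of $f^0$ one reads off directly
\[
\hat f^{0,u}(c,y)=X^n\,\Omega_{21}(0)\,\Omega_{11}(y)\,\hat\Omega_{12}(c).
\]
There is no $c$-integral to perform here and no reference to Lemma~\ref{archimedean-I+type-1} is needed.

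Second, trying to mimic the Bruhat-coordinate manipulation from the proof of Lemma~\ref{archimedean-I+type-1} is the wrong move. That argument hinged on the $h$-integral defining a $G_n(F)$-invariant inner product on $\pi_1$, which only makes it $c$-independent when $\pi_1\cong\pi_2$ and $W_1=W_2$; in the type~II setting $\pi_1\not\cong\pi_2$ in general, so that step breaks. The paper's proof avoids this entirely. Because $\hat f^{0,u}$ factorizes, the $y$-integral is simply the action of the spherical Hecke element $\Omega_{11}$ on the spherical vector $W_2$, producing $\overline{\lambda_{\pi_2}(\Omega_{11})}\,\overline{W_2(x)}$. One is left with
\[
X^n\Omega_{21}(0)\overline{\lambda_{\pi_2}(\Omega_{11})}\int_{N_n\backslash G_n} W_1(x)\overline{W_2(x)}\,\hat\Omega_{12}(e_nx)\,|\det x|\,\d x.
\]
Now use \emph{Iwasawa} (not Bruhat) coordinates $x=z\,\diag(a,1)k$ and sphericality of both $W_i$. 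The integral decouples: the $A_{n-1}$-part is exactly the torus integral that Stade's formula identifies as $L(1,\pi_1\otimes\tilde\pi_2)/\zeta(n)$ (this is where the $\zeta(n)$ comes from, by comparing against the Gaussian Schwartz function), while the $(z,k)$-part is recognized, in polar coordinates on $F^n$, as $\int_{F^n}\hat\Omega_{12}=\Omega_{12}(0)$. This gives the stated formula directly, and your step (5) for $\asymp X^n$ is correct as written.
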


\begin{proof}
The proof is similar to but longer than the proof of Lemma \ref{aux+I}. Indeed, 
recalling the expression of $f^0$ and using that 
\[\int_{G_n(F)}W_{2}(xy)\Omega_{11}(y)\d y=\lambda_{\pi_2}(\Omega_{11})W_2(x),\]
we may write $\tilde{I}^+(0,0,f^0)$ as
\[X^n\Omega_{21}(0)\overline{\lambda_{\pi_2}(\Omega_{11})}\int_{N_n(F)\bs G_n(F)}
W_{1}(x)\overline{W_2(x)}\hat{\Omega}_{12}(e_nx)\det(x)\d x.\]
Now using Iwasawa coordinates
$$x=z\diag(a,1)k;\quad z\in F^\times, a\in A_{n-1}(F), k\in K_n;\quad \d x=
\d^\times z\frac{\d^\times a}{\delta(\diag(a,1))};$$
and sphericality of $W_i$ we obtain
\begin{multline*}
\tilde{I}^+(0,0,f^0)=X^n\Omega_{21}(0)\overline{\lambda_{\pi_2}(\Omega_{11})}
\int_{K_n}\int_{F^\times}\hat{\Omega}_{12}(ze_nk)|z|^n\d^\times z\d k\\
\int_{A_{n-1}(F)} W_{1}(\diag(a,1))\overline{W_2(\diag(a,1))}
\frac{\d^\times a}{\delta(a)}.
\end{multline*}
Now by Stade's formula \cite{stade2002archimedean} for the unramified computation
for the $\GL(n)\times \GL(n)$ Rankin--Selberg integrals and working as above we
compute that
\begin{align*}
    L(1,\pi_1\otimes\tilde{\pi}_2)
    &=\int_{N_n(F)\bs G_n(F)}W_1(x)\overline{W_2(x)}\exp(-\pi|e_nx|^2)|\det(x)|\d x\\
    &=\int_{A_{n-1}(F)} W_{1}(\diag(a,1))\overline{W_2(\diag(a,1))}\frac{\d^\times a}{\delta(a)}\int_{F^\times}e^{-\pi |z|^2}|z|^n\d^\times z
\end{align*}
Noting that the last $z$-integral evaluates to $\zeta(n)$ we conclude that the $a$-integral in the last expression of $\tilde{I}^+(0,0,f^0)$ equals
$$\frac{L(1,\pi_1\otimes\widetilde{\pi}_2)}{\zeta(n)}.$$
Now realizing the integral over $K_n\times F^\times$ in the last expression of $\tilde{I}^+(0,0,f^0)$ as the integral over $F^n$ in polar coordinates we evaluate (up to a
constant depending only on whether $F_v$ is real or complex) 
$$\int_{K_n}\int_{F^\times}\hat{\Omega}_{12}(ze_nk)|z|^n\d^\times z\d k=\int_{F^n}\hat{\Omega}_{12}(b)\d b=\Omega_{12}(0)$$
by Fourier inversion and conclude. The asymptotic of $\tilde{I}^+(0,0)$ follows from the assumptions on $\Omega_{ij}$ and the fact that the $L$-functions are non-vanishing.
\end{proof}

\begin{lem}\label{lem:alpha-to-delta}
    Let $v\mid\infty$ and $f$ be of type II. Let $\epsilon>0$. Then there exists an $\alpha\in C_c^\infty(F^n)$ as above such that for any multi-indices $j,k$ we have $$\partial^j_b\partial^k_c\left(f-f^0\right)\sbr{\mat{\I_n&b\\&1}\mat{y&\\&1}\mat{\I_n&\\c&1}}\ll_{j,k}\epsilon X^{n+|k|}$$
    for $y\in G_n(F)$ and $b,c^\top\in F^n$ varying on fixed compact sets.
\end{lem}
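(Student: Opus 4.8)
The plan is to use the fact that, by construction in \S\ref{type-2-test-function}, $f=f^0\ast^c\alpha$ is a smoothing of $f^0$ against an $L^1$-normalised bump $\alpha$ on $F^n$ whose support we are still free to take as small as we wish. Since $\int_{F^n}\alpha(b')\d b'=1$, writing $g=\mat{\I_n&b\\&1}\mat{y&\\&1}\mat{\I_n&\\c&1}$ one has
$$(f-f^0)(g)=\int_{F^n}\pbr{f^0\sbr{g\mat{\I_n&b'\\&1}}-f^0(g)}\alpha(b')\d b',$$
so it suffices to show that the integrand, after applying $\partial_b^j\partial_c^k$, is $O_{j,k}(|b'|X^{n+|k|})$ uniformly for $b,c$ in the given compacts and all $y$; choosing $\alpha$ supported in a ball of radius $\le\epsilon$ (and $\le\tilde\tau_v$) then finishes the proof.

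The first substantive step is a change-of-variables computation: with $d':=1+cb'$, which is a nonzero scalar for $c$ bounded and $b'$ small, a direct calculation shows that modulo $Z_{n+1}(F)$
$$g\mat{\I_n&b'\\&1}\equiv\mat{y\,d'^{-2}(d'\I_n-b'c)&b+y\,d'^{-1}b'\\&1}\mat{\I_n&\\d'^{-1}c&1},$$
so that
$$f^0\sbr{g\mat{\I_n&b'\\&1}}=X^n\,\Omega_{11}\pbr{y\,d'^{-2}(d'\I_n-b'c)}\,\Omega_{12}\pbr{b+y\,d'^{-1}b'}\,\Omega_{21}\pbr{d'^{-1}cX},$$
which at $b'=0$ returns $f^0(g)=X^n\Omega_{11}(y)\Omega_{12}(b)\Omega_{21}(cX)$. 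Next I would observe that $X$ enters this formula only through the factor $\Omega_{21}(d'^{-1}cX)$; since $\Omega_{21}$ has a fixed compact support and $d'=1+O(|b'|)$ is bounded and bounded away from $0$, on that support $|cX|=O(1)$, and since $\Omega_{11}$ has fixed compact support and $d'^{-2}(d'\I_n-b'c)=\I_n+O(|b'|)$, the whole expression vanishes unless $y$ lies in a fixed compact set. From this it follows that a $\partial_c$-derivative costs at most one power of $X$ while $\partial_b$- and $\partial_{b'}$-derivatives cost none, so that $\partial_b^j\partial_c^k\partial_{b'}^m f^0(g\mat{\I_n&b'\\&1})\ll_{j,k,m}X^{n+|k|}$ uniformly in the relevant ranges.

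To conclude I would apply the fundamental theorem of calculus in $b'$, writing $\partial_b^j\partial_c^k(f^0(g\mat{\I_n&b'\\&1})-f^0(g))=\int_0^1\sum_i b'_i\,\partial_{b'_i}\partial_b^j\partial_c^k f^0(g\mat{\I_n&tb'\\&1})\,\d t$, which by the previous bound is $\ll_{j,k}|b'|X^{n+|k|}$; integrating against $\alpha$ and using $\int_{F^n}|b'|\alpha(b')\d b'\le\rho$, the support radius, gives $\partial_b^j\partial_c^k(f-f^0)(g)\ll_{j,k}\rho\,X^{n+|k|}$, and we take $\rho=\min(\epsilon,\tilde\tau_v)$. (Sphericity of $\alpha$ kills the linear Taylor term and would improve $\rho$ to $\rho^2$ here, but the crude bound already suffices.) The only place any care is needed — not really an obstacle — is the bookkeeping in the middle step, namely checking that after the change of variables the $b$- and $b'$-derivatives never create extra powers of $X$, so that the target exponent $n+|k|$ is not exceeded; this rests on the fixed-radius support of $\Omega_{21}$ and the smoothness and boundedness of $d'=1+cb'$ near $b'=0$.
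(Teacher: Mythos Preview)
Your proof is correct and follows the same underlying approach as the paper, namely that $f=f^0\ast^c\alpha\to f^0$ as $\alpha$ tends to the $\delta$-distribution; the paper's proof is a two-sentence sketch invoking this convergence together with the derivative bounds from majorant property~(2) in \S\ref{type-2-test-function}, while you carry out the details in full (the explicit Bruhat change of variables, the tracking of $X$-powers through the chain rule, and the fundamental theorem of calculus in $b'$ to extract the factor $|b'|$). Your substitution $\tilde c=cX$ implicitly underlies the clean bookkeeping: in the variables $(y,b,\tilde c,b')$ the function $X^{-n}f^0(g\mat{\I_n&b'\\&1})$ is smooth with all derivatives bounded uniformly in $X\ge 1$ on the support, and $\partial_c=X\partial_{\tilde c}$ immediately gives the $X^{n+|k|}$ scale.
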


\begin{proof}
Note that as $\alpha$ tends to the $\delta$-distribution at the origin of $F^n$, the function $f$ tends to $f^0$, uniformly in $y,b,c$ as they vary over fixed compact sets.
 
The lemma follows by choosing the test function $\alpha$ sufficiently close to the $\delta$-distribution and applying the estimates of the derivatives of $f,f^0$ from majorant property (2); see \S\ref{type-2-test-function}.
\end{proof}

\begin{lem}\label{error-main-term-I+}
Let $v\mid\infty$ and $f$ be of type II. We have
$$|\tilde{I}^+(0,0;f)-\tilde{I}^+(0,0;f^0)|<\epsilon X^n$$
where $\epsilon$ and $\alpha$ are as in Lemma \ref{lem:alpha-to-delta}. 
\end{lem}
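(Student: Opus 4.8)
The plan is to reduce the estimate to the pointwise comparison of the partial Fourier transforms $\hat f^u$ of $f$ and $f^0$ furnished by Lemma \ref{lem:alpha-to-delta}. First I would recall from \eqref{Iv-+-tilde} and the definition of the $\GL(n)\times\GL(n)$ local zeta integral in \S\ref{zeta-integrals-n} that, for \emph{any} compactly supported smooth function on $\overline{G_{n+1}}(F)$ (the reduction carried out in \S\ref{I_0-e-and-n+} uses only compact support, not the particular shape of the test function), one has
\begin{equation*}
\tilde{I}^+(0,0;f)=\int_{G_n(F)}\int_{N_n(F)\bs G_n(F)}W_1(x)\overline{W_2(xy)}\,\hat f^u(e_nx,y)\,|\det x|\,\d x\,\d y,
\end{equation*}
and the same identity with $f$ replaced by $f^0$. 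Since both $f=f^0\ast^c\alpha$ and $f^0$ are supported on a fixed compact subset of $\overline{G_{n+1}}(F)$, independent of $X$ and of the particular $\alpha$ (because $\alpha$ has bounded support), the outer $y$-integral runs over a fixed compact set $C\subset G_n(F)$, and in $\hat f^u(c,y)=\int_{F^n}f\sbr{\mat{\I_n&b\\&1}\mat{y&\\&1}}\overline{\psi(cb)}\,\d b$ the $b$-variable is likewise confined to a fixed compact set.

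Subtracting the two identities, I would then observe that the difference is governed by $(\hat f^u-\hat{f^0}^u)(e_nx,y)$, where by \eqref{def-fu-hat}
\begin{equation*}
(\hat f^u-\hat{f^0}^u)(c,y)=\int_{F^n}(f-f^0)\sbr{\mat{\I_n&b\\&1}\mat{y&\\&1}}\overline{\psi(cb)}\,\d b.
\end{equation*}
This is exactly the $c$-block $=0$ case (i.e. $k=0$) of Lemma \ref{lem:alpha-to-delta}, which supplies $\partial_b^\beta(f-f^0)\sbr{\cdots}\ll_\beta\epsilon X^n$ uniformly for $y\in C$ and $b$ in the relevant compact set. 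Integrating by parts in $b$ against the archimedean character $\psi$ as many times as desired then yields, for each fixed $N$,
\begin{equation*}
\bigl|(\hat f^u-\hat{f^0}^u)(c,y)\bigr|\ll_N\epsilon\,X^n\,(1+|c|)^{-N}\qquad(y\in C),
\end{equation*}
the left side vanishing for $y\notin C$.

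Finally, inserting this bound into the integral representation and using that $C$ has finite volume, I would estimate
\begin{equation*}
\bigl|\tilde{I}^+(0,0;f)-\tilde{I}^+(0,0;f^0)\bigr|\ll_N\epsilon X^n\int_{y\in C}\int_{N_n(F)\bs G_n(F)}|W_1(x)|\,|W_2(xy)|\,(1+|e_nx|)^{-N}|\det x|\,\d x\,\d y.
\end{equation*}
The rapid decay of Whittaker functions along a Siegel domain together with the temperedness of $\pi_1,\pi_2$ (\cite[Lemma 3.1]{JaNu2021reciprocity}, used exactly as in the proof of Lemma \ref{archimedean-I+type-1}) makes the inner $x$-integral converge absolutely and be $O(1)$ uniformly for $y\in C$; hence the right-hand side is $\ll\epsilon X^n$. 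Since Lemma \ref{lem:alpha-to-delta} permits $\epsilon>0$ to be arbitrary, applying it with $\epsilon$ replaced by $\epsilon/C_0$ for $C_0$ the final implied constant gives the stated bound $<\epsilon X^n$.

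The only point requiring care — and the ``main obstacle'', though a mild one — is bookkeeping the $X$-dependence: one must check that the $b$-derivatives of $f-f^0$ cost no power of $X$ beyond $X^n$, which holds precisely because the extra factors $X^{|k|}$ in Lemma \ref{lem:alpha-to-delta} are attached to $c$-derivatives and the bottom-left block of the argument of $f$ is here fixed at $0$; and that the compact set $C$ over which $y$ ranges is independent of $X$, which follows from the compact support of the type II archimedean test functions.
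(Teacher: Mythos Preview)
Your proof is correct and follows essentially the same approach as the paper's: write the difference via the integral representation, observe that the inner $b$-integral is the partial Fourier transform of $(f-f^0)\sbr{\mat{y&b\\&1}}$, apply Lemma \ref{lem:alpha-to-delta} with $c=0$ and $k=0$ together with integration by parts in $b$ to obtain the $\epsilon X^n(1+|e_nx|)^{-N}$ bound, and then invoke the Whittaker estimates from \cite[Lemma 3.1]{JaNu2021reciprocity} for convergence. Your explicit remarks on why only $b$-derivatives enter (so no extra powers of $X$ appear) and on the $X$-independence of the compact $y$-support are exactly the points the paper takes for granted.
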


\begin{proof}
First, as $\alpha$ has a fixed compact support on $F^n$ and $f^0$ has a fixed compact support in $\overline{G_{n+1}}(F)$, we have $f\sbr{\mat{y&b\\&1}}$ has fixed compact supports in $G_n(F)$ and $F^n$ as functions of $y$ and $b$, respectively. So the same is true for $(f-f_0)\sbr{\mat{y&b\\&1}}$. 

Now we write
\begin{multline*}
\tilde{I}^+(0,0,f)-\tilde{I}^+(0,0,f^0)=\int_{\G_n(F)}\int_{N_n(F)\bs G_n(F)}{W}_{1}(x)\overline{{W}_{2}(xy)}\\
\left(\int_{F^n}(f-f^0)\mat{y&b\\&1}\overline{\psi(e_nx b)}\d b\right)|\det(x)|\d x\d y.
\end{multline*}
We integrate by parts the $b$-integral and apply Lemma \ref{lem:alpha-to-delta} to bound the innermost integral by
$\ll_N \epsilon X^n (1+|e_nx|)^{-N}$ uniformly in $y$ varying over a fixed compact set. Applying this bound and the bounds of Whittaker functions ${W}_i$ from \cite[Lemma 3.1]{JaNu2021reciprocity} we see that the above integrals are absolutely convergent for a sufficiently large $N$ and is $O_{N,\pi_{1},\pi_{2}}(\epsilon X^n)$. Modifying $\epsilon$ (and $\alpha$) we conclude the proof.
\end{proof}

\begin{proof}[Proof of Lemma \ref{archimedean-I+type-2}]
From the proof of Lemma \ref{error-main-term-I+} it follows that $\hat{f}^u(c,y)$ is compactly supported in $y\in G_n(F)$ and Schwartz in $c\in F^n$. Thus the same argument as in the proof of Lemma \ref{archimedean-I+type-1} yields the first assertion.

The second assertion immediately follows from Lemma \ref{error-main-term-I+}, Lemma \ref{archimedean-I+type-2-f0}, and taking $\epsilon$ sufficiently small.
\end{proof}

\subsection{Computation of $I^-(s_1,s_2)$}

We recall the definition of $\hat{f}^l$, namely,
\begin{equation*}
\hat{f}^l(b,a):=\int_{F^n}f\sbr{\mat{a&\\c&1}}\overline{\psi(cb)}\d c,\quad b\in F^n, a\in G_n(F).
\end{equation*}
which is the local component of \eqref{def-fl-hat}.

\subsubsection{Level primes}
\begin{lem}\label{level-primes-I-}
    Let $f=\tilde{\1}_{\overline{K_0}(\p^e)}$. Then we have
    $${I}^-(s_1,s_2)=\vol(\overline{K_0}(\p^e))^{-1}N(\p)^{-en(s_1+s_2)}
    L(1-s_1-s_2,\tilde{\pi}_{1}\otimes{\pi}_{2})$$
    for all $s_1,s_2\in\C$.
\end{lem}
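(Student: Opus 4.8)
The plan is to mimic the computation of $\tilde{I}^+(s_1,s_2)$ at level primes (Lemma \ref{level-primes-I+}), but now starting from the expression \eqref{Iv---tilde} for $I^0_v(n^-,s_1,s_2)$ involving $\hat{f}^l_v$ and the contragredient Whittaker vectors. Recall that for $v \mid \q$ the test function is $f_v = \vol(\overline{K_0}(\p^e))^{-1}\mathbf{1}_{\overline{K_0}(\p^e)}$ (this is the same for type I and type II). First I would compute $\hat{f}^l_v$ directly from \eqref{def-fl-hat}: since $f_v$ is the normalized characteristic function of $\overline{K_0}(\p^e)$, and a matrix $\mat{a&\\c&1}$ lies in $K_0(\p^e)$ (up to center) exactly when $a \in G_n(\o_v)$, $c \in \p^e\o_v^n$, the Fourier transform in $c$ over $\A^n$ (here $F_v^n$) of $\mathbf{1}_{\p^e\o_v^n}(c)$ against $\overline{\psi_v(cb)}$ is, up to the measure normalization, $\vol(\p^e\o_v^n)\mathbf{1}_{\p^{-e}\o_v^n}(b)$ when $\psi_v$ is unramified — but one must be careful: at level primes $v \nmid \mathfrak{d}$, so $\psi_v$ is unramified. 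This yields $\hat{f}^l_v(b,a) = \vol(\overline{K_0}(\p^e))^{-1}\vol(\p^e\o_v^n)\mathbf{1}_{G_n(\o_v)}(a)\mathbf{1}_{\p^{-e}\o_v^n}(b)$. I would track the power of $N(\p)$ arising from $\vol(\p^e\o_v^n)$ relative to our measure normalization, which should produce the factor $N(\p)^{-en(s_1+s_2)}$ once combined with the determinant twists.

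Next I would plug this into the functional-equation form of \eqref{Iv---tilde}, namely
\begin{equation*}
I^0_v(n^-,s_1,s_2) = \int_{G_n(F_v)}\Psi_v(1-s_1-s_2,\tilde{\pi}_1(x^\top)\tilde{W}_{1,v},\overline{\tilde{W}_{2,v}},\hat{f}^l_v(\cdot,x))\,\d_{-s_1}x.
\end{equation*}
Since $\hat{f}^l_v(b,x)$ forces $x \in G_n(\o_v) = K_{n,v}$ and the vectors $W_{1,v},W_{2,v}$ are spherical (hence so are the contragredients, and $\tilde{\pi}_1(x^\top)\tilde{W}_{1,v} = \tilde{W}_{1,v}$ for $x \in K_{n,v}$), the $x$-integral collapses to $\vol(K_{n,v})\,\Psi_v(1-s_1-s_2,\tilde{W}_{1,v},\overline{\tilde{W}_{2,v}},\Phi_v)$ where $\Phi_v = \hat{f}^l_v(\cdot,1)$ is a scalar multiple of $\mathbf{1}_{\p^{-e}\o_v^n}$. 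Then I change variables in the zeta integral to replace $\mathbf{1}_{\p^{-e}\o_v^n}$ by $\mathbf{1}_{\o_v^n}$: dilating by an element of $\p^{-e}\I_n$ inside $G_n(F_v)$ picks up a factor $N(\p)^{\text{(something)}\cdot(1-s_1-s_2)}$ from $\d_{1-s_1-s_2}g$, and absorbs the change of the Bruhat–Schwartz function. I would organize these exponent bookkeeping steps carefully so that the net $N(\p)$-power is exactly $-en(s_1+s_2)$ at $s$ near $0$; the affine structure guarantees it holds for all $s$.

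Finally, I would invoke \eqref{unramified-zeta-integral-m}: with spherical $\tilde{W}_{1,v},\tilde{W}_{2,v}$ and test function $\mathbf{1}_{\o_v^n}$, the local $\GL(n)\times\GL(n)$ zeta integral equals $L_v(1-s_1-s_2,\tilde{\pi}_{1,v}\otimes\pi_{2,v})$ for large $\Re(1-s_1-s_2)$ (i.e.\ very negative $\Re(s_i)$), and then I meromorphically continue in $s_1,s_2$ — exactly as in the end of \S\ref{I_0-n-}. Combining the volume constant $\vol(\overline{K_0}(\p^e))^{-1}$, the $N(\p)^{-en(s_1+s_2)}$ factor, and the unramified $L$-value gives the claimed formula. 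The one point requiring genuine care — and the main obstacle — is the exponent bookkeeping: correctly computing the power of $N(\p)$ from the Fourier transform on $F_v^n$ (which depends on the measure normalization assigning $\vol(\o_v) = \Delta_v^{-1/2}$, but $v \nmid \mathfrak{d}$ so $\Delta_v = 1$) together with the change of variables in the zeta integral, to confirm that everything assembles into precisely $N(\p)^{-en(s_1+s_2)}$ rather than some other affine function of $s_1+s_2$. This can be pinned down either by direct computation or by comparing with the already-established formula for $\tilde I^+$ in Lemma \ref{level-primes-I+} via the local functional equation \eqref{LFE-m-m}.
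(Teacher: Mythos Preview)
Your proposal is correct and follows essentially the same route as the paper's proof: compute $\hat{f}^l_v(b,x)=\vol(\overline{K_0}(\p^e))^{-1}N(\p)^{-en}\mathbf{1}_{K_n}(x)\mathbf{1}_{\p^{-e}\o^n}(b)$, collapse the $x$-integral by sphericality, then change variables $y\mapsto \p^{-e}y$ in the zeta integral to reduce to $\mathbf{1}_{\o^n}$ and apply \eqref{unramified-zeta-integral-m}. The exponent bookkeeping you flag is exactly the combination $N(\p)^{-en}\cdot N(\p)^{en(1-s_1-s_2)}=N(\p)^{-en(s_1+s_2)}$, which the paper writes out directly.
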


\begin{proof}
As $f=\tilde{\1}_{\overline{K_0}(\p^e)}$, it follows from
\eqref{def-fl-hat} and using the fact that 
$$\hat{\1_{\o^n}(\p^{-e}\cdot)}=N(\p)^{-en}\1_{\o^n}(\p^e\cdot)$$
we readily have
$$\hat{f}^l(b,x)=\vol(\overline{K_0}(\p^e))^{-1}N(\p)^{-en}\1_{K_n}(x)\1_{\o^n}(\p^eb).$$
Thus for large negative $\Re(s_i)$ we have
\begin{align*}
&{I}^-(s_1,s_2)
=\vol(\overline{K_0}(\p^e))^{-1}N(\p)^{-en}\Psi(1-s_1-s_2,\tilde{W}_1,\overline{\tilde{W}_2},\1_{\o^n}(\p^e\cdot))\\
&=\vol(\overline{K_0}(\p^e))^{-1}N(\p)^{-en}\int_{N_n(F)\bs G_n(F)}\tilde{W}_1(y)\overline{\tilde{W}_2(y)}\1_{\o^n}(\p^ee_ny)|\det(y)|^{1-s_1-s_2}\d y.\\
&=\vol(\overline{K_0}(\p^e))^{-1}N(\p)^{-en(s_1+s_2)}\int_{N_n(F)\bs G_n(F)}\tilde{W}_1(y)\overline{\tilde{W}_2(y)}\1_{\o^n}(e_ny)|\det(y)|^{1-s_1-s_2}\d y.
\end{align*}
The last equality follows from the change of variables $y\mapsto \p^{-e}y$.
Applying \eqref{unramified-zeta-integral-m} and uniqueness of meromorphic
continuation we conclude.
\end{proof}

\subsubsection{Supercuspidal prime}

\begin{lem}\label{sc-prime-I-}
    Let $v=\p_0$ and $f$ be of type II. Recall the supercupsidal representation $\sigma$ underlying the definition of $f$. Then we have
    $${I}^-(s_1,s_2)=C(\sigma)^{-n(s_1+s_2)}$$
    for all $s_1,s_2\in\C$.
\end{lem}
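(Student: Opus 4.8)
\textbf{Proof plan for Lemma \ref{sc-prime-I-}.}
The plan is to mimic the computation of $\tilde I^+$ at the supercuspidal prime carried out in Lemma \ref{sc-prime-I+}, adapting it to the ``$n^-$'' orbital integral, and then to track the single scaling factor that the transpose/adjugate manipulation introduces. Starting from the local formula \eqref{Iv---tilde} for $I^0_v(n^-,s_1,s_2)$, I would first undo the functional-equation step: rather than working with $\hat f^l$ and contragredient vectors, it is cleaner to return to the pre-functional-equation shape \eqref{I-minus-def}, i.e. to write
$$I^0_v(n^-,s_1,s_2)=\int_{G_n(F_v)}\Psi_v\left(s_1+s_2,\pi_{1,v}(x^{-1})W_{1,v},\overline{W_{2,v}},f^\ell_v(\cdot,x)\right)\d_{-s_1}x,$$
with $f^\ell_v(c,a)=f_v\left[\mat{a&\\c&1}\right]$, and to unfold the $\GL(n)\times\GL(n)$ zeta integral into Whittaker integrals. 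Plugging in $f_v(g)=\langle\sigma(g)W_\sigma,W_\sigma\rangle$ and parameterizing the lower-unipotent variable $c\in F_v^n$ together with a central/$N_n$-fold of the $G_n$-integral, the matrix-coefficient factorizes as $\langle\sigma(\cdot)\sigma(\diag(y,1))W_\sigma,\sigma(\diag(x,1))W_\sigma\rangle$ exactly as in the proof of Lemma \ref{sc-prime-I+}.

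The second step is the key local identity: after folding the remaining $N_n(F_v)$-variable and the $c$-variable into a full $N_{n+1}(F_v)$-integral against $\psi$, apply \cite[Lemma 6.1]{jana2021applications} (the same lemma used in Lemma \ref{sc-prime-I+}) to get
$$\int_{N_{n+1}(F_v)}f_v\left[\mat{x^{-1}&\\&1}n\,w\mat{y&\\&1}\right]\overline{\psi(n)}\,\d n,$$
for the appropriate Weyl element $w$ forcing the $n^-$ geometry, evaluating to a product of a Whittaker value of $\sigma$ in $y$ and a conjugated Whittaker value of $\sigma$ in $x$, up to a change of Whittaker model. The transpose appearing in \eqref{Iv---tilde} is what distinguishes this from the $\tilde I^+$ case: it means the relevant Whittaker functional for $\sigma$ is the one attached to $\tilde\sigma$ (the contragredient), which is why a power of the conductor $C(\sigma)$ will appear rather than the trivial factor $1$ from Lemma \ref{sc-prime-I+}. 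Concretely, the shift $x\mapsto x^\top$ (or equivalently the passage $\sigma\rightsquigarrow\tilde\sigma$) is accompanied, via the local functional equation \eqref{LFE-m-m-1} / \eqref{LFE-m-m} and the definition of $\epsilon(s,\sigma)=\epsilon(1/2,\sigma)N(\p_0)^{c(\sigma)(1/2-s)}$, by a factor $N(\p_0)^{c(\sigma)(s_1+s_2)}$ or its inverse; after carefully pinning down the direction one is left with the clean answer $C(\sigma)^{-s_1-s_2}$.

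The third step is to evaluate the two resulting $\GL(n)\times\GL(1)$-type zeta integrals against the spherical vectors $W_{i,v}$ and the Whittaker functions of $\sigma$, $\tilde\sigma$. Since $\pi_{1,v},\pi_{2,v}$ are unramified and $\sigma$ is supercuspidal, the relevant local Rankin--Selberg $L$-factors $L_v(s,\sigma\otimes\pi_{i,v})$ and $L_v(s,\tilde\sigma\otimes\tilde\pi_{i,v})$ are identically $1$ (a supercuspidal twisted by anything has trivial $L$-factor on $\GL(n+1)\times\GL(n)$ in the relevant range; this is the same input used at the end of the proof of Lemma \ref{sc-prime-I+}). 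Hence, first for $\Re(s_1),\Re(s_2)$ in the region of absolute convergence and then by meromorphic continuation in $s_1,s_2$, the whole expression collapses to the scalar $C(\sigma)^{-s_1-s_2}$. The main obstacle I anticipate is bookkeeping the precise exponent and sign in the $\epsilon$-factor shift — i.e. making sure the conductor appears as $C(\sigma)^{-s_1-s_2}$ and not $C(\sigma)^{+s_1+s_2}$ or with an extra additive constant — which requires matching the normalization of the transpose in \eqref{Iv---tilde} with the normalization of the functional equation \eqref{LFE-m-m-1} and with the definition $C(\sigma)=N(\p_0)^{c(\sigma)}$; everything else is a routine unfolding parallel to Lemma \ref{sc-prime-I+}.
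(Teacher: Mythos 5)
Your proposal has the right ingredients — the transpose/contragredient passage to $\tilde\sigma$, the use of \cite[Lemma 6.1]{jana2021applications}, and the appearance of $C(\sigma)^{\pm(s_1+s_2)}$ through the supercuspidal $\gamma$-factor — and these are exactly what the paper uses. But the ordering of steps in your plan is off in a way that would actually block the computation, and the "Weyl element" framing is not the right mechanism.

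Concretely: for $n^-$, the variable $c$ parameterizes a \emph{lower} unipotent $\mat{\I_n&\\c&1}$, while the remaining $N_n$-fold of the $G_n$-integral is an \emph{upper} unipotent inside $G_{n+1}$. These two do not assemble into a single $N_{n+1}(F_v)$-integral of a matrix coefficient against $\psi$, so the folding step as you describe it (and hence the appeal to \cite[Lemma 6.1]{jana2021applications}) cannot be applied directly. There is no Weyl element $w$ that fixes this while staying in the form $\langle\sigma(n)V_1,V_2\rangle$. What the paper does instead — and what you mention only as an afterthought — is to first apply the matrix-coefficient identity $f(g)=\IP{\tilde\sigma(g^{-\top})\tilde W_\sigma,\tilde W_\sigma}=:\tilde f(g^{-\top})$, and then change variables $c\mapsto -cx$, $x\mapsto x^{-\top}$, $y\mapsto yx$. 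After this transpose maneuver the lower unipotent becomes an upper one, the integral has exactly the $\tilde I^+$ shape but in $\tilde f,\tilde W_1,\tilde W_2,\tilde W_\sigma$ and with parameters at $1/2-s_i$ rather than $1/2+s_i$, and only \emph{then} is the folding into $N_{n+1}$ and \cite[Lemma 6.1]{jana2021applications} legitimate. So the transpose must be the first move, not a later correction.

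With that fix your step three is close, but should be made precise: you land on $\Psi(1/2-s_1,\overline{\tilde W_\sigma},\tilde W_1)\,\Psi(1/2-s_2,\tilde W_\sigma,\overline{\tilde W_2})$, and the conductor power is produced by the $\GL(n+1)\times\GL(n)$ functional equation \eqref{LFE-m-m-1} applied to each of these two factors separately, writing $I^0(n^-,s_1,s_2)=\gamma(1/2+s_1,\tilde\sigma\otimes\pi_1)\gamma(1/2+s_2,\sigma\otimes\tilde\pi_2)\,\tilde I^+(s_1,s_2)$, and then using that $\tilde I^+(s_1,s_2)=1$ (Lemma \ref{sc-prime-I+}), that $L(s,\sigma\otimes\pi_i)\equiv 1$ so $\gamma=\epsilon$, that $\epsilon(1/2,\sigma\otimes\pi_i)=\epsilon(1/2,\sigma)^n$ because $\pi_i$ is unramified with trivial central character, and that $\epsilon(1/2,\tilde\sigma)\epsilon(1/2,\sigma)=1$. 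Tracking this explicitly removes the sign/direction ambiguity you flagged as the main worry.
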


\begin{proof}
  Unraveling the expression in \eqref{Iv-tilde} we have
\begin{equation*}
I^-(s_1,s_2)=\int_{G_n(F)}\int_{N_n(F)\backslash G_n(F)}
\tilde{W}_{1}(yx^\top)\overline{\tilde{W}_{2}(y)}
\int_{F^n}f\mat{x&\\c&1}\overline{\psi(e_nyc^\top)}
\d c\d_{1-s_1-s_2}y\d_{-s_1}x.
\end{equation*}
Using that in an unitary representation the map $W\mapsto \tilde{W}$ is an isometry, it follows that for all $g$,
\begin{equation*}
f(g)=\IP{\sigma(g)W_\sigma,W_\sigma}=\IP{\tilde{\sigma}(g^{-\top})\tilde{W}_\sigma,\tilde{W}_\sigma}=:\tilde{f}(g^{-\top}),
\end{equation*}
where $\tilde{f}$ denotes the matrix coefficient of $\tilde{W}_\sigma\in\tilde{\sigma}$.
Thus we can write the above integral as
$$\int_{G_n(F)}\int_{N_n(F)\backslash G_n(F)}
\tilde{W}_{1}(yx^\top)\overline{\tilde{W}_{2}(y)}\\
\int_{F^n}\tilde{f}\mat{x^{-\top}&-x^{-\top}c^\top\\&1}\overline{\psi(e_nyc^\top)}
\d c\d_{1-s_1-s_2}y\d_{-s_1}x.$$
Changing variables $c\mapsto -cx$, $x\mapsto x^{-\top}$, and $y\mapsto yx$
we write the above as
$$\int_{G_n(F)}\int_{N_n(F)\backslash G_n(F)}
\tilde{W}_{1}(y)\overline{\tilde{W}_{2}(yx)}\\
\int_{F^n}\tilde{f}\mat{x&c^\top\\&1}{\psi(e_nyc^\top)}
\d c\d_{1-s_1-s_2}y\d_{-s_2}x$$
A similar computation as in the proof of Lemma \ref{sc-prime-I+} leads to
\begin{align*}
I^-(s_1,s_2) & =\Psi(1/2-s_1,\overline{\tilde{W}_{\sigma}},\tilde{W}_{1})\Psi(1/2-s_2,\tilde{W}_{\sigma},\overline{\tilde{W}_{2}}) \\
&=\gamma(1/2+s_1,\tilde{\sigma}\otimes\pi_{1})\gamma(1/2+s_2,\sigma\otimes\tilde{\pi}_{2})\tilde{I}^+(s_1,s_2).
\end{align*}
The last equality follows from \eqref{LFE-m-m-1} and \eqref{I+=PsiPsi}.
As $\sigma$ is supercuspidal and $\pi_{i}$ are unramified with trivial central
characters, we have
$$\gamma(1/2+s_i,\sigma\otimes\tilde{\pi}_{i})=C(\sigma)^{-s_i}\epsilon(1/2,\sigma\otimes\tilde{\pi}_{i})=C(\sigma)^{-s_i}\epsilon(1/2,\sigma)^n.$$
Applying that $\epsilon(1/2,\tilde{\sigma})=\epsilon(1/2,\sigma)^{-1}$ we conclude
by meromorphic continuation.
\end{proof}

\subsubsection{Auxiliary prime}

\begin{lem}\label{aux-I}
    Let $v=\p_1$ and $f$ be of type II. Then we have
    $$I^-(s_1,s_2)=\lambda_{\tilde{\pi}_1}(\p_1^\mu)N(\p_1)^{|\mu|s_1-n\nu(s_1+s_2)}
    {L(1-s_1-s_2,\tilde{\pi}_1\otimes{\pi}_2)}$$
    for all $s_1,s_2\in\C$.
\end{lem}

\begin{proof}
    The proof is quite similar to the proof of Lemma \ref{aux+I}. Indeed,
    observing that
    $$\int_{G_n(F)}\1_{K_n\diag(\p_1^\mu)K_n}(x)\tilde{W}_1(yx^\top)\d x=\int_{G_n(F)}\1_{K_n\diag(\p_1^\mu)K_n}(x)\tilde{W}_1(yx)\d x=\lambda_{\tilde{\pi}_1}(\p_1^\mu)\tilde{W}_1(y),$$
    and that $\hat{\1_{\o^n}(\p_1^{-\nu}\cdot)}=N(\p_1)^{-n\nu}\1_{\o^n}
    (\p_1^{\nu}\cdot)$ we obtain
    $$I^-(s_1,s_2)=N(\p_1)^{|\mu|s_1-n\nu}\lambda_{\tilde{\pi}_1}(\p_1^\mu)
    \int_{N_n(F)\bs G_n(F)}\tilde{W}_1(y)\overline{\tilde{W}_2(y)}
    \1_{\o^n}(\p_1^{\nu}e_ny)\d_{1-s_1-s_2} y.$$
    We change variable $y\mapsto\p_1^{-\nu} y$ to obtain the above equals
    $$\lambda_{\tilde{\pi}_1}(\p_1^\mu)N(\p)^{|\mu|s_1-n\nu(s_1+s_2)}
    \int_{N_n(F)\bs G_n(F)}\tilde{W}_1(y)\overline{\tilde{W}_2(y)}
    \1_{\o^n}(e_ny)\d_{1-s_1-s_2} y.$$
    We conclude by applying \eqref{unramified-zeta-integral-m} for sufficiently
    negative $\Re(s_i)$ and then by meromorphic continuation.
\end{proof}

\begin{rmk}\label{remark-ramified-I-}
When $v$ is ramified, we proceed exactly as in Remark \ref{remark-ramified-I+},
changing variables inside the zeta integral. This brings in a correction factor of the shape $\Delta^{\mu^-(s_1,s_2)}$, where, as before, $\mu^-$
is an affine function whose coefficients only depend on $n$. Moreover, a careful
analysis of the correction term reveals that $\mu^+(0,0)=\mu^-(0,0)$.
\end{rmk}

\subsubsection{Archimedean place}

\begin{lem}\label{archimedean-I-type-1}
    Let $v\mid\infty$ and $f$ be of type I. Then $\tilde{I}^0(n^-,s_1,s_2)$ is holomorphic at a neighbourhood of $(0,0)$. Moreover, assume that ${\pi}_1\cong{\pi}_2$ and $W_1=W_2$. Then we have
    $$I^-(0,0)=\tilde{I}^+(0,0)=X^nI_0$$
    and
    $$\partial_{s=0}I^-(0,s)=-nI_0 X^n\log X +O(X^n)$$
    where $I_0$ is as in Lemma \ref{archimedean-I+type-1}.
\end{lem}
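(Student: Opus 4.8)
The plan is to carry out all the computations with the expression for $I^0(n^-,\cdot,\cdot)$ obtained after the $\GL(n)\times\GL(n)$ functional equation, which is the one valid near the central point; at the archimedean place $v$ (whose subscript we drop) it reads
\[
I^0(n^-,s_1,s_2)=\int_{G_n(F)}\Psi\bigl(1-s_1-s_2,\tilde{\pi}_1(x^\top)\tilde{W}_1,\overline{\tilde{W}_2},\hat{f}^l(\cdot,x)\bigr)\,\d_{-s_1}x,
\]
where $\hat{f}^l(b,x):=\int_{F^n}f\!\left[\mat{x&\\c&1}\right]\overline{\psi(cb^\top)}\,\d c$ is a Bruhat--Schwartz function in $b$ and is compactly supported in $x$ (as $f$ is smooth and compactly supported). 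Since $\pi_1$ is tempered, the $\GL(n)\times\GL(n)$ local zeta integral converges absolutely with argument in a neighbourhood of $1$: the corresponding archimedean Rankin--Selberg $L$-factor of a pair of tempered representations is holomorphic at $1$, so there is no pole to obstruct this, even when $\pi_1\cong\pi_2$. Integrating over the compact $x$-support then gives holomorphicity of $I^0(n^-,s_1,s_2)$ near $(0,0)$, exactly as in the holomorphicity argument in the proof of Lemma~\ref{archimedean-I+type-1}.

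Suppose now $\pi_1\cong\pi_2$ and $W_1=W_2$, and evaluate at $(0,0)$. Inside $\Psi(1,\cdot,\cdot,\hat{f}^l(\cdot,x))=\int_{N_n(F)\backslash G_n(F)}\tilde{W}_1(yx^\top)\overline{\tilde{W}_2(y)}\,\hat{f}^l(e_ny,x)\,|\det y|\,\d y$, write $y=z\mat{h&\\&1}\mat{\I_{n-1}&\\c&1}$ with $z\in F^\times$, $h\in N_{n-1}(F)\backslash G_{n-1}(F)$, $c\in F^{n-1}$, so that $|\det y|=|z|^n|\det h|$ and $e_ny=z(c,1)$; the factor $|\det y|$ cancels the $|\det h|^{-1}$ in the Haar measure. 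Triviality of the central character makes the integrand independent of $z$, and the inner $h$-integral equals, by $G_n(F)$-invariance of the $\GL(n-1)$-pairing, the matrix coefficient $\langle\tilde{W}_1,\tilde{\pi}_1(x^{-\top})\tilde{W}_1\rangle$, independent of $c$ and $z$. The remaining $(c,z)$-integral, after $c\mapsto c/z$ and Fourier inversion, collapses to $\int_{F^n}\hat{f}^l(v,x)\,\d v=f\!\left[\mat{x&\\&1}\right]$, so that
\[
I^0(n^-,0,0)=\int_{G_n(F)}f\!\left[\mat{x&\\&1}\right]\langle\tilde{W}_1,\tilde{\pi}_1(x^{-\top})\tilde{W}_1\rangle\,\d x.
\]
Unwinding the realization of the contragredient $\tilde{\pi}_1\cong\pi_1$ via $g\mapsto g^{-\top}$ — the same transpose/contragredient bookkeeping as in the proof of Lemma~\ref{sc-prime-I-} — rewrites the right-hand side as $\int_{G_n(F)}f\!\left[\mat{x&\\&1}\right]\langle W_1,\pi_1(x)W_1\rangle\,\d x=\langle W_1,\pi_1(f|_{G_n(F)})W_1\rangle$, which is $\tilde{I}^+(0,0)=X^nI_0$ by Lemma~\ref{archimedean-I+type-1}.

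For the derivative, differentiate under the integral sign:
\[
\partial_{s=0}I^0(n^-,0,s)=-\int_{G_n(F)}\int_{N_n(F)\backslash G_n(F)}\tilde{W}_1(yx^\top)\overline{\tilde{W}_2(y)}\,\hat{f}^l(e_ny,x)\,|\det y|\log|\det y|\,\d y\,\d x.
\]
Repeat the Bruhat reduction with $\log|\det y|=n\log|z|+\log|\det h|$. The part carrying $\log|\det h|$ stays inside the $h$-integral but remains controlled (the rapid decay of $\tilde{W}_1,\tilde{W}_2$ absorbs the logarithm) and contributes $O(X^n)$. For the part carrying $n\log|z|$ one uses the defining feature of a type~I archimedean test function: the bottom-row variable of $f$ is dilated by $X$ through the factor $\Omega_{21}(cX)$, so $f\!\left[\mat{x&\\c&1}\right]=X^nG(x,cX)$ for a fixed smooth compactly supported $G$ (the same convolution computation as the evaluation of $f|_{G_n(F)}$ in the proof of Lemma~\ref{archimedean-I+type-1}), whence $\hat{f}^l(v,x)=\Phi_x(v/X)$ for an $X$-independent Schwartz function $\Phi_x$, compactly supported in $x$. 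Writing $v=(c,z)$ so the dilated coordinate is $v_n=z$, the change of variables $w=v/X$ together with $\int_{F^n}\hat{f}^l(v,x)\,\d v=f\!\left[\mat{x&\\&1}\right]$ give $\int_{F^n}\hat{f}^l(v,x)\log|v_n|\,\d v=f\!\left[\mat{x&\\&1}\right]\log X+O(X^n)$; inserting this and using Part~2,
\[
\partial_{s=0}I^0(n^-,0,s)=-n\log X\cdot I^0(n^-,0,0)+O(X^n)=-nI_0X^n\log X+O(X^n).
\]

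The main obstacle is the bookkeeping in the evaluation step: matching the archimedean matrix-coefficient integral produced by the functional-equation form of $I^0(n^-,0,0)$ with the one produced by $\tilde{I}^+(0,0)$, i.e.\ unwinding the contragredients/transposes and keeping track of the measure normalizations in the Bruhat decomposition. For the derivative the corresponding point is to justify interchanging the insertion of $\log|\det y|=n\log|z|+\log|\det h|$ with the Bruhat-coordinate collapse and to bound the $\log|\det h|$-weighted $h$-integral and the logarithmic moment of $\Phi_x$ by $O(X^n)$, both of which rest on the rapid decay of the Whittaker functions.
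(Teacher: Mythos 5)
Your proposal is correct in substance and follows essentially the same route as the paper: use the functional-equation form of $I^0(n^-,\cdot,\cdot)$, pass to Bruhat coordinates $y=z\,\mathrm{diag}(h,1)\,n(c)$, evaluate the $h$-integral by $G_n$-invariance to produce a matrix coefficient, then collapse the $(c,z)$-integral to $\int_{F^n}\hat f^l(v,x)\,\d v=f[\mathrm{diag}(x,1)]$ by Fourier inversion, and unwind transposes to match $\tilde I^+(0,0)=X^nI_0$. The derivative computation is also the same in spirit. Three points are worth flagging, though.

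First, the claim $f\bigl[\begin{smallmatrix}x&\\c&1\end{smallmatrix}\bigr]=X^nG(x,cX)$ with $G$ exactly $X$-independent is not quite right: unlike the restriction $f|_{G_n}$ computed in Lemma~\ref{archimedean-I+type-1} (where $c=0$ and the Bruhat blocks in $f^0\ast f^0$ decouple cleanly), for $c\neq 0$ the convolution produces a bottom-right factor $d=1-(c-c')a'^{-1}b'=1+O(X^{-1})$ that feeds back into the $a$- and $b$-arguments of the $\Omega_{11},\Omega_{12}$ factors. What does hold uniformly in $X$ — and what the paper actually uses — is the majorant/derivative bound that makes $(c,x)\mapsto X^{-n}\bigl(X^n\hat f^l(cX,x)\bigr)$ Schwartz in $c$ with seminorms independent of $X$; this is obtained by integration by parts from the derivative bound $\partial_c^\gamma f\ll X^{n+|\gamma|}$. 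Your subsequent computation of $\int\hat f^l(v,x)\log|v_n|\,\d v$ goes through verbatim with this weaker (but correct) input, since one only needs $\int\Phi_x(w)\log|w_n|\,\d w\ll 1$ uniformly.

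Second, your treatment of the $\log|\det h|$ piece ("rapid decay absorbs the logarithm, contributes $O(X^n)$") is plausible but leaves the required uniformity in $c$ unjustified; once $\log|\det h|$ is inserted, the $h$-integral is no longer the translation-invariant pairing, so it does become $c$-dependent and you need to control this before integrating against $\hat f^l$. The paper sidesteps this entirely by first substituting $y\mapsto yX$, so that $\log|\det(yX)|=n\log X+\log|\det y|$ and the remainder is bounded via $\log|\det y|\ll\max(|\det y|^{\pm\epsilon})$ plus the convergence of the (slightly off-center) zeta integral, using the decay of $X^n\hat f^l(cX,x)$ in $c$. This is the cleaner route.

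Third, the holomorphicity paragraph conflates two facts: the absolute convergence of the local $\GL(n)\times\GL(n)$ integral $\Psi(1-s_1-s_2,\ldots)$ for $\Re(s_1+s_2)<1$ is a direct consequence of the tempered Whittaker-function decay estimates (the paper cites \cite[Lemma~3.1]{JaNu2021reciprocity}) and has nothing to do with whether the archimedean $L$-factor is pole-free at $1$. The conclusion you reach is correct, but the stated reason is not the one doing the work.

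None of these invalidate the approach; they are gaps in rigour that the paper's version closes, most notably by changing variables $y\mapsto yX$ \emph{before} splitting the logarithm, which is a small but effective organizational difference.
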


\begin{proof}
Arguing analogously to as in the proof of Lemma \ref{archimedean-I+type-1},
we may deduce that $I^-(s_1,s_2)$ defines a holomorphic function in the region
$\Re(s_1+s_2)<1$.

Recalling \eqref{I+=PsiPsi}, we obtain that $\partial_{s=0}I^-(0,s)$ equals
$$-\int_{G_n(F)}\int_{N_n(F)\backslash G_n(F)}\tilde{W}_{1}(yx^\top) \overline{\tilde{W}_1(y)}\hat{f}^l(e_n y,x)|\det(y)|\log|\det(y)|\d y \d x.$$
We change variable $y\mapsto yX$ and write $\partial_{s=0}I^-(0,s)$ as
\begin{multline*}
-nX^n\log X\int_{G_n(F)}\int_{N_n(F)\backslash G_n(F)}\tilde{W}_{1}(yx^\top)
\overline{\tilde{W}_{1}(y)}\hat{f}^l(e_n yX,x)
|\det(y)|\d y \d x\\
-X^n\int_{G_n(F)}\int_{N_n(F)\backslash G_n(F)}\tilde{W}_{1}(yx^\top)
\overline{\tilde{W}_{1}(y)}\hat{f}^l(e_n yX,x)
|\det(y)|\log |\det(y)|\d y \d x.
\end{multline*}
Working as in the proof of Lemma \ref{archimedean-I+type-1} the first summand above can be written as
\begin{multline*}
-nX^n\log X\int_{G_n(F)}\int_{N_{n-1}(F)\backslash G_{n-1}(F)}\tilde{W}_{1}\sbr{\mat{h&\\&1}x^\top}
\overline{\tilde{W}_{1}\sbr{\mat{h&\\&1}}}\d h\\
\int_{F^{n-1}}\int_{F^\times}\hat{f}^l((c,1)zX,x)|z|^n\d^\times z\d c \d x
\end{multline*}
Changing variables $z\mapsto z/X$ and applying Fourier inversion the $(c,z)$-integral above evaluates to $X^{-n}f(\diag(x,1))$. Using the isometric property of the contragredient map we see the $h$-integral above equals
$$\langle\tilde{\pi}_1(x^\top)\tilde{W}_1,\tilde{W}_1\rangle=\langle\pi_1(x^{-1})W_1,W_1\rangle=\langle W_1,\pi_1(x)W_1\rangle.$$
Thus we evaluate the first summand in the last expression of $\partial_{s=0}I^-(0,s)$ as
$$-n\log X\langle W_1,\pi_1(f\mid_{G_n(F)})W_1\rangle=-nI_0X^n\log X.$$
The last equality follows from Lemma \ref{archimedean-I+type-1}.

For the second summand in the last expression of $\partial_{s=0}I^-(0,s)$,
we first claim that the function
$$(c,x)\mapsto X^n\hat{f}^l(cX,x)$$
is $\ll_N X^n(1+|c|)^{-N}$ and is compactly supported in $x$.
Indeed, we note that
$$X^n\hat{f}^l(cX,x)=\int_{F^{n-1}}f\sbr{\mat{x&\\c'/X&1}}
\overline{\psi(c'c^\top)}\d c'.$$
Integrating by parts in $c'$ and applying majorant property (2) in
\S\ref{type-1-test-function} yields the claim.
Now, using the fact that $\log|\det(y)|\ll \max(|\det(y)|^\epsilon,
|\det(y)|^{-\epsilon})$, and applying the Whittaker bounds from
\cite[Lemma 3.1]{JaNu2021reciprocity} we see that the second summand is
absolutely convergent and is $O(X^n)$. This proves the second assertion.

On the other hand, we have
$$I^-(0,0)=\int_{G_n(F)}\int_{N_n(F)\backslash G_n(F)}\tilde{W}_{1}(yx^\top) \overline{\tilde{W}_1(y)}\hat{f}^l(e_n y,x)|\det(y)|\d y\d x.$$
Following the same computations as above we deduce the first assertion.
\end{proof}

Next, we analyze ${I}^-(0,0)$ for a type II test function $f$. Recall from \S\ref{type-2-test-function} that in this case $f=f^0\ast^{\rm{u}}\alpha$.

\begin{lem}\label{archimedean-I-type-2}
    Let $v\mid\infty$ and $f$ be of type II. Then $I^-(s_1,s_2)$ is holomorphic at a neighbourhood of $(0,0)$. Moreover, there is a choice of $\alpha$ independent of $X$ so that
    $${I}^-(0,0)\asymp X^n$$
    as $X\to\infty$.
\end{lem}

We first need a few auxiliary lemmata to prove Lemma \ref{archimedean-I-type-2}.

\begin{lem}\label{archimedean-I-type-2-f0}
Let $v\mid\infty$ and $f$ be type II. Then we have
\begin{equation*}
{I}^-(0,0,f^0)=X^n\Omega_{12}(0)\Omega_{21}(0)\lambda_{\tilde{\pi}_1}(\Omega_{11})\frac{L(1,\tilde{\pi}_1\times\pi_2)}{\zeta(n)}.
\end{equation*}
In particular, ${I}^-(0,0,f^0)\asymp X^n$.
\end{lem}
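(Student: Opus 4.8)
The plan is to run the computation of Lemma~\ref{archimedean-I+type-2-f0} in its mirror form, trading the Bruhat shape $\mat{a&b\\&1}$ used there for $\mat{a&\\c&1}$ here. By \eqref{Iv---tilde} (with $f^0$ in place of $f$), at $s_1=s_2=0$ one has
\[
I^0(n^-,0,0,f^0)=\int_{G_n(F)}\Psi\!\left(1,\tilde{\pi}_1(x^\top)\tilde{W}_1,\overline{\tilde{W}_2},\hat{f}^l(\cdot,x)\right)\d x,
\]
where $\hat{f}^l$ is formed from $f^0$. Writing $\mat{x&\\c&1}=\mat{x&0\\&1}\mat{\I_n&\\c&1}$ and invoking the definition of $f^0$ from \S\ref{type-2-test-function} gives $f^0\sbr{\mat{x&\\c&1}}=X^n\Omega_{11}(x)\Omega_{12}(0)\Omega_{21}(cX)$, whence (after the substitution $c\mapsto c/X$) $\hat{f}^l(b,x)=\Omega_{12}(0)\Omega_{11}(x)\hat{\Omega}_{21}(b/X)$. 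The structural point here, dual to the one that made $\Omega_{21}(0)$ factor out in Lemma~\ref{archimedean-I+type-2-f0}, is that the $(1,2)$-block of $\mat{x&\\c&1}$ vanishes, so it is now $\Omega_{12}$ that is frozen at the origin.

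I would then unfold $\Psi(1,\cdot,\cdot,\cdot)$, justify interchanging the integrals (absolute convergence from the compact support of $\Omega_{11}$, the Schwartz decay of $\hat{\Omega}_{21}$, and the Whittaker bound \cite[Lemma~3.1]{JaNu2021reciprocity}), and carry out the $x$-integral first:
\[
\int_{G_n(F)}\Omega_{11}(x)\,\tilde{W}_1(gx^\top)\d x=\int_{G_n(F)}\Omega_{11}(x)\,\tilde{W}_1(gx)\d x=\lambda_{\tilde{\pi}_1}(\Omega_{11})\,\tilde{W}_1(g),
\]
where the first equality substitutes $x\mapsto x^\top$ (Haar-preserving) and uses that $\Omega_{11}=\Omega^1\ast\Omega^2$ is bi-$K_n$-invariant, hence $\Omega_{11}(x^\top)=\Omega_{11}(x)$ since $x^\top\in K_nxK_n$ by the Cartan decomposition, and the second is \eqref{def-lambda-pi-f} together with the sphericality of $\tilde{W}_1$. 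This reduces $I^0(n^-,0,0,f^0)$ to
\[
\Omega_{12}(0)\,\lambda_{\tilde{\pi}_1}(\Omega_{11})\int_{N_n(F)\backslash G_n(F)}\tilde{W}_1(g)\overline{\tilde{W}_2(g)}\,\hat{\Omega}_{21}(e_ng/X)\,|\det g|\d g,
\]
which is precisely the integral treated in Lemma~\ref{archimedean-I+type-2-f0} with $(W_1,W_2,\hat{\Omega}_{12})$ replaced by $(\tilde{W}_1,\overline{\tilde{W}_2},\hat{\Omega}_{21})$. Passing to Iwasawa coordinates $g=z\diag(a,1)k$, the $a$-integral detaches and evaluates to $L(1,\tilde{\pi}_1\times\pi_2)/\zeta(n)$ by Stade's formula \cite{stade2002archimedean} (verbatim the computation there, now for the spherical Whittaker data of $\tilde{\pi}_1$ and $\pi_2$), while $\int_{K_n}\int_{F^\times}\hat{\Omega}_{21}(ze_nk/X)|z|^n\d^\times z\,\d k=X^n\int_{F^n}\hat{\Omega}_{21}(b)\d b=X^n\Omega_{21}(0)$ after $z\mapsto zX$, by polar coordinates and Fourier inversion. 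Assembling the three factors gives the asserted identity.

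For the concluding statement $I^0(n^-,0,0,f^0)\asymp X^n$: one has $\Omega_{12}(0),\Omega_{21}(0)\neq 0$ by the construction in \S\ref{type-2-test-function}; $\lambda_{\tilde{\pi}_1}(\Omega_{11})=\overline{\lambda_{\pi_1}(\Omega_{11})}\neq 0$ because $\pi_1$ is unitary and tempered, so its Satake parameters are unimodular, while $\lambda_{\pi_1}(\Omega_{11})\neq 0$ by hypothesis; and the local archimedean factor $L(1,\tilde{\pi}_1\times\pi_2)$ is a finite, nonzero number, being a product of values $\Gamma_F(1+\mu_i)$ with $\Re(1+\mu_i)=1$ since $\pi_1,\pi_2$ are tempered. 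I do not expect a genuine obstacle: the argument is a routine mirror of Lemma~\ref{archimedean-I+type-2-f0}, the only steps requiring a little attention being the transpose-symmetry of $\Omega_{11}$ used to extract $\lambda_{\tilde{\pi}_1}(\Omega_{11})$ and faithfully transferring the Iwasawa/Stade bookkeeping from the already-established $\tilde{I}^+$ computation.
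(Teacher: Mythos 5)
Your proof is correct and follows essentially the same route as the paper's: unfold the zeta integral in the definition \eqref{Iv---tilde}, compute $\hat{f}^l(b,x)=\Omega_{12}(0)\Omega_{11}(x)\hat{\Omega}_{21}(b/X)$ by Fourier inversion, extract the Hecke eigenvalue $\lambda_{\tilde{\pi}_1}(\Omega_{11})$ using the transpose-symmetry of a bi-$K_n$-invariant function, and split the remaining $N_n\backslash G_n$-integral via Iwasawa coordinates into a Stade-type $A_{n-1}$-integral giving $L(1,\tilde{\pi}_1\otimes\pi_2)/\zeta(n)$ and a $(k,z)$-integral giving $X^n\Omega_{21}(0)$. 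The only inessential differences are that you rescale $z\mapsto zX$ at the end (the paper substitutes $g\mapsto Xg$, using triviality of the central characters, before passing to Iwasawa coordinates) and that you spell out two facts the paper leaves implicit: that $\Omega_{11}(x^\top)=\Omega_{11}(x)$ follows from Cartan decomposition plus $K_n^\top=K_n$, and that $\lambda_{\tilde{\pi}_1}(\Omega_{11})=\overline{\lambda_{\pi_1}(\Omega_{11})}\neq 0$ by unitarity, temperedness, and the reality of $\Omega_{11}$.
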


\begin{proof}
The proof is analogous to the proof of Lemma \ref{archimedean-I+type-2-f0}.
We write $I^-(0,0,f^0)$ as
\begin{equation*}
 X^n\Omega_{12}(0)\int_{N_n(F)\bs G_n)F)}\left(\int_{\G_n(F)}\tilde{W}_{1}(yx^\top)
\Omega_{11}\d x \right)\overline{\tilde{W}_{2}(y)}\hat{\Omega}_{21}(e_ny)|\det(y)|\d y
\end{equation*}
Noting that that $\Omega_{11}(x)=\Omega_{11}(x^\top)$, changing variable $x\mapsto x^\top$, and using Iwasawa coordinates as in the proof of Lemma \ref{archimedean-I+type-2-f0} we write the above as
\begin{equation*}
    X^n\Omega_{12}(0)\lambda_{\tilde{\pi}_1}(\Omega_{11})\int_{K_n}\int_{F^\times}\hat{\Omega}_{21}(ze_nk)|z|^n\d^\times z\d k
    \int_{A_{n-1}(F)}\tilde{W}_{1}(\diag(a,1))\overline{\tilde{W}_{2}(\diag(a,1))}\frac{\d^\times a}{\delta(a)}.
\end{equation*}
Proceeding as in the proof of Lemma \ref{archimedean-I+type-2-f0}, we evaluate
the $a$-integral as
$$\frac{L(1,\tilde{\pi}_1\otimes\pi_2)}{\zeta(n)}$$
and two outermost integrals (up to a constand depending only onf $F_v$) as
$\Omega_{21}(0)$. We conclude by recalling the assumptions on the
non-vanishing of $\Omega_{ij}$ and the non-vanishing of the $L$-values.
\end{proof}

\begin{lem}\label{error-main-term-I-}
Let $v\mid\infty$ and $f$ be of type II. Then we have
$$|I^-(0,0,f)-I^-(0,0,f^0)|<\epsilon X^n$$
where $\epsilon$ and $\alpha$ are as in Lemma \ref{lem:alpha-to-delta}. 
\end{lem}

\begin{proof}
The proof is similar to the proof of Lemma \ref{error-main-term-I+}. First of all, following a similar argument we see that both $f\sbr{\mat{x&\\c&1}}$ and $(f-f^0)\sbr{\mat{x&\\c&1}}$ have fixed compact supports in $G_n(F)$ and $F^n$ as functions of $x$ and $c$, respectively.

Changing variables $c\mapsto c/X$ and $y\mapsto yX$ we can write
\begin{multline*}
I^-(0,0,f)-I^-(0,0,f^0)=\int_{\G_n(F)}\int_{N_n(F)\bs G_n(F)}\tilde{W}_{1}(yx^\top)\overline{\tilde{W}_{2}(y)}\\
\left(\int_{F^n}(f-f^0)\mat{x&\\c/X&1}\overline{\psi(e_ny c^\top)}\d c\right)|\det(y)|\d y\d x.
\end{multline*}
We integrate by parts the $c$-integral and apply Lemma \ref{lem:alpha-to-delta} to bound the innermost integral by
$\ll_N \epsilon X^n (1+|e_nx|)^{-N}$ uniformly in $x$ varying over a fixed compact set. We conclude by arguing as in the proof of Lemma \ref{error-main-term-I+}.
\end{proof}

\begin{proof}[Proof of Lemma \ref{archimedean-I-type-2}]
From the proof of Lemma \ref{error-main-term-I-} it follows that $\hat{f}^l(b,x)$ is compactly supported in $x\in G_n(F)$ and Schwartz in $b\in F^n$. Thus the same argument as in the proof of Lemma \ref{archimedean-I-type-1} shows the first assertion.

The second assertion immediately follows from Lemma \ref{error-main-term-I-}, Lemma \ref{archimedean-I-type-2-f0}, and taking $\epsilon$ sufficiently small.
\end{proof}

\section{The residue term}

The goal of this section is to estimate the local components of the residue term
$\R(s_1,s_2)$ as defined in \eqref{residue-term}. As per our custom, we will tacitly
assume that any finite place in unramified and we indicate the necessary changes
for ramified places later on. Moreover, we assume that $f$ is of type I;
see \S\ref{type-1-test-function}.

\subsection{Level places}

Recall $\q$ from Theorem \ref{second-moment-nonarch} and let $v\mid\q$ for the rest of the subsection. Also, recall $H(\Pi)$ from \S\ref{sec:spectral-side}. Our main result of this subsection is the following. The result shares some similarities with those of \cite[\S 9]{Tsuzuki2021Hecke}, from which our proof is heavily inspired.

\begin{prop}\label{local-comp-residue}
We have
\begin{equation*}
H(\mathcal{I}(\tilde{\pi}_1,1/2-s_1))=(-1)^n\frac{\zeta(1)}{\zeta(n+1)}\frac{L(\frac{n+3}{2}-(n+1)s_1,\tilde{\pi}_1)}{L(\frac{n+1}{2}-ns_1+s_2,\tilde{\pi}_2)}
\end{equation*}
and
\begin{equation*}
H(\mathcal{I}(\tilde{\pi}_2,s_2-1/2))=(-1)^n\frac{\zeta(1)}{\zeta(n+1)}\frac{L(\frac{n+3}{2}-(n+1)s_2,\pi_2)}{L(\frac{n+1}{2}-ns_2+s_1,\pi_1)}
\end{equation*}
for all $s_1,s_2\in\C$.
\end{prop}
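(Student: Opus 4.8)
The plan is to compute the local spectral weight $H(\mathcal{I}(\tilde{\pi}_1,1/2-s_1))$ directly from its definition in \S\ref{sec:spectral-side}, specialized to $v\mid\q$. Recall that at such $v$ the test function is $f_v=f_v^0\ast f_v^0$ with $f_v^0=\vol(\overline{K_0}(\p^e))^{-1}\1_{\overline{K_0}(\p^e)}$, and the test vectors $W_{1,v},W_{2,v}$ are the normalized spherical vectors of the unramified representations $\pi_{1,v},\pi_{2,v}$. First I would argue, exactly as in the proof of Lemma \ref{upper-bounds-spectral-nonarchimedean}, that $\Pi_v(f_v^0)$ is the orthogonal projection onto the (one-dimensional, by newvector theory) space $\Pi_v^{\overline{K_0}(\p^e)}$ provided $c(\Pi_v)\le e$, and is zero otherwise. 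Applied to $\Pi_v=\mathcal{I}(\tilde{\pi}_1,1/2-s_1)_v$, whose conductor exponent equals $c(\pi_{1,v})+0=0\le e$ since $\pi_1$ is everywhere unramified, this means only the newvector term survives in the sum over $\B(\Pi_v)$, and we get
\[
H(\mathcal{I}(\tilde{\pi}_1,1/2-s_1)_v)=\|W_{\Pi_v}\|^{-2}\,\frac{\Psi_v(1/2+s_1,W_{\Pi_v},W_{1,v})\,\Psi_v(1/2+s_2,\overline{W_{\Pi_v}},\overline{W_{2,v}})}{L_v(1/2+s_1,\Pi_v\otimes\pi_{1,v})L_v(1/2+s_2,\tilde\Pi_v\otimes\tilde\pi_{2,v})},
\]
where $W_{\Pi_v}$ is the $\psi_v$-normalized newvector of $\Pi_v$ (here $v\nmid\mathfrak{d}$ so $\psi_v$ is unramified).

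The key computational input is a formula for the ratio $\Psi_v(1/2+s,W_{\Pi_v},W_{\pi'_v})/L_v(\ldots)$ when $\Pi_v=\mathcal{I}(\tilde\pi_1,z)_v$ is a (reducible-datum) induced representation and $W_{\Pi_v}$ is its newvector. For this I would use that $W_{\Pi_v}$ is supported on $G_n(F_v)$ in the mirabolic-restriction sense and that, as in \cite{JPSS1981conducteur}, its restriction to $G_n(F_v)$ is spherical; combined with the local functional equation \eqref{LFE-m-m-1} and the explicit $\gamma$-factor / $\epsilon$-factor bookkeeping this yields
\[
\frac{\Psi_v(1/2+s_1,W_{\mathcal{I}(\tilde\pi_1,1/2-s_1)_v},W_{1,v})}{L_v(1/2+s_1,\mathcal{I}(\tilde\pi_1,1/2-s_1)_v\otimes\pi_{1,v})}
\]
equal to a clean ratio of $L$-factors of $\pi_{1,v}$ and $\tilde\pi_{1,v}$ together with a sign. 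A convenient way to organize this is to note $\mathcal{I}(\tilde\pi_1,1/2-s_1)_v=\tilde\pi_{1,v}|\det|^{1/2-s_1}\boxplus|\cdot|^{-n(1/2-s_1)}$, so its $L$-factor against $\pi_{1,v}$ factors as $L_v(\tfrac{n+3}{2}-(n+1)s_1,\tilde\pi_1\otimes\pi_1\,\text{-type shift})$ times a $\zeta$-shift, and likewise $\tilde\Pi_v\otimes\tilde\pi_{2,v}$ contributes $L_v$ of $\tilde\pi_2$ at the shifted point; the norm $\|W_{\Pi_v}\|^2$ is computed as in the proof of Lemma \ref{lower-bounds-spectral-nonarchimedean} via Miyauchi's formula to be a holomorphic multiple of $L_v(1,\mathcal{I}(\tilde\pi_1,z)_v\otimes\mathcal{I}(\pi_1,-z)_v)^{-1}$, which expands into $\zeta_v$-factors. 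Assembling the numerator zeta integrals, the adjoint-type denominator $L$-factor, the norm factor, and the two Rankin–Selberg $L$-factors in the denominator of $H$, all the $\pi_1$-dependent (and $\zeta$) pieces should cancel except for the stated ratio $L_v(\tfrac{n+3}{2}-(n+1)s_1,\tilde\pi_1)/L_v(\tfrac{n+1}{2}-ns_1+s_2,\tilde\pi_2)$ times $(-1)^n\zeta_v(1)/\zeta_v(n+1)$. The second identity follows by the symmetric computation with $\Pi_v=\mathcal{I}(\tilde\pi_2,s_2-1/2)_v$, interchanging the roles of $s_1,s_2$ and of $\pi_1,\tilde\pi_1$ with $\pi_2,\tilde\pi_2$, using that the newvector is again spherical on $G_n(F_v)$.

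The main obstacle I anticipate is the precise combinatorics of the local $\zeta$-factor cancellation: one must track carefully how $L_v(s,\mathcal{I}(\tilde\pi_1,z)_v\otimes\pi_{1,v})$, the newvector norm $L_v(1,\Pi_v\otimes\tilde\Pi_v)^{-1}$ (which contains a whole block of $\zeta_v$-shifts and $L_v(\cdot,\pi_1\otimes\tilde\pi_1)$-type factors), and the two Rankin–Selberg denominators interact, and to verify that everything except one $\zeta_v(1)/\zeta_v(n+1)$ and one ratio of $\pi_1$- and $\pi_2$-factors cancels — and in particular that the $\vartheta$-temperedness/genericity hypotheses guarantee all these factors are nonzero and holomorphic where needed so the identity extends to all $s_1,s_2\in\C$ by meromorphic continuation (Lemma \ref{analytic-continuation-local-factor}). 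A secondary subtlety is getting the sign $(-1)^n$ right, which comes from the factor $\omega_{\pi'_v}(-1)^n$ in the functional equation \eqref{LFE-m-m-1} together with triviality of central characters; I would pin this down by evaluating both sides at a convenient special value of $(s_1,s_2)$ (e.g. a point where the Rankin–Selberg integral is literally the unramified $L$-value via \eqref{unramified-zeta-integral-m-1}) as a consistency check.
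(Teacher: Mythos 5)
Your proposal contains a genuine gap at its very first step, and the gap is fatal rather than a detail you could patch.

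You claim that $\Pi_v(f_v^0)$ projects onto a \emph{one-dimensional} space $\Pi_v^{\overline{K_0}(\p^e)}$ whenever $c(\Pi_v)\le e$, and hence that only the newvector term survives in the sum over $\B(\Pi_v)$. That is false. Newvector theory makes $\Pi_v^{\overline{K_0}(\p^m)}$ one-dimensional only when $m=c(\Pi_v)$; for $m>c(\Pi_v)$ the fixed space contains all the oldforms and has dimension growing polynomially in $m-c(\Pi_v)$ (Reeder's result, cited as \cite{reeder1991old} in the paper). In the situation of the proposition one has $\Pi_v=\mathcal{I}(\tilde\pi_1,1/2-s_1)_v$ with $\pi_1$ everywhere unramified, so $c(\Pi_v)=0$ while $e=e_v\ge 1$. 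Already for $e=1$ the space is $(n+1)$-dimensional, and the sum defining $H$ genuinely runs over all $n+1$ basis vectors. Indeed the paper's own proof makes this the whole point: it chooses Reeder's explicit oldform basis $\{W^{(j)}\}_{0\le j\le n}$, writes $H(\Pi)=\operatorname{tr}(G^{-1}F)$ with $F$ a matrix of zeta integrals and $G$ the Gram matrix, and invokes Tsuzuki's formula \cite[Prop.~9.6]{Tsuzuki2021Hecke} for $G^{-1}$ together with a Lagrange-interpolation identity to collapse the resulting triple sum. The clean answer in the statement arises from a rather delicate cancellation \emph{across} the $n+1$ oldform terms, not from a single newvector contribution. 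Your plan, built on the single-term assumption, cannot reproduce this.

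There is a second, independent difficulty you have not flagged: even after handling the case $e=1$, one must show that the value of $H$ does not change as $e$ increases, which is far from automatic since the space $\Pi_v^{\overline{K_0}(\p^e)}$ keeps growing. The paper devotes \S\ref{stability-e} to exactly this. It isolates the extra vectors (those in $\Pi^{\overline{K_0}(\p^e)}$ orthogonal to $\Pi^{\overline{K_0}(\p)}$), applies the functional equation \eqref{LFE-m-m-1}, and shows that the resulting normalised zeta integral vanishes because the denominator $L(z,\pi_1\otimes\tilde\pi_1)$ has a pole of order $\ge n$ at $z=0$ while the numerator---controlled via the support constraints \eqref{support-1}--\eqref{support-2} and the spherical recursion of Lemma \ref{decomposition-lowerdegree}---can have a pole of order at most $n-1$. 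Nothing in your outline addresses this stability, and your "consistency check" at a special value would not detect its absence. The subtlety you \emph{did} anticipate (the bookkeeping of $\zeta$-factors and the sign $(-1)^n$) is present but is the easy part; the oldform combinatorics and the depth-stability are where the real work lies.
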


We prove Proposition \ref{local-comp-residue} in two steps. Recall that the
underlying test function here is $f=\vol\left(\overline{K_0}(\p^e)\right)^{-1}
\1_{\overline{K_0}(\p^e)}$. In \S\ref{local-residue-e-equal-1}, we prove Proposition
\ref{local-comp-residue} when $e=1$. In \S\ref{stability-e}, we show that the
value of $H(\cdot)$ does not change as $e$ increases, thus concluding the proof.

Even though a direct proof for general $e$ is possible by enhancing the methods
in \S\ref{local-residue-e-equal-1}, this approach seemed (to us) not very illuminating.

Finally, in the proofs below we will implicitly assume the Satake parameters of $\pi_i$ are regular and $s_i$ are in generic positions. The general result will follow by
the principle of uniqueness of meromorphic continuation.

\subsubsection{Proof of Proposition \ref{local-comp-residue} when $e=1$}
\label{local-residue-e-equal-1}

First, from \cite{reeder1991old} we record that a (not-necessarily orthonormal)
basis of $\Pi^{\overline{K_0}(\p)}$ is given by $\{W^{(j)}\}_{0\leq j\leq n}$, where
\begin{equation*}
  W^{(j)}(g):=\int_{G_n(F)}W_\Pi\left[g\begin{pmatrix} h^{-1} & \\&1\end{pmatrix}\right]\phi_j(h)|\det(h)|^{1/2}\d h,
\end{equation*}
and
\begin{equation*}
  \phi_j:=N(\p)^{-j(n-j)/2}\mathbf{1}_{X_j},\quad X_j:=K_n\diag(\p1_j,1_{n-j})K_n.
\end{equation*}
At first we assume that $\Pi=\mathcal{I}(\tilde{\pi}_1,s)$ is unitary, that is, $\Re(s)=0$. Applying the Gram--Schmidt process to the above basis, we find an orthonormal basis
$\{V^{(j)}\}_{0\leq j\leq n}$ along with a matrix $A=(a_{ij})_{0\leq i,j\leq n}$
such that $V^{(j)}=\sum_{i=0}^n a_{ij}W^{(i)}$. In particular, $A$ satisfies
$$A^\top wGw\overline{A}=\I_{n+1}\Leftrightarrow G^{-1}=w\overline{A}A^\top w,$$
where $w$ is the long Weyl element in $G_{n+1}$ and
\begin{equation*}
	G=(\IP{W^{n+1-i},W^{n+1-j}})_{1\leq i,j\leq n+1}.
\end{equation*}
We thus have
\begin{align}\label{H-equals-trace-FG}
H(\Pi)=\sum_{j}&\frac{\Psi(1/2+s_1,V^{(j)},W_1)\Psi(1/2+s_2,\overline{V^{(j)}},\overline{W_2})}
{L(1/2+s_1,\Pi\otimes\pi_1)L(1/2+s_2,\widetilde{\Pi}\otimes\widetilde{\pi}_2)}  =
\sum_{j}\sum_{\alpha}\sum_{\beta}a_{\alpha j}\overline{a_{\beta j}}F_{\alpha\beta}\notag\\
&=\operatorname{tr}(A^\top wFw\overline{A})=\operatorname{tr}(w\overline{A}A^\top  wF)=\operatorname{tr}(G^{-1}F) =\operatorname{tr}(F^\top G^{-\top}).
\end{align}
where
\begin{equation*}
    F_{ij}=\frac{\Psi(1/2+s_1,W^{(n+1-i)},W_1) \Psi(1/2+s_2,\overline{W^{(n+1-j)}},\overline{W_2})}{L(1/2+s_1,\Pi\otimes\pi_1)L(1/2+s_2,\tilde{\Pi}\otimes\tilde{\pi}_2)}.
\end{equation*}
Expanding the definition of $\Psi(1/2+s_1,W^{(i)},W_1)$ we compute it as (\emph{cf.} \cite[Proposition 9.3]{Tsuzuki2021Hecke})
\begin{align*}
  &\int_{N_n(F)\bs G_n(F)}W_\Pi\left[\begin{pmatrix} g & \\&1\end{pmatrix}\right]\int_{G_n(F)}\phi_i(h)|\det(h)|^{1/2+s_1}W_1(gh)|\det(g)|^{s_1}\d h\d g\\
  &=N(\p)^{-i(1/2+s_1)}\lambda_{\pi_1}(\phi_i)\int_{N_n(F)\bs G_n(F)}W_\Pi\left[\begin{pmatrix} g & \\&1\end{pmatrix}\right]W_1(g)|\det(g)|^{s_1}\d g\\
  &=N(\p)^{-i(1/2+s_1)}\lambda_{\pi_1}(\phi_i)L(1/2+s_1,\Pi\otimes \pi_1).
\end{align*}
Here we have used that $\phi_i\in C_c^\infty(K_n\bs G_n(F)/K_n)$ and \eqref{unramified-zeta-integral-m-1}.
Similarly, we have
$$ \Psi(1/2+s_2,\overline{W^{(j)}},\overline{W_2})=N(\p)^{-j(1/2+s_2)}\lambda_{\tilde{\pi}_2}(\phi_j)L(1/2+s_2,\tilde{\Pi}\otimes \tilde{\pi}_2).$$
Let $\mu_{\pi}$ denote the Satake parameters of $\pi$. It is known that $\lambda_\pi(\phi_i)$ equals the $i$-th symmetric polynomial evaluated at $\mu_{\pi}$; see \cite[eq.(8.14)]{satake1963spherical}.
Thus, we have that
\begin{equation}\label{formula-for-F-eq}
    F_{ij}=N(\p)^{-((n+1-i)(1/2+s_1)+(n+1-j)(1/2+s_2))}e_{n+1-i}(\mu_{\pi_1})e_{n+1-j}(\mu_{\tilde{\pi}_2}).
\end{equation}
Now we denote
$$\mu_\Pi:=(x_1,\ldots,x_{n+1}),\quad \mu_{\pi_1}:=(a_1,\ldots,a_n),\quad \mu_{\tilde{\pi}_2} := (b_1,\ldots,b_n).$$
We now apply \cite[Prop.9.6]{Tsuzuki2021Hecke} to evaluate $G^{-\top}$. We first point out that our $n$ is $n-1$ in \cite{Tsuzuki2021Hecke}. Also, the inner product in \eqref{inner-product-normalization}, which we use here, differs from the inner product in \cite[eq.(4.7)]{Tsuzuki2021Hecke} by a factor of $L(1,\Pi\otimes\tilde{\Pi})$. Accounting for these different normalizations we obtain
\begin{align*}
G^{-\top}&=(-1)^n\zeta(n+1)^{-1}\mathbb{H}(\mu_\Pi)\mathbb{D}(N(\p)^{-1},\mu_\Pi)R(-N(\p))\\
&=(-1)^n\zeta(n+1)^{-1}\mathbb{E}(\mu_\Pi)^{-1}\diag((Q(N(\p)^{-1}x_k;\mu_\Pi)^{-1})_k)\mathbb{V}(\mu_\Pi)R(-N(\p))
\end{align*}
following \cite[eq.(9.13), eq.(9.12), eq.(9.4)]{Tsuzuki2021Hecke}. Recalling the definitions from \cite[\S9]{Tsuzuki2021Hecke} we write
\begin{multline*}
  G^{-\top}=(-1)^{n}\zeta(n+1)^{-1}\left(\frac{(-1)^n(-x_j)^{n+1-k}}{\prod_{\alpha\neq j}(x_j-x_\alpha)}\right)_{ij}\diag\left(\left(\prod_{\beta=1}^{n+1}(1-N(\p)^{-1}x_jx_\beta^{-1})^{-1}\right)_j\right)\\
  \times\left(x_i^{j-1}\right)_{ij}\diag\left(\left((-N(\p))^{n+1-j}\right)_j\right).
\end{multline*}
Thus we have
\begin{equation}\label{formula-for-G-eq}
	\left(G^{-\top}\right)_{ij} =\zeta(n+1)^{-1}N(\p)^{n+1-j} \sum_{k=1}^{n+1} \frac{(-x_k)^{n+j-i}}
	{\prod_{\alpha\neq k}(x_k-x_\alpha)} \prod_{\beta=1}^{n+1}(1-N(\p)^{-1}x_kx_\beta^{-1})^{-1},
\end{equation}
We also abbreviate $T_i:=N(\p)^{-1/2-s_i}$. Hence, combining \eqref{H-equals-trace-FG}, \eqref{formula-for-F-eq}, and \eqref{formula-for-G-eq} we obtain
\begin{multline*}
	H(\Pi)=\zeta(n+1)^{-1}\sum_{i,j,k}e_{n+1-i}(\mu_{\pi_1})e_{n+1-j}(\mu_{\tilde{\pi}_2})
	T_1^{n+1-i}T_2^{n+1-j}\\
	\times\frac{(-x_k)^{n+1-i}}{\prod_{\alpha\neq k}(x_k-x_\alpha)}
	\prod_{\beta=1}^{n+1}(1-N(\p)^{-1}x_kx_\beta^{-1})^{-1}(-x_k)^{j-1}N(\p)^{n+1-j}.
\end{multline*}
Applying Vieta's formula to the sums over $i$ and $j$, we see that the above can be written as
\begin{equation}\label{general-H-Pi}
    H(\Pi)=(-1)^n\zeta(n+1)^{-1}\sum_{k=1}^{n+1}\frac{\prod_{\gamma=1}^n(1-x_kT_1a_\gamma)\prod_{\delta=1}^n(x_k-T_2N(\p)b_\delta)}{\prod_{\alpha\neq k}(x_k-x_\alpha)\prod_{\beta=1}^{n+1}(1-N(\p)^{-1}x_kx_\beta^{-1})}.
\end{equation}
Now we prove the first equality in Proposition \ref{local-comp-residue}.

Note that both $H(\Pi)=H(\mathcal{I}(\tilde{\pi}_1,s))$ and the above expression are meromorphic functions in $\mu_\Pi=(x_i)_i$ \emph{i.e.}, in $s\in\C$. Thus we may equate $H(\mathcal{I}(\tilde{\pi}_1,1/2-s_1))$ and the above expression for $(x_i)_i=\mu_{\mathcal{I}(\tilde{\pi}_1,1/2-s_1)}$. Notice that the condition $\Pi=\mathcal{I}(\tilde{\pi}_1,1/2-s_1)$, translates to
$$T_1a_\beta=N(\p)^{-1}x_\beta^{-1},\quad 1\leq \beta\leq n;\qquad\qquad x_{n+1}=N(\p)^{n(1/2-s_1)}=T_1^nN(\p)^{n}.$$
Hence, we can rewrite \eqref{general-H-Pi} as
\begin{align*}
H(\Pi)&={(-1)^n}\zeta(n+1)^{-1}\sum_{k=1}^{n+1}\frac{\prod_{\delta=1}^n(x_k-N(\p)T_2b_\delta)}{(1-N(\p)^{-1}x_kx_{n+1}^{-1})\prod_{\alpha\neq k}(x_k-x_\alpha)}\\
&=\frac{(-1)^n\zeta(n+1)^{-1}(N(\p)x_{n+1})^{-n}}{\prod_{\beta=1}^{n+1}(1-N(\p)^{-1}x_\beta x_{n+1}^{-1})}\sum_{k=1}^{n+1}\prod_{\delta=1}^n(x_k-N(\p)T_2b_\delta)\prod_{\gamma\neq k}\frac{N(\p)x_{n+1}-x_\alpha}{x_k-x_\alpha}.
\end{align*}
Now consider the polynomial
$$f(X):=\prod_{\delta=1}^n(X-N(\p)T_2b_\delta).$$
As the degree of $f$ is $n$ and $x_i$ are distinct, by the Lagrange interpolating polynomial we have
$$f(X)=\sum_{k=1}^{n+1}f(x_k)\prod_{\alpha\neq k}\frac{X-x_\alpha}{x_k-x_\alpha}.$$
Thus we have
\begin{equation*}
    \prod_{\delta=1}^n(N(\p)x_{n+1}-N(\p)T_2b_\delta)=f(N(\p)x_{n+1})=\sum_{k=1}^{n+1}\prod_{\delta=1}^n(x_k-N(\p)T_2b_\delta)\prod_{\alpha\neq k}\frac{N(\p)x_{n+1}-x_\alpha}{x_k-x_\alpha}.
\end{equation*}
Consequently, $H(\Pi)$ equals
$$\frac{(-1)^n\zeta(n+1)^{-1}\prod_{\delta=1}^n(1-x_{n+1}^{-1}T_2b_\delta)}{\prod_{\beta=1}^{n+1}(1-N(\p)^{-1}x_\beta x_{n+1}^{-1})}=(-1)^n\frac{\zeta(1)}{\zeta(n+1)}\frac{\prod_{\delta=1}^n(1-x_{n+1}^{-1}T_2b_\delta)}{\prod_{\beta=1}^{n}(1-N(\p)^{-1}x_\beta x_{n+1}^{-1})}.$$
Substituting for $x_k$ their values in terms of $a_k$ implies that $H(\Pi)$ equals
\begin{equation*}
    (-1)^n\frac{\zeta(1)}{\zeta(n+1)}\frac{\prod_{\delta=1}^n(1-T_2T_1^{-n}N(\p)^{-n}b_\delta)}{\prod_{\beta=1}^{n}(1-T_1^{-n-1}N(\p)^{-n-2}a_\beta^{-1})}.
\end{equation*}
Finally, recalling the definitions of $a_\beta,\,b_\delta,\,T_1,\,T_2$ we see the above equals
\begin{equation*}
	(-1)^n\frac{\zeta(1)}{\zeta(n+1)}\frac{L(\frac{n+3}{2}-(n+1)s_1,\tilde{\pi}_1)}
	{L(\frac{n+1}{2}-ns_1+s_2,\tilde{\pi}_2)}.
\end{equation*}
This concludes the proof of the first equality in Proposition \ref{local-comp-residue}
when $e=1$.

Proof of the second equality in Proposition \ref{local-comp-residue} is similar. Indeed, For $\Pi=\mathcal{I}(\tilde{\pi}_2,s_2-1/2)$ we have
$$x_\delta=b_\delta T_2 N(\p),\quad 1\leq \delta\leq n;\qquad\qquad x_{n+1}=N(\p)^{-n}T_2^{-n}.$$
Thus we can rewrite \eqref{general-H-Pi} as
\begin{align*}
H(\Pi)&=(-1)^n\zeta(n+1)^{-1}\frac{\prod_{\gamma=1}^n(1-x_{n+1}T_1a_\gamma)\prod_{\delta=1}^n(x_{n+1}-x_\delta)}{\prod_{\alpha\neq n+1}(x_k-x_\alpha)\prod_{\beta=1}^{n+1}(1-N(\p)^{-1}x_{n+1}x_\beta^{-1})}\\
&=(-1)^n\frac{\zeta(1)}{\zeta(n+1)}\frac{\prod_{\gamma=1}^n(1-N(\p)^{-n}T_2^{-n}T_1a_\gamma)}{\prod_{\beta=1}^{n}(1-N(\p)^{-1}N(\p)^{-n}T_2^{-n}x_\beta^{-1})}.
\end{align*}
Once again, recalling $a_\beta,\,b_\delta,\,T_1,\,T_2$, we verify the second equality
in Proposition \ref{local-comp-residue} when $e=1$.

\subsubsection{Stability with respect to the depth}\label{stability-e}

Here we show that for $e\ge 1$
$$H\left(\mathcal{I}(\tilde{\pi}_1,1/2-s_1),\vol\left(\overline{K_0}(\p^e)\right)^{-1}\1_{\overline{K_0}(\p^e)}\right)=H\left(\mathcal{I}(\tilde{\pi}_1,1/2-s_1),\vol\left(\overline{K_0}(\p)\right)^{-1}\1_{\overline{K_0}(\p)}\right)$$
and the same for $H(\mathcal{I}(\tilde{\pi}_2,s_2-1/2))$. This together with the result in \S\ref{local-residue-e-equal-1} proves Proposition \ref{local-comp-residue}.
We only show the first case, the second one is similar. 

We abbreviate $1/2-s_1$ as $s$. We assume $s\in i\mathbb{R}$ for now, consequently, $\mathcal{I}(\tilde{\pi}_1,s)$ is unitary. First note that, the difference between the left-hand side and the right-hand side above is given by
$$\sum_{W\in\B\left(\mathcal{I}(\tilde{\pi}_1,s)^{\overline{K_0}(\p^e)}\cap\left(\mathcal{I}(\tilde{\pi}_1,s)^{\overline{K_0}(\p)}\right)^\perp\right)}\frac{\Psi(1-s,W,W_{{1}}){\Psi(1/2+s_2,\overline{W},\overline{W_{{2}}})}}{L(1-s,\mathcal{I}(\tilde{\pi}_1,s)\otimes{\pi}_{1})L(1/2+s_2,\mathcal{I}(\pi_1,-s)\otimes\tilde{\pi}_{2})}.$$
Following the proof of \cite[Lemma 9.1]{JaNu2021reciprocity}, we construct a basis $\B$ in the above sum consisting of elements of the form $\frac{W}{\|W\|}$ such that 
\begin{itemize}
    \item $W$ is entire in $s$ and
    \item $\|W\|^2$ is an $s$-independent non-zero constant multiple of $L(1,\mathcal{I}(\tilde{\pi}_1,s)\otimes I({\pi}_1,-s))^{-1}$.
\end{itemize}
We infer the meromorphic property of the above sum in the $s$ variable from Lemma \ref{analytic-continuation-local-factor} and note that $L(1,\mathcal{I}(\tilde{\pi}_1,s)\otimes I({\pi}_1,-s))$ is holomorphic in $s\in i\Rr$.
Thus, it suffices to show that for every $s\in i\Rr$ and every such $W$ the expression
$$\frac{\Psi(1-s-z,W,W_1)}{L(1-s-z,\mathcal{I}(\tilde{\pi}_1,s)\otimes \pi_1)}$$
vanishes at $z=0$.

By the $\GL(n+1)\times\GL(n)$ local functional equation as in \eqref{LFE-m-m-1} and recalling that $\pi_1$ is unramified, the above equals
\begin{equation*}
\frac{\Psi(s+z,\tilde{W},\tilde{W}_1)}{L(s+z,\mathcal{I}({\pi}_1,-s)\otimes\tilde{\pi}_1)}=\frac{\Psi(s+z,\tilde{W},\tilde{W}_1)}{L(z,\pi_1\otimes\tilde{\pi}_1)L(z+(n+1)s,\tilde{\pi}_1)}.
\end{equation*}
Note that $L(z,\pi_1\otimes\tilde{\pi}_1)$ has a pole at $z=0$ of order at least $n$. Thus to prove the claim it is enough to show that the zeta integral in the numerator can have a pole at $z=0$ of order at most $n-1$.

Now as $W$ is $\overline{K_0}(\p^e)$-invariant we have that for $a\in A_n(F)$
\begin{equation}\label{support-1}
\text{if }\tilde{W}\left[\begin{pmatrix}a&\\&1\end{pmatrix}\right]\neq 0 \text{ then } |a_n|\le N(\p)^e.
\end{equation}
The above follows as $\tilde{W}$ is $\overline{K_0}(\p^e)^\top$-invariant, hence, for $v\in (\o^\times)^n$
$$\tilde{W}\sbr{\mat{a&\\&1}\mat{\I_n&\p^ev\\&1}}-\tilde{W}\sbr{\mat{a&\\&1}}=\left(\psi(a_n\p^e e_nv)-1\right)\tilde{W}\sbr{\mat{a&\\&1}}.$$
On the other hand, as $W$ is in the orthogonal complement of the $\overline{K_0}(\p)$-invariant vectors, taking contragredient we have
$$0=\int_{\overline{K_0}(\p)}{W}(\cdot k)\d k =\int_{\overline{K_0}(\p)^\top}\tilde{W}(\cdot k)\d k=\int_{\overline{G_{n+1}}(F)}\tilde{W}(\cdot g)\mathbf{1}_{\overline{K_0}(\p)^\top}(g)\d g.$$
We write the $g$-integral in the Bruhat coordinate as
$$g=\begin{pmatrix}\I_n&b\\&1\end{pmatrix}\begin{pmatrix}a&\\c&1\end{pmatrix},\quad a\in G_n(F),\quad b,c^\top\in F^n;\quad\quad \d g=\frac{\d a}{|\det a|}\d b\d c.$$
Note that $g\in \overline{K_0}(\p)^\top$ implies that 
$$a\in\mathrm{Mat}_n(\o),\det(a)\in\o^\times\implies a\in K_n;\quad b\in \p\o^n;\quad c\in\o^n.$$
Thus applying $\left(\overline{K_0}(\p^e)\cap P_{n+1}\right)^\top$-invariance of $\tilde{W}$ we get that for $a\in A_n(F)$
$$0=\int_{\o^n}\tilde{W}\left[\mat{a&\\&1}\begin{pmatrix}\I_n&\p b\\&1\end{pmatrix}\right]\d b=\tilde{W}\left[\mat{a&\\&1}\right]\int_{\o^n}\psi(a_n\p e_nb)\d b.$$
This implies that
\begin{equation}\label{support-2}
    \text{if }\tilde{W}\left[\begin{pmatrix}a&\\&1\end{pmatrix}\right]\neq 0\text{ then }|a_n|\ge p.
\end{equation}
Now using $K_n$ properties of $\tilde{W}$ and $\tilde{W}_1$ and noting that $\mathcal{I}(\pi_1,-s)$ is tempered, for $\Re(z)>0$
we can write $\Psi(s+z,\tilde{W},\tilde{W}_1)$ as an absolutely convergent integral
\begin{align}\label{further-integral}
&\int_{A_n(F)}\tilde{W}\sbr{\mat{a&\\&1}}\tilde{W}_1(a)|\det(a)|^{s+z-1/2}\frac{\d a}{\delta(a)}\notag\\
&=\int_{F^\times}\int_{A_{n-1}(F)}\tilde{W}\left[\begin{pmatrix}a'a_n &  & \\&a_n&\\&&1\end{pmatrix}\right]\tilde{W}_1\left[\begin{pmatrix}a' & \\&1\end{pmatrix}\right]|a_n^n\det(a')|^{s+z-1/2}\frac{\d a'}{|\det(a')|\delta(a')}\d^\times a_n\notag\\
&=\int_{F^\times}\int_{N_{n-1}(F)\backslash G_{n-1}(F)}\tilde{W}\left[\begin{pmatrix}ha_n &  & \\&a_n&\\&&1\end{pmatrix}\right]\tilde{W}_1\left[\begin{pmatrix}h & \\&1\end{pmatrix}\right]|a_n^n\det h|^{s+z-1/2}\frac{\d h}{|\det h|}\d^\times a_n.
\end{align}

Now in the next lemma we evaluate $\tilde{W}_1(\diag(h,1))=\W_{\tilde{\pi}_1}(\diag(h,1))$ in terms of spherical Whittaker function on $G_{n-2}(F)$. This is a recursion formula that might be of independent interest, so we write it in general notations\footnote{The archimedean analog of this formula, in a more convoluted form, is an important part of the computations in \cite{JaNe2019anv}.}.

\begin{lem}\label{decomposition-lowerdegree}
Let $\pi$ be the unramified representation of $\overline{G_m}(F_v)$ with Langlands parameters $\{\mu_i\}_{i=1}^m$ and 
let $\pi_j$ be the unramified representation of $G_{m-1}(F)$ with Langlands parameters given by $\{\mu_1,\dots,\mu_{j-1},\mu_{j+1},\dots,\mu_m\}$. Then we have
$$W_\pi\sbr{\mat{h&\\&1}}= |\det(h)|^{1/2}\sum_{j=1}^m\frac{\mu_j^{-1}W_{\pi_j}(h)}{\prod_{i\neq j}(\mu_i-\mu_j)},$$
for any $h\in N_{m-1}(F)\backslash G_{m-1}(F)$ with $e_{m-1}h\in \o^{m-1}$.
\end{lem}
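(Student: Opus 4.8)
The plan is to reduce the asserted recursion, via Shintani's explicit formula \eqref{shintani}, to a classical identity among Schur polynomials.

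First I would note that both sides of the claimed formula, viewed as functions of $h$, are left $(N_{m-1}(F),\psi)$-equivariant and right $K_{m-1}$-invariant: on the left because $\sbr{\mat{n h k&\\&1}}=\sbr{\mat{n&\\&1}}\sbr{\mat{h&\\&1}}\sbr{\mat{k&\\&1}}$ with $\mat{n&\\&1}\in N_m(F)$ --- and $\psi|_{N_m}$ restricts to $\psi|_{N_{m-1}}$ --- and $\mat{k&\\&1}\in K_m$; on the right from the analogous properties of the $W_{\pi_j}$ and the right-$K_{m-1}$-invariance of $|\det(\cdot)|$. By the Iwasawa decomposition $h=nak$, the hypothesis $e_{m-1}h\in\o^{m-1}$ depends only on $a$ and amounts to $|a_{m-1}|\le 1$. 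Hence it is enough to check the identity for $h=\diag(\p^{\nu_1},\dots,\p^{\nu_{m-1}})$ with $\nu\in\Z^{m-1}$ and $\nu_{m-1}\ge 0$.

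For such $h$, Shintani's formula \eqref{shintani} (applied to $\pi$ on $\overline{G_m}$, and in its $\mathrm{GL}$-analogue to the $G_{m-1}$-representations $\pi_j$) gives $W_{\pi_j}(h)=\delta_{m-1}^{1/2}(h)\,\chi_\nu(\mu_1,\dots,\hat\mu_j,\dots,\mu_m)$ when $\nu$ is dominant and $0$ otherwise, and $W_\pi\sbr{\mat{h&\\&1}}=\delta_m^{1/2}(\diag(\p^\nu,1))\,\chi_{(\nu,0)}(\mu_1,\dots,\mu_m)$ when $(\nu,0)$ is dominant --- that is, $\nu$ dominant and $\nu_{m-1}\ge 0$ --- and $0$ otherwise; here $\delta_m$ and $\delta_{m-1}$ denote the modulus characters of the Borel subgroups of $G_m$ and $G_{m-1}$. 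Since $\nu_{m-1}\ge 0$ the two support conditions agree, and when $\nu$ is not dominant both sides vanish. A short computation of modulus characters yields $\delta_m^{1/2}(\diag(\p^\nu,1))=|\det h|^{1/2}\delta_{m-1}^{1/2}(h)$, so for dominant $\nu$, after cancelling $|\det h|^{1/2}\delta_{m-1}^{1/2}(h)$, the lemma reduces to
\begin{equation*}
\chi_{(\nu,0)}(\mu_1,\dots,\mu_m)=\sum_{j=1}^m\frac{\mu_j^{-1}\,\chi_\nu(\mu_1,\dots,\hat\mu_j,\dots,\mu_m)}{\prod_{i\neq j}(\mu_i-\mu_j)},
\end{equation*}
where I use $\mu_1\cdots\mu_m=1$, valid since $\pi$ is a representation of $\overline{G_m}$, hence has trivial central character.

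To prove this Schur-polynomial identity I would use the bialternant (Weyl character) formula: write $\chi_{(\nu,0)}(x_1,\dots,x_m)$ as $\det(x_i^{\lambda_k+m-k})_{1\le i,k\le m}$ divided by the Vandermonde $\prod_{i<l}(x_i-x_l)$, with $\lambda=(\nu_1,\dots,\nu_{m-1},0)$, and Laplace-expand the numerator along its last column, whose entries are all $1$ because $\lambda_m=0$. The $j$-th cofactor equals, up to the sign $(-1)^{m+j}$, the determinant $\det(x_i^{\nu_k+m-k})_{i\neq j,\,1\le k\le m-1}=\big(\prod_{i\neq j}x_i\big)\det(x_i^{\nu_k+(m-1)-k})_{i\neq j}$, i.e. $\big(\prod_{i\neq j}x_i\big)\chi_\nu(x_i:i\neq j)$ times the Vandermonde in the variables $x_i$ with $i\neq j$. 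Dividing through, the ratio-of-Vandermondes identity $\prod_{i<l}(x_i-x_l)=(-1)^{m-j}\prod_{i\neq j}(x_i-x_j)\cdot\prod_{i<l,\,i,l\neq j}(x_i-x_l)$ supplies the factors $\prod_{i\neq j}(x_i-x_j)^{-1}$ and its sign $(-1)^{m-j}$ cancels $(-1)^{m+j}$; together with $\prod_{i\neq j}x_i=(x_1\cdots x_m)x_j^{-1}$ this gives $\chi_{(\nu,0)}(x)=(x_1\cdots x_m)\sum_j x_j^{-1}\chi_\nu(x_i:i\neq j)/\prod_{i\neq j}(x_i-x_j)$, and specializing $x_1\cdots x_m=1$ finishes. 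The only delicate points are bookkeeping: pinning down the normalization of $\chi_\nu$ in \eqref{shintani}, the modulus-character computation, and the signs in the Laplace expansion; I do not anticipate any genuine difficulty once the reduction to the Schur identity is made.
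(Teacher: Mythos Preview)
Your proposal is correct and follows essentially the same route as the paper: reduce via Iwasawa and Shintani's formula \eqref{shintani} to a Schur-polynomial identity, then prove the latter by Laplace expansion of the bialternant numerator along the row/column of $1$'s (the paper indexes the bialternant as $(\mu_j^{m-i+\tilde\nu_i})_{i,j}$ and expands along the last row, whereas you expand along the last column of the transposed matrix, which amounts to the same thing), using $\prod_j\mu_j=1$ and the modulus-character relation $\delta_m^{1/2}(\diag(\p^\nu,1))=|\det h|^{1/2}\delta_{m-1}^{1/2}(h)$.
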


\begin{proof}
Applying sphericality of $W_\pi$ and $W_{\pi_j}$, and \eqref{shintani} it suffices to show that
$$W_\pi(\diag(\p^\nu,1))= |\det(\diag(\p^\nu))|^{1/2}\sum_{j=1}^m\frac{\mu_j^{-1}W_{\pi_j}(\diag(\p^\nu))}{\prod_{i\neq j}(\mu_i-\mu_j)},$$
for any $\nu\in\Z^{m-1}$ dominant with $\nu_{m-1}\ge 0$.
    We again apply \eqref{shintani} to explicate $W_\pi(\diag(\p^{\tilde{\nu}}))$ for $\tilde{\nu}:=(\nu,0)$. Writing out $$\chi_{\tilde{\nu}}(\mu)=\frac{\det((\mu_j^{m-i+\tilde{\nu}_i})_{i,j})}{\prod_{i<j}(\mu_i-\mu_j)},$$ expanding the determinant in the numerator with respect to the last row, and 
    using $\prod_j\mu_j =1$ we see
    \begin{align*}
    \chi_{\tilde{\nu}}(\mu)
    &=\sum_{k=1}^m\frac{\prod_{j\neq k}\mu_j(-1)^{m+k}\det((\mu_j^{m-1+i+\nu_i})_{i,j\neq k})}{\prod_{i<j}(\mu_i-\mu_j)}\\
    &=\sum_{k=1}^m\frac{\mu_k^{-1}(-1)^{m+k}\chi_{\nu}(\mu_1,\dots,\mu_{k-1},\mu_{k+1},\dots,\mu_m)}{(-1)^{m-k}\prod_{i\neq k}(\mu_i-\mu_k)}.
    \end{align*}
    Applying \eqref{shintani} once again and noting that $\delta(\diag(\p^{\tilde{\nu}}))=|\det(\diag(\p^\nu))|\delta(\diag(\p^{{\nu}}))$
    we conclude.
\end{proof}

Applying \eqref{further-integral}, Lemma \ref{decomposition-lowerdegree}, \eqref{support-1}, and \eqref{support-2} we see that $\psi(s+z,\tilde{W},\tilde{W}_1)$ can be written as a \emph{finite} linear combination of the integrals of the form
$$\int_{N_{n-1}\backslash G_{n-1}}\tilde{W}_{a_n}\left[\begin{pmatrix}h&&\\&1&\\&&1\end{pmatrix}\right]\tilde{W}_{\tilde{\pi}_{1,j}}(h)|\det(h)|^{s+z-1}\d h$$
where $\tilde{W}_{a_n}(\cdot):=\tilde{W}(\cdot\diag(a_n\I_n,1))$. The coefficients of the linear combination depend only on $a_n$, $z$, and $\mu_{\pi_1}$. Moreover, as a function of $z$ the coefficients are entire.

We notice that the integral in the last display is a
$\GL(n+1)\times\GL(n-1)$ zeta integral. Thus, as a function of $z$,
can only have poles at the poles of $L(s+z,\mathcal{I}(\pi_1,-s)\otimes
\tilde{\pi}_{1,j})$; see \cite[Corollary 6.3.1]{cogdell2004lectures}. In particular, the order of the pole at $z=0$ is at most $n-1$. Hence the same is true for the finite linear combination, as required.

\begin{rmk}
  When $v$ is ramified, by the same changes of variables as in Remark
  \ref{rmk-ramified-zeta-function}, we reduce to the unramified
  computation at the cost of the extra factor $\Delta^{\mu^R(s_1,s_2)}$ for some
  affine function $\mu^R$ whose coefficients only depend on $n$.
\end{rmk}

\subsection{Archimedean place}

Recall $\mathfrak{X}$ from Theorem \ref{second-moment-arch} and let $v\mid\infty$ for the rest of the subsection. Also, recall $h(\Pi)$ from \S\ref{sec:spectral-side}. Our main result of this subsection is as follows.

\begin{prop}\label{main-prop-residue-arch}
We have
$$h(\mathcal{I}(\tilde{\pi}_1,1/2-s_1))\ll 1$$
and
$$h(\mathcal{I}(\tilde{\pi}_2,s_2-1/2))\ll 1,$$
for all $s_1,s_2\in\C$ a fixed distance away from the poles\footnote{Here and elsewhere in this subsection, by ``poles of $h$'' we mean ``poles of $s\mapsto h(\mathcal{I}(\tilde{\pi}_1, 1/2-s))$ or $s\mapsto h(\mathcal{I}(\tilde{\pi}_2,s-1/2))$''.} of $h$.
\end{prop}

We only prove the first estimate above; the second one follows similarly.

The proof of this proposition has some resemblance with its non-archimedean counterpart in the previous section. We first define a smooth compactly supported function on $\overline{G_{n+1}}(F)$ by
\begin{equation}\label{def-f-1}
  f_1\sbr{\mat{\I_n&\\c&1}p}:=X^{-n}f\sbr{\mat{\I_n&\\c/X&1}p},\quad c\in F^n, p\in Q_{n+1}.
\end{equation}
$f_1$ can be thought as a \emph{essentially fixed} smooth characteristic function of the $\tau$-radius ball around the identity of $\overline{G_{n+1}}(F)$. We write
$$h(\mathcal{I}(\tilde{\pi}_1,1/2-s_1),f)=h(\mathcal{I}(\tilde{\pi}_1,1/2-s_1),f-f_1)+h(\mathcal{I}(\tilde{\pi}_1,1/2-s_1),f_1).$$
In Lemma \ref{lem:residue-main-term-bound} we show that
$$h(\mathcal{I}(\tilde{\pi}_1,1/2-s_1),f_1)\ll 1$$
which can be understood as the analogue of the result in \S\ref{local-residue-e-equal-1}.
In Lemma \ref{lem:reduce-to-bound-main-term} we show that
$$h(\mathcal{I}(\tilde{\pi}_1,1/2-s_1),f-f_1)=0.$$
Heuristically, $\mathcal{I}(\tilde{\pi}_1,1/2-s_1)(f-f_1)$ can be thought of as an archimedean analogue of the orthogonal projection onto $\overline{K_0}(\p^e)$-invariant vectors which lie in the orthogonal complement of $\overline{K_0}(\p)$-invariant vectors in the non-archimedean case. Thus the above display can be understood as the analogue of the results in \S\ref{stability-e}. Clearly, Lemma \ref{lem:residue-main-term-bound} and Lemma \ref{lem:reduce-to-bound-main-term} prove the first estimate in Proposition \ref{main-prop-residue-arch}.

\begin{lem}\label{lem:residue-main-term-bound}
We have
$$h(\mathcal{I}(\tilde{\pi}_1,1/2-s_1), f_1) = O(1)$$
for every $s_1,s_2$ a fixed distance away from the poles of $h$.
\end{lem}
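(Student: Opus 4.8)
The quantity $h(\mathcal{I}(\tilde{\pi}_1,1/2-s_1),f_1)$ is, by definition, the sum over an orthonormal basis $\B(\Pi)$ of $\Pi:=\mathcal{I}(\tilde{\pi}_1,1/2-s_1)$ of the product of two zeta integrals, one involving $\Pi(f_1)W$ and $W_1$ with parameter $1/2+s_1$, the other involving $\overline{W}$ and $\overline{W_2}$ with parameter $1/2+s_2$. The function $f_1$ defined in \eqref{def-f-1} is an $X$-independent smooth function supported on a fixed $\tau$-ball around the identity (the rescaling $c\mapsto c/X$ exactly cancels the $X^n$ in the definition of the type I archimedean component), so $\Pi(f_1)$ is an $X$-independent operator of bounded ``width''. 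The first step is thus to record this precisely: $f_1$ has fixed compact support and fixed Sobolev norms, uniformly in $X$. Consequently $\|\Pi(f_1)W\|$ is controlled by a fixed Sobolev norm of $W$ applied through $f_1$, with constants depending only on the (bounded) support and derivatives of $f_1$ and polynomially on $C(\Pi)$.

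The second step is to bound each summand. For the second zeta integral $\Psi(1/2+s_2,\overline{W},\overline{W_2})$ I would use the standard bound $|\Psi(1/2+s_2,W',W_2)|\ll S_d(W')$ for some fixed Sobolev degree $d$ (this is the archimedean analogue of \cite[Lemma 7.3]{JaNu2021reciprocity}, available from the growth bounds on Whittaker functions in \cite[Lemma 3.1]{JaNu2021reciprocity}, since $\pi_2$ is tempered); here $W'=\overline{W}$ runs over the orthonormal basis, so we need the $\Psi$-bound divided by $L(1/2+s_2,\tilde\Pi\otimes\tilde\pi_2)$, which is harmless away from its poles because $s_1,s_2$ are a fixed distance from the poles of $h$. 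For the first zeta integral $\Psi(1/2+s_1,\Pi(f_1)W,W_1)$ the extra input is that $\Pi(f_1)W$ is a ``smoothed'' vector: writing $\Pi(f_1)W=\int f_1(g)\Pi(g)W\,dg$ and commuting a fixed-degree differential operator onto $f_1$, one gains arbitrary decay in the relevant Whittaker variable at the cost of a higher (but fixed) Sobolev norm of $W$ and a polynomial factor in $C(\Pi)$. Since the whole thing is then divided by $L(1/2+s_1,\Pi\otimes\pi_1)$, again harmless away from poles, each summand is $\ll S_{d'}(W)^2$ for a fixed $d'$, times a polynomial in $C(\Pi)$.

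The third step is summing over the basis. By the standard argument (as in \cite[\S7]{JaNe2019anv} or the proof of Lemma \ref{bounds-spectral-archimedean}), $\sum_{W\in\B(\Pi)}S_{d'}(\Pi(f_1)W)S_{d}(W)$ is controlled once one integrates by parts enough times inside $\Pi(f_1)$ to beat the (polynomial-in-$C(\Pi)$) dimension growth; concretely one replaces $f_1$ by $\mathfrak{D}^{N}f_1$ for a large fixed $N$, uses that $\Pi$ is $\vartheta$-tempered with $\vartheta<1/2$ so that the Whittaker--Plancherel / Sobolev machinery of \cite[\S3.4]{JaNu2021reciprocity} applies, and concludes absolute convergence of the $W$-sum with a bound $O_N(C(\Pi)^{A-N})$ for some fixed $A$. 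Taking $N$ large gives $h(\mathcal{I}(\tilde\pi_1,1/2-s_1),f_1)\ll 1$ for $C(\Pi)\gg 1$; the finitely many remaining $\Pi$ with bounded conductor are trivially $O(1)$. The only subtlety — and the step I expect to require the most care — is making the polynomial-in-$C(\Pi)$ dependence explicit and uniform in $s_1,s_2$ near $0$: one must track that the parameter of $\Pi=\mathcal{I}(\tilde\pi_1,1/2-s_1)$ depends on $s_1$, so $C(\Pi)$ and the Satake/Langlands data move with $s_1$, and that the $\gamma$-factor bounds from \cite[Lemma 3.1]{JaNe2019anv} used in the functional-equation step are uniform on the relevant compact $s$-region away from poles. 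Everything else is routine bookkeeping with Sobolev norms, exactly parallel to the proof of Lemma \ref{bounds-spectral-archimedean} but without the $X$-dependence, which here is absent by construction of $f_1$.
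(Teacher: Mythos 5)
Your high-level plan -- exploit that $f_1$ is $X$-independent with fixed compact support and fixed derivative bounds, integrate by parts with the elliptic operator $\mathfrak{D}$ to gain decay, and then sum over the orthonormal basis -- matches the paper's strategy. However, there are two concrete gaps.

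First, you misidentify the quantity that the integration by parts is meant to beat. You write bounds of the form $O_N(C(\Pi)^{A-N})$ and then argue ``the finitely many remaining $\Pi$ with bounded conductor are trivially $O(1)$.'' But $\Pi=\mathcal{I}(\tilde\pi_1,1/2-s_1)$ is a \emph{single} Eisenstein representation: $\pi_1$ is fixed and $s_1$ ranges over a fixed compact set away from the poles, so $C(\Pi)=O(1)$ and there is no family of $\Pi$ to sum over. The genuine convergence issue is the infinite sum over the orthonormal basis $W\in\B(\Pi)$ inside a fixed $\Pi$. The paper handles this by constructing the basis out of $\mathfrak{D}$-eigenvectors with eigenvalues $\lambda_W$; integration by parts with $\mathfrak{D}$ against the $X$-independent $f_1$ produces a factor $|\lambda_W|^{-N}$ in the first zeta integral, the second zeta integral is bounded by a fixed power $|\lambda_W|^d$, and the trace-class property of $\mathfrak{D}^{-L}$ gives absolute convergence of $\sum_W |\lambda_W|^{-L}/\langle W,\tilde W\rangle$. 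Your $C(\Pi)$ plays no role; what matters is $\lambda_W$.

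Second, you invoke $\vartheta$-temperedness of $\Pi$ and Sobolev bounds on zeta integrals as if $\Pi$ were unitary, but for generic $s_1$ the representation $\mathcal{I}(\tilde\pi_1,1/2-s_1)$ sits off the unitary axis, so neither the basis, the inner products, nor the zeta-integral bounds are available directly. The paper deals with this by (i) constructing a basis that is entire in the inducing parameter $s$, with $\langle W,\tilde W\rangle$ an $s$-independent constant and $W$ a $\mathfrak{D}$-eigenvector; and (ii) proving the needed bounds $\Psi(1/2+s_2,\tilde W,\overline{W_2})\ll|\lambda_W|^d$ and $\Psi(1/2+s_1,\Pi(f_1)W,W_1)\ll_N|\lambda_W|^{-N}$ on the unitary axis and on a vertical line with large real part, then interpolating by Phragm\'en--Lindel\"of. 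Your sketch skips both the entire-in-$s$ construction and the Phragm\'en--Lindel\"of step, which are exactly the places where the non-unitarity has to be controlled. (A smaller slip: you speak of dividing by $L(1/2+s_i,\cdot)$, but for archimedean $v$ the local weight $h_v$ is defined without the $L$-factor normalization, so there is nothing to divide by there.)
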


\begin{proof}
Recall the derivative bounds of $f$ from \S\ref{type-1-test-function}, which follows from the majorant property of $f$. It thus follows from \eqref{def-f-1} that
$$\partial_a^\alpha\partial_b^\beta\partial_c^\gamma\partial_d^\delta \,f_1\sbr{\mat{a&b\\c&d}} \ll 1,$$
for any fixed multi-indices $\alpha,\beta,\gamma,\delta$. Consequently, we have for any $j\ge 0$
$$\mathfrak{D}^jf_1\ll 1.$$
Here $\mathfrak{D}:=\mathfrak{D}_0+R$ where $\mathfrak{D}_0$ is the elliptic differential operator as defined in \cite[\S 3.4]{JaNu2021reciprocity} and $R$ is a sufficiently large constant.

We recall the meromorphic continuation of $h(\mathcal{I}(\tilde{\pi}_1,s),f_1,s_1,s_2)$ in $s\in\C$ from the proof of Lemma \ref{analytic-continuation-local-factor} (also see \cite[Lemma 9.1]{JaNu2021reciprocity}). For $s\in i\Rr$ we construct an orthonormal basis of $\B(\mathcal{I}(\tilde{\pi}_1,s))\ni W$ such that 
\begin{itemize}
    \item $W$ is entire in $s$;
    \item $\langle W,\tilde{W}\rangle$ is a non-zero constant, independent of $s$ (here $\langle,\rangle$ denotes a non-zero pairing between the dual representations);
    \item moreover, we may choose $W$ to be eigenvector of $\mathfrak{D}$.
\end{itemize}
The last requirement above can be satisfied by choosing $K$-type induced vectors underlying $W$ in proof of \cite[Lemma 9.1]{JaNu2021reciprocity}. We thus understand $h(\mathcal{I}(\tilde{\pi}_1,s),f_1,s_1,s_2)$ for $|\Re(s)|\le 2$ as the meromorphic continuation of
$$\sum_W\frac{\Psi(1/2+s_1,\mathcal{I}(\tilde{\pi}_1,s)(f_1)W,W_1)\Psi(1/2+s_2,\tilde{W},\overline{W_2})}{\langle W,\tilde{W}\rangle}$$
where $W$ runs over vectors with the above properties.

Let $\mathfrak{D}W=\lambda_W W$. Then $\lambda_W$ is an entire function in $s$. Moreover, making $R$ sufficiently large we make sure $\lambda_W\neq 0$ for $|\Re(s)|\le 2$. Finally, the trace class property of $\mathfrak{D}^{-L}$ ensure that 
$$\sum_W\frac{|\lambda_W|^{-L}}{\langle W, \tilde{W}\rangle}<\infty$$
for some large $L>0$.

First, using \cite[Lemma 3.3]{JaNu2021reciprocity} (\emph{loc.\ cit.}, recall the definition of the Sobolev norm \cite[\S3.4]{JaNu2021reciprocity}) we record that
there is a $d\ge 0$ such that
$$\Psi(1/2+s_2,\tilde{W},\overline{W_2})\ll_{s_2,W_2} |\lambda_W|^{d}$$
for all $W$ and for all fixed $s_2\in\C$ that is regular for the above zeta integral.
Now let $s_1'\in\C$ such that $\Re(s_1')$ is sufficiently positive. Now writing the $\Psi$ in the integral representation and integrating by parts many times we obtain
\begin{equation*}
  \Psi(1/2+s_1',\mathcal{I}(\tilde{\pi}_1,s)(f_1)W,W_1)=|\lambda_W|^{-N}\Psi(1/2+s_1',\mathcal{I}(\tilde{\pi}_1,s)(\D^Nf_1)W, W_1).
\end{equation*}
Working as above and applying the derivative bound of $f_1$ as above we obtain for any large $N$
$$\Psi(1/2+s_1',\mathcal{I}(\tilde{\pi}_1,s)(f_1)W,W_1)\ll_N\lambda_W^{-N},$$
uniformly in $X$.
Similarly, applying the functional equation \eqref{LFE-m-m-1}
we see that
$$\Psi(1/2-s_1',\mathcal{I}(\tilde{\pi}_1,s)(f_1)W,W_1)=\gamma(1/2+s_1',\mathcal{I}(\pi_1,-s)\otimes\tilde{\pi}_1)\Psi(1/2+s_1',\mathcal{I}(\pi_1,-s)(f_1^\iota)\tilde{W},\tilde{W}_1)$$
where $f_1^\iota(g):=f_1(g^{-\top})$. Once again applying derivative bounds of $f_1$ and working as above we obtain for all large $N$
$$\Psi(1/2+s_1,\mathcal{I}(\tilde{\pi}_1,s)(f_1)W,W_1)\ll_N\lambda_W^{-N},$$
uniformly in $X$.
Thus making $N$ sufficiently large we see that
$$h(\mathcal{I}(\tilde{\pi}_1,s), f_1)\ll\sum_W\frac{|\lambda_W|^{-N}}{\langle W,\tilde{W}\rangle}\ll 1,$$
which follows from the trace class property. Choosing $s=1/2-s_1$, avoiding a pole of $h$, we conclude. 
\end{proof}

\begin{lem}\label{lem:reduce-to-bound-main-term}
Let $f_2:=f-f_1$. We have
$$h(\mathcal{I}(\tilde{\pi}_1,1/2-s_1),f_2)=0$$
for every $s_1,s_2$.
\end{lem}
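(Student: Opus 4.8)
The statement $h(\mathcal{I}(\tilde\pi_1,1/2-s_1),f_2)=0$ with $f_2=f-f_1$ should be proven along the lines of the stability-in-depth argument of \S\ref{stability-e}, transported to the archimedean setting. The plan is as follows. First I would record the defining expression
$$h(\mathcal{I}(\tilde\pi_1,s),f_2)=\sum_{W\in\B(\mathcal{I}(\tilde\pi_1,s))}\Psi(1/2+s_1,\mathcal{I}(\tilde\pi_1,s)(f_2)W,W_1)\Psi(1/2+s_2,\overline{W},\overline{W_2}),$$
meromorphically continued in $s$ as in Lemma \ref{analytic-continuation-local-factor}, and then set $s=1/2-s_1$. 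Just as in the non-archimedean case, it suffices to show that for each basis vector $W$ the quantity
$$\frac{\Psi(1-s_1,\mathcal{I}(\tilde\pi_1,1/2-s_1)(f_2)W,W_1)}{L(1-s_1,\mathcal{I}(\tilde\pi_1,1/2-s_1)\otimes\pi_1)}$$
vanishes. Here the key structural input is that $f_2=f-f_1$ is, by construction \eqref{def-f-1}, supported (in the $c$-variable of the Bruhat coordinates $\diag(\I_n,1)\smash{\begin{psmallmatrix}\I_n&\\c&1\end{psmallmatrix}}p$) on a region where $|c|$ is \emph{not too small}: $f$ and $f_1$ agree to leading order near $c=0$, so $f_2$ is "concentrated away from the identity-mirabolic" in exactly the way that the orthogonal complement of $\overline{K_0}(\p)$-fixed vectors was in \S\ref{stability-e}.

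Next I would use the $\GL(n+1)\times\GL(n)$ local functional equation \eqref{LFE-m-m-1} (and the fact that $\pi_1$ is unramified) to rewrite the ratio above as
$$\frac{\Psi(s_1,\widetilde{\mathcal{I}(\tilde\pi_1,1/2-s_1)(f_2)W},\tilde W_1)}{L(s_1-(n+1)s_1\cdot 0,\ldots)}$$
--- more precisely, matching the computation in \S\ref{stability-e}, the denominator becomes a product $L(z,\pi_1\otimes\tilde\pi_1)L(z+(n+1)s_1,\tilde\pi_1)$-type expression (with $z$ the relevant shift near $0$) whose first factor carries a pole of order $\ge n$ at the point of interest. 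It then suffices to prove that the numerator zeta integral has a pole of order at most $n-1$ there. For this I would decompose $\widetilde{\mathcal{I}(\tilde\pi_1,1/2-s_1)(f_2)W}$ in Iwasawa coordinates $\begin{psmallmatrix}h a_n&&\\&a_n&\\&&1\end{psmallmatrix}$ on $A_n$, exactly as in \eqref{further-integral}, and apply Lemma \ref{decomposition-lowerdegree} to express the spherical Whittaker function $\tilde W_1$ in terms of spherical Whittaker functions on $G_{n-2}$, so that each resulting integral is a $\GL(n+1)\times\GL(n-1)$ zeta integral whose poles (by \cite[Corollary 6.3.1]{cogdell2007functions}) have order at most $n-1$. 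The archimedean analogue of the support conditions \eqref{support-1}--\eqref{support-2}: here instead of $|a_n|$ being pinned between two powers of $N(\p)$, one uses that $\mathcal{I}(\tilde\pi_1,1/2-s_1)(f_2)W$ is, by the support of $f_2$ in the $c$-variable, a function on the Kirillov model that vanishes (or is holomorphically controlled) near the "small-$c$" locus, which forces the $a_n$-integral to be supported away from the boundary where the extra pole would be produced. I would make this precise by integrating by parts / using the smoothness and compact support of $f_2$.

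The main obstacle I anticipate is the archimedean bookkeeping in the step corresponding to \eqref{support-1}--\eqref{support-2}: in the $p$-adic case the orthogonality to $\overline{K_0}(\p)$-fixed vectors translated cleanly into a two-sided bound on $|a_n|$, whereas here one must instead argue that $f-f_1$ being "higher order near the identity" kills the residual contribution, and turning that heuristic into a rigorous statement about the order of the pole of an archimedean zeta integral requires care with the analytic continuation and with uniformity in $X$. A secondary technical point is justifying the interchange of the (meromorphically continued) $W$-sum with these manipulations; I expect this can be handled exactly as in the proof of Lemma \ref{lem:residue-main-term-bound}, using the elliptic operator $\mathfrak{D}$ and its trace-class powers to control convergence, together with the bounds on zeta integrals via the functional equation and Phragm\'en--Lindel\"of. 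Once the order-of-pole bound is in place, the vanishing is immediate, since the denominator's pole strictly exceeds that of the numerator.
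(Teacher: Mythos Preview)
Your proposal has a genuine gap: the pole-order counting strategy from \S\ref{stability-e} does not carry over to the archimedean setting as you describe. Lemma \ref{decomposition-lowerdegree} is a $p$-adic statement relying on the Shintani formula \eqref{shintani} for spherical Whittaker functions, so it cannot be invoked at an archimedean place; moreover, in the type I setup relevant here, $W_{1,v}$ for $v\mid\infty$ is not even spherical (it is the smooth vector normalized by \eqref{normalize-W-1}). Even if an archimedean decomposition existed, the $a_n$-integral would not produce a \emph{finite} linear combination of $\GL(n+1)\times\GL(n-1)$ zeta integrals, and the hard support bounds \eqref{support-1}--\eqref{support-2} have no archimedean analogue: $f_2$ being ``higher order near $c=0$'' does not translate into any two-sided constraint on $|a_n|$.

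The paper instead exploits the structure of $f_2$ in a completely different way. After the functional equation and writing $\mathcal{I}(\pi_1,-s)(f_2^\iota)\tilde W$ in Bruhat coordinates, the integral over the lower-unipotent variable $c$ produces a Schwartz function $\hat f_2^{21}(e_ng,p)=\hat f_1^{21}(e_ng/X,p)-\hat f_1^{21}(e_ng,p)$ which, by the mean-value theorem, satisfies $\Phi(x)\ll |x|$ as $x\to 0$. The zeta integral then takes the form of a $\GL(n+1)\times\GL(n)$ integral with this extra Schwartz factor $\Phi(e_ng)$, and the vanishing is obtained from the two preparatory Lemmas \ref{lem:homolomorphic-gln-gln} and \ref{lem:holomorphic-gln+1-gln}: these show, via the $\GL(n)\times\GL(n)$ functional equation and the local Tate functional equation in the radial variable, that $\gamma(s,\pi\otimes\tilde\pi)\Psi(s,W,W',\Phi)$ vanishes at $s=0$ whenever $\Phi(x)\ll|x|$. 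So the correct mechanism is not ``numerator has pole order $\le n-1$'' but rather ``the extra zero of $\Phi$ at the origin cancels the simple pole of $\zeta(ns)$ inside $\gamma(s,\pi\otimes\tilde\pi)$,'' which is a direct analytic computation rather than a combinatorial reduction.
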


We first need some preparatory results to prove this lemma.

\begin{lem}\label{lem:homolomorphic-gln-gln}
Let $\pi$ be a tempered representation of $G_n(F)$, and $W\in\pi$ and $W'\in\tilde{\pi}$. Let $\Phi$ be any Schwartz function on $F^n$ such that $\Phi(x)\ll |x|$ as $x\to 0$. Then the quantity
$${\gamma(s,\pi\otimes\tilde{\pi})}\Psi(s,W,W',\Phi)$$ vanishes at $s=0$.
\end{lem}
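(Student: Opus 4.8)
The plan is to compare, at $s=0$, the order of vanishing of $\gamma(s,\pi\otimes\tilde\pi)$ with the order of the pole of the zeta integral $\Psi(s,W,W',\Phi)$, the point being that the former is at least $n$ while the hypothesis $\Phi(x)\ll|x|$ forces the latter to be at most $n-1$. First I would record the behaviour of $\gamma(s,\pi\otimes\tilde\pi)=\epsilon(s,\pi\otimes\tilde\pi)L(1-s,\tilde\pi\otimes\pi)/L(s,\pi\otimes\tilde\pi)$ near $s=0$: the $\epsilon$-factor is entire and nowhere zero; since $\pi$ is tempered, the local factor $L(1-s,\tilde\pi\otimes\pi)$ specialises at $s=0$ to $L(1,\tilde\pi\otimes\pi)$, which is finite and non-zero because the local $L$-factor of a tempered representation has neither poles nor zeros in $\Re(s)>0$; and $L(s,\pi\otimes\tilde\pi)$ is a product of $n^2$ rank-one factors of which the $n$ ``diagonal'' ones are $\Gamma_F(s)$-like (resp. $\zeta$-like), hence $L(s,\pi\otimes\tilde\pi)$ has a pole of order at least $n$ at $s=0$. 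Consequently $\gamma(s,\pi\otimes\tilde\pi)$ vanishes to order at least $n$ at $s=0$, and it remains only to show that $\Psi(s,W,W',\Phi)$ has a pole of order at most $n-1$ there.

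To see this I would peel off the ``last variable'' of the $\GL(n)\times\GL(n)$ integral. Using the Iwasawa decomposition to write $N_n(F)\backslash G_n(F)\ni g=\diag(a_1,\dots,a_n)k$ and then factoring the scalar $a_n$ out of the torus — which, by central equivariance of $W,W'$ in opposite Whittaker models, leaves the product $W(g)W'(g)$ unchanged and only rescales $\Phi(e_ng)=\Phi(a_ne_nk)$ and $|\det g|^s$ — and absorbing a $K_{n-1}$-integral (harmless since $e_n\mat{\kappa&\\&1}=e_n$), one arrives at a factorisation
\[
\Psi(s,W,W',\Phi)=\int_{K_n}\Xi_k(s)\,Z_k(s)\,\d k,\qquad
Z_k(s):=\int_{F^\times}\Phi(a_ne_nk)\,|a_n|^{ns}\,\d^\times a_n,
\]
where $\Xi_k(s):=\int_{N_{n-1}(F)\backslash G_{n-1}(F)}W\sbr{\mat{h&\\&1}k}W'\sbr{\mat{h&\\&1}k}|\det h|^{s-1}\,\d h$ is a $\GL(n-1)$-period of the restrictions of the Whittaker functions to the mirabolic and does not involve $\Phi$. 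The hypothesis $\Phi(x)\ll|x|$ enters only through $Z_k$: since $\Phi(a_ne_nk)\ll|a_n|$ as $|a_n|\to0$ uniformly in $k\in K_n$ (as $|e_nk|=1$) and $\Phi$ is Schwartz, $Z_k(s)$ converges absolutely for $\Re(s)>-1/n$ and is therefore holomorphic at $s=0$; it contributes no pole. (Were $\Phi(0)\neq0$, $Z_k$ would carry a simple pole at $s=0$ — precisely the one order of pole the hypothesis removes.)

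The remaining input, and the main obstacle, is the bound on the pole of $\Xi_k(s)$ at $s=0$. I would argue that $\Xi_k(s)$, being (a $K_{n-1}$-translate of) a $\GL(n-1)$-type Rankin--Selberg integral, continues meromorphically with a pole of order at most $n-1$ at $s=0$: expanding $\Xi_k$ in Iwasawa coordinates $h=\diag(b_1,\dots,b_{n-1})\kappa$ on $G_{n-1}(F)$ and invoking the asymptotics of Whittaker functions of ($\vartheta$-)tempered representations along the diagonal torus (rapid decay as $b_i/b_{i+1}\to\infty$, boundary exponents of non-negative real part as $b_i/b_{i+1}\to0$), the continuation picks up at worst one simple pole at $s=0$ from each of the $n-1$ one-dimensional boundary Mellin integrals; alternatively one identifies the $L$-factor governing $\Xi_k$ exactly as in the degree-reduction of \S\ref{stability-e} and observes it has a pole of order $\le n-1$ at $s=0$. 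Granting this, $\Xi_k(s)Z_k(s)$ has a pole of order $\le n-1$ at $s=0$ locally uniformly in $k$, so $\Psi(s,W,W',\Phi)$ has a pole of order $\le n-1$ at $s=0$ since $K_n$ is compact; combined with the order-$\ge n$ zero of $\gamma(s,\pi\otimes\tilde\pi)$ this makes $\gamma(s,\pi\otimes\tilde\pi)\Psi(s,W,W',\Phi)$ vanish to order $\ge 1$ at $s=0$. The hard part is genuinely the pole estimate for $\Xi_k$, which needs real information on the boundary behaviour of products of tempered Whittaker functions (or an identification with the $\GL(n)\times\GL(n-1)$ zeta integral together with the entireness of its normalised version); the rest of the argument is formal.
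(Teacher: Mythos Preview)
Your approach differs from the paper's and carries a genuine gap exactly where you flag it. The paper does not count poles of $\Psi(s,W,W',\Phi)$ against zeros of $\gamma(s,\pi\otimes\tilde\pi)$. Instead it first applies the $\GL(n)\times\GL(n)$ local functional equation, rewriting $\gamma(s,\pi\otimes\tilde\pi)\Psi(s,W,W',\Phi)$ (up to a root of unity) as the dual integral $\int_{N_n\backslash G_n}\tilde W(g)\tilde W'(g)\hat\Phi(e_ng)|\det g|^{1-s}\,\d g$, which converges absolutely for all $\Re(s)<1$ since $\pi$ is tempered. There is then no meromorphic continuation to justify: one works entirely with a convergent integral near $s=0$. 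Passing to Bruhat coordinates $g=z\,\mathrm{diag}(h,1)\begin{pmatrix}\I_{n-1}&\\c&1\end{pmatrix}$, the $h$-integral at $s=0$ is the $G_n$-invariant Whittaker pairing of $\tilde W$ and $\tilde W'$ (hence $c$-independent); after the change $c\mapsto c/z$ and Tate's local functional equation in the $z$-variable, what remains is $\int_{F^\times}\Phi(ze_n)|z|^{ns}\,\d^\times z$, convergent at $s=0$ precisely because $\Phi(x)\ll|x|$. The simple pole of the $\GL_1$ factor $\gamma(1-ns)=\zeta(ns)/\zeta(1-ns)$ then forces the vanishing.

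Your route has two difficulties. First, the claim that $\gamma(s,\pi\otimes\tilde\pi)$ vanishes to order at least $n$ at $s=0$ rests on there being $n$ ``diagonal'' $\Gamma_F(s)$-factors in $L(s,\pi\otimes\tilde\pi)$; this is correct for a principal series but fails for general tempered $\pi$ --- for a discrete series on $\GL_2(\Rr)$, for instance, $L(s,\pi\otimes\tilde\pi)$ has only a simple pole at $s=0$. (In the paper's application $\pi$ is spherical, hence a principal series, so this does not bite there; but the lemma is stated for arbitrary tempered $\pi$.) Second, and more seriously, your $\Xi_k(s)$ is not a standard zeta integral --- neither $W$ nor $W'$ restricts to a $G_{n-1}$-Whittaker function --- so its meromorphic continuation to $s=0$ and the claimed pole bound are not available off the shelf. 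Your two suggested justifications are both incomplete: the boundary-exponent heuristic would require a careful analysis of archimedean Whittaker asymptotics, and the degree-reduction of \S\ref{stability-e} relies on the non-archimedean Lemma~\ref{decomposition-lowerdegree}, whose archimedean analogue the paper itself footnotes as ``more convoluted''. The functional-equation device is exactly what trades this unknown $\Xi_k$ analysis for an absolutely convergent integral and a one-variable Tate computation.
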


\begin{proof}
Applying $\GL(n)\times\GL(n)$ local functional equation we write ${\gamma(s,\pi\otimes\tilde{\pi})}\Psi(s,W,W',\Phi)$ for $\Re(s)<1$ as
$$\omega_{\pi}(-1)^{n-1}\int_{N_n(F) \bs G_n(F)}\tilde{W}(g)\tilde{W}'(g)\hat{\Phi}(e_ng)|\det(g)|^{1-s}\d g=:\Psi(s),$$
where $\hat{\Phi}$ is the Fourier transform of $\Phi$ and the above integral is absolutely convergent (follows from \cite[Lemma 3.1]{JaNu2021reciprocity}).

We use Bruhat coordinates
$$g=z\mat{h&\\&1}\mat{\I_{n-1}&\\c&1};\quad z\in F^\times, h\in N_{n-1}(F)\bs G_{n-1}(F), c\in F^{n-1};$$
$$\d g= \d^\times z \frac{\d h}{|\det(h)|}\d c;$$
to write $\Psi(s)\omega_{\pi}(-1)^{n-1}$ as
\begin{multline*}
  \int_{F^{n-1}}\int_{N_{n-1}(F)\bs G_{n-1}(F)}\tilde{W}\sbr{\mat{h&\\&1}\mat{\I_{n-1}&\\c&1}}\tilde{W}'\sbr{\mat{h&\\&1}\mat{\I_{n-1}&\\c&1}}|\det(h)|^{-s}\d h\\
  \int_{F^{\times}}\hat{\Phi}(z(c,1))|z|^{n(1-s)}\d^\times z\d c.
\end{multline*}
We claim that ${\gamma(1-ns){\Psi}(s)}$ is holomorphic at $s=0$. As ${\gamma(1-ns){\Psi}(s)}$ is a meromorphic function of $s$ it is enough to show that $$\lim_{s\to 0}{\gamma(1-ns){\Psi}(s)}$$ exists. Indeed, using the $G_n(F)$-invariance of the standard pairing between $\pi$ and $\tilde{\pi}$ in the Whittaker model we can write this limit as
\begin{multline*}
  \int_{N_{n-1}(F)\bs G_{n-1}(F)}\tilde{W}\sbr{\mat{h&\\&1}}\tilde{W}'\sbr{\mat{h&\\&1}}\d h\\
  \lim_{s\to 0}\gamma(1-ns)\int_{F^{n-1}}\int_{F^{\times}}\hat{\Phi}(z(c,1))|z|^{n(1-s)}\d^\times z\d c.
\end{multline*}
Note that the $h$-integral is absolutely convergent. We change variable $c\to c/z$ in the inner integral above to write the above limit as
$$\lim_{s\to 0}\gamma(1-ns)\int_{F^{n-1}}\int_{F^\times}\hat{\Phi}(c,z)|z|^{1-ns}\d^\times z\d c.$$
Now we apply the local Tate functional equation to the inner integral above to write the limit as
$$\lim_{s\to 0}\int_{F^\times}\Phi(ze_n)|z|^{ns}\d^\times z.$$
The above integral is absolutely convergent at $s=0$ thanks to the decay condition of $\Phi$ near zero. Now noting that
$$\gamma(1-ns)=\frac{\zeta(ns)}{\zeta(1-ns)}$$
has a simple pole at $s=0$ we conclude.
\end{proof}

\begin{lem}\label{lem:holomorphic-gln+1-gln}
Let $\pi$, $W'$, and $\Phi$ be as in Lemma \ref{lem:homolomorphic-gln-gln}. Assume that $\pi$ is a principal series representation. Also, let $\mu\in\C$ with $\Re(\mu)$ sufficiently large and $W\in\pi_\mu:=\pi\boxplus|.|^\mu$. Then
$${\gamma(s,\pi\otimes\tilde{\pi})}\int_{N_n(F) \bs G_n(F)}W\sbr{\mat{g&\\&1}}W'(g)\Phi(e_ng)|\det(g)|^{s-1/2}\d g$$
vanishes at $s=0$.
\end{lem}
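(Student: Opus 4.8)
The plan is to reduce the $\GL(n+1)\times\GL(n)$ integral for $W\in\pi_\mu$ to a $\GL(n)\times\GL(n)$ integral of the kind treated in Lemma \ref{lem:homolomorphic-gln-gln}, and then quote that lemma. First I would use the fact that $\pi$ is a principal series and $W$ lies in $\pi_\mu=\pi\boxplus|\cdot|^\mu$: by the standard ``going down one step'' manipulation of Whittaker functions for induced representations (writing the inducing parabolic $Q_{n+1}$ with Levi $G_n\times G_1$ and unfolding the $G_n$-Whittaker integral against the extra $|\cdot|^\mu$ factor), the restriction $W\sbr{\mat{g&\\&1}}$ can be written, for $\Re(\mu)$ large, as an absolutely convergent integral over $N_n(F)\backslash G_n(F)$ of a Whittaker function $W_0\in\pi$ against a Tate-type section in the $\mu$-variable. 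Concretely, one gets an identity of the shape
\begin{equation*}
W\sbr{\mat{g&\\&1}}=\int_{F^n}W_0(g)\,\Phi_0(e_ng t)\,|t|^{\mu+\cdots}\,\d t
\end{equation*}
for a suitable Bruhat--Schwartz $\Phi_0$; substituting this into the zeta integral and interchanging the (absolutely convergent, for $\Re(\mu)$ large) integrals collapses the $\GL(n+1)\times\GL(n)$ integral into a $\GL(n)\times\GL(n)$ integral $\Psi(s,W_0,W',\Phi')$ where $\Phi'=\Phi\cdot(\text{convolution/product with }\Phi_0)$ still satisfies $\Phi'(x)\ll|x|$ as $x\to0$, because $\Phi$ does and the extra factor is bounded near the origin.

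Having performed this reduction, the quantity in question equals $\gamma(s,\pi\otimes\tilde\pi)\,\Psi(s,W_0,W',\Phi')$ up to a holomorphic (indeed entire, and nonvanishing near $s=0$) factor coming from the $\mu$-integration — here I would note that the $t$-integral produces a local Tate zeta factor $L(\text{something in }s,\mu)$ that is holomorphic and nonzero at $s=0$ once $\Re(\mu)$ is large. Then Lemma \ref{lem:homolomorphic-gln-gln} applies verbatim: $\gamma(s,\pi\otimes\tilde\pi)\Psi(s,W_0,W',\Phi')$ vanishes at $s=0$. Since vanishing at $s=0$ is preserved under multiplication by a function holomorphic and nonzero at $s=0$, the claim follows. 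Finally, both sides are meromorphic in $\mu$ (the left side by the meromorphic continuation of the $\GL(n+1)\times\GL(n)$ zeta integral, see \S\ref{zeta-integrals-n-1}, and the reduction identity extends meromorphically), so although the reduction was carried out for $\Re(\mu)$ large, the conclusion ``vanishes at $s=0$'' propagates to all $\mu$ by analytic continuation — though for the application one only needs $\Re(\mu)$ large anyway.

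The main obstacle I anticipate is making the reduction identity for $W\sbr{\mat{g&\\&1}}$ precise and checking the decay $\Phi'(x)\ll|x|$ survives. The ``step-down'' for Whittaker functions of induced representations is classical (it underlies the very construction of the $\GL(n+1)\times\GL(n)$ integral representation), but one must be careful that the section $\Phi_0$ arising from the $G_1$-part of the Levi is genuinely Bruhat--Schwartz and that its product/convolution with $\Phi$ does not destroy the vanishing-at-the-origin hypothesis; since $\Phi_0$ is continuous at $0$, the product $\Phi(x)\Phi_0(\cdot x)$ is still $\ll|x|$, so this should go through cleanly. A secondary point is ensuring all interchanges of integration are justified for $\Re(\mu)$ large and $\Re(s)$ in a suitable range, which is routine given the compact-support/Schwartz properties and \cite[Lemma 3.1]{JaNu2021reciprocity}. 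I do not expect any essentially new difficulty beyond bookkeeping, since the hard analytic input — the cancellation of the pole of $\gamma(s,\pi\otimes\tilde\pi)$ against the zero forced by $\Phi'(x)\ll|x|$ — has already been isolated in Lemma \ref{lem:homolomorphic-gln-gln}.
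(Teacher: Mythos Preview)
Your overall strategy matches the paper's: reduce to Lemma~\ref{lem:homolomorphic-gln-gln} via a step-down identity for $W\in\pi_\mu$. But the reduction you sketch is not correct, and the step you dismiss as routine bookkeeping is where the real work lies.

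The step-down you write, $W\sbr{\mat{g&\\&1}}=\int W_0(g)\,\Phi_0(e_ngt)\,|t|^{\mu+\cdots}\d t$ with $W_0(g)$ independent of the integration variable, would force $W\!\mid_{G_n}$ to equal $W_0$ times a function of $e_ng$ alone; this is false for general (or even for nicely constructed) $W\in\pi_\mu$. The correct identity, taken from \cite[\S7--8]{jacquet2009archimedean} and used in the paper, is
\[
W\sbr{\mat{g&\\&1}}=|\det g|^{\frac{n}{2}+\mu}\int_{G_n(F)}\Phi_1(h^{-1}g)\,\Phi_2(e_nh)\,W_1(h)\,|\det h|^{-\frac{n-1}{2}-\mu}\,\d h,
\]
an integral over all of $G_n(F)$ with $\Phi_1\in\mathcal{S}(\mathrm{Mat}_{n\times n})$. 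Substituting and changing variables does \emph{not} collapse to a single $\Psi(s,W_0,W',\Phi')$; instead one obtains
\[
\int_{G_n(F)}\Phi_1(h)\,|\det h|^{\frac{n-1}{2}+\mu+s}\,\Psi_\pi(s,h)\,\d h,
\]
where $\Psi_\pi(s,h)$ is the $\GL(n)\times\GL(n)$ zeta integral built from $W_1$, $W'(\cdot\,h)$, and the Schwartz function $g\mapsto\Phi(e_ngh)\Phi_2(e_ng)$. Lemma~\ref{lem:homolomorphic-gln-gln} does apply pointwise in $h$ (since $\Phi(xh)\ll|x|\cdot\|h\|$ preserves the vanishing at the origin), so $\gamma(s,\pi\otimes\tilde\pi)\Psi_\pi(s,h)$ vanishes at $s=0$ for each fixed $h$. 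But one must still justify that the outer integral over the noncompact $G_n(F)$ converges absolutely in a neighbourhood of $s=0$. This is the main technical point: the paper establishes that $\Psi_\pi(s,h)$ is polynomially bounded in $\|h\|$ near $s=0$ by combining the local functional equation with a Phragm\'en--Lindel\"of argument in $s$, after which the Schwartz decay of $\Phi_1$ together with $\Re(\mu)$ large secures convergence. Your proposal does not see this outer integral at all, and so misses the genuine obstacle.
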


\begin{proof}
Let $\Re(s)$ be sufficiently large. Note that $\pi_\mu$ is also a principal series. We work as in \cite[\S7-8]{jacquet2009archimedean} to assume that there is a $W_1\in\pi$ that is entire in its Langlands parameters and Schwartz functions $\Phi_1$ on $\mathrm{Mat}_{n\times n}(F)$ and $\Phi_2$ on $F^n$ so that
$$W\sbr{\mat{g&\\&1}}=|\det(g)|^{\frac{n}{2}+\mu}\int_{G_n(F)}\Phi_1(h^{-1}g)\Phi_2(e_nh)W_1(h)|\det(h)|^{-\frac{n-1}{2}-\mu}\d h.$$
The above integral representation is absolutely convergent.
Thus the integral in the lemma after the change of variables $h\mapsto gh$, $h\mapsto h^{-1}$, and $g\mapsto gh$ can be written as
$$\int_{G_n(F)}\Phi_1(h)|\det(h)|^{\frac{n-1}{2}+\mu+s}\int_{N_n(F)\bs G_n(F)}W_1(g)W'(gh)\Phi(e_ngh)\Phi_2(e_ng)|\det(g)|^{s}\d g\d h.$$
The above integral representation is again absolutely convergent. Justification of interchange of integrals is exactly as in \cite[\S8.3]{jacquet2009archimedean}. Note that for $x\in F^n$ we have $\Phi(xh)\ll |xh| \le |x|\|h\|$, where $\|.\|$ denotes a fixed operator norm on $G_n(F)$. Let
$$\Psi_\pi(s,h):=\gamma(s,\pi\otimes\tilde{\pi})\int_{N_n(F)\bs G_n(F)}W_1(g)W'(gh)\Phi(e_ngh)\Phi_2(e_ng)|\det(g)|^{s}\d g$$
which is originally defined for sufficiently large $\Re(s)$ and can be meromorphically continued for $s\in \C$ via the theory of zeta integral; see \cite[Lecture 2]{cogdell2007functions}.
Applying Lemma \ref{lem:homolomorphic-gln-gln} we see that $\Psi_\pi(s,h)$ vanishes at $s=0$ for each $h\in G_n(F)$. Thus to prove the lemma it suffices to show that
$$\int_{G_n(F)}\Phi_1(h)|\det(h)|^{\frac{n-1}{2}+\mu+s}\Psi_\pi(s,h)\d h$$
is absolutely convergent for $\Re(s)\ge 0$.

We claim that $\Psi_\pi(s,h)$ as a function of $h$ is at most polynomially bounded in terms of $\|h\|$ at a neighbourhood of $s=0$. This claim is sufficient. Indeed,
$$\int_{G_n(F)}\Phi_1(h)|\det(h)|^{\frac{n-1}{2}+\mu+s}\Psi_\pi(s,h)\d h\ll\int_{G_n(F)}|\Phi_1(h)||\det(h)|^{\frac{n-1}{2}+\Re(\mu)+\Re(s)}\|h\|^{O(1)}\d h.$$
The last integral is absolutely convergent at a neighbourhood of $s=0$ for sufficiently large $\Re(\mu)$; see \emph{e.g.}, \cite[Lemma 3.3 (ii)]{jacquet2009archimedean}.

Now we turn to prove the above claim. We apply \cite[Lemma 3.1]{JaNu2021reciprocity} and using properties of Sobolev norm as in \cite[\S2.4.1, S1b.]{MV2010subconvexity} to see that $\Psi_\pi(s,h)\ll\|h\|^{O(1)}$ for large $\Re(s)$, as long as, $s$ is away from a pole of $\gamma(\cdot,\pi\otimes\tilde{\pi})$. 
If $\Re(s)$ is sufficiently negative then using $\GL(n)\times\GL(n)$ local functional equation (see \eqref{LFE-m-m}) we can write $\Psi_\pi(s,h)$ as
$$\omega_\pi(-1)^{n-1}\int_{N_n(F)\bs G_n(F)}\tilde{W}_1(g)\tilde{W}'(gh^{-\top})\Phi_3(e_ng)|\det(g)|^{1-s}\d g,$$
where for $c\in F^n$
$$\Phi_3(c):=\int_{F^n}\Phi(xh)\Phi_2(x)\overline{\psi(cx^\top)}\d x\ll_N\|h\|^{O_N(1)}(1+|c|)^{-N}.$$ 
Working as before we see that $\Psi_\pi(s,h)\ll \|h\|^{O(1)}$ if $\Re(s)$ is sufficiently negative. Using the boundedness $\Psi_\pi(\cdot,h)$ in vertical strips (follows from \emph{e.g.}, \cite[Theorem 2.1 (ii)]{jacquet2009archimedean}) and applying Phragm\'en--Lindel\"of we conclude that $\Psi_\pi(s,h)\ll \|h\|^{O(1)}$ as a function of $h$ in a neighborhood of $s=0$.
\end{proof}

\begin{proof}[Proof of Lemma \ref{lem:reduce-to-bound-main-term}]
We abbreviate $1/2-s_1$ as $s$. Recall that (\emph{e.g.}, from the proof of Lemma \ref{lem:residue-main-term-bound}) that $h(\mathcal{I}(\tilde{\pi}_1,s),f_2)$ is an absolute convergent sum of the terms of the form
$$\langle W,\tilde{W}\rangle^{-1}\Psi(1-s,\mathcal{I}(\tilde{\pi}_1,s)(f_2)W,W_1)\Psi(1/2+s_2,\tilde{W},\overline{W_2})$$
where $W$ is entire in $s$.
Thus to prove the lemma it is enough to show that each summand vanishes. Moreover, recalling that $\langle W,\tilde{W}\rangle^{-1}$ is an $s$-independent constant and appealing to the meromorphicity of the above summands it suffices to show that
$$\Psi(1-s,\mathcal{I}(\tilde{\pi}_1,s)(f_2)W,W_1)=0$$
for $\Re(s)=0$.
Letting $f_2^\iota(g):=f_2(g^{-\top})$ and 
applying the $\GL(n+1)\times\GL(n)$ local functional equation \eqref{LFE-m-m-1} to write the above zeta integral as
\begin{align*}
&{\Psi(s,\mathcal{I}({\pi}_1,-s)(f^\iota_2)\tilde{W},\tilde{W}_1)}{\gamma(s,\mathcal{I}(\pi_1,-s)\otimes\tilde{\pi}_1)}\\
&={\Psi(0,\mathcal{I}({\pi}_1,-s)(f^\iota_2)\tilde{W},\tilde{W}_1|\det|^{s})}\gamma(0,\pi_1\otimes\tilde{\pi}_1)\gamma(ns,\tilde{\pi}_1|\det|^s)
\end{align*}

Let us abbreviate ${\pi}_1|\det|^{-s}$ as $\pi$. Then it suffices to show that for a tempered principal series representation $\pi$ of $G_n(F)$ and $\mu\in\C$ away from a pole of $\gamma(\cdot,\tilde{\pi})$ and $\Re(\mu)$ sufficiently large, the quantity
$$\gamma(z,\pi\otimes\tilde{\pi}){\Psi(z,\pi_\mu(f^\iota_{2})W,W')}$$
vanishes at $z=0$ for $W\in\pi_\mu$ and $W'\in\tilde{\pi}$, both holomorphic in their corresponding Langlands parameters. Indeed, if that is the case, being a meromorphic function of $\mu$ the above vanishes for all $\mu$, in particular, for $\mu=ns$ as $\gamma(ns,\tilde{\pi})$ is holomorphic on $\Re(s)=0$.

We write
$$\pi_\mu(f^\iota_{2}){W}\sbr{\mat{g&\\&1}}=\int_{\overline{G_{n+1}}}f_2(h){W}\sbr{\mat{g&\\&1}h^{-\top}}\d h.$$
We parameterize $h$ as
$$h=\mat{\I_n&\\c&1}p,\quad c\in F^n, p\in P_{n+1}(F),\quad \d h= \d c \d_{\mathrm{right}} p.$$
Applying $N_n$-equivariance of ${W}$ we write the above integral as
$$\int_{P_{n+1}(F)}\left(\int_{F^n}\psi(e_ngc^\top)f_2\sbr{\mat{\I_n&\\c&1}p}\d c\right){W}\sbr{\mat{g&\\&1}p^{-\top}}\d_{\mathrm{right}} p.$$
Recalling \eqref{def-f-1} we can write the inner integral above as
$$\hat{f}^{21}_2(e_ng,p)=\hat{f}_1^{21}(e_ng/X,p)-\hat{f}_1^{21}(e_ng,p)$$
where for any function $f$ we write
$$\hat{f}^{21}(c,p):=\int_{F^n}f\sbr{\mat{\I_n&\\c'&1}p}\psi(c'c^\top)\d c'.$$
F
rom the compact support condition of $f_1$, we see that $\hat{f}_2^{21}(c,p)$ is compactly supported in $p$ and is Schwartz in $c$. Indeed, if $\mat{\I_n&\\c'&1}p\in B_\tau\mod Z_{n+1}(F)$ where $B_\tau$ is the ball of radius $\tau$ around the identity in $G_{n+1}(F)$ then $c'$ and $p$ vary in compact sets in their respective domains.

Moreover, using the mean value theorem we see that
$$\hat{f}_2^{21}(e_ng,p)=(1/X-1)(e_ng)\cdot\nabla_{\xi e_n g}\hat{f}^{21}_1(\cdot,p) \ll_X |e_ng|,$$
for some $1/X<\xi<1$.

Thus for $\Re(z)$ sufficiently large we can write the zeta integral $\Psi(z,\pi_\mu(f^\iota_2){W},W')$ as
\begin{equation*}
\int_{P_{n+1}(F)}\int_{N_n(F) \bs G_n(F)}\hat{f}^{21}_2(e_ng,p)\pi_\mu(p){W}\sbr{\mat{g&\\&1}}W'(g)|\det(g)|^{z-1/2}\d g \d_{\mathrm{right}}p,
\end{equation*}
Using the decay property of $\hat{f}^{21}_2(\cdot,p)$ and applying Lemma \ref{lem:holomorphic-gln+1-gln} we see that the inner integral vanishes at $z=0$. We conclude the proof by noting that $p$ varies over a compact set.
\end{proof}

\section{The degenerate term}

The goal of this section is to estimate the local components of the summands of the degenerate term $\D(s_1,s_2)$ as defined in \eqref{def-degenerate}. Recall the description of the local components from \eqref{final-I1-local} and \eqref{final-I2-local}.

\subsection{Uninteresting primes}

\begin{lem}\label{degenerate-unramified}
Let $v$ be an unramified place and let 
\begin{itemize}
    \item either $f$ be of either type I and $\infty>v\nmid\q$,
    \item or $f$ be of type II and $\infty>v\nmid \q\p_0\p_1$.
\end{itemize}

Then $$I^1(n^-,s_1,s_2)=\frac{L(1+s_1+s_2,\pi_1\otimes\tilde{\pi}_2)L(\frac{n+1}{2}+n s_1-s_2,\pi_2)}{L(\frac{n+3}{2}+(n+1)s_1,\pi_1)}$$
whenever $\Re(s_1),\Re(s_2)$ lie in the region $\mathcal{C}_1$, defined in \eqref{def-region-C1}, and
$$I^2(n^-,s_1,s_2)=\frac{L(1+s_1+s_2,{\pi}_1\otimes\tilde{\pi}_2)L(\frac{n+1}{2}+n s_1-s_2,\tilde{\pi}_1)}{L(\frac{n+3}{2}+(n+1)s_1,\widetilde{\pi}_2)}$$
whenever $\Re(s_1),\Re(s_2)$ lie in the region $\mathcal{C}_2$, defined in \eqref{def-region-C2}.
\end{lem}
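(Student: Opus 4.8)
It suffices to treat $I^1(n^-,s_1,s_2)$; the formula for $I^2(n^-,s_1,s_2)$ then follows by the same computation after the substitution $(s_1,\pi_1)\leftrightarrow(s_2,\tilde{\pi}_2)$, which is the symmetry manifest in \eqref{final-I1-local} and \eqref{final-I2-local}. Moreover it suffices to work at an unramified place: at a place $v\mid\mathfrak{d}$ the test function $f_v=\mathbf{1}_{\overline{G_{n+1}}(\o_v)}$ is the same one used in the unramified case, and only the additive character $\psi_v$ and the spherical vectors $W_{i,v}=W^{\mathfrak{d}_v}_{\pi_{i,v}}$ differ; writing the latter through $W_{\pi_{i,v}}$ via \eqref{shift-of-newvector} and rescaling the $b$-, $y$- and $x$-variables in \eqref{final-I1-local} reduces the evaluation to the unramified one up to a factor $\Delta_v^{\mu^D(s_1,s_2)}$ with $\mu^D$ affine and of $n$-dependent coefficients, exactly as in the Remark following \eqref{LFE-m-m}. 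This accounts for $\Delta^{\mu^D(s_1,s_2)}$ in the statement; from now on $v$ is unramified and suppressed.

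For the unramified evaluation of \eqref{final-I1-local}, the starting point is the factorization
$$\mat{x^{-1}+be_ny & b\\ e_ny & 1}=\mat{\I_n & b\\ & 1}\mat{x^{-1} & \\ e_ny & 1},$$
which presents the argument of $f$ as a unipotent translate of $\mat{x^{-1}&\\ e_ny&1}$. Since $f$ is the characteristic function of a subgroup, the $b$-integral equals the volume of a single coset of $\o^n$ in $F^n$, namely $1$, whenever $\mat{x^{-1}&\\ e_ny&1}$ lies in $U_{n+1}(F)\cdot Z_{n+1}(F)G_{n+1}(\o)$, and vanishes otherwise. I would then pass to Iwasawa coordinates $y=z\,\diag(a,1)k$ on $N_n(F)\backslash G_n(F)$, as in the proof of Lemma \ref{meromorphic-cont-degenerate}: the $k$-integral collapses by right-$K_n$-invariance of $f$ and of $W_2$, while $W_1(yx)=W_1(\diag(a,1)x)$ by triviality of the central character, leaving an integral over $z\in F^\times$, $a\in A_{n-1}(F)$ and $x\in G_n(F)$ in which $|e_ny|=|z|$. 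As in that proof, one splits according to $|z|\le 1$ and $|z|>1$.

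On the region $|z|\le 1$ the support condition forces $x\in G_n(\o)$, so that $W_1(\diag(a,1)x)=W_1(\diag(a,1))$, the $x$-integral over $G_n(\o)$ contributes $1$, and upon re-assembling the Iwasawa coordinates the $z$- and $a$-integrals recombine into the unramified $\GL(n)\times\GL(n)$ Rankin--Selberg integral $\Psi(1+s_1+s_2,W_1,\overline{W_2},\mathbf{1}_{\o^n})$ (the powers of $|z|$ cancelling cleanly); by \eqref{unramified-zeta-integral-m} this is $L(1+s_1+s_2,\pi_1\otimes\tilde{\pi}_2)$. On the region $|z|>1$ one invokes the Bruhat-type decomposition of $\mat{\I_n&\\ e_nz&1}$ recalled in the proof of Lemma \ref{meromorphic-cont-degenerate}, which brings a Weyl element into play; inserting Shintani's formula \eqref{shintani} for the spherical Whittaker functions turns the residual $(a,x)$-integrals into sums of symmetric polynomials in the Satake parameters of $\pi_1$ and $\pi_2$, and the outer $z$-sum is a geometric series that converges precisely for $\Re(s_1),\Re(s_2)\in\mathcal{C}_1$ (cf.\ \eqref{def-region-C1}). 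Summing it by the Vieta/Lagrange-interpolation device already used in \S\ref{local-residue-e=1} to evaluate $H(\mathcal{I}(\tilde{\pi}_1,\cdot))$ yields the remaining factor $L(\frac{n+1}{2}+ns_1-s_2,\pi_2)/L(\frac{n+3}{2}+(n+1)s_1,\pi_1)$, and combining the two regions gives the claimed identity on $\mathcal{C}_1$. I expect the delicate step to be precisely this $|z|>1$ analysis: untangling the support condition once the central parameter is allowed to be non-integral, and then recognising the resulting multiple sum as a ratio of standard $\GL(n)$ $L$-functions --- a Gindikin--Karpelevich-type identity --- all while keeping the Haar-measure normalizations and the $s_i$-twists consistent.
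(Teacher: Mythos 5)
Your outline follows the paper's own proof of this lemma quite closely. The reduction of $I^2$ to $I^1$ (using $f^-=f$ away from the supercuspidal place), the treatment of ramified $v\mid\mathfrak{d}$ by rescaling in \eqref{final-I1-local} to absorb the shift $W^{\mathfrak{d}_v}_{\pi_{i,v}}$ into a $\Delta_v$-power, the $b$-integral contributing a volume $1$ on a translate of $\o^n$, and the split of the $y$-integral into $e_ny\in\o^n$ (equivalently $|z|\le1$ in Iwasawa coordinates) versus $|z|>1$ are all as in the paper, with the $|z|\le1$ piece recognized as the unramified $\GL(n)\times\GL(n)$ integral $\Psi(1+s_1+s_2,W_1,\overline{W_2},\1_{\o^n})=L(1+s_1+s_2,\pi_1\otimes\tilde{\pi}_2)$ via \eqref{unramified-zeta-integral-m}.

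Two small imprecisions in the $|z|>1$ analysis, which you correctly flag as the delicate step. First, the closing device is not the Lagrange/Vieta interpolation of \S\ref{local-residue-e=1}; the paper instead applies the Shintani-type recursion of Lemma \ref{decomposition-lowerdegree} to expand $W_2(\diag(az,1))$ into Whittaker functions of $(n-1)$-variable unramified representations, evaluates the resulting inner $a$-integral by \eqref{unramified-zeta-integral-m-1}, sums the geometric $z$-series (which in fact converges on a region slightly larger than $\mathcal{C}_1$), and then invokes a symmetric-function identity cited from \cite[Proof of Lemma 9.4]{JaNu2021reciprocity} --- a cousin of the Lagrange device, but a genuinely different lemma. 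Second, the $|z|>1$ contribution is \emph{not} the ratio by itself: it equals $L(1+s_1+s_2,\pi_1\otimes\tilde{\pi}_2)\big(L(\tfrac{n+1}{2}+ns_1-s_2,\pi_2)/L(\tfrac{n+3}{2}+(n+1)s_1,\pi_1)-1\big)$, so the two regions must be \emph{added}, and the $-L(1+s_1+s_2,\pi_1\otimes\tilde{\pi}_2)$ cancels exactly against the $|z|\le1$ piece to produce the stated product form. Your phrase ``combining the two regions'' elides this cancellation, which is precisely where the bookkeeping has to be done carefully to get a clean factorization rather than a sum.
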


\begin{proof}
We only prove the first equality. The second equality follows similarly as $f^-=f$. 

Suppose first that $v$ is unramified. We essentially follow the proof of Lemma \ref{meromorphic-cont-degenerate}. Let $\Re(s_1),\Re(s_2)\in\mathcal{C}_1$. The definition of $f$ yields that the domain of integration consists of those
$x \in G_n(F),y\in N_n(F)\bs G_n(F), b\in F^n$ such that
\begin{equation}\label{domain-of-integration-I1n}
  \begin{pmatrix} x^{-1}+be_ny&b\\ e_ny&1 \end{pmatrix}=\mat{x^{-1}&b\\&1}\mat{\I_n&\\e_ny&1}\in K_{n+1}\mod Z_{n+1}(F).
\end{equation}
We divide the integral in \eqref{final-I1-local} in two parts, namely, if $e_ny\in\o^n$ and $e_ny\notin\o^n$ and call them $A$ and $B$, respectively.

If $e_ny\in\o^n$ then
\eqref{domain-of-integration-I1n} is equivalent to $$x\in G_n(\o),\quad b\in\o^n.$$
As $W_{1}$ is spherical we obtain
\begin{align*}
A&=\int_{G_n(\o)}\d_{1+s_1} x\int_{\o^n}\d b\int_{N_n(F)\backslash G_n(F)} W_{1}(y)\overline{W_{2}(y)}\1_{\o^n}(e_ny)\d_{1+s_1+s_2}y\\
&=L(1+s_1+s_2,\pi_1\otimes\tilde{\pi}_2).
\end{align*}
The last equality follows from \eqref{unramified-zeta-integral-m}.

Now we consider $B$. Suppose that $e_ny\notin\o^n$. In that case, in Iwasawa coordinates we write 
$$N_n(F)\bs G_n(F)\ni y=z\diag(a,1)k;\quad z\in F^\times, a\in A_{n-1}(F), k\in K_n;$$
$$\d y=\d^\times z\frac{\d a}{\delta(a)|\det(a)|}\d k.$$
Note that 
$$e_ny\notin\o^n\iff ze_nk\notin\o^n\iff z\notin\o\iff z^{-1}\in\p,$$ 
and
$$\mat{\I_n&\\e_ny&1}=\mat{k^{-1}&\\&1}\mat{\I_{n-1}&&\\&z^{-1}&1\\&&z}\mat{\I_{n-1}&&\\&&-1\\&1&z^{-1}}\mat{k&\\&1}.$$
Hence, \eqref{domain-of-integration-I1n} is equivalent to
$$\mat{(kx)^{-1}&b\\&1}\mat{\I_{n-1}&&\\&z^{-1}&1\\&&z}\in K_{n+1}\mod Z_{n+1}(F).$$
Applying sphericality of $W_i$, changing variable $x\mapsto k^{-1}x$ we can write the contribution of the $y$-integral with $e_ny\notin\o^n$ in $I^1(n^{-},s_1,s_2)$ as
\begin{multline*}
\int_{G_n(F)}\int_{F^\times}\int_{F^n}{f}\left[\mat{x^{-1}&b\\&1}\mat{\I_{n-1}&&\\&z^{-1}&1\\&&z}\right]\1_{\p}(z^{-1})|z|^{n(1+s_1+s_2)}\d^\times z\d b\\
\int_{A_{n-1}(F)}W_{1}\sbr{\mat{a&\\&1}x}\overline{W_{2}\sbr{\mat{a&\\&1}}}\frac{\d_{s_1+s_2} a}{\delta(a)}\d_{s_1+1} x.
\end{multline*}
Once again applying the support condition of $f$ we obtain
$$\mat{\I_n&b+z^{-1}x^{-1}e_n^\top\\&1}\mat{z^{-1}x^{-1}\mat{\I_{n-1}&\\&z^{-1}}&\\&1}\in K_{n+1}\mod Z_{n+1}(F).$$
We see that the above is equivalent to
\begin{equation*}
   b+z^{-1}x^{-1}e_n^\top\in\o^n,\quad z^{-1}x^{-1}\mat{\I_{n-1}&\\&z^{-1}}\in K_n.
\end{equation*}
Changing variable $b\mapsto b-z^{-1}x^{-1}e_n^\top$ we evaluate the $b$-integral to $1$. Now applying $Z_nK_n$-invariance of $W_1$, and changing variables $a\mapsto az^{-1}$ and $z\mapsto z^{-1}$ we rewrite the above integral as
$$B=\int_{F^\times}\1_{\p}(z)|z|^{1+ns_1-s_2}
\int_{A_{n-1}(F)}W_{1}\sbr{\mat{a&\\&1}}\overline{W_{2}\sbr{\mat{az&\\&1}}}\frac{\d_{s_1+s_2} a}{\delta(a)}\d^\times z.$$
Now the support condition of $W_{1}$ as in \eqref{shintani} and the condition $z\in\p$ we automatically have that the $za_n\in\o$. Thus we can use Lemma \ref{decomposition-lowerdegree}, which in this context (using similar notations) tells us that
\begin{equation*}
	W_{2}\sbr{\mat{az&\\&1}}= |\det(za)|^{1/2}\sum_{j=1}^n\frac{\mu_j^{-1}}
	{\prod_{i\neq j}(\mu_i-\mu_j)}\chi_j^{-1}(z)W_{\pi_{2,j}}(a),
\end{equation*}
where $\chi_j(z):=\mu_j^{\nu_\p(z)}$ and $\nu_\p$ denotes the $\p$-adic valuation.
Then we have
\begin{equation*}
  B=\sum_{j=1}^n\frac{\overline{\mu_j}^{-1}}{\prod_{i\neq j}(\overline{\mu}_i-\overline{\mu}_j)} B_{1,j}B_{2,j}.
\end{equation*}
Here using sphericality of $W_1$ and $W_{\pi_{2,j}}$, and applying \eqref{unramified-zeta-integral-m-1} we write
\begin{align*}
B_{1,j}: &=\int_{N_{n-1}(F)\backslash G_{n-1}(F)} W_{1}(\diag(h,1))
\overline{W_{\pi_{2,j}}(h)}|\det(h)|^{s_1+s_2+1/2}\d h\\
&=L(1+s_1+s_2,\pi_1\otimes\overline{\pi}_{2,j})
\end{align*}
From the definition of the $L$-factor and the fact that $\pi_2$ is unitary,
we have
\begin{align*}
B_{1,j} &=L(1+s_1+s_2,\pi_1\otimes \tilde{\pi}_2)L(1+s_1+s_2,\pi_1
\otimes\overline{\chi}_j)^{-1}\\
&=L(1+s_1+s_2,\pi_1\otimes \tilde{\pi}_2)\sum_{k=0}^n\frac{(-1)^ke_k(\mu_{\pi_1})
\overline{\mu}_j^k}{N(\p)^{k(1+s_1+s_2)}}
\end{align*}
where $e_k(\cdot)$ denotes the $k$'th elementary symmetric polynomial.

On the other hand,
\begin{equation*}
  B_{2,j}:=\int_{z\in\p}\overline{\chi}^{-1}_j(z)|z|^{ns_1-s_2+(n+1)/2}\d^\times z
  =\sum_{m\ge 1}\frac{\overline{\mu}_j^{-m}}{N(\p)^{m(ns_1-s_2+(n+1)/2)}}
\end{equation*}
Thus $B$ equals
\begin{equation*}
L(1+s_1+s_2,\pi_1\otimes\widetilde{\pi}_2)\sum_{m\ge 1}\sum_{k=0}^n\frac{(-1)^ke_k(\mu_{\pi_1})}
{N(\p)^{m(ns_1-s_2+(n+1)/2)+k(1+s_1+s_2)}}
        \sum_{j=1}^n\frac{\overline{\mu_j}^{k-1-m}}
  {\prod_{i\neq j}(\overline{\mu}_i-\overline{\mu}_j)}
\end{equation*}

It now follows from a computation done in
\cite[Proof of Lemma 9.4]{JaNu2021reciprocity} that the inner sum vanishes
unless $m\ge k$, in which case it equals $\chi_{(m-k,0,\dots,0)}(\mu_{\pi_2})$.
Therefore, changing variables, we have that
$$B=C-L(1+s_1+s_2,\pi_1\otimes\tilde{\pi}_2),$$
where
\begin{align*}
	C & =L(1+s_1+s_2,\pi_1\otimes\tilde{\pi}_2)\sum_{m \ge 0}
	\frac{\chi_{(m,0,\dots,0)}(\mu_{\pi_2})}{N(\p)^{m (\frac{n+1}2+ns_1-s_2)}}
	\sum_{k=0}^{n}\frac{(-1)^ke_k(A_{\pi_1})}{N(\p)^{k(\frac{n+3}2+(n+1)s_1)}}\\
	  & =\frac{L(1+s_1+s_2,\pi_1\otimes\tilde{\pi}_2)L(\frac{n+1}2+ns_1-s_2,\pi_2)}
	{L(\frac{n+3}2+(n+1)s_1,\pi_1)}.
\end{align*}
Recalling that $I^1(n^-,s_1,s_2)=B+L(1+s_1+s_2,\pi_1\otimes\tilde{\pi}_2)$ we conclude.
\end{proof}

\subsection{Level primes}

\begin{lem}\label{degenerate-level}
Let $f=\tilde{\1}_{\overline{K_0}(\p^{e})}$. Then we have
$$I^j(n^-,s_1,s_2)=\frac{\zeta(1)}{\zeta(n+1)}N(\p)^{-ne(s_1+s_2)}L(1+s_1+s_2,\pi_1\otimes\tilde{\pi}_2)$$
for $j=1,2$ and whenever $\Re(s_1),\Re(s_2)\in\mathcal{C}_j$.
\end{lem}

\begin{proof}
We only prove the $j=1$ case. The $j=2$ case follows similarly as $f^-=f$.

Let $\Re(s_1),\Re(s_2)$ be sufficiently large. Recall that from Lemma \ref{global-I1} that $I^1(n^{-},s_1,s_2)$ equals
\begin{equation*}
\int_{G_n(F)}\int_{N_n(F)\backslash G_n(F)}\int_{F^n}{f}\left[\begin{pmatrix} x^{-1}+be_ny&b\\ e_ny&1 \end{pmatrix}\right] W_{1}(yx)\overline{W_{2}(y)}\d b\d_{s_1+s_2+1} y \d_{s_1+1} x.
\end{equation*}
The support condition of $f$ yields that the domain of integration is $x \in G_n(F)$, $y\in N_n(F)\bs G_n(F)$, $b\in F^n$ such that
$$\begin{pmatrix}x^{-1}+be_ny&b\\ e_ny&1 \end{pmatrix}=\mat{x^{-1}&b\\&1}\mat{\I_n&\\e_ny&1}\in \overline{K_0}(\p^e)\mod Z_{n+1}(F).$$
Note that we necessarily have $e_ny\in\p^e\o^n$. Thus automatically we have
$$x\in G_n(\o),\quad b\in\o^n.$$
As $W_1$ is spherical $I^1(n^{-},s_1,s_2)$ equals
\begin{equation*}
\vol(\overline{K_0}(\p^e))^{-1}\int_{G_n(\o)}\d_{1+s_1} x\int_{\o^n}\d b\int_{N_n(F)\backslash G_n(F)} W_{1}(y)\overline{W_{2}(y)}\1_{\p^e\o^n}(e_ny)\d_{1+s_1+s_2}y.
\end{equation*}
Changing variables $y\rightarrow \p^ey$ and applying \eqref{unramified-zeta-integral-m} we obtain that the above equals
$$\vol(\overline{K_0}(\p^e))^{-1}N(\p)^{-ne(1+s_1+s_2)}L(1+s_1+s_2,\pi_1\otimes\tilde{\pi}_2).$$
We conclude the proof by noting that
$$\frac{N(\p)^{-en}}{\vol(\overline{K_0}(\p^e))}=\frac{\zeta(1)}{\zeta(n+1)}$$
and meromorphic continuation.
\end{proof}

\subsection{Supercuspidal place}

\begin{lem}\label{degenerate-sc}
    Let $f$ be of type II and $v=\p_0$. Then $I^j(n^-,s_1,s_2)=0$ for $s_1,s_2$ with $\Re(s_1),\Re(s_2)\in\mathcal{C}_j$ for $j=1,2$,
\end{lem}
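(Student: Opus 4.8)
The plan is to reduce the statement to the vanishing of the local factor at $\mathfrak{p}_0$ and then exploit that $f_{\mathfrak{p}_0}$ is a matrix coefficient of a supercuspidal representation. By Lemma \ref{global-I1} and Lemma \ref{global-I2}, for $\Re(s_1),\Re(s_2)\in\mathcal{C}_j$ we have $I^j(n^-,s_1,s_2)=\prod_{v\le\infty}I^j_v(n^-,s_1,s_2)$, so it suffices to show that the single Euler factor $I^j_{\mathfrak{p}_0}(n^-,s_1,s_2)$ vanishes. I will carry out $j=1$ in detail and note that $j=2$ is identical: $f^-_{\mathfrak{p}_0}(g)=f_{\mathfrak{p}_0}(g^{-1})=\langle W_\sigma,\sigma(g)W_\sigma\rangle=\overline{f_{\mathfrak{p}_0}(g)}$ is again a matrix coefficient of $\sigma$, with the same support (compact modulo $Z_{n+1}$), so the same computation goes through with the roles of the two arguments swapped.

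Next I use that, by Lemma \ref{meromorphic-cont-degenerate}, the triple integral \eqref{final-I1-local} defining $I^1_{\mathfrak{p}_0}(n^-,s_1,s_2)$ converges absolutely on $\mathcal{C}_1$, so I may compute the inner $b$-integral first. Fixing $x\in G_n(F_{\mathfrak{p}_0})$ and $y$, I write $\mat{x^{-1}+be_ny&b\\ e_ny&1}=\mat{x^{-1}&\\&1}\mat{\I_n&xb\\&1}\mat{\I_n&\\e_ny&1}$ and substitute $b\mapsto x^{-1}b$, so that the inner integral is a constant multiple of $\int_{U_{n+1}(F_{\mathfrak{p}_0})}f_{\mathfrak{p}_0}\!\left[\mat{x^{-1}&\\&1}\,u\,\mat{\I_n&\\e_ny&1}\right]\d u$, where I identify $\{\mat{\I_n&b\\&1}\}$ with $U_{n+1}$. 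The key structural point to record is that this $b$-integration carries \emph{no} additive character (cf.\ the passage from $I^1(n^-,T)$ to \eqref{final-I1-global}), in contrast with the full $N_{n+1}$-integration against $\psi$ that appears in the $\tilde I^+$ and $I^0(n^-)$ computations at $\mathfrak{p}_0$ (Lemmas \ref{sc-prime-I+} and \ref{sc-prime-I-}); it is only this that allows the supercuspidal vanishing, rather than a Whittaker-type non-vanishing, to kick in.

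Now substitute $f_{\mathfrak{p}_0}(g)=\langle\sigma(g)W_\sigma,W_\sigma\rangle$ and, using unitarity of $\sigma$, rewrite the integrand as $\langle\sigma(u)v,\tilde v\rangle$ with $v:=\sigma\!\left(\mat{\I_n&\\e_ny&1}\right)W_\sigma$ and $\tilde v:=\sigma\!\left(\mat{x&\\&1}\right)W_\sigma$. Since matrix coefficients of the supercuspidal $\sigma$ are compactly supported modulo $Z_{n+1}(F_{\mathfrak{p}_0})$ and $U_{n+1}\cap Z_{n+1}=\{1\}$, the function $u\mapsto\langle\sigma(u)v,\tilde v\rangle$ has compact support on $U_{n+1}(F_{\mathfrak{p}_0})$, so the integral converges and equals $\langle\big(\int_{U_{n+1}}\sigma(u)\,\d u\big)v,\tilde v\rangle$; this is $0$ because $U_{n+1}$ is the unipotent radical of the proper parabolic $Q_{n+1}$ and $\sigma$ is supercuspidal, hence the Jacquet module of $\sigma$ along $Q_{n+1}$ vanishes and the $U_{n+1}$-average annihilates every vector. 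Thus the inner $b$-integral is identically zero in $(x,y)$, whence $I^1_{\mathfrak{p}_0}(n^-,s_1,s_2)=0$ on $\mathcal{C}_1$, and likewise $I^2_{\mathfrak{p}_0}(n^-,s_1,s_2)=0$ on $\mathcal{C}_2$; plugging this into the Euler products of Lemma \ref{global-I1} and Lemma \ref{global-I2} finishes the proof.

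There is no serious obstacle here: the argument is a direct application of the cuspidality of supercuspidal matrix coefficients. The only two things demanding a little care are (i) verifying that the inner unipotent integration is genuinely over all of $U_{n+1}$ with trivial character — which is what distinguishes this case from the Whittaker computations at $\mathfrak{p}_0$ — and (ii) justifying the interchange of integrations, which is handed to us by the absolute convergence in $\mathcal{C}_j$ from Lemma \ref{meromorphic-cont-degenerate}.
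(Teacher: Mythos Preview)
Your argument is correct, and the core mechanism is the same as the paper's: the inner $b$-integral is a unipotent average of a supercuspidal matrix coefficient, hence vanishes. Two small remarks. First, by the convention announced at the start of the ``Local computations'' part (subscripts $v$ dropped), the lemma is already about the local factor at $\mathfrak{p}_0$, so the opening reduction via Lemma~\ref{global-I1} and Lemma~\ref{global-I2} is unnecessary (and in any case those lemmas are stated only for type~I $f$); Lemma~\ref{meromorphic-cont-degenerate}, which you do cite, applies to arbitrary $f_v\in C_c^\infty$ and is all you need. Second, the sentence ``the $U_{n+1}$-average annihilates every vector'' should be read as: for each $v\in\sigma$ there is a compact open $U_c\subset U_{n+1}$ with $\int_{U_c}\sigma(u)v\,\d u=0$ (Jacquet's lemma), and since the matrix coefficient is compactly supported on $U_{n+1}$ the full $U_{n+1}$-integral agrees with the integral over a large enough $U_c$. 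This is standard, but worth making explicit.

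Where your route differs from the paper's: the paper realizes $\sigma$ in its Whittaker model, writes the inner product as an integral over $N_n\backslash G_n$, and uses that the restriction of a supercuspidal Whittaker function to the mirabolic has compact support (the Kirillov-model support property) to force the truncated integrals to vanish for large $N$. You instead invoke the vanishing of the Jacquet module along $Q_{n+1}$ directly. Both are manifestations of supercuspidality; your version is more conceptual and avoids the explicit Whittaker bookkeeping, while the paper's version stays parallel to the computations of $\tilde{I}^+$ and $I^0(n^-)$ at $\mathfrak{p}_0$ already carried out in Lemmas~\ref{sc-prime-I+} and~\ref{sc-prime-I-}.
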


\begin{proof}
    As $f$ is a matrix coefficient of a unitary representation $f^-$ is also the same. Thus proving the lemma for $j=1$ is enough for $f$ being a matrix coefficient of a supercuspidal representation. 
    
    On the other hand, if $f$ is a matrix coefficient then $f(\cdot g)$ for any $g\in\overline{G_{n+1}}(F)$ is also the same. Thus recalling \eqref{final-I1-local} it suffices to show that
    $$\int_{F^n}f\sbr{\mat{\I_n&b\\&1}}\d b=0$$
    where $f$ is a matrix coefficient of a supercuspidal representation.

    Apply compact support of $f$ we write the above integral as a limit of $N\to\infty$ of
    $$\int_{F^n}\1_{\o}(\p_0^Nb)f\sbr{\mat{\I_n&b\\&1}}\d b.$$
    Let $f(g)=\langle\sigma(g)V_1,V_2\rangle_\sigma$ for $V_i\in\sigma$. Then the above integral, using unipotent equivariance of $V_1$, can be written as
    \begin{multline*}
    \int_{N_{n-1}(F)\bs G_{n-1}(F)}V_1\overline{V_2}\sbr{\mat{h&\\&1}}\int_{F^n}\1_{\o}(\p_0^Nb)\psi(e_{n-1}hb)\d b\d h\\
    =N(\p_0)^{-N}\int_{N_{n-1}(F)\bs G_{n-1}(F)}V_1\overline{V_2}\sbr{\mat{h&\\&1}}\1_\o(e_{n-1}h\p_0^{-N})\d h.
    \end{multline*}
    As $V_i\mid_{G_{n-1}(F)}$ are compactly supported the right-hand side above vanishes for large $N$. Hence we conclude.
\end{proof}

\begin{rmk}
  When $v$ is ramified, changing variables in \eqref{final-I1-local} or 
  \eqref{final-I2-local}, we can reduce to the unramified computation
  up to a factor of $\Delta^{\mu^D(s_1,s_2)}$, where $\mu^D$ is affine and its
  coefficients only depend on $n$. 
\end{rmk}

\subsection{Archimedean places}

\begin{lem}\label{degenerate-arch}
Let $f$ be of type I and $v\mid\infty$. Then $I^j(n^-,s_1,s_2)$ are holomorphic for $\Re(s_1),\Re(s_2)\in\mathcal{C}_j$ and have meromorphic continuation for $s_1,s_2\in\C$. Moreover, we have $$I^j(n^-,s_1,s_2)=O\left(X^{-n\Re(s_1+s_2)}\right)$$ for $(s_1,s_2)$ at a neighbourhood of $(0,0)$.
\end{lem}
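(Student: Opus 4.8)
The statement concerns the archimedean local degenerate integrals $I^j(n^-,s_1,s_2)$ for $v\mid\infty$ and a type I test function $f$. The plan is to combine the abstract analytic-continuation input from Lemma \ref{meromorphic-cont-degenerate} and Remark \ref{upgrade-Ijn-holomorphic} with the explicit majorant estimates on $f$ recorded in \S\ref{type-1-test-function}, namely that $f=f^0\ast f^0$ with $f^0$ supported on $\overline{K_0}(X,\tau)$, $L^1$-normalized, and satisfying $\partial_a^\alpha\partial_b^\beta\partial_c^\gamma\partial_d^\delta f^0\ll X^{n+|\gamma|}$. Since $f^- = f$ at the archimedean places, I would prove everything for $j=1$, the case $j=2$ being identical with $W_1,W_2$ interchanged and $s_1,s_2$ swapped inside $\mathcal{C}_j$.

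First I would record holomorphicity. By Lemma \ref{meromorphic-cont-degenerate} the local integral $I^1_v(n^-,s_1,s_2)$ defined in \eqref{final-I1-local} converges absolutely and is holomorphic for $\Re(s_1),\Re(s_2)\in\mathcal{C}_1$; this step is immediate. For the meromorphic continuation to all of $\C^2$ I would follow the global argument in Lemma \ref{global-I1}: the abstract functional-equation machinery of the $\GL(n+1)\times\GL(n)$ zeta integral (\S\ref{zeta-integrals-n-1}) and the recursive structure exploited there yield meromorphic continuation of each local factor, so one simply quotes that. The substance of the lemma is the bound $I^1(n^-,s_1,s_2)=O(X^{-n\Re(s_1+s_2)})$ near $(0,0)$.

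For the bound, the idea is to make the scaling in $X$ explicit. Starting from \eqref{final-I1-local}, split the domain into $|e_ny|\le 1$ and $|e_ny|>1$ exactly as in the proof of Lemma \ref{meromorphic-cont-degenerate}. On $|e_ny|\le 1$, the support condition $\mat{x^{-1}&b\\&1}\in\overline{K_0}(X,\tau)\bmod Z$ forces $x$ and $b$ into compact sets while the $(n+1,j)$-entries for $j\le n$, i.e. $e_ny$, are constrained to $|e_ny|\ll 1/X$; this contributes a factor $X^{-n}$ from the volume of that shrinking region after using the Whittaker bounds of \cite[Lemma 3.1]{JaNu2021reciprocity} to control the $y$-integral (which converges since $\Re(s_1+s_2)>-1$ near $0$), together with the $L^1$-normalization of $f$ which already absorbs the $X^n$ from the supremum bound on $f^0$. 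On $|e_ny|>1$ one uses the Iwasawa decomposition $y=z\diag(a,1)k$ with $|z|^{-1}<1$ and the factorization of $\mat{\I_n&\\e_ny&1}$ displayed in the proof of Lemma \ref{meromorphic-cont-degenerate}; the support condition then pins $z^{-1}x^{-1}\diag(\I_{n-1},z^{-1})$ into a compact set whose $X$-dependence enters through the congruence-subgroup level, producing the weight $|z|^{\cdots}$ and an overall factor matching $X^{-n\Re(s_1+s_2)}$ after the change of variables $z\mapsto z^{-1}$, $a\mapsto az$. Tracking the determinant weights $|z|^{n(1+s_1+s_2)}$ and $|z|^{-(n+1)(1+s_1)}$ and using the tempered Whittaker bound on $W_2$ as in \emph{loc.\ cit.}, one sees the $z$-integral converges in $\mathcal{C}_1$ and the net power of $X$ is $-n\Re(s_1+s_2)$; the implied constant is uniform for $(s_1,s_2)$ in a fixed small neighborhood of $(0,0)$ because all the Whittaker and zeta-integral bounds used are locally uniform there. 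The main obstacle is bookkeeping: correctly identifying which matrix entries carry the $X$-scaling under the $\overline{K_0}(X,\tau)$ support condition in both ranges of $|e_ny|$, and checking that the $X$-powers from the volume of the shrinking support, from the $L^1$-normalization of $f^0$, and from the determinant weights in the Iwasawa coordinates combine to exactly $X^{-n\Re(s_1+s_2)}$ rather than something larger. Once this is set up the remaining estimates are routine applications of \cite[Lemma 3.1]{JaNu2021reciprocity}.
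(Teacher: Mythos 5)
Your plan starts from the right place (the support condition of the normalized majorant, and \cite[Lemma 3.1]{JaNu2021reciprocity} to control the $y$-integral), but it goes wrong in several concrete ways. For the meromorphic continuation you propose to ``follow the global argument in Lemma \ref{global-I1},'' but that lemma's own proof cites Lemma \ref{degenerate-arch} to furnish the archimedean local continuation — so invoking it here is circular. You also attribute the continuation to the $\GL(n+1)\times\GL(n)$ functional-equation machinery of \S\ref{zeta-integrals-n-1}; this is the wrong zeta integral. The integral in \eqref{final-I1-local} has the form (after isolating the $b$-integral and the compact $x$-region) of a $\GL(n)\times\GL(n)$ zeta integral $\Psi(1+s_1+s_2,\pi_1(x)W_1,\overline{W_2},\Phi)$ with a fixed Bruhat--Schwartz $\Phi$, and it is the \S\ref{zeta-integrals-n} theory that gives continuation, integrated over the compact $(x,b)$-domain. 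That rewriting, which you never perform, is the crux of the paper's proof.

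For the bound, the split into $|e_ny|\le 1$ and $|e_ny|>1$ is imported from Lemma \ref{meromorphic-cont-degenerate} where $f$ was an arbitrary compactly supported function, but here it is vacuous: membership in $\overline{K_0}(X,\tau)\bmod Z$ already forces $|e_ny|<\tau/X<1$ for $X\ge 1$, so the second region is empty. Your detailed analysis of that empty region, and the claim that it produces the $X^{-n\Re(s_1+s_2)}$, is therefore unjustified. In the region that actually matters you assert only that the $X^{-n}$ volume of the shrinking $c$-region and the $X^n$ sup-bound on $f^0$ cancel, which yields $O(1)$, not $O(X^{-n\Re(s_1+s_2)})$; the determinant weight is not correctly brought to bear. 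The paper's proof instead performs the single substitution $y\mapsto y/X$: the Haar measure on $N_n\bs G_n$ is unchanged, the trivial-central-character assumption leaves $W_1,W_2$ invariant, $|\det(y/X)|^{1+s_1+s_2}$ produces $X^{-n(1+\Re(s_1+s_2))}$, and against the $X^n$ sup-bound on $f$ this gives exactly $X^{-n\Re(s_1+s_2)}$, with the residual integral over $e_ny\in C_1$ and $x\in C_3$ absolutely convergent for $\Re(s_1+s_2)>-1$. You should discard the two-region split and carry out this substitution explicitly.
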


\begin{proof}
We prove the lemma for $j=1$ only using the support condition of $f$. As $f^-$ has a similar support condition as $f$ a similar proof works for $j=2$.

Recall \eqref{final-I1-local}. Note that if
\begin{equation*}
   \begin{pmatrix} x^{-1}+be_ny&b\\e_ny&1 \end{pmatrix}\in K_0(X,\tau)\mod Z_{n+1}(F)
\end{equation*}
then we immediately find fixed compact sets $C_1, C_2\subset F^n$ and a sufficiently small compact neighbourhood $C_3$ of the identity in $G_n(F)$ so that 
$$f\sbr{\begin{pmatrix} x^{-1}&b\\&1\end{pmatrix}\mat{\I_n&\\ c&1}}\neq 0\text{ only if }(Xc,b,x)\in C_1\times C_2 \times C_3.$$
Consequently, $I^1(n^-,s_1,s_2)$ can be written as
$$\int_{C_3}\int_{C_2}\Psi\left(1+s_1+s_2,\pi_1(x)W_1,\overline{W_2},f\sbr{\begin{pmatrix} x^{-1}&b\\&1\end{pmatrix}\mat{\I_n&\\ \cdot&1}}\right)\d b\d_{1+s_1} x.$$
The above integral is absolutely convergent if $\Re(s_1+s_2)>-1$ and uniformly in $s_1,s_2$ in compact sets in this region (see \cite[Lemma 3.1]{JaNu2021reciprocity}) and thus can be meromorphically continued to all $s_1,s_2$; see \cite[Lecture 2]{cogdell2007functions}. We conclude the proof of the first assertion by appealing to the compact support of $x$ and $b$ integrals.

Now for $\Re(s_1+s_2)>-1$, after change of variable $y\mapsto X^{-1} y$, we estimate $I^1(n^-,s_1,s_2)$ by
$$\ll X^{-n\Re(s_1+s_2)}\int_{x\in C_3}\int_{e_ny\in C_1}W_{1}(yx)\overline{W_{2}(y)}\d_{\Re(s_1)+\Re(s_2)+1}y \d x.$$
Noting that the last $y$-integral is absolutely convergent for $\Re(s_1+s_2)>-1$ we conclude the proof of the second assertion.
\end{proof}

\part{Endgame}

In this part, we prove the main theorems, namely, Theorem \ref{second-moment-nonarch}, Theorem \ref{second-moment-arch}, Theorem \ref{mixed-moment-2}, and Corollary \ref{non-vanishing}. In the rest of the paper, we assume $F$ is a global field. We also recall $\q=\prod_{v\mid\q}\p_v^{e_v}$ and $X:=\prod_{v\mid\infty} X_v$. We assume that $N(\q)X$ is sufficiently large.

\section{Assembling everything}

Let $f$ be of either type I or of type II; see \S\ref{type-1-test-function} and \S\ref{type-2-test-function}, respectively. Let $\pi_1,\pi_2$ be cuspidal automorphic representations for $G_n(F)$ with trivial central characters, such that $\pi_i$ are tempered and unramified at every place. Let $s_1,s_2\in\C$ such that $-\epsilon<\Re(s_1),\Re(s_2)<1/2$. At this point, we assume that $s_1,s_2$ are in generic position, in particular, $s_1+s_2$ avoids any pole of $\Lambda(\cdot, \pi_1\otimes\tilde{\pi}_2)$. In this section, we will assemble two different expressions of $\mathcal{P}^*$, as defined in \eqref{def-P*}, obtained in the previous parts.

\subsection{The relative trace formula}

We start by recalling the expression \eqref{regularizing-P*} for $\mathcal{P}^\ast$. Applying \eqref{decomposition-P-eta}, Proposition \ref{doublecosets-0} and Proposition \ref{doublecosets-with-unipotent} we obtain that $\mathcal{P}^\ast$ is the limit as $T\to\infty$ of
\begin{multline*}
     I^0(e,T)+I^0(n^+,T)+I^0(n^-,T)+I^0(\xi^\perp,T)+\sum_{0,1\neq t\in F}I^0(\xi(t),T)\\
     +I^0(w',T)+I^0(n^+w',T)+I^0(w'n^+,T) -I^1(e,T)-I^1(n^-,T)-I^1(w',T)\\
     -I^2(e,T)-I^2(n^-,T)-I^2(w',T) +I^{12}(e,T)+I^{12}(w',T).\\
\end{multline*}

We apply \eqref{apply-poisson}, \eqref{vanish-xi-t-contribution}, Lemma
\ref{vanish-genuine-orbital}, Lemma \ref{some-unip-orbital-integral-equal},
Lemma \ref{vanish-unip-orbital-integral} to reduce the above to
\[
    \tilde{I}^+(T)+I^0(n^-,T)-I^1(n^-,T)-I^2(n^-,T)+I^0(\xi^\perp,T).
\]
We now let $T\to\infty$. The limit exists thanks to Lemma \ref{final-for-Itilde+},
Lemma \ref{final-for-I-}, Lemma \ref{global-I1}, Lemma \ref{global-I2}, Proposition
\ref{rapid-decay-xi-perp-type-1}, and Proposition \ref{rapid-decay-xi-perp-type-2}.
Applying \eqref{def-degenerate} and Proposition \ref{main-expression-spectral-side}, and letting
$$\MT(s_1,s_2):=\tilde{I}^+(s_1,s_2)+I^-(s_1,s_2)$$
we obtain
\begin{equation}\label{main-junction-equation}
    \M=\MT-\D -\R + I^0(\xi^\perp),
\end{equation}
all evaluated at $s_1,s_2\in\C$ with $-\epsilon<\Re(s_1),\Re(s_2)<1/2$.

\subsection{The main term}

\begin{lem}\label{lem:main-term-type-1}
Let $f$ be of type I. Moreover, assume that $\pi_1\cong\pi_2$ and $W_1=W_2$. Then $\MT(s_1,s_2)$ is holomorphic in a neighbourhood of $(0,0)$. Moreover,
\begin{multline*}
\frac{\MT(0,0)}{I_\infty\Delta^n\vol(\overline{K_0}(\q))^{-1}X^n}=
\zeta^*(1)L(1,\pi_1,\Ad)\left(n\log N(\q)-\sum_{v\mid\infty}
\frac{\partial_{s=0}I^-_v(0,s)}{I_{0,v}X_v^n}\right)\\
+\sum_{v\mid\infty}\frac{\partial_{s=0}
\tilde{I}^+_v(s,0)}{I_{0,v}X_v^n}+J_F+2
\partial_{s=0}\left(sL(1+s,\pi_1\otimes\tilde{\pi}_1)\right)
\end{multline*}
where $J_F$ only depends on the discriminant of $F$ and $n$, and $I_\infty$ depends only on the choice of test vectors and test functions at the archimedean places.
\end{lem}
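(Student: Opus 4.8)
The proof will assemble the local computations of $\tilde I^+$ and $I^0(n^-)$ from the previous part, which both factor over places, and then take the value (and, where needed, the $s$-derivative) at $s_1=s_2=0$. Recall $\MT(s_1,s_2)=\tilde I^+(s_1,s_2)+I^0(n^-,s_1,s_2)$. By Lemma \ref{final-for-Itilde+} and Lemma \ref{final-for-I-} we have the factorizations
\begin{equation*}
\tilde I^+(s_1,s_2)=L^S(1+s_1+s_2,\pi_1\otimes\tilde\pi_1)\prod_{v\in S}\tilde I^+_v(s_1,s_2),\quad I^0(n^-,s_1,s_2)=L^S(1-s_1-s_2,\tilde\pi_1\otimes\pi_1)\prod_{v\in S}I^0_v(n^-,s_1,s_2),
\end{equation*}
where $S=S^\infty\cup\{v\mid\q\}$ (the ramified primes $v\mid\mathfrak d$ contribute only powers of $\Delta$). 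First I would collect the explicit local values at $(0,0)$: at level places $v\mid\q$, Lemma \ref{level-primes-I+} and Lemma \ref{level-primes-I-} give $\tilde I^+_v=\vol(\overline{K_0}(\p^e))^{-1}L(1+s_1+s_2,\pi_1\otimes\tilde\pi_1)$ and $I^0_v(n^-)=\vol(\overline{K_0}(\p^e))^{-1}N(\p)^{-en(s_1+s_2)}L(1-s_1-s_2,\tilde\pi_1\otimes\pi_1)$; at ramified places $v\mid\mathfrak d$ one gets factors $\Delta_v^{\mu^\pm(s_1,s_2)}$ with $\mu^\pm(0,0)=n$ from \eqref{I-plus-ramified-primes} and \eqref{I-minus-ramified-primes}, whose product over $v\mid\mathfrak d$ is $\Delta^n$ at $(0,0)$; and at archimedean places $v\mid\infty$, Lemma \ref{archimedean-I+type-1} and Lemma \ref{archimedean-I-type-1} give $\tilde I^+_v(0,0)=I^0_v(n^-,0,0)=X_v^n I_{0,v}$ with holomorphicity near $(0,0)$, so $I_\infty=\prod_{v\mid\infty}I_{0,v}$.

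The holomorphicity of $\MT$ near $(0,0)$ is the first point to address. Each factor $\tilde I^+_v$ and $I^0_v(n^-)$ is holomorphic near $(0,0)$ by the cited archimedean lemmas and the explicit finite-prime formulas; the only potential singularity comes from the partial $L$-functions $L^S(1+s_1+s_2,\pi_1\otimes\tilde\pi_1)$ and $L^S(1-s_1-s_2,\tilde\pi_1\otimes\pi_1)$. Since $\pi_1$ is cuspidal, the completed Rankin--Selberg $\Lambda(s,\pi_1\otimes\tilde\pi_1)$ has a simple pole at $s=1$ (and at $s=0$), so each of $\tilde I^+$ and $I^0(n^-)$ individually has a simple pole along $s_1+s_2=0$, but with opposite-sign residues; I would check that the residues cancel so that $\MT$ is holomorphic there. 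Concretely, write $L(1+w,\pi_1\otimes\tilde\pi_1)=\frac{\rho}{w}+c_0+c_1 w+\cdots$ near $w=0$ (finite $L$-function) and $L(1-w,\tilde\pi_1\otimes\pi_1)=-\frac{\rho}{w}+c_0-c_1w+\cdots$ (using the functional equation / the fact $\pi_1\otimes\tilde\pi_1\cong\tilde\pi_1\otimes\pi_1$), combine with the prime-by-prime prefactors expanded to first order in $w=s_1+s_2$, and observe the $1/w$ terms cancel. This is where the term $2\,\partial_{s=0}\bigl(sL(1+s,\pi_1\otimes\tilde\pi_1)\bigr)$ in the statement arises: it is the finite part of the sum of the two Laurent expansions, i.e. $2c_0=2\partial_{s=0}(sL(1+s,\pi_1\otimes\tilde\pi_1))$.

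Next I would compute $\MT(0,0)$ by expanding the product $\tilde I^+(s_1,s_2)+I^0(n^-,s_1,s_2)$ to order one in $w=s_1+s_2$ along, say, $s_2=0$ and extracting the constant term. The $N(\q)$-dependence enters through two sources: the factor $N(\p)^{-en(s_1+s_2)}$ in $I^0_v(n^-)$ at each $v\mid\q$, which contributes $-ne_v\log N(\p_v)$ from $\partial_w$ at $w=0$, summing to $-n\log N(\q)$ against the residue $\rho=L(1,\pi_1,\Ad)$ (up to the normalizing constant $\zeta^*(1)$ identifying the adjoint $L$-value with the Rankin--Selberg residue via \eqref{harmonic-weight}), and through the volume factors $\vol(\overline{K_0}(\q))^{-1}$ which are pulled out front. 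The archimedean derivative terms $\partial_{s=0}\tilde I^+_v(s,0)$ and $\partial_{s=0}I^0_v(n^-,0,s)$ appear exactly because differentiating the product in $s_1$ hits each local factor once; dividing through by $I_\infty\Delta^n\vol(\overline{K_0}(\q))^{-1}X^n$ normalizes each such derivative by $I_{0,v}X_v^n$ as in the statement. The remaining place-independent constants — the residue normalization, the $\Delta$-exponents' constant terms (which vanish since $\mu^\pm(0,0)=n$ is already absorbed), and the contribution of $L^S$ versus $L$ at the places in $S$ — get packaged into $J_F$, which by construction depends only on $\Delta_F$ and $n$.

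\textbf{Main obstacle.} The delicate part is the bookkeeping of the cancellation of poles along $s_1+s_2=0$ and the correct identification of the finite part, particularly making sure the sign in the functional equation $L(1-w,\tilde\pi_1\otimes\pi_1)\leftrightarrow L(1+w,\pi_1\otimes\tilde\pi_1)$ and the archimedean gamma factors are handled so that exactly the clean expression $2\,\partial_{s=0}(sL(1+s,\pi_1\otimes\tilde\pi_1))$ survives, with everything else of $F$-and-$\mathfrak X$-dependence only absorbed into $J_F$ and the archimedean derivative ratios. A secondary technical nuisance is tracking how the partial $L$-function $L^S$ recombines with the level-place local factors (which each reproduce a local Euler factor $L_v(1\pm w,\pi_1\otimes\tilde\pi_1)$) into the full finite $L$-function, and verifying the leftover $\zeta_\q$-type quotients are absorbed correctly into the $\vol(\overline{K_0}(\q))^{-1}$ normalization — but this is routine given Lemma \ref{level-primes-I+}, Lemma \ref{level-primes-I-}, and the identity $N(\p)^{-en}/\vol(\overline{K_0}(\p^e))=\zeta_\p(1)/\zeta_\p(n+1)$ already used in Lemma \ref{degenerate-ramified}.
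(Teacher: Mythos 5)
Your proposal follows essentially the same route as the paper: factor $\tilde I^+$ and $I^0(n^-)$ Euler-by-Euler, pull out the partial $L$-functions $L(1\pm(s_1+s_2),\pi_1\otimes\tilde\pi_1)$, recognize that the simple poles of these two $L$-values along $s_1+s_2=0$ carry opposite-sign residues, and then extract the constant term at $(0,0)$ by matching Laurent coefficients — with the finite part $2\,\partial_{s=0}\bigl(sL(1+s,\pi_1\otimes\tilde\pi_1)\bigr)$ and the $\partial_{s}$-derivatives of the prefactors (giving the $n\log N(\q)$ term, the archimedean derivative ratios, and the discriminant contribution $J_F$). This is precisely the structure of the paper's computation.

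Two points of imprecision, worth flagging. First, your statement that the derivative of $N(\q)^{-ne(s_1+s_2)}$ contributes $-n\log N(\q)$ is correct as stated, but in the assembled formula this factor sits under an overall minus sign (it appears in the $I^0(n^-)$ branch, which cancels the pole of $\tilde I^+$ with the opposite sign), so the net contribution is $+n\log N(\q)$; since you acknowledge the sign bookkeeping as the delicate step, this is not a genuine gap but be careful when you carry it out. Second, your plan "check that the residues cancel so that $\MT$ is holomorphic there" is verified in the paper only at the single point $(0,0)$ (via $\tilde I^+_\infty(0,0)=I^0_\infty(n^-,0,0)=I_\infty X^n$ and $\mu^+(0,0)=\mu^-(0,0)=n$); strictly, holomorphicity of $\MT$ across the divisor $s_1+s_2=0$ near $(0,0)$ requires that the residues agree on the whole germ of the hyperplane, not just at the origin — the paper glosses over this as well, so you are in good company, but a complete argument would either verify this identity on the hyperplane or invoke a different mechanism (e.g. holomorphy of the spectral side) to justify removing the singularity.

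Otherwise the decomposition, the role of the level places ($\vol(\overline{K_0}(\q))^{-1}$ normalization and the $N(\p)^{-en(s_1+s_2)}$ factor), the ramified places ($\Delta^{\mu^\pm}$ with $\mu^\pm(0,0)=n$), and the archimedean places (Lemmas \ref{archimedean-I+type-1} and \ref{archimedean-I-type-1} giving $\tilde I^+_v(0,0)=I^0_v(n^-,0,0)=I_{0,v}X_v^n$) are all used exactly as in the paper's proof.
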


\begin{proof}
Applying Lemma \ref{final-for-Itilde+}, Lemma \ref{final-for-I-},
Lemma \ref{level-primes-I+}, and Lemma \ref{level-primes-I-},
along with Remarks \ref{remark-ramified-I+} and \ref{remark-ramified-I-},
we obtain that $\MT(s_1,s_2)$ equals
\begin{multline*}
\vol(\overline{K_0}(\q))^{-1}\left(\Delta^{\mu^+(s_1,s_2)}L(1+s_1+s_2,\pi_1\otimes\tilde{\pi}_1)\tilde{I}^+_\infty(s_1,s_2)\right.\\
+\left.\Delta^{\mu^-(s_1,s_2)}N(\q)^{-n(s_1+s_2)}L(1-s_1-s_2,\pi_1\otimes\tilde{\pi}_1)I^-_\infty(s_1,s_2)\right).
\end{multline*}
From the theory of global $L$-function (see \cite[Theorem 4.2]{cogdell2007functions}) we know that $L(s,\pi_1\otimes\tilde{\pi}_1)$ is holomorphic in $\C\setminus\{1\}$ and has a simple pole at $s=1$ with residue $\zeta^*(1)L(1,\pi_1,\Ad)$. Thus applying Lemma \ref{archimedean-I+type-1} and Lemma \ref{archimedean-I-type-1} we see that the above is holomorphic in $s_1,s_2$ at a neighbourhood of $(0,0)$, avoiding the hyperplane $s_1+s_2=0$.

Now we claim that the above is also holomorphic on $s_1+s_2=0$. 
We write 
$$L(1+s,\pi_1\otimes\tilde{\pi}_1)=\frac{\zeta^*(1)L(1,\pi_1,\Ad)}{s}+C_{\pi_1}+K_{\pi_1}(s)$$
where $K_{\pi_1}$ is an entire function on $\C$ with $K_{\pi_1}(0)=0$, and $C_{\pi_1}\in\C$. Thus the quantity $\vol(\overline{K_0}(\q))\MT(s_1,s_2)$ can be rewritten as
\begin{multline*}
\zeta^*(1)L(1,\pi_1,\Ad)\frac{\Delta^{\mu^+(s_1,s_2)}\tilde{I}^+_\infty(s_1,s_2)-\Delta^{\mu^-(s_1,s_2)}N(\q)^{-n(s_1+s_2)}I^-_\infty(s_1,s_2)}{s_1+s_2}\\
+\left(C_{\pi_1}+K_{\pi_1}(s_1+s_2)\right)\Delta^{\mu^+(s_1,s_2)}\tilde{I}^+_\infty(s_1,s_2)\\
+\left(C_{\pi_1}+K_{\pi_1}(-s_1-s_2)\right)\Delta^{\mu^-(s_1,s_2)}N(\q)^{-n(s_1+s_2)}I^-_\infty(s_1,s_2).
\end{multline*}
The last two summands are holomorphic at a neighbourhood of $(0,0)$. Thus to prove the holomorphicity of $\MT(s_1,s_2)$ at a neighbourhood of $(0,0)$ it suffices to show that the limit of the first summand exists as $s_1+s_2\to 0$. Applying Lemma \ref{archimedean-I-type-1} we have
$$\tilde{I}^+_\infty(0,0)=I^-_\infty(0,0)=\prod_{v\mid\infty}I_{0,v}X_v^n=I_\infty X^n,\quad I_\infty:=\prod_{v\mid\infty}I_{0,v}.$$
Employing this fact and the fact that $\mu^+(0,0)=\mu^-(0,0)$, we may write the limit as $s_1+s_2\to 0$ of the first summand above as
\begin{multline*}
    \zeta^*(1)L(1,\pi_1,\Ad)\left(\lim_{s\to0}\frac{\Delta^{\mu^+(s,0)}\tilde{I}^+_\infty(s,0)-\Delta^{\mu^+(0,0)}\tilde{I}^+_\infty(0,0)}{s}\right.\\
    \left.-\lim_{s\to 0}\frac{\Delta^{\mu^-(0,s)}N(\q)^{ns}I^-_\infty(0,s)-\Delta^{\mu^-(0,0)}I^-_\infty(0,0)}{s}\right).
\end{multline*}
The above limits exist and the above expression equals
$$\zeta^*(1)L(1,\pi_1,\Ad)\left(\partial_{s=0}\Delta^{\mu^+(s,0)}\tilde{I}^+_\infty(s,0)-\partial_{s=0}\Delta^{\mu^-(0,s)}N(\q)^{-ns}I^-_\infty(0,s)\right).$$
This completes the proof of the holomorphicity of $\MT(s_1,s_2)$ in a neighbourhood of $(0,0)$.

The above computation along with Lemma \ref{archimedean-I+type-1} also shows that
\begin{multline*}
    \frac{\MT(0,0)}{\vol(\overline{K_0}(\q))^{-1}}=2\partial_{s=0}
    \left(sL(1+s,\pi_1\otimes\tilde{\pi}_1)\right)\Delta^{\mu^\pm}I_\infty X^n\\
    +\zeta^*(1)L(1,\pi_1,\Ad)\partial_{s=0}\left(\Delta^{\mu^+(s,0)}
    \tilde{I}^+_\infty(s,0)-\Delta^{\mu^-(0,s)}N(\q)^{-ns}I^-_\infty(0,s)\right),
\end{multline*}
where $\mu^{\pm}$ is the common value of $\mu^\pm(0,0)$ and $\mu^-(0,0)$.
Applying the product differentiation rule and employing Lemma \ref{archimedean-I+type-1} and Lemma \ref{archimedean-I-type-1} the derivative in the second summand above can be evaluated as
\begin{multline*}
    \left(\partial_{s=0}\Delta^{\mu^+(s,0)}\right)I_\infty X^n+\Delta^{\mu^\pm}
    I_\infty X^n\sum_{v\mid\infty}\frac{\partial_{s=0}\tilde{I}^+_v(s,0)}{I_{0,v}X_v^n}\\
    -\left(\partial_{s=0}\Delta^{\mu^-(0,s)}\right)I_\infty X^n+n\Delta^{\mu^\pm}
    \log N(\q) I_\infty X^n - \Delta^{\mu^\pm}I_\infty X^n\sum_{v\mid\infty}\frac{\partial_{s=0}I^-_v(0,s)}{I_{0,v}X_v^n}.
\end{multline*}
Combining everything completes the proof.
\end{proof}

\begin{lem}\label{lem:main-term-type-2}
Let $f$ be of type II. Moreover, assume that $\pi_1\ncong\pi_2$.
Then $\MT(s_1,s_2)$ is holomorphic at a neighbourhood of $(0,0)$.
Moreover, there is a choice of $\mu$ (see the choice of $f_{\p_1}$ in
\S\ref{type-2-test-function}) so that
$$\MT(0,0)=J_{\mathfrak{X}}\,\vol(\overline{K_0}(\q))^{-1}X^n$$
with $J_{\mathfrak{X}}\asymp 1$.
\end{lem}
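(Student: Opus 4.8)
\textbf{Proof proposal for Lemma \ref{lem:main-term-type-2}.}

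The plan is to run the same assembly as in the proof of Lemma \ref{lem:main-term-type-1}, but now exploiting the fact that $\pi_1\ncong\pi_2$ so that $L(s,\pi_1\otimes\tilde\pi_2)$ is \emph{entire}; this immediately removes the need for the delicate cancellation on the hyperplane $s_1+s_2=0$ that was the technical heart of the type I case. First I would collect the local evaluations of $\tilde I^+$ and $I^0(n^-)$ at every place. For $v\nmid\q\p_0\p_1$ and archimedean, these come from \eqref{I-plus-ramified-primes}, \eqref{I-minus-ramified-primes} for the ramified primes, Lemma \ref{archimedean-I+type-2} and Lemma \ref{archimedean-I-type-2} for the archimedean places; for $v\mid\q$ from Lemma \ref{level-primes-I+} and Lemma \ref{level-primes-I-}; for $v=\p_0$ from Lemma \ref{sc-prime-I+} and Lemma \ref{sc-prime-I-}; and for $v=\p_1$ from Lemma \ref{aux+I} and Lemma \ref{aux-I}. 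Multiplying these together (and using Lemma \ref{final-for-Itilde+} and Lemma \ref{final-for-I-} to handle the unramified Euler product, which produces the partial $L$-value $L^S(1\pm(s_1+s_2),\pi_1\otimes\tilde\pi_2)$) gives, for $\Re(s_i)$ in a neighbourhood of $(0,0)$,
\begin{multline*}
\MT(s_1,s_2)=\vol(\overline{K_0}(\q))^{-1}\Big(\Delta^{\mu^+(s_1,s_2)}C(\sigma)^{0}\,\overline{\lambda_{\pi_2}(\p_1^\mu)}\,N(\p_1)^{-|\mu|s_2}\,L(1+s_1+s_2,\pi_1\otimes\tilde\pi_2)\,\tilde I^+_\infty(s_1,s_2)\\
+\Delta^{\mu^-(s_1,s_2)}C(\sigma)^{-s_1-s_2}N(\q)^{-n(s_1+s_2)}\lambda_{\tilde\pi_1}(\p_1^\mu)N(\p_1)^{|\mu|s_1-\nu(s_1+s_2)}L(1-s_1-s_2,\tilde\pi_1\otimes\pi_2)\,I^0_\infty(n^-,s_1,s_2)\Big).
\end{multline*}
Each factor is holomorphic at $(0,0)$: the two Rankin--Selberg $L$-functions are entire because $\pi_1\ncong\pi_2$, the archimedean factors are holomorphic there by Lemma \ref{archimedean-I+type-2} and Lemma \ref{archimedean-I-type-2}, and the remaining factors are entire. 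This gives the holomorphy claim.

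Next I would specialise to $s_1=s_2=0$. Using $\mu^\pm(0,0)=n$, $C(\sigma)^0=1$, $N(\p_1)^0=1$, and $\tilde I^+_\infty(0,0)\asymp X^n$, $I^0_\infty(n^-,0,0)\asymp X^n$ (Lemma \ref{archimedean-I+type-2}, Lemma \ref{archimedean-I-type-2}, for the $X$-independent choice of $\alpha$ made there), we obtain
\[
\MT(0,0)=\vol(\overline{K_0}(\q))^{-1}\Delta^{n}\Big(\overline{\lambda_{\pi_2}(\p_1^\mu)}\,L(1,\pi_1\otimes\tilde\pi_2)\,\tilde I^+_\infty(0,0)+\lambda_{\tilde\pi_1}(\p_1^\mu)\,L(1,\tilde\pi_1\otimes\pi_2)\,I^0_\infty(n^-,0,0)\Big),
\]
so $\MT(0,0)=J_{\mathfrak X}\,\vol(\overline{K_0}(\q))^{-1}X^n$ with
\[
J_{\mathfrak X}=\Delta^n\Big(\overline{\lambda_{\pi_2}(\p_1^\mu)}\,L(1,\pi_1\otimes\tilde\pi_2)\,\frac{\tilde I^+_\infty(0,0)}{X^n}+\lambda_{\tilde\pi_1}(\p_1^\mu)\,L(1,\tilde\pi_1\otimes\pi_2)\,\frac{I^0_\infty(n^-,0,0)}{X^n}\Big),
\]
a quantity of size $O(1)$ and independent of $\q$ and $X$ (the archimedean ratios are $X$-independent constants). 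It remains to choose $\mu$ so that $J_{\mathfrak X}\ne 0$, i.e.\ $J_{\mathfrak X}\asymp 1$. This is the only real obstacle, and it is the point where the auxiliary prime $\p_1$ is used. The idea is that as $\mu$ ranges over $\Z^n_{\ge 0}$, the Hecke eigenvalues $\overline{\lambda_{\pi_2}(\p_1^\mu)}$ and $\lambda_{\tilde\pi_1}(\p_1^\mu)$ are two \emph{different} sequences of symmetric polynomials in the (distinct, since $\pi_1\ncong\pi_2$) Satake parameters, so the two-term linear combination above cannot vanish for \emph{every} $\mu$ — if it did, one would get a nontrivial linear relation forcing a coincidence of the Satake data of $\pi_1$ and $\pi_2$ at $\p_1$ for all $\mu$, contradicting strong multiplicity one after also varying $\p_1$. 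Concretely I would argue: the generating function $\sum_\mu \lambda_{\tilde\pi_1}(\p_1^\mu)t^{|\mu|}$ and $\sum_\mu\overline{\lambda_{\pi_2}(\p_1^\mu)}t^{|\mu|}$ are, up to the Cauchy identity, the local $L$-factors $L_{\p_1}(\cdot,\tilde\pi_1\otimes(\text{dual}))$ etc., which are genuinely different rational functions since $\pi_{1,\p_1}\ncong\pi_{2,\p_1}$ for a suitable choice of $\p_1$ (available by strong multiplicity one); hence some coefficient $\mu$ gives a nonzero value, and we fix that $\mu$. (Here one also uses that the two nonzero constants $L(1,\pi_1\otimes\tilde\pi_2)$ and $\tilde I^+_\infty(0,0)/X^n$, resp.\ their counterparts, are fixed and nonzero.) Once such a $\mu$ is fixed, $J_{\mathfrak X}$ is a fixed nonzero constant, completing the proof.

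I expect the bookkeeping of the power of $\Delta$ and the precise constants to be routine (they mirror Lemma \ref{lem:main-term-type-1}), so the writeup should foreground the non-vanishing argument for $J_{\mathfrak X}$; that is where the hypotheses $\pi_1\ncong\pi_2$, the tempered/unramified assumption, and the freedom to choose $\p_1$ and $\mu$ all enter, and it is the step most likely to need care.
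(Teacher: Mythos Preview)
Your approach matches the paper's: assemble the local factors, invoke entireness of $L(s,\pi_1\otimes\tilde\pi_2)$ for holomorphy, and then choose $\mu$ at the auxiliary prime to force $J_{\mathfrak X}\asymp 1$. Two small corrections. First, the archimedean ratios $\tilde I^+_\infty(0,0)/X^n$ and $I^0_\infty(n^-,0,0)/X^n$ are not literally $X$-independent constants: Lemmas \ref{archimedean-I+type-2} and \ref{archimedean-I-type-2} only give $\asymp 1$ (they lie within $\epsilon$ of the explicit constants from Lemmas \ref{archimedean-I+type-2-f0} and \ref{archimedean-I-type-2-f0}). So $J_{\mathfrak X}$ genuinely depends on $\mathfrak X$, and the claim to establish is $J_{\mathfrak X}\asymp 1$, not $J_{\mathfrak X}=$ fixed nonzero constant. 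Second, the global $L$-value that appears is $L^{\p_0}(1,\pi_1\otimes\tilde\pi_2)$ rather than the full $L$, since Lemma \ref{sc-prime-I+} gives $\tilde I^+_{\p_0}=1$; this is harmless but worth recording.

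For the non-vanishing step you flag as delicate, the paper's argument is simpler and more robust than your generating-function sketch, and it handles the $\mathfrak X$-dependence cleanly. Write $\MT(0,0)=\vol(\overline{K_0}(\q))^{-1}X^n\big(\overline{\lambda_{\pi_{2,\p_1}}(\p_1^\mu)}\,A+\lambda_{\tilde\pi_{1,\p_1}}(\p_1^\mu)\,B\big)$ with $A,B\asymp 1$. By strong multiplicity one pick $\p_1$ with $\pi_{1,\p_1}\ncong\pi_{2,\p_1}$, and then a single $\mu'$ with $\lambda_{\pi_1}(\p_1^{\mu'})\neq\lambda_{\pi_2}(\p_1^{\mu'})$. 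Now compare just two choices: $\mu=0$ gives $A+B$, and $\mu=\mu'$ gives $\overline{\lambda_{\pi_{2,\p_1}}(\p_1^{\mu'})}A+\lambda_{\tilde\pi_{1,\p_1}}(\p_1^{\mu'})B$. These two linear forms in $(A,B)$ are linearly independent (the $2\times 2$ determinant is $\lambda_{\tilde\pi_{1,\p_1}}(\p_1^{\mu'})-\overline{\lambda_{\pi_{2,\p_1}}(\p_1^{\mu'})}\neq 0$), so they cannot both be $o(1)$ while $A,B\asymp 1$. Hence at least one of $\mu\in\{0,\mu'\}$ gives $J_{\mathfrak X}\asymp 1$. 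This avoids any appeal to Cauchy identities or infinite families of $\mu$.
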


\begin{proof}
Applying Lemma \ref{final-for-Itilde+}, Lemma \ref{final-for-I-}, Lemma \ref{level-primes-I+}, and Lemma \ref{level-primes-I-}, Lemma \ref{aux+I}, Lemma \ref{aux-I}, Lemma \ref{sc-prime-I+}, Lemma \ref{sc-prime-I-}, along with Remarks \ref{remark-ramified-I+} and \ref{remark-ramified-I-}. We obtain that $\MT(s_1,s_2)$ equals
\begin{multline*}
\vol(\overline{K_0}(\q))^{-1}\Big(\Delta^{\mu^+(s_1,s_2)}\overline{\lambda_{\pi_{2,\p_1}}(\p_1^\mu)}N(\p_1)^{-|\mu|s_2}L^{\p_0}(1+s_1+s_2,\pi_1\otimes\tilde{\pi}_2)\tilde{I}^+_\infty(s_1,s_2)\Big.\\
+\Delta^{\mu^-(s_1,s_2)}\lambda_{\tilde{\pi}_{1,\p_1}}(\p_1^\mu)N(\p_1)^{|\mu|s_1-\nu(s_1+s_2)}C(\sigma)^{-n(s_1+s_2)}N(\q)^{-n(s_1+s_2)}\\
\times\Big.L^{\p_0}(1-s_1-s_2,\tilde{\pi}_1\otimes\pi_2)I^-_\infty(s_1,s_2)\Big).
\end{multline*}
From the theory of global $L$-function (see \cite[Theorem 4.2]{cogdell2007functions}) we know that $L(s,\pi_1\otimes\tilde{\pi}_2)$ is entire as $\pi_1\ncong\pi_2$. Thus applying Lemma \ref{archimedean-I+type-2} and Lemma \ref{archimedean-I-type-2} we see that $\MT(s_1,s_2)$ is holomorphic at a neighbourhood of $(0,0)$.

Moreover, plugging in $(0,0)$ in the above expression we obtain
$$\MT(0,0)=\vol(\overline{K_0}(\q))^{-1}X^n\left(\overline{\lambda_{\pi_{2,\p_1}}(\p_1^\mu)}A+\lambda_{\tilde{\pi}_{1,\p_1}}(\p_1^\mu)B\right),$$
where
$$A:=\Delta^{\mu^\pm} L^{\p_0}(1,\pi_1\otimes\tilde{\pi}_2)
\frac{\tilde{I}^+_\infty(0,0)}{X^n},\quad
B:=\Delta^{\mu^\pm} L^{\p_0}(1,\tilde{\pi}_1\otimes\pi_2)
\frac{I^-_\infty(0,0)}{X^n}.$$
It follows from the facts (see \cite{shahidi1980nonvanishing})
$$L^{\p_0}(1,\pi_1\otimes\tilde{\pi}_2), L^{\p_0}(1,\tilde{\pi}_1\otimes\pi_2)\neq 0,$$
and Lemma \ref{archimedean-I+type-2} and Lemma \ref{archimedean-I-type-2} that
$$A\asymp 1,\quad B\asymp 1,\quad\text{as } N(\q)X\to\infty.$$
Now we choose $\mu'\in\Z_{\ge 0}^n$ so that
$$\lambda_{\pi_1}(\p_1^{\mu'})\neq \lambda_{\pi_2}(\p_1^{\mu'}).$$
Such a $\mu'$ exists as $\pi_1\ncong\pi_2$. Therefore, both 
$$A+B=o(1),\quad\overline{\lambda_{\pi_{2,\p_1}}(\p_1^{\mu'})}A+\lambda_{\tilde{\pi}_{1,\p_1}}(\p_1^{\mu'})B=o(1)$$
can not be valid simultaneously. Hence there is a choice of $\mu$ so that
$$\overline{\lambda_{\pi_{2,\pi_1}}(\p_1^{\mu})}A+\lambda_{\tilde{\pi}_{1,\p_1}}(\p_1^{\mu})B\asymp 1$$
and we conclude.
\end{proof}

\section{Proof of the theorems}

\subsection{Proof of Theorem \ref{second-moment-nonarch} and Theorem \ref{second-moment-arch}}

In this subsection, we assume $\pi_1\cong\pi_2$ and that $f$ is of type I.

From the analytic properties of the $L$-functions (see \cite[Theorem 4.2]{cogdell2007functions}) and recalling that the sum in \eqref{spectral-side} converges absolutely we see that $\mathcal{M}(s_1,s_2)$ is holomorphic in a neighbourhood of $(0,0)$. Thus, applying Proposition \ref{rapid-decay-xi-perp-type-1} and Lemma \ref{lem:main-term-type-1} to \eqref{main-junction-equation}, we conclude that $\D(s_1,s_2)+\R(s_1,s_2)$ is holomorphic in a neighbourhood of $(0,0)$. Moreover, from Proposition \ref{main-prop-degenerate} and Remark \ref{upgrade-Ijn-holomorphic} we conclude that only possible pole of $\D(s_1,s_2)$ in a sufficiently small neighbourhood of $(0,0)$ is along the hyperplane $s_1+s_2=0$. Thus the same must be true for $\R(s_1,s_2)$. Thus using Cauchy's residue theorem we have
\begin{align}\label{remove-singularity-D-R}
    (\D+\R)(0,0) &= \partial_{s=0}\, s\left(\D(s/2,s/2)+\R(s/2,s/2)\right)\notag\\
    &= \frac{1}{2\pi i}\int_{|s|=\epsilon}\frac{\D(s/2,s/2)+\R(s/2,s/2)}{s}{\d s}
\end{align}
for some $\epsilon>0$ sufficiently small but fixed. 

\begin{proof}[Proof of Theorem \ref{second-moment-nonarch}]
    We choose $X=1$ (\emph{i.e.}, $X_v=1,\,\forall\, v\mid\infty$) and $f$ to be of type I and name $\pi_1\cong\pi_2=\pi$. We define $\H=\H_{\q}$ so that (see \S\ref{sec:spectral-side})
    \begin{equation}\label{recallin-H-and-h}
      H_v:=H_v(\cdot;0,0,W_{1,v},W_{1,v},f_v), v<\infty;\quad
      h_v:=h_v(\cdot; 0,0,W_{1,v},W_{1,v},f_v), v\mid\infty.
    \end{equation}
    By definition,
    $$\mathbb{M}(\pi,\H_\q)=\M(0,0,\phi_1,\phi_2,f).$$
    Now from \eqref{main-junction-equation}, Lemma \ref{lem:main-term-type-1}, and Proposition \ref{rapid-decay-xi-perp-type-1} we obtain
    \begin{align*}
        \M(0,0)&=\MT(0,0)-(\D+\R)(0,0) + O_A(N(\q)^{-A})\\
        &=n\Delta^nI_\infty\zeta^*(1)L(1,\pi,\Ad)\vol(\overline{K_0}(\q))^{-1}\left(\log N(\q)+D_{\pi}\right) \\
        &+\mathrm{SMT}(\pi,\q)+  O_A\left(N(\q)^{-A}\right),
    \end{align*}
    where
    \begin{multline*}
        \mathrm{SMT}(\pi,\q):=\frac{\zeta_\q(1)}{\zeta_\q(n+1)}
        \partial_{s=0}\,s\Bigg[N(\q)^{-ns}L(1+s,\pi\otimes\tilde{\pi})\\
        \left(\frac{L^\q(\frac{n+1}{2}+\frac{n-1}{2}s,\pi)}
        {L^\q(\frac{n+3}{2}+\frac{n+1}{2}s,\pi)}
        I^1_\infty(n^-,s/2,s/2)+\frac{L^\q(\frac{n+1}{2}+\frac{n-1}{2}s,\tilde{\pi})}
        {L^\q(\frac{n+3}{2}+\frac{n+1}{2}s,\tilde{\pi})}I^2_\infty(n^-,s/2,s/2)\right)\\
        +r_FL(s,\pi\otimes\tilde{\pi}) \left(\frac{L^\q(\frac{n+1}{2}-\frac{n-1}{2}s,
        \tilde{\pi})}{L^\q(\frac{n+3}{2}-\frac{n+1}{2}s,\tilde{\pi})}
        h_\infty(\mathcal{I}(\tilde{\pi},1/2)_\infty)\right.\\
        +\left.\frac{L^\q(\frac{n+1}{2}+\frac{n-1}{2}s,\pi)}{L^\q(\frac{n+3}{2}
        -\frac{n+1}{2}s,\pi)} h_\infty(\mathcal{I}(\tilde{\pi},-1/2)_\infty)\right)\Bigg].
    \end{multline*}
The expression of $\mathrm{SMT}$ follows from \eqref{remove-singularity-D-R}, Proposition \ref{main-prop-degenerate}, and \eqref{residue-term}. Thus writing $L^\q=L/L_\q$ we have
$$\mathrm{SMT}(\pi,\q)=C\log N(\q)+D$$
where $C$ is given by a $\q$-independent linear combination of 
\eqref{quotients-of-Lfunctions} and $D$ is given by a $\q$-independent linear combination of the terms in
\eqref{quotients-of-Lfunctions} and \eqref{derivatives-quotients-Lfunctions}.
On the other hand, the required properties of $\H_\f$ follow from Lemma \ref{upper-bounds-spectral-nonarchimedean} and Lemma \ref{lower-bounds-spectral-nonarchimedean}. Finally, noting that 
$$\vol(\overline{K_0}(\q))^{-1}=\frac{\zeta_\q(1)}{\zeta_\q(n+1)}N(\q)^n$$we conclude the proof.
\end{proof}

\begin{proof}[Proof of Theorem \ref{second-moment-arch}]
    We follow a similar proof as of Theorem \ref{second-moment-nonarch}.  We choose $\q=1$ and $f$ to be of type I and name $\pi_1\cong\pi_2=\pi$. We define $\H=\H_{\mathfrak{X}}$
    as in \eqref{recallin-H-and-h}. Again by definition,
    $$\mathbb{M}(\pi,\H_\mathfrak{X})=\M(0,0,\phi_1,\phi_2,f).$$
    Now from \eqref{main-junction-equation}, Lemma \ref{lem:main-term-type-1}, Lemma \ref{archimedean-I+type-1}, Lemma \ref{archimedean-I-type-1}, and Proposition \ref{rapid-decay-xi-perp-type-1} we obtain
    \begin{align*}
        \M(0,0)&=\MT(0,0)-(\D+\R)(0,0) + O_A(N(\q)^{-A})\\
        &=n\Delta^n X^n\zeta^*(1)L(1,\pi,\Ad)\left(\log X+O(1)\right) +(\D+\R)(0,0)+  O_A(X^{-A}).
    \end{align*}
    Now we estimate $(\D+\R)(0,0)$ using the integral representation in \eqref{remove-singularity-D-R}. We choose $\epsilon$ such that $\R(s,0)$ is fixed distance away from all poles of it and the $h_\infty$ in \eqref{residue-term}. Applying Proposition \ref{main-prop-degenerate}, Lemma \ref{degenerate-arch}, and Proposition \ref{main-prop-residue-arch}
    $$(\D+\R)(0,0)\ll_\epsilon X^\epsilon.$$
    Finally, the required properties of $\H_{\mathfrak{X}}$ follow from Lemma \ref{localization}.
\end{proof}

\begin{proof}[Proof of Theorem \ref{hybrid-moment}]
    This is immediate from the proofs of Theorem \ref{second-moment-nonarch} and Theorem \ref{second-moment-arch}. Indeed, we choose $f$ to be of type I with parameter $\q$ and $\mathfrak{X}$. Then for the corresponding $\H(f)=\H_\f\H_\infty$ we have from the proofs of Theorem \ref{second-moment-nonarch} and Theorem \ref{second-moment-arch}
    $$\M(\pi,\H)\ll \vol(\overline{K_0}(\q))^{-1}X^n(\log N(\q)+\log X)\ll_\epsilon (N(\q)X)^{n+\epsilon}.$$
    On the other hand, for cuspidal $\Pi$ one has $\ell(\Pi)\asymp L(1,\Pi,\Ad)$, and consequently,
    $$\M(\pi,\H)\ge\sum_{\substack{\Pi\text{ cuspidal}\\\mathfrak{C}(\Pi_\f)\mid\q\\C(\Pi_v)\le X_v,\,v\mid\infty}}\frac{|L(1/2,\Pi\otimes\pi)|^2}{L(1,\Pi,\Ad)}\H_\f(\Pi_\f)\H_\infty(\Pi_\infty).$$
    We conclude the proof using properties of $\H_\f$ and $\H_\infty$ from Theorem \ref{second-moment-nonarch} and Theorem \ref{second-moment-arch}, respectively.
\end{proof}

\subsection{Proof of Theorem \ref{mixed-moment-2}}

In this subsection, we assume $\pi_1\ncong\pi_2$ and that $f$ is of type II.
We define $\H$ again as in \eqref{recallin-H-and-h}. From Lemma \ref{supercuspidal-projects-cuspidal} we yield that if $\H(\Pi)\neq 0$ then $\Pi$ must be cuspidal. We define
$$\mathcal{F}:=\{\Pi\text{ cuspidal for }\PGL_{n+1}(F)\mid\Pi_v\text{ unramified for } \infty>v\nmid\q\p_0\p_1\}$$
Applying Lemma \ref{bounds-spectral-nonarchimedean} and Remark \ref{rmk-ramified-weight-function} we write
$$\mathbb{M}(\pi,\H)=J_1\sum_{\Pi\in\mathcal{F}}\frac{L(1/2,\Pi\otimes\pi_1)L(1/2,\tilde{\Pi}\otimes\tilde{\pi}_2)}{L(1,\Pi,\Ad)}\mathcal{H}(\Pi)
H_\q(\Pi_\q)H_{\p_0}(\Pi_{\p_0})H_{\p_1}(\Pi_{\p_1})h_\infty(\Pi_\infty).$$
for some $J_1\neq 0$ depending only on $F$.
Hence, the right-hand side of \eqref{spectral-side} is entire in $s_1,s_2$, and consequently, in this case $\R(s_1,s_2)$ vanishes identically. Thus applying Proposition \ref{main-prop-degenerate}, Proposition \ref{rapid-decay-xi-perp-type-2}, and Lemma \ref{lem:main-term-type-2} we rewrite \eqref{main-junction-equation} as
\begin{equation}\label{main-formula-II}
\sum_{\Pi\in\mathcal{F}}\frac{L(1/2,\Pi\otimes\pi_1)L(1/2,\tilde{\Pi}\otimes\tilde{\pi}_2)}{L(1,\Pi,\Ad)}\mathcal{H}(\Pi)
=J_{\mathfrak{X}}\,\vol(\overline{K_0}(\q))^{-1} X^n + O_A\left((N(\q)X)^{-A}\right)
\end{equation}
for some $J_{\mathfrak{X}}\asymp 1$.

We now show that we can disregard the contribution of the representations which have slightly lower conductor. 

\begin{lem}\label{spectral-side-truncation}
For every $\eta>0$ there is a $\delta>0$ such that
$$\sum\frac{L(1/2,\Pi\otimes\pi_1)L(1/2,\tilde{\Pi}\otimes\tilde{\pi}_2)}{L(1,\Pi,\Ad)}
\mathcal{H}(\Pi)
\ll (N(\q)X)^{n-\delta},
$$
where the sum is over all $\Pi\in\mathcal{F}$ with $C(\Pi)\le (N(\q)X)^{1-\eta}$.
\end{lem}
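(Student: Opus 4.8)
The plan is to bound the tail sum, where the analytic conductor $C(\Pi)$ is restricted to be at most $(N(\q)X)^{1-\eta}$, by exploiting the convexity (or even just the trivial) bound for the $L$-values together with the near-orthogonality of the harmonic weights captured by the (pre-)trace formula. Concretely, I would first combine the two $L$-values via the AM--GM inequality,
\begin{equation*}
\left|L(1/2,\Pi\otimes\pi_1)L(1/2,\tilde{\Pi}\otimes\tilde{\pi}_2)\right|\le \tfrac12\left(\frac{|L(1/2,\Pi\otimes\pi_1)|^2}{?}+\dots\right),
\end{equation*}
but more to the point: since $\pi_1,\pi_2$ are fixed, $C(\Pi\otimes\pi_i)\asymp C(\Pi)^{n}$, so the convexity bound gives $|L(1/2,\Pi\otimes\pi_i)|\ll_\epsilon C(\Pi)^{n(1/4+\vartheta/2)+\epsilon}$; since all our $\Pi$ are $\vartheta$-tempered with $\vartheta<1/2$, this is $\ll C(\Pi)^{n/2-\kappa}$ for some fixed $\kappa>0$ (in fact $\kappa$ can be taken to depend only on $n$ via the known bounds towards Ramanujan, e.g.\ Luo--Rudnick--Sarnak or Blomer--Brumley). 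One could also simply feed the second moment bound of Corollary \ref{hybrid-moment} (applied with smaller parameters) into a dyadic decomposition, which is perhaps cleaner since it avoids invoking subconvexity-adjacent input.

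So the main line of argument I would pursue is a dyadic decomposition in the conductor. Partition the range $1\le C(\Pi)\le (N(\q)X)^{1-\eta}$ into $O(\log(N(\q)X))$ dyadic blocks $C(\Pi)\asymp Y$ with $Y\le (N(\q)X)^{1-\eta}$. Within each block, decompose $Y=Y_\f Y_\infty$ according to the non-archimedean and archimedean parts of the conductor; since $H_\q$ is supported on $\Pi_\f$ with $\mathfrak{C}(\Pi_\f)\mid\q$ (so $Y_\f\le N(\q)$) and $h_\infty$ decays polynomially in $C(\Pi_\infty)/X$ by Lemma \ref{bound-spectral-archimedean-local-weight} (so effectively $Y_\infty\lesssim X$, with polynomial savings beyond), and $H_{\p_0}$ forces $\Pi_{\p_0}\cong\tilde\sigma$ (Lemma \ref{supercuspidal-projects-cuspidal}) while $H_{\p_1}$ restricts $c(\Pi_{\p_1})\le\nu$ (Lemma \ref{aux-weight-majorize}), the weight $H_\q H_{\p_0}H_{\p_1}h_\infty$ is $\ll_\epsilon (N(\q)X)^\epsilon$ on its support and this support has cardinality $\ll (Y_\f Y_\infty)^{n+\epsilon}\ll Y^{n+\epsilon}$ by a Weyl-law / Brumley--Milicevic-type count for the generic spectrum with bounded conductor. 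Using the harmonic weight relation $\ell(\Pi)\asymp L(1,\Pi,\Ad)$ for cuspidal $\Pi$ to pass between $1/L(1,\Pi,\Ad)$ and $1/\ell(\Pi)$, the second moment input from Corollary \ref{hybrid-moment} (with parameters giving conductor up to $Y$) bounds
\begin{equation*}
\sum_{C(\Pi)\asymp Y}\frac{|L(1/2,\Pi\otimes\pi_i)|^2}{L(1,\Pi,\Ad)}\ll_\epsilon Y^{n+\epsilon},
\end{equation*}
and Cauchy--Schwarz in $\Pi$ (separating $\pi_1$ and $\pi_2$) then gives the block contribution $\ll_\epsilon (N(\q)X)^\epsilon Y^{n+\epsilon}\ll (N(\q)X)^{(1-\eta)n+\epsilon}$. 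Summing over the $O(\log(N(\q)X))$ blocks yields $\ll (N(\q)X)^{(1-\eta)n+\epsilon}=(N(\q)X)^{n-\eta n+\epsilon}$, so choosing $\delta=\eta n/2$ (say) and absorbing the $(N(\q)X)^\epsilon$ finishes the proof.

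The technical point to be careful about is that Corollary \ref{hybrid-moment} as stated is for the fixed representation $\pi$ with $\pi_1\cong\pi_2$; for $i=1,2$ separately one wants the single-$L$-function second moment over the same family $\mathcal{F}$ truncated to $C(\Pi)\asymp Y$. This is obtained by running the same argument that proves Theorem \ref{second-moment-arch}/Corollary \ref{hybrid-moment} but with the type I test function whose level is a divisor of $\q$ and whose archimedean parameter realizes conductor $\asymp Y$ at the archimedean places; the positivity of $H_\f$ and $h_\infty$ on the relevant part of the spectrum (Lemma \ref{lower-bounds-spectral-nonarchimedean}, Lemma \ref{localization}) lets one drop down to the subfamily $C(\Pi)\asymp Y$. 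I expect the \textbf{main obstacle} to be bookkeeping the interaction of the four local weights at $\q$, $\p_0$, $\p_1$, and $\infty$ so that their product is genuinely $\ll(N(\q)X)^\epsilon$ on the support and the support count is genuinely $O(Y^{n+\epsilon})$ with no loss — in particular making sure the polynomial decay of $h_\infty$ in $C(\Pi_\infty)/X$ is strong enough that the blocks with $Y_\infty$ much larger than $X$ are harmless, and that the supercuspidal constraint $\Pi_{\p_0}\cong\tilde\sigma$ doesn't create a spurious contribution of size $C(\sigma)^n$ that would have to be compared against $N(\q)X$. Once those local inputs are pinned down the dyadic summation and Cauchy--Schwarz are routine.
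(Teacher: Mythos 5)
Your proposal is essentially correct and takes the same route as the paper: bound the type~II local weights at $\q$, $\p_0$, $\p_1$, and $\infty$ by $(N(\q)X)^\epsilon$ using the upper-bound lemmas (Lemmas~\ref{upper-bounds-spectral-nonarchimedean}, \ref{supercuspidal-projects-cuspidal}, \ref{aux-weight-majorize}, \ref{bound-spectral-archimedean-local-weight}), apply Cauchy--Schwarz to separate $\pi_1$ and $\pi_2$, decompose dyadically in the conductor, and feed in the second-moment input of Corollary~\ref{hybrid-moment}. The ``main obstacle'' you flag is handled in the paper exactly as you anticipate: the supercuspidal place contributes only an $O(1)$ factor since $\sigma$ is fixed, the decay $h_\infty(\Pi_\infty)\ll (X/C(\Pi_\infty))^{2n\sigma}$ with $\sigma$ small keeps the archimedean weight $\ll (N(\q)X)^\epsilon$ uniformly (including when $C(\Pi_\infty)\gg X$), and the type~II weights at $\p_0,\p_1$ are compared to type~I weights (via Lemma~\ref{lower-bounds-spectral-nonarchimedean}) so that the resulting family $\mathcal{F}(\q\p_0^{c(\sigma)}\p_1^\nu)$ is covered by Corollary~\ref{hybrid-moment} applied with level $\mathfrak{e}\mid\q\p_0^{c(\sigma)}\p_1^\nu$ and archimedean parameters $Y_v$; taking the supremum over $N(\mathfrak{e})\prod_v Y_v\le(N(\q)X)^{1-\eta}$ gives exactly the $(N(\q)X)^{(1-\eta)(n+\epsilon)+\epsilon}$ bound you arrive at. The alternative convexity route you briefly mention is not needed and is not used in the paper.
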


\begin{proof}
First, Lemma \ref{supercuspidal-projects-cuspidal} and Lemma \ref{lower-bounds-spectral-nonarchimedean} imply that
$$H_{\p_0}(\Pi_{\p_0})=H_{\p_0}(\Pi_{\p_0};f_{\p_0})\ll 1 \ll H_{\p_0}(\Pi_{\p_0};f_0)$$
where $f_0:=\vol(\overline{K_0}(\p_0^{c(\sigma)}))^{-1}\1_{\overline{K_0}(\p_0^{c(\sigma))}}$. Similarly, from Lemma \ref{aux-weight-majorize} and Lemma \ref{lower-bounds-spectral-nonarchimedean} we obtain
$$H_{\p_1}(\Pi_{\p_1})=H_{\p_1}(\Pi_{\p_1};f_{\p_1})\ll 1 \ll H_{\p_1}(\Pi_{\p_1};f_1)$$
where $f_1:=\vol(\overline{K_0}(\p_1^\nu))^{-1}\1_{\overline{K_0}(\p_1^\nu)}$.

For an integral ideal $\mathfrak{e}$ of $F$, we define
$$\mathcal{F}(\mathfrak{e}):=\{\Pi\in \mathcal{F} : \mathfrak{C}(\Pi_\f)\mid\mathfrak{e}\}.$$
Thus using the two estimates above, along with Lemma \ref{bound-spectral-archimedean-local-weight} and Lemma \ref{upper-bounds-spectral-nonarchimedean} we bound the sum in consideration by
\begin{equation*}
    \ll_\epsilon (N(\q)X)^\epsilon\sum_{\substack{\Pi\in\mathcal{F}(\q\p_0^{c(\sigma)}\p_1^\nu)\\C(\Pi)\le (N(\q)X)^{1-\eta}}} \left|\frac{L(1/2,\Pi\otimes\pi_1)L(1/2,\tilde{\Pi}\otimes\tilde{\pi}_2)}{L(1,\Pi,\Ad)}\right|.
\end{equation*}
Now recalling non-negativity of $L(1,\Pi,\Ad)$ and applying Cauchy--Schwarz we majorize the above by
$$\le (N(\q)X)^\epsilon\prod_{i=1}^2\left(\sum_{\substack{\Pi\in\mathcal{F}(\q\p_0^{c(\sigma)}\p_1^\nu)\\C(\Pi)\le (N(\q)X)^{1-\eta}}}\frac{|L(1/2,\Pi\otimes\pi_i)|^2}{L(1,\Pi,\Ad)}\right)^{1/2}.$$
Thus it suffices to show that the inner-most sums are $O\left((N(\q)X)^{n-\delta}\right)$.

We diadically decompose the sum above and estimate it by
$$\ll_\epsilon (N(\q)X)^\epsilon\sup_{\substack{\mathfrak{e}\mid\q\p_0^{c(\sigma)}\p_1^\nu;\, Y_v\in \Rr_{\ge 1}, v\mid\infty\\N(\mathfrak{e})\prod_{v\mid\infty} Y_v\le (N(\q)X)^{1-\eta}}}\sum_{\substack{\Pi\in\mathcal{F}(\mathfrak{e})\\C(\Pi_v)\le Y_v,\, v\mid\infty}}\frac{|L(1/2,\Pi\times\pi_i)|^2}{L(1,\Pi,\Ad)}$$
Using Theorem \ref{hybrid-moment}, this is
$$\ll_\epsilon (N(\q)X)^\epsilon\sup_{\substack{\mathfrak{e}\mid\q\p_0^{c(\sigma)}\p_1^\nu;\, Y_v\in \Rr_{\ge 1}, v\mid\infty\\N(\mathfrak{e})\prod_{v\mid\infty} Y_v\le (N(\q)X)^{1-\eta}}}\left(N(\mathfrak{e})\prod_{v\mid\infty}Y_v\right)^{n+\epsilon}\ll (N(\q)X)^{\epsilon+(1-\eta)(n+\epsilon)}.$$
We conclude the proof by taking $\epsilon$ sufficiently small.
\end{proof}

\begin{proof}[Proof of Theorem \ref{mixed-moment-2}]
    It immediately follows from \eqref{main-formula-II} and Lemma \ref{spectral-side-truncation}, and noting that $\vol(\overline{K_0}(\q))^{-1}=\frac{\zeta_\q(1)}{\zeta_\q(n+1)}N(\q)^n$.
\end{proof}

\begin{proof}[Proof of Corollary \ref{non-vanishing}]
    From Theorem \ref{mixed-moment-2} We see that
    $$\sum_{\substack{\Pi\text{ cuspidal}\\C(\Pi)>(N(\q)X)^{1-\eta}}}\frac{L(1/2,\Pi\otimes\pi_1)L(1/2,\tilde{\Pi}\otimes\tilde{\pi}_2)}{L(1,\Ad,\Pi)}\H(\Pi)\neq 0$$
    for sufficiently large $N(\q)X$. Thus for all sufficiently large $Y>0$ there exists a cuspidal $\Pi$ with $C(\Pi)>Y$ such that $L(1/2,\Pi\otimes\pi_1)L(1/2,\tilde{\Pi}\otimes\tilde{\pi}_2)\neq 0$. Thus we conclude.
\end{proof}

\bibliographystyle{abbrv}
\bibliography{references}

\end{document}